\documentclass[11pt]{article}
\usepackage[total={6.5in, 9in}]{geometry}
\usepackage{amsmath,amsfonts,mathrsfs,amsthm,amssymb}
\numberwithin{equation}{section}
\usepackage{cases}
\usepackage{latexsym,bm}
\usepackage{indentfirst}
\usepackage{color}
\usepackage{ifpdf}
\usepackage{graphicx}
\usepackage{psfrag}
\usepackage{dsfont}
\usepackage{enumerate}

\usepackage{pgf,tikz,pgfplots}
\usepackage{tikz-3dplot}
\usetikzlibrary{positioning,fit,backgrounds,calc,decorations.pathreplacing}
\usepackage{pifont}
\usepackage{refcount}
\usepackage[linesnumbered,ruled,vlined]{algorithm2e}
\usepackage{subcaption}

\usepackage[
pdfauthor={},
pdftitle={},
pdfstartview=XYZ,
bookmarks=true,
colorlinks=true,
linkcolor=blue,
urlcolor=blue,
citecolor=blue,
linktocpage=true,
hyperindex=true
]{hyperref}

\theoremstyle{definition}
\newtheorem{THM}{\textbf{Theorem}}[section]
\newtheorem{DEF}{\textbf{Definition}}[section]
\newtheorem{LEM}[THM]{\textbf{Lemma}}
\newtheorem{CLA}{\textbf{Claim}}[section]
\newtheorem{CON}[THM]{\textbf{Conjecture}}

\newtheorem{Ope}{\textbf{Operation}}[section]

\providecommand{\phantomsection}{} \makeatletter \newcommand{\CaseLabel}[1]{%
\smallskip \noindent\textbf{Case #1.}\phantomsection \def\@currentlabel{#1}\label{case:#1}\ } 
\makeatother

\newcommand{\CC}{\mathcal{C}}
\DeclareMathOperator{\df}{def}
\newcommand{\ve}{\varepsilon}
\newcommand{\pbar}{\overline{\varphi}}

\DeclareMathOperator{\pr}{Pr}

\begin{document}
	\title{Towards the Overfull  Conjecture II}
	\author{Guantao Chen \thanks{Georgia State
			University, Department of Mathematics and Statistics,  Atlanta, GA 30303. Email: {\tt gchen@gsu.edu}.  Partially supported by NSF grant DMS-2154331.}
		\quad 
		Jessica McDonald\thanks{Auburn University, Department of Mathematics and Statistics, Auburn, AL 36849.
			Email: {\tt mcdonald@auburn.edu}.   
			Partially supported  by Simons Foundation Grant \#845698 and NSF grant DMS-2452103.}	
		\quad 
		Songling Shan \thanks{Auburn University, Department of Mathematics and Statistics, Auburn, AL 36849. 
			Email:  	{\tt szs0398@auburn.edu}. Partially supported by NSF  grant 
			DMS-2451895.}
	}
	
	\date{\today}
	\maketitle

	\emph{\textbf{Abstract}.}
	Let $G$ be a simple graph with maximum degree $\Delta(G)$. A subgraph $H\subseteq G$ is \emph{$\Delta(G)$-overfull} if $|E(H)|>\Delta(G)\left\lfloor |V(H)|/2\right\rfloor$. In any edge coloring of $G$, each color class restricted to $H$ is a matching of size at most $\left\lfloor |V(H)|/2\right\rfloor$. Thus, if $G$ contains a $\Delta(G)$-overfull subgraph, then $G$ cannot be edge-colored with only $\Delta(G)$ colors. By Vizing's Theorem, $\chi'(G)\le \Delta(G)+1$, and hence $G$ is class $2$. In 1986, Chetwynd and Hilton conjectured that whenever $\Delta(G)>|V(G)|/3$, the converse also holds: every class $2$ graph $G$ contains a $\Delta(G)$-overfull subgraph. This statement, commonly known as the \emph{Overfull Conjecture}, is one of the most influential conjectures in graph edge coloring. It would imply a polynomial-time algorithm for determining the chromatic index of graphs $G$ with $\Delta(G)>|V(G)|/3$, and would also imply several other longstanding conjectures in the area, including the Just-overfull Conjecture and the Vertex-splitting Conjecture. In previous work, the third author verified the conjecture for large graphs $G$ with maximum degree at least $13|V(G)|/14$. In this paper, we confirm the conjecture for robust expanders satisfying certain  density constraints. As a consequence, for every $0<\ve<1$, the conjecture holds for all sufficiently large graphs $G$ with maximum degree at least $(1+\ve)|V(G)|/2$.

	\emph{\textbf{Keywords}.} Chromatic index; 1-factorization; Overfull Conjecture; Robust expander   
	
		\newpage 
	
	\tableofcontents

	\newpage 
	
	\section{Introduction}

	In this paper, the term \emph{graph} always refers to a simple graph. When
	multiple edges are allowed, but loops are not, we use the term \emph{multigraph}.
	Since multigraphs arise in the proof of the main result, we state the relevant definitions and terminology in the setting of multigraphs. If $G$ is a
	multigraph, then, for ease of notation, we still refer to any multigraph $H$ with
	$V(H)\subseteq V(G)$ and $E(H)\subseteq E(G)$ as a subgraph of $G$, rather than
	as a submultigraph of $G$.
	
	Let $\mathbb{Z}$ be the set of integers and let $\mathbb{N}$ be the set of
	nonnegative integers. For integers $p,q$, let
	$[p,q]=\{i\in\mathbb{Z}: p\le i\le q\}$, and write $[q]$ for $[1,q]$. For  
 $k\in \mathbb{N}$, an \emph{edge $k$-coloring} of a multigraph $G$ is a
	mapping $\varphi:E(G)\to [k]$, whose elements are called \emph{colors}, such that
	no two adjacent edges receive the same color. Each set of edges receiving the
	same color under $\varphi$ is a \emph{color class}. The \emph{chromatic index} of
	$G$, denoted by $\chi'(G)$, is the smallest integer $k$ such that $G$ admits an
	edge $k$-coloring.
	
	A multigraph $G$ with $|E(G)|>\Delta(G)\lfloor |V(G)|/2\rfloor$ is called
	\emph{overfull}. More generally, a subgraph $H$ of $G$ with
	$|E(H)|>\Delta(G)\lfloor |V(H)|/2\rfloor$ is called a
	\emph{$\Delta(G)$-overfull subgraph} of $G$.   Since $2|E(H)| \le \Delta(G)|V(H)|$,
	it follows that any $\Delta(G)$-overfull subgraph has odd order. 
	In any edge $k$-coloring of $G$, 
	since  each color class  restricted to $H$  is a matching of size at most $\lfloor |V(H)|/2\rfloor$, if
	$G$ contains a $\Delta(G)$-overfull subgraph $H$, then $\chi'(G)\ge \chi'(H)\ge
	\frac{|E(H)|}{\lfloor |V(H)|/2\rfloor}>\Delta(G)$.

	 In the 1960s, Gupta~\cite{Gupta-67} and, independently, Vizing~\cite{Vizing-2-classes}
	 showed that, for every multigraph $G$, $ \Delta(G)\le \chi'(G)\le \Delta(G)+\mu(G)$, 
	 where $\mu(G)$, the \emph{multiplicity} of $G$, is the maximum number of edges
	 joining two vertices in $G$. In the case of simple graphs, where $\mu(G)\le 1$,
	 this yields a natural classification. Following Fiorini and Wilson~\cite{fw},
	 a graph $G$ is of \emph{class 1} if $\chi'(G)=\Delta(G)$, and of
	 \emph{class 2} if $\chi'(G)=\Delta(G)+1$. Holyer~\cite{Holyer} showed that it is
	 NP-complete to determine whether an arbitrary graph is of class 1. However, a
	 conjecture of Chetwynd and Hilton~\cite{MR975994,MR848854} from 1986 would give
	 a polynomial-time algorithm for determining the chromatic index of graphs $G$
	 with $\Delta(G)>\frac{1}{3}|V(G)|$. This conjecture is formulated in terms of
	 overfull subgraphs, and is listed as  one of the 
	 \emph{Twenty Pretty Edge Coloring Conjectures} in the book of Stiebitz, Scheide, Toft, and Favrholdt~\cite[p. 258]{StiebSTF-Book}. 
	 It states the following.

	 \begin{CON}[Overfull Conjecture]\label{overfull-con}
	 	Let $G$ be a graph with $\Delta(G)>\frac{1}{3}|V(G)|$. Then
	 	$\chi'(G)=\Delta(G)$ if and only if $G$ contains no $\Delta(G)$-overfull
	 	subgraph.
	 \end{CON}
	 
	 The only known example demonstrating the sharpness of the degree condition
	 $\Delta(G)>\tfrac{1}{3}|V(G)|$ is the graph $P^*$, obtained from the Petersen
	 graph by deleting one vertex. We have $\Delta(P^*)=\tfrac{1}{3}|V(P^*)|$ and
	 $\chi'(P^*)=4$, yet $P^*$ contains no $3$-overfull subgraph.
	 
	 Using Edmonds' matching polytope theorem, Seymour~\cite{seymour79} showed that
	 the existence of  a $\Delta(G)$-overfull subgraph  in a graph
	 $G$ can be decided in time polynomial in $|V(G)|$. Independently,
	 Niessen~\cite{MR1814514} proved in 2001 that, for graphs $G$ with
	 $\Delta(G)>\tfrac{1}{3}|V(G)|$, there are at most three induced
	 $\Delta(G)$-overfull subgraphs, and that one can be found in polynomial time in
	 $|V(G)|$. Moreover, in this same paper,  Niessen proved that when $\Delta(G)\ge \tfrac{1}{2}|V(G)|$, there is at most one
	 induced $\Delta(G)$-overfull subgraph, and it can be identified in linear time in
	 $|V(G)|+|E(G)|$.
	 
	 Consequently, if the Overfull Conjecture holds, then the NP-complete problem of
	 determining the chromatic index becomes polynomial-time solvable for graphs $G$
	 with $\Delta(G)>\tfrac{1}{3}|V(G)|$. The Overfull Conjecture would also imply
	 several other longstanding conjectures in edge coloring. These include the
	 Just-overfull Conjecture~\cite[Conjecture~4.23, p.72]{StiebSTF-Book}, the
	 Vertex-splitting Conjecture~\cite[Conjecture~1]{MR1460574}, as well as Vizing's
	 Independence Number, $2$-Factor, and Average Degree Conjectures
	 ~\cite{vizing-2factor,vizing-ind} when restricted to graphs $G$ with
	 $\Delta(G)>\tfrac{1}{3}|V(G)|$. The latter three conjectures also appear
	 in the aforementioned list of Stiebitz, Scheide, Toft, and Favrholdt~\cite[pp. 257--258]{StiebSTF-Book}.

	 The well-known \emph{1-Factorization Conjecture}, first stated explicitly
	 in~\cite{MR772711} and often traced back to Dirac in the early 1950s, was
	 verified for all sufficiently large graphs by Csaba, K\"uhn, Lo, Osthus, and
	 Treglown~\cite{MR3545109} in 2016. 
	 
	 \begin{CON}[1-Factorization Conjecture]\label{con:1-factorization}
	 	Let $G$ be a graph of even order. If $G$ is $k$-regular for some
	 	$k\ge 2\lceil |V(G)|/4\rceil-1$, then $G$ is 1-factorable; equivalently,
	 	$\chi'(G)=\Delta(G)$.
	 \end{CON}

	 This conjecture is a special case of the
	 Overfull Conjecture. Indeed, if $G$ is a  $k$-regular graph of even order with 
  $ k\ge 2\lceil |V(G)|/4\rceil-1$,  since every  subset
	 $X\subseteq V(G)$  with $|X|$ odd satisfies
	 $|E(G[X])|\le k \lfloor |X|/2\rfloor$, $G$ contains no $k$-overfull subgraph. 
	 Then the Overfull Conjecture implies $\chi'(G)=k$ and so $G$ has a 1-factorization since 
	 $G$ is regular.

	 Much of the progress on the Overfull Conjecture has followed the perspective of
	 the 1-Factorization Conjecture, whose formulation may be viewed as imposing a
	 minimum-degree condition within the framework of the Overfull Conjecture.
	 Accordingly, most existing results rely on a minimum-degree assumption; see, for
	 example, \cite{MR4394718,MR4563210,MR4735535}. By contrast, considerably less is
	 known when no minimum-degree assumption is imposed.
	 
	 If one restricts only the maximum degree of $G$, the strongest earlier result was
	 obtained by Chetwynd and Hilton in 1988~\cite{MR975994}, who proved that every
	 graph $G$ with $\Delta(G)\ge |V(G)|-3$ satisfies the Overfull Conjecture. More
	 recently, Shan~\cite{overfull-I}  showed that,
	 for sufficiently large $|V(G)|$, the Overfull Conjecture holds for graphs $G$
	 with $\Delta(G)\ge \tfrac{13}{14}|V(G)|$. In this paper, we prove the conjecture
	 for robust expanders satisfying certain density  constraints. As a consequence,
	  the conjecture holds for large graphs $G$ whose maximum degree is at least  $(1+\ve)|V(G)|/2$ for any $0<\ve<1$. 
	 
	 In order to state our main result, we define robust expanders. Given
	 $0<\nu\le \tau<1$, we say that a graph $G$ on $n$ vertices is a
	 \emph{robust $(\nu,\tau)$-expander} if, for every $S\subseteq V(G)$ with
	 $\tau n\le |S|\le (1-\tau)n$, the number of vertices  of $G$ that have at least
	 $\nu n$ neighbors in $S$ is at least $|S|+\nu n$. The \emph{$\nu$-robust
	 	neighborhood} $RN_{\nu,G}(S)$ is the set of all vertices of $G$ with at least
	 $\nu n$ neighbors in $S$. Note that, for
	 $0<\nu'\le \nu\le \tau\le \tau'<1$, every robust $(\nu,\tau)$-expander is also a
	 robust $(\nu',\tau')$-expander.

	We say that a graph $G$ satisfies \emph{weak VAL} if
	$d_G(u)+d_G(v)\ge \Delta(G)+2$ for every edge $uv\in E(G)$. The terminology is motivated by Vizing's Adjacency Lemma, which will be introduced later. 
	
	For a multigraph $G$ and a given $0<\eta <1$, let $$U_\eta(G)=\{v\in V(G): \Delta(G)-|N_G(v)|> \eta |V(G)|\}.$$
	Our main result is the following.

	 \begin{THM}\label{thm:robust-expander}
	 	For every $\alpha>0$, there exists $\tau=\tau(\alpha)>0$ such that, for
	 	every $\nu>0$, there exists $n_0=n_0(\alpha,\nu,\tau)\in\mathbb{N}$ for
	 	which the following holds. Suppose that $G$ is a graph on $n\ge n_0$
	 	vertices satisfying:
	 	\begin{enumerate}[(i)]
	 		\item $\Delta(G)\ge \alpha n$ and $G$ satisfies weak VAL;
	 		\item for some $0<\eta\ll \nu$,   we have
	 		$|U_\eta(G)|<\eta^2 n$;
	 		\item $G$ is a robust $(\nu,\tau)$-expander.
	 	\end{enumerate}
	 	Then $\chi'(G)=\Delta(G)$ if and only if $G$ contains no
	 	$\Delta(G)$-overfull subgraph.
	 \end{THM}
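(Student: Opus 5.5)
The forward direction is immediate from the discussion preceding Conjecture~\ref{overfull-con}: a $\Delta(G)$-overfull subgraph forces $\chi'(G)>\Delta(G)$. For the converse we assume $G$ has no $\Delta(G)$-overfull subgraph and construct an edge $\Delta$-coloring, where $\Delta=\Delta(G)$. Our strategy is to embed $G$ into a regular multigraph of even order (or a near-regular one) with controlled multiplicity, decompose it into perfect matchings, and then repair. We then use the robust expansion to apply a Kühn--Osthus type decomposition. That result says robust expanders of linear degree that are regular (or admit a suitable ``regularization'') have Hamilton decompositions, hence 1-factorizations.

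\textbf{Step 1: reduce to a near-regular spanning structure.} Let $U=U_\eta(G)$; since $|U|<\eta^2 n$, almost every vertex has degree within $\eta n$ of $\Delta$. We want to find edge-disjoint matchings $M_1,\dots,M_k$ with $k\approx \sqrt{\eta}\, n$ (or another intermediate scale between $\eta n$ and $\nu n$). These matchings should cover every vertex of $U$ as much as needed, and after they are removed the remaining graph $G'$ should have every deficiency $\Delta-k-d_{G'}(v)$ small and even-balanced.

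Concretely, we take the deficiency $\mathrm{def}(v)=\Delta-d_G(v)$ and add a set of ``dummy'' edges, forming a multigraph $G^*\supseteq G$ that is $\Delta$-regular on an even vertex set. If $n$ is odd, we add one new vertex, or use the no-overfull hypothesis to fix parity, since the total deficiency is then at least $\Delta$. The absence of overfull subgraphs, via a Tutte/Edmonds matching-polytope type argument, is exactly what guarantees that deficiencies can be absorbed. Weak VAL bounds how the low-degree vertices interact: any neighbor of a vertex of degree $d$ has degree at least $\Delta+2-d$, so vertices in $U$ are only adjacent to vertices of near-maximum degree.

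\textbf{Step 2: absorb $U$ and the dummy edges.} Using the no-overfull condition and the small size of $U$, we greedily color all edges incident to $U$ first, with a reserved palette. We find in $G-U$ a family of edge-disjoint matchings, via Vizing fans and Kempe changes combined with the robust expansion, so that each color class is a matching missing only vertices whose deficiency ``paid'' for it. We then pass to a leftover graph $H$ on $V\setminus U$ that is close to regular, still a robust $(\nu/2,2\tau)$-expander, and whose degrees differ from the target $D$ in a way expressible by a few further dummy matchings.

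\textbf{Step 3: decompose.} We apply the robust-expander Hamilton/1-factor decomposition to a $D$-regular even completion $H^*$ of $H$. This yields $D$ perfect matchings. Deleting the dummy edges gives matchings of $G$, and combined with the reserved colors this gives a $\Delta$-coloring.

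The main obstacle is Step 2 when $G$ is close to being overfull. If $n$ is odd and $|E(G)|$ is close to $\Delta\lfloor n/2\rfloor$, there is essentially no slack, and every color class must miss exactly the right vertex. We must show that the deficiency distribution, constrained by weak VAL and no-overfull, allows exact balancing. We would first try a parity/flow argument: choose a vertex set $W$ whose deficiencies sum to at least $\Delta$ (guaranteed by non-overfullness when $n$ is odd), and assign each color a missing vertex in $W$ with multiplicities at most $\mathrm{def}(v)$. This is realized by prescribing near-perfect matchings avoiding those vertices, which is possible in robust expanders by a Tutte-condition check.
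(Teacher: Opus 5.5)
Your outline has no workable mechanism for the low-degree vertices, and the steps you do specify fail as stated.

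\textbf{Handling $U$.} The set $U=U_\eta(G)$ can contain vertices of constant degree as well as vertices of degree just below $\Delta-\eta n$. It also need not be independent. Weak VAL only forces neighbors of a vertex of degree $d$ to have degree at least $\Delta+2-d$, so two vertices of degree about $\Delta/2$, both in $U$, may be adjacent.
\begin{itemize}
\item In Step~1, removing only $k\approx\sqrt{\eta}\,n$ matchings leaves a vertex of constant degree with deficiency $\Delta-k-d_{G'}(v)\ge\Delta-k-O(1)$. The leftover is therefore nowhere near regular.
\item In Step~2, every vertex of degree $\Delta$ must see all $\Delta$ colors, so every reserved color must appear at every maximum-degree vertex. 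Each reserved class is then a matching saturating all maximum-degree vertices, which is not something you can obtain greedily.
\item The reserved palette also needs at least $\max_{u\in U}d_G(u)$ colors, which may be $\Delta-O(\eta n)$. That leaves a remainder $H$ of degree $O(\eta n)$, which is neither of linear degree nor a robust expander.
\end{itemize}
The paper instead runs about $\Delta/3$ rounds. Each round deletes an edge-$2$-colorable subgraph consisting of three parts:
\begin{itemize}
\item prescribed edges covering the independent set $W$ of vertices of degree below $\Delta/3-\eta n$;
\item projection edges for the other vertices of $U_0$;
\item a spanning cycle of $G-U_0$ through prescribed auxiliary edges, obtained for all rounds at once from Lemma~\ref{lem:removing-linear-forests}. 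This is what keeps robust expansion alive over linearly many rounds.
\end{itemize}
The maximum degree drops by exactly two per round to about $\Delta/3$. At that point the vertices of $U_0\setminus W_0$ are within $O(\eta n)$ of the new maximum.

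\textbf{Preserving no-overfullness.} The step you defer to a ``parity/flow argument'' is the heart of the problem. Edmonds' matching polytope turns the no-overfull hypothesis only into fractional $\Delta$-edge-colorability; the integral statement is the conjecture itself. Deleting a matching that saturates all maximum-degree vertices lowers $\df_G(S)$ of an odd set $S$ by the number of vertices of $S$ not matched inside $S$. So peeling off color classes can create an overfull subgraph for the new maximum degree, and a Tutte-condition check on each single matching does not prevent this. The paper handles it as follows:
\begin{itemize}
\item it tracks ``almost-overfull'' sets through auxiliary degree records and stops trimming as soon as one appears;
\item in Stage~II it identifies the small complementary set $Y_1$ into one or two vertices, sometimes after Operation~\ref{ope:stage1-matching-removal-case2};
\item a careful case analysis shows that no overfull subgraph arises.
\end{itemize}

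\textbf{The final decomposition.} Step~3 cannot invoke K\"uhn--Osthus, because regularizing with dummy edges generally produces a multigraph. For example, if $n$ is even, three vertices have degree $\lceil\Delta/3\rceil$ and all others have degree $\Delta$, the completion contains a triangle with multiplicities about $\Delta/3$. The paper's own completions may even join $v_1$ and $v_2$ by nearly $\Delta$ parallel edges. Theorem~\ref{thm:regular-robust-decomposition}, however, needs a simple even-regular robust expander. This is why the paper proves the separate multigraph result, Theorem~\ref{thm:robust-expander2}. Its proof uses a regularity-lemma bipartition, extends about $\Delta/2$ color classes to $1$-factors by alternating paths, and finishes the leftover bipartite multigraph with K\"onig's theorem.
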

	 
	 As an application of Theorem~\ref{thm:robust-expander}, we obtain the following
	 result.
	 
	 \begin{THM}\label{thm:1}
	 	For every $0<\varepsilon<1$, there exists $n_0\in\mathbb{N}$ for which the
	 	following holds. If $G$ is a graph on $n\ge n_0$ vertices with
	 	$\Delta(G)\ge \frac{1}{2}(1+\varepsilon)n$, then
	 	$\chi'(G)=\Delta(G)$ if and only if $G$ contains no
	 	$\Delta(G)$-overfull subgraph.
	 \end{THM}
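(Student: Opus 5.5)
The plan is to derive Theorem~\ref{thm:1} from Theorem~\ref{thm:robust-expander} by passing to a critical subgraph. The "only if" direction is the trivial observation made in the introduction, so I only need the "if" direction. Suppose $G$ has $\Delta(G)\ge\frac12(1+\ve)n$, contains no $\Delta(G)$-overfull subgraph, and yet $\chi'(G)=\Delta(G)+1$.

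\textbf{Step 1: pass to a critical subgraph.} Delete edges one at a time, keeping the chromatic index equal to $\Delta(G)+1$, until this is no longer possible; then discard isolated vertices. This gives a $\Delta$-critical subgraph $H\subseteq G$ with $\Delta(H)=\Delta(G)=:\Delta$ (if $\Delta(H)<\Delta$, Vizing's theorem would give $\chi'(H)\le\Delta$).
\begin{itemize}
\item $H$ is class $2$.
\item $H$ has no $\Delta(H)$-overfull subgraph, since any such subgraph would be $\Delta(G)$-overfull in $G$.
\item By Vizing's Adjacency Lemma, every edge $uv$ of $H$ has an endpoint adjacent to at least $\Delta-d_H(u)+1\ge 1$ vertices of degree $\Delta$ besides the other endpoint. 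In particular $d_H(u)+d_H(v)\ge\Delta+2$, so $H$ satisfies weak VAL.
\item $n':=|V(H)|$ satisfies $n\ge n'\ge\Delta+1\ge n/2$, so $n'$ is large, and $\Delta(H)\ge\frac12(1+\ve)n'$.
\end{itemize}
So hypothesis (i) holds with $\alpha=1/2$. I then fix $\tau=\tau(1/2)$, choose $\nu\ll\ve$ with $\nu\le\tau$, and choose $\eta\ll\nu$.

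\textbf{Step 2: verify (ii), $|U_\eta(H)|<\eta^2 n'$.} I expect this to be the main obstacle, because criticality alone does not bound the number of low-degree vertices. My first attempt is a double count using the strong form of VAL.
\begin{itemize}
\item Every vertex $v\in U_\eta(H)$ has degree less than $\Delta-\eta n'$.
\item By VAL, each neighbour $u$ of $v$ is adjacent to more than $\eta n'$ vertices of degree $\Delta$.
\item Moreover, by VAL applied to the edge $uv$, every such $u$ has degree greater than $\eta n'$.
\end{itemize}
Combining these with the lack of overfull subgraphs, I would aim to bound the edge deficiency
$$\sum_{x\in V(H)}\bigl(\Delta-d_H(x)\bigr)$$
from above. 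The lack of overfull subgraphs applied to $H$ itself gives no such bound when $n'$ is even, so there I would consider the odd-order subgraph $H-x$ for a suitable vertex $x$. This should show that the vertices of deficiency more than $\eta n'$ number fewer than $\eta^2 n'$.

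If this direct count fails, I would instead remove $U_\eta$-type vertices in a controlled way. The alternative is to treat the case of many low-degree vertices separately: there, VAL forces a large set of $\Delta$-vertices of the kind a Kierstead-path or multifan argument can exploit. But I would first try the deficiency count.

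\textbf{Step 3: verify (iii), robust expansion.} Given (ii), all but at most $\eta^2 n'$ vertices of $H$ have degree at least $\Delta-\eta n'\ge(\frac12+\frac{\ve}{3})n'$. Let $S$ satisfy $\tau n'\le|S|\le(1-\tau)n'$, and put $R=RN_{\nu,H}(S)$.
\begin{itemize}
\item Each vertex of $S$ outside $U_\eta(H)$ has at least $(\frac12+\frac{\ve}{3})n'-|S|$ neighbours outside $S$.
\item Each vertex of $V(H)\setminus R$ sends fewer than $\nu n'$ edges into $S$.
\item If $|S|\le(\frac12+\frac{\ve}{6})n'$, the first bullet forces many edges leaving $S$; the second bullet then forces $R\setminus S$ to be large. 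Separately, each vertex of $S$ of high degree has at least $\frac{\ve}{6}n'\ge\nu n'$ neighbours in $S$, so it lies in $R$. Together these give $|R|\ge|S|+\nu n'$.
\item If $|S|>(\frac12+\frac{\ve}{6})n'$, then every vertex of high degree has at least $\frac{\ve}{6}n'-\tau n'\ge\nu n'$ neighbours in $S$. Hence $|R|\ge n'-\eta^2n'\ge|S|+\nu n'$, using $|S|\le(1-\tau)n'$ and $\nu,\eta\ll\tau$.
\end{itemize}
This is routine once the constants are chosen in the order $\ve\gg\tau\gg\nu\gg\eta\gg1/n_0$.

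\textbf{Conclusion.} Theorem~\ref{thm:robust-expander} applied to $H$ gives $\chi'(H)=\Delta$, contradicting the choice of $H$.
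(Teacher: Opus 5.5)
\textbf{The gap is Step~2.} The route you sketch for it cannot be completed, because the two tools you combine are too weak or point the wrong way.

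First, the absence of overfull subgraphs only gives \emph{lower} bounds on deficiency: for odd $X$ it says $2e_H(X)\le\Delta(|X|-1)$, i.e.\ $\df_H(X)\ge\Delta$. It never gives an upper bound on $\sum_x(\Delta-d_H(x))$.

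Second, VAL is compatible with linearly many vertices of tiny degree. Take $n=3s$ and $(1+\ve)n/2\le\Delta\le 2n/3$, which is possible when $\ve\le 1/3$. Let $M$ span a $(\Delta-1)$-regular graph on $2s$ vertices, and add an independent set $S$ of $s$ vertices, each joined to a private pair of vertices of $M$.
\begin{itemize}
\item Every $\Delta$-vertex next to a degree-$2$ vertex has all its other $\Delta-1$ neighbours of degree $\Delta$, so VAL holds on every edge in both directions.
\item A short check shows that no odd $X$ has $\df(X)\le\Delta-2$, so there is no overfull subgraph.
\item Yet $|U_\eta|=n/3$.
\end{itemize}
So no double count that uses only (strong) VAL and non-overfullness can yield hypothesis~(ii).

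What the paper uses instead is a global property of critical graphs: the Cao--Chen theorem (Lemma~\ref{Thm:average}). Applied with parameter $\eta^3$, it says the critical graph has average degree at least $(1-\eta^3)\Delta$. Hence the total deficiency is below $\eta^3 n^2$, and fewer than $\eta^2 n$ vertices have deficiency above $\eta n$. Your remark that criticality alone does not bound the number of low-degree vertices is therefore mistaken. Criticality does bound it, but only through this asymptotic form of Vizing's average degree conjecture. Your fallback via Kierstead paths and multifans amounts to reproving that theorem.

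\textbf{Step~3 also needs a (routine) repair.} In the case $|S|\le(\frac12+\frac{\ve}{6})n'$ you claim every high-degree vertex of $S$ has at least $\frac{\ve}{6}n'$ neighbours in $S$. This fails for small $S$; for instance, $S$ could be an independent set of size $\tau n'$.

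The fix is to count all pairs $(s,x)$ with $s\in S$ and $x\in N_H(s)$, without separating $x\in S$ from $x\notin S$. This gives $|S|\,|R|\ge(|S|-\eta^2n')(\frac12+\frac{\ve}{3})n'-\nu n'^2$. Hence $|R|\ge(\frac12+\frac{\ve}{3}-\frac{\eta^2+\nu}{\tau})n'\ge|S|+\nu n'$.

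The paper avoids this computation altogether. It deletes $U_\eta(H)$, applies Lemma~\ref{lem:expander} to the remainder (which has minimum degree at least $(1+\ve/2)n/2$), and re-adds the deleted vertices with Lemma~\ref{lem:stability-expansion}(b).

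Finally, $\tau=\tau(1/2)$ is imposed by Theorem~\ref{thm:robust-expander} and need not be small compared with $\ve$. So your hierarchy $\ve\gg\tau$ must be run with an auxiliary $\tau'\le\tau(1/2)$ and $\tau'\ll\ve$. The result then transfers to $\tau(1/2)$ by monotonicity of robust expansion in $\tau$, exactly as the paper does.
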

	 
	 We will use the standard hierarchy notation $0<a\ll b\le 1$. More precisely, if
	 we claim that a statement holds whenever $0<a\ll b\le 1$, then there exists a
	 non-decreasing function $f:(0,1]\to(0,1]$ such that the statement holds for all
	 $0<a\le f(b)$. Intuitively, this means that, for every fixed $b>0$, the
	 statement holds provided that $a$ is sufficiently small compared to $b$.
	 
	 The remainder of this paper is organized as follows. In the next section, we
	 prove Theorem~\ref{thm:1} by applying Theorem~\ref{thm:robust-expander}, and we
	 give an outline of the proof of Theorem~\ref{thm:robust-expander}. In
	 Section~3, we introduce the necessary notation and preliminaries. In Section~4,
	 we reformulate Theorem~\ref{thm:robust-expander} into an equivalent statement
	 better suited to our proof techniques. This equivalent statement  is proved in the final section.

	\section{Proof of Theorem~\ref{thm:1} and  Proof  Outline of  Theorem~\ref{thm:robust-expander}}\label{sec2}

\subsection{Proof of Theorem~\ref{thm:1}}

We first present several preliminary results that will be needed in the proof.

K\"uhn and Osthus~\cite{KO2014} showed that graphs with large minimum degree are robust expanders.

\begin{LEM}[{\cite[Lemma 3.8]{KO2014}}]\label{lem:expander}
	Suppose that $0<\nu\le \tau\le \varepsilon<1$ and $\varepsilon\ge 4\nu/\tau$. Let $G$ be a graph on $n$ vertices with minimum degree $\delta(G)\ge (1+\varepsilon)n/2$. Then $G$ is a robust $(\nu,\tau)$-expander.
\end{LEM}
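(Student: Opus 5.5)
We argue directly from the definitions, using a double-counting of edges between $S$ and its complement. Fix $S\subseteq V(G)$ with $\tau n\le |S|\le (1-\tau)n$, and write $R=RN_{\nu,G}(S)$. The goal is $|R|\ge |S|+\nu n$. We split into two cases according to the size of $S$, since the minimum degree condition helps in different ways for small and large sets.

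\textbf{Case $|S|\le (1-\varepsilon)n/2$ (roughly, $S$ small).} Count the edges incident with $S$ by summing degrees:
\[
\sum_{v\in V(G)} |N_G(v)\cap S| = \sum_{u\in S} d_G(u)\ge |S|(1+\varepsilon)n/2 .
\]
Each vertex of $R$ contributes at most $|S|$ to the left side. Each vertex outside $R$ contributes less than $\nu n$. Hence
\[
|S|(1+\varepsilon)n/2\le |R||S|+\nu n^2 ,
\]
so that
\[
|R|\ge (1+\varepsilon)n/2-\nu n/\tau .
\]
Using $|S|\le (1-\varepsilon)n/2$, this exceeds $|S|+\nu n$ provided $\varepsilon n-\nu n/\tau\ge \nu n$. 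This follows from $\varepsilon\ge 4\nu/\tau$ and $\nu\le\tau<1$, since $\nu/\tau+\nu\le 2\nu/\tau$.

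\textbf{Case $|S|>(1-\varepsilon)n/2$.} Here we show that almost every vertex lies in $R$. Every vertex $v$ satisfies
\[
|N_G(v)\cap S|\ge d_G(v)-(n-|S|)\ge |S|-(1-\varepsilon)n/2 .
\]
This is only useful when $|S|$ is large, so we refine the case split. If $|S|\ge (1-\varepsilon)n/2+\nu n$, then every vertex has at least $\nu n$ neighbours in $S$. Thus $R=V(G)$, and $|R|=n\ge |S|+\nu n$ because $|S|\le(1-\tau)n$ and $\nu\le\tau$. In the intermediate window $(1-\varepsilon)n/2<|S|<(1-\varepsilon)n/2+\nu n$, we reuse the counting bound from the first case:
\[
|R|\ge (1+\varepsilon)n/2-\nu n/\tau .
\]
We need this to be at least $|S|+\nu n$. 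Since $|S|<(1-\varepsilon)n/2+\nu n$, it suffices that $\varepsilon n\ge \nu n/\tau+2\nu n$, which again holds because $\nu/\tau+2\nu\le 3\nu/\tau\le 4\nu/\tau\le\varepsilon$.

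\textbf{Main obstacle.} The main delicacy is making the thresholds of the case split overlap correctly with the constant $4\nu/\tau$. So the first thing we do is fix the split point at $(1-\varepsilon)n/2+\nu n$ and check both inequalities against it, as above. Everything else is routine counting.
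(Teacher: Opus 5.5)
Your proof is correct, and it is the standard two-case argument: the double count $\sum_{u\in S}d_G(u)\le |RN_{\nu,G}(S)|\,|S|+\nu n^2$ handles the smaller sets $S$, and the bound $|N_G(v)\cap S|\ge |S|-(1-\varepsilon)n/2$ puts every vertex into $RN_{\nu,G}(S)$ for the larger ones. The paper does not reprove this lemma; it simply quotes it from K\"uhn and Osthus.

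A single split at $|S|=n/2$ also works and explains the constant $4\nu/\tau$. For $|S|\le n/2$, counting gives $|RN_{\nu,G}(S)|\ge (1+\varepsilon)n/2-\nu n/\tau\ge n/2+\nu n$, using $\varepsilon/2\ge 2\nu/\tau\ge \nu/\tau+\nu$. For $|S|>n/2$, every vertex has at least $\varepsilon n/2\ge \nu n$ neighbours in $S$. Your finer split point at $(1-\varepsilon)n/2+\nu n$ avoids this slack, and in fact your argument only needs $\varepsilon\ge \nu/\tau+2\nu$.
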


The following lemma was stated for  robust outexpanders  due to  Gir\~ao, Granet, K\"uhn, Lo, and Osthus~\cite{MR4550147};
here we use its undirected analogue. It 
states that robust expansion is preserved under the removal of few edges and under the addition or removal of few vertices. 

\begin{LEM}[{\cite[Lemma 4.2]{MR4550147}}]\label{lem:stability-expansion}
	Let $0<\varepsilon\le \nu\le \tau\le 1$, and let $G$ be a robust $(\nu,\tau)$-expander on $n$ vertices.
	\begin{enumerate}[(a)]
		\item If $G'$ is obtained from $G$ by removing at most $\varepsilon n$ edges at each vertex, then $G'$ is a robust $(\nu-\varepsilon,\tau)$-expander.
		\item Suppose that $\tau\ge (1+2\tau)\varepsilon$. If $G'$ is obtained from $G$ by adding or removing at most $\varepsilon n$ vertices, then $G'$ is a robust $(\nu-\varepsilon,2\tau)$-expander.
	\end{enumerate}
\end{LEM}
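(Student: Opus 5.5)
Both parts follow directly from the definition of the $\nu$-robust neighborhood: we fix an admissible set in $G'$, pass to a corresponding set in $G$, and compare robust neighborhoods. For (b) we assume, as intended, that $G'$ agrees with $G$ on the common vertex set, i.e.\ $G'[V(G)\cap V(G')]=G[V(G)\cap V(G')]$.

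For (a), note $V(G')=V(G)$. Fix $S$ with $\tau n\le |S|\le (1-\tau)n$. Every $v\in RN_{\nu,G}(S)$ has at least $\nu n$ neighbours in $S$ in $G$. At most $\varepsilon n$ edges at $v$ are deleted, so $v$ keeps at least $(\nu-\varepsilon)n$ neighbours in $S$. Hence $RN_{\nu,G}(S)\subseteq RN_{\nu-\varepsilon,G'}(S)$, and therefore
\[|RN_{\nu-\varepsilon,G'}(S)|\ge |S|+\nu n\ge |S|+(\nu-\varepsilon)n.\]
The hypothesis $\varepsilon\le\nu$ only ensures the new parameter is nonnegative.

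For (b), let $n'=|V(G')|$, so $(1-\varepsilon)n\le n'\le (1+\varepsilon)n$. Take $S'\subseteq V(G')$ with $2\tau n'\le |S'|\le (1-2\tau)n'$, and set $S=S'\cap V(G)$. Then $|S'|-\varepsilon n\le |S|\le |S'|$.

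The first step is to check that $S$ is admissible for $G$.
\begin{itemize}
\item Lower bound: $|S|\ge 2\tau(1-\varepsilon)n-\varepsilon n=2\tau n-(1+2\tau)\varepsilon n\ge \tau n$. This is exactly where $\tau\ge(1+2\tau)\varepsilon$ is used.
\item Upper bound: $|S|\le (1-2\tau)(1+\varepsilon)n\le(1-\tau)n$. This holds because $\varepsilon(1-2\tau)\le\varepsilon(1+2\tau)\le\tau$.
\end{itemize}
By robust expansion of $G$, $|RN_{\nu,G}(S)|\ge |S|+\nu n$.

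Next, let $v\in RN_{\nu,G}(S)\cap V(G')$. Since $S\subseteq S'$ and $G'$ agrees with $G$ on common vertices, $v$ still has at least $\nu n$ neighbours in $S'$. We also have $\nu n\ge(\nu-\varepsilon)n'$, since $(\nu-\varepsilon)(1+\varepsilon)\le\nu$. So $v\in RN_{\nu-\varepsilon,G'}(S')$.

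The last step is the counting, and I expect it to be the main obstacle. Write $a=|V(G')\setminus V(G)|$ and $r=|V(G)\setminus V(G')|$, so $a+r\le\varepsilon n$. Then $|S|\ge|S'|-a$, and at most $r$ vertices of $RN_{\nu,G}(S)$ are lost. This gives
\[|RN_{\nu-\varepsilon,G'}(S')|\ge |S'|+\nu n-(a+r)\ge |S'|+(\nu-\varepsilon)n.\]
Comparing with the required $|S'|+(\nu-\varepsilon)n'$ is where care is needed. When vertices are only removed, $n'\le n$ and we are done. When vertices are added, $n'$ may exceed $n$ by up to $\varepsilon n$, leaving a slack term of order $\varepsilon\nu n$.

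I would handle this case by counting more carefully. First, $S$ may be smaller than $S'$ by $a$, but the expansion surplus $\nu n$ is measured against $n$. Second, new vertices of $G'$ may themselves lie in the robust neighbourhood, although we cannot rely on that. If a direct count does not close the $O(\varepsilon\nu n)$ gap, I would fall back to the normalization used in~\cite{MR4550147}, where the expansion surplus is measured with respect to the original order. Alternatively, I would absorb the gap into the slack in the threshold: the true threshold is $\nu n$ versus $(\nu-\varepsilon)n'$, and the resulting loss of $\varepsilon^2 n$-type error terms is harmless in all later applications.
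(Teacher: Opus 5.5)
The paper only quotes this lemma from \cite{MR4550147} and gives no proof, so your argument has to stand on its own. Part (a) is correct. In part (b), the admissibility of $S=S'\cap V(G)$ and the threshold comparison $\nu n\ge(\nu-\varepsilon)n'$ are also correct.

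\textbf{The gap.} The genuine gap is the final inequality in (b) when vertices are added. Write $a'=|S'\setminus V(G)|$, and let $a,r$ be the numbers of added and removed vertices, with $a+r\le\varepsilon n$ as in your setup. Your count gives $|RN_{\nu-\varepsilon,G'}(S')|\ge|S'|+\nu n-a'-r$, whereas the target is $|S'|+(\nu-\varepsilon)n'$. Take $r=0$ and $a=a'=\varepsilon n$, for instance isolated new vertices all placed in $S'$. Then the count falls short by $(\nu-\varepsilon)\varepsilon n$. This is of order $\varepsilon\nu n$, not $\varepsilon^2 n$. None of your fallbacks closes the gap. The definition of a robust $(\nu-\varepsilon,2\tau)$-expander for $G'$ is normalised by $n'=|V(G')|$. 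So switching normalisation, or declaring the loss harmless in later applications, proves a weaker statement than the one claimed. The paper's own bipartite analogue, Lemma~\ref{lem:bipartite-stability}(b), sidesteps exactly this point by concluding only $\nu-3\beta$ rather than $\nu-\beta$.

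\textbf{The missing idea.} You need to convert the slack in the threshold into slack in the size of the test set. Put $\sigma=\nu n-(\nu-\varepsilon)n'$. Since $n'\le n+a$, we have $\sigma\ge\varepsilon n-(\nu-\varepsilon)a\ge 0$.

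Do not apply the expansion of $G$ to $S$ itself. Instead apply it to a set $S^+$ with $S\subseteq S^+\subseteq V(G)$ and $|S^+|=|S|+x$, where $x=\min\{a'+r,\lfloor\sigma\rfloor\}$. Every $v\in RN_{\nu,G}(S^+)\cap V(G')$ still has at least $\nu n-x\ge(\nu-\varepsilon)n'$ neighbours in $S\subseteq S'$. Hence
\[
|RN_{\nu-\varepsilon,G'}(S')|\ge|S^+|+\nu n-r=|S'|+(\nu-\varepsilon)n'+\bigl(x+\sigma-a'-r\bigr).
\]

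It remains to check that $x+\sigma\ge a'+r$. This holds because
\[
2\sigma-(a+r)\ge 2\varepsilon n-2(\nu-\varepsilon)a-(a+r)\ge(1-2\nu+2\varepsilon)\varepsilon n>0 .
\]
Here you may assume $\nu\le\tau\le 1/4$, since otherwise the range $2\tau n'\le|S'|\le(1-2\tau)n'$ is empty.

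The set $S^+$ is still admissible. For the lower bound, $|S^+|\ge|S|\ge\tau n$ by your own computation. For the upper bound,
\[
|S^+|\le|S'|+r\le(1-2\tau)(n+a-r)+r\le(1-\tau)n,
\]
because $(1-2\tau)a+2\tau r\le a+r\le\varepsilon n\le\tau n$. Up to rounding, this completes (b) with the stated loss of only $\varepsilon$.
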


A graph $G$ is \emph{edge-chromatic critical} if $\chi'(G)=\Delta(G)+1$ but every proper subgraph $H$ of $G$ satisfies $\chi'(H)\le \Delta(G)$. An edge $e\in E(G)$ is \emph{critical} if $\chi'(G)=\Delta(G)+1$ but $\chi'(G-e)= \Delta(G)$. Clearly, a graph is edge-chromatic critical if and only if it is connected and all of its edges are critical. The following two results describe structural properties of edge-chromatic critical graphs; the second one is due to Cao and Chen~\cite{MR4130909}.

\begin{LEM}[Vizing's Adjacency Lemma (VAL), \cite{Vizing-2-classes}]\label{lem:val}
	Let $G$ be a class~2 graph with maximum degree $\Delta$. If $e=xy$ is a critical edge of $G$, then $x$ has at least $\Delta-d_G(y)+1$ neighbors of degree $\Delta$ in $V(G)\setminus\{y\}$. As a consequence, $d_G(x)+d_G(y)\ge \Delta+2$.
\end{LEM}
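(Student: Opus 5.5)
The statement to prove is Vizing's Adjacency Lemma itself. I would argue by Kempe chains and a Vizing fan at $x$.

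\textbf{Setup.} Since $e=xy$ is critical, $G-e$ has an edge $\Delta$-coloring $\varphi$. For a vertex $v$, let $\pbar(v)$ denote the set of colors in $[\Delta]$ missing at $v$ under $\varphi$ (in $G-e$). Then $|\pbar(y)|=\Delta-d_G(y)+1\ge 1$ and $|\pbar(x)|\ge 1$. We also record the following standard fact: if $\pbar(x)\cap\pbar(y)\neq\emptyset$, we could color $e$ and get $\chi'(G)=\Delta$, a contradiction.

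\textbf{Step 1: missing colors at $y$ are used at distinct neighbors of $x$.} For each $\beta\in\pbar(y)$, $\beta$ is present at $x$, so there is an edge $xz_\beta$ with $\varphi(xz_\beta)=\beta$, where $z_\beta\ne y$. Distinct $\beta$ give distinct $z_\beta$, since $\varphi$ is proper at $x$. This yields $\Delta-d_G(y)+1$ distinct neighbors $z_\beta$ of $x$ in $V(G)\setminus\{y\}$.

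\textbf{Step 2: each $z_\beta$ has degree $\Delta$.} Suppose some $z=z_\beta$ has $d_G(z)<\Delta$. Then $z$ misses some color $\gamma$.
\begin{itemize}
\item If $\gamma\in\pbar(x)$, recolor $xz$ with $\gamma$. Now $\beta$ is missing at both $x$ and $y$, so we can color $xy$ with $\beta$, a contradiction.
\item Otherwise, pick $\alpha\in\pbar(x)$. Consider the $(\alpha,\gamma)$-Kempe chain starting at $z$; it is a path, since $z$ misses $\gamma$. Swapping it makes $\alpha$ missing at $z$. This swap can disturb $x$ only if the chain ends at $x$.
\begin{itemize}
\item If $x$ is not the endpoint, then after the swap $\alpha\in\pbar(x)\cap\pbar(z)$, and we are back in the first case.
\item If the chain does end at $x$, instead swap the $(\alpha,\gamma)$-chain at $x$. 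This makes $\gamma$ missing at $x$ while $\gamma$ remains missing at $z$, because the two chains are the same path. Unless that path also passes through $y$ (and so changes the colors missing at $y$), we are again in the first case.
\end{itemize}
\end{itemize}
The clean way to organize this is the full Vizing fan argument. Build a maximal fan $y=z_0,z_1,\dots,z_k$ at $x$ with $\varphi(xz_{i})\in\pbar(z_{j})$ for some $j<i$. The standard fan lemma says the sets $\pbar(z_i)$ and $\pbar(x)$ are pairwise disjoint; otherwise shifting the fan colors $e$. Every $z_\beta$ lies in the fan with $j=0$. If some fan vertex misses a color $\gamma\notin\pbar(x)$, the $(\alpha,\gamma)$-chain from $x$ has to end at two different fan vertices (the usual Vizing contradiction), which is impossible. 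I would invoke this fan argument directly, noting that the degree deficiency of $z_\beta$ supplies the extra missing color.

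\textbf{Main obstacle.} The main difficulty is this Kempe-chain bookkeeping in Step 2. The key is to use the disjointness of the missing-color sets over the fan, $\{x\}\cup\{\text{fan vertices}\}$. Granted that disjointness, we count over the fan $z_0,\dots,z_k$ and $x$: the missing sets have total size at least $|\pbar(x)|+\sum_i|\pbar(z_i)|$, and this must be at most $\Delta$. Here $|\pbar(x)|=\Delta-d_G(x)+1$ and $|\pbar(y)|=\Delta-d_G(y)+1$.

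\textbf{Consequence.} Since $x$ has at least $\Delta-d_G(y)+1$ neighbors other than $y$ and $y$ itself is a neighbor, $d_G(x)\ge \Delta-d_G(y)+2$. Hence $d_G(x)+d_G(y)\ge\Delta+2$.
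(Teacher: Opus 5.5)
Your Step~1 is correct, and it already gives the ``consequence'' $d_G(x)+d_G(y)\ge\Delta+2$, because it produces $\Delta-d_G(y)+1$ neighbors of $x$ other than $y$. The gap is Step~2. The claim that every $z_\beta$ has degree $\Delta$ is false, so no Kempe-chain bookkeeping can prove it.

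Here is a counterexample. Take $P^*$, the Petersen graph minus $o_0$, where the Petersen graph has outer cycle $o_0\cdots o_4$, spokes $o_ji_j$ and inner edges $i_ji_{j+2}$ (indices mod $5$). Let $x=o_2$ and $y=i_2$. Color $o_1i_1,o_3i_3,o_4i_4,i_0i_2$ with $1$; $o_2o_3,i_1i_3,i_2i_4$ with $2$; and $o_1o_2,o_3o_4,i_0i_3,i_1i_4$ with $3$. This is a $3$-edge-coloring of $P^*-xy$, so $xy$ is a critical edge of the class~$2$ graph $P^*$. We have $\pbar(y)=\{3\}$, and the edge of color $3$ at $x$ goes to $o_1$, which has degree $2<\Delta$. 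The degree-$3$ neighbor that VAL promises is $o_3$; it enters the fan one level later, since $\varphi(o_2o_3)=2\in\pbar(o_1)$.

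Your fan paragraph has a related error. You say that a fan vertex missing a color $\gamma\notin\pbar(x)$ leads to a contradiction. But by disjointness, every color missing at a fan vertex lies outside $\pbar(x)$, so this is the normal situation and gives no contradiction. In your second bullet there is also a slip: swapping a chain whose ends are $z$ and $x$ changes the missing colors at both ends, so $\gamma$ does not remain missing at $z$.

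The missing idea is to bound the missing colors by the colors on the fan edges, using maximality, rather than by $\Delta$. Let $F=(x,e_1,s_1,\dots,e_p,s_p)$ be a maximal multifan at $x$ with $s_1=y$.
\begin{itemize}
\item By Lemma~\ref{lem:ele-fan}, the sets $\pbar(x),\pbar(s_1),\dots,\pbar(s_p)$ are pairwise disjoint.
\item So every $\gamma\in\pbar(s_i)$ is present at $x$, and by maximality its edge at $x$ is some $e_j$ with $j\ge2$.
\item Hence $\bigcup_i\pbar(s_i)\subseteq\{\varphi(e_2),\dots,\varphi(e_p)\}$, which gives
\[
(\Delta-d_G(y)+1)+\bigl|\{j\ge 2: d_G(s_j)<\Delta\}\bigr|\le\Bigl|\bigcup_{i=1}^p\pbar(s_i)\Bigr|\le p-1 .
\]
\item Therefore at least $\Delta-d_G(y)+1$ of the distinct neighbors $s_2,\dots,s_p$ have degree $\Delta$.
\end{itemize}
Your proposed count, $|\pbar(x)|+\sum_i|\pbar(z_i)|\le\Delta$, compares missing colors with the whole palette rather than with the fan edges. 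It therefore never relates the number of fan vertices to their deficiencies, and the conclusion does not follow. The paper does not prove VAL itself (it cites Vizing), but exactly this counting appears in its proof of Lemma~\ref{lemma:reduce-to-weak-VAL}.
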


\begin{LEM}[{\cite[Theorem 3]{MR4130909}}]\label{Thm:average}
	For any $0<\ve<1$, there exists a constant $\Delta_0$ such that if $G$ is an edge-chromatic critical graph with $\Delta(G)\ge \Delta_0$, then the average degree of $G$ is at least $(1-\ve)\Delta(G)$. Moreover, if $\ve<2/3$, then $\Delta_0\le (16/\ve^3)^{8/\ve}$.
\end{LEM}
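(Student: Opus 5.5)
Since Lemma~\ref{Thm:average} is quoted from Cao and Chen~\cite{MR4130909}, the paper simply invokes it. If I had to prove it myself, I would use a \emph{discharging argument}, with adjacency lemmas for critical edges as the structural input.

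\textbf{Set-up.} Fix $0<\ve<1$ and let $G$ be edge-chromatic critical with maximum degree $\Delta$ large. Choose a threshold $\delta_1$ with $\ve\gg\delta_1\gg 1/\Delta$.
\begin{itemize}
\item Call a vertex \emph{small} if its degree is less than $(1-\delta_1)\Delta$, and \emph{big} otherwise.
\item Give each vertex $v$ initial charge $d_G(v)$.
\item Redistribute charge so that every vertex ends with charge at least $(1-\ve)\Delta$. Since the total charge is $2|E(G)|$ and is unchanged, this proves the average-degree bound.
\end{itemize}
The rule I would use is: each big vertex $z$ sends charge $\frac{(1-\ve)\Delta-d_G(y)}{m(y)}$ to each small neighbor $y$. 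Here $m(y)$ is a guaranteed lower bound on the number of big vertices "responsible'' for $y$, taken either among the neighbors of $y$ or at distance two from $y$.

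\textbf{Step 1: small vertices receive enough.} Let $y$ be small with $d_G(y)=k$, and let $x$ be any neighbor of $y$. By VAL (Lemma~\ref{lem:val}), $x$ has at least $\Delta-k+1$ neighbors of degree $\Delta$, and $d_G(x)\ge \Delta-k+2$. So every neighbor of a small vertex is big, with many $\Delta$-neighbors of its own. Because the neighbors of $y$ are big, the charge can be routed along paths $y$--$x$--$z$ with $z$ of degree $\Delta$: each such $z$ sends a small amount to $y$ through $x$. Counting these paths, there are at least $k(\Delta-k+1)$ of them, which gives $y$ enough incoming charge provided each $\Delta$-vertex $z$ is not overloaded.

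\textbf{Step 2 (main obstacle): big vertices are not overloaded.} I need to show that a vertex $z$ of degree $\Delta$ has few small vertices within distance two, weighted appropriately. More precisely, the sum of the deficits $\Delta-d_G(y)$ over small $y$ reached from $z$ must be at most about $\ve\Delta^2$. VAL alone is too weak for this, so I would prove a stronger adjacency lemma using Kierstead paths or Tashkinov trees. Take a $\Delta$-coloring $\varphi$ of $G-e$ for a critical edge $e$. The vertex set of a maximal Tashkinov tree $T$ grown from $e$ is \emph{elementary}: no color is missing at two of its vertices. Hence
\[
\sum_{v\in V(T)}\bigl(\Delta-d_G(v)\bigr)\le \Delta+O(1),
\]
and $T$ is closed with respect to the missing colors. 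Since $T$ can be grown to contain the neighborhood structure around $z$, this bounds the total deficit near $z$. The delicate part is making $T$ large enough to capture the relevant small vertices, while still controlling the exceptional configurations. I expect this to need an iterative argument on "broom''-type structures, and this is where the explicit bound $\Delta_0\le (16/\ve^3)^{8/\ve}$ would come from.

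\textbf{Step 3: conclude.} Combine Steps~1 and~2 and choose $\delta_1$ and $\Delta_0$ suitably in terms of $\ve$. Then small vertices end with charge at least $(1-\ve)\Delta$, and big vertices lose at most $(\ve-\delta_1)\Delta$, so they also finish with at least $(1-\ve)\Delta$. This completes the proof.
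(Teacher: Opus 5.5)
The paper does not prove this lemma; it quotes Theorem~3 of Cao and Chen, so citing it is exactly what the paper does. As a self-contained proof, however, your sketch has a genuine gap, and the gap is the substance of the theorem. Step~2 is not proved, and the mechanism you propose for it does not work.

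You take $\varphi\in\CC^{\Delta}(G-e)$ and assert that a maximal Tashkinov tree is elementary. Tashkinov's theorem needs $k\ge\Delta+1$ colors, and with only $\Delta$ colors it is false. Already in the class~$2$ graph $P^*$ from the introduction ($\Delta=3$), one can exhibit a critical edge $e$, a $3$-coloring of $P^*-e$, and a six-vertex Tashkinov tree containing two vertices that miss the same color. With $k=\Delta$, elementarity is known only for restricted structures, such as multifans (Lemma~\ref{lem:ele-fan}) and short Kierstead paths under extra degree conditions. So the inequality $\sum_{v\in V(T)}(\Delta-d_G(v))\le\Delta-2$ is not available to you.

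Moreover, even where elementarity holds, such a tree grows only along edges whose colors are missing at vertices already in it, so its vertex set is dictated by $\varphi$. Nothing lets you force $z$, its neighbors, and the small vertices at distance two from $z$ into one elementary set. What you need is a genuinely stronger adjacency lemma, valid with $\Delta$ colors, that bounds the weighted deficit charged to a vertex of degree $\Delta$. That lemma is the real content of the cited result, and the explicit bound $\Delta_0\le(16/\ve^3)^{8/\ve}$ comes out of its proof, not out of anything in your outline.

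VAL-type counting cannot close this gap. Let $x$ be a neighbor of $z$, and let $k$ be the smallest degree of a small neighbor of $x$. Applied to that edge, VAL (Lemma~\ref{lem:val}) only tells you that $x$ has at most $k-1$ neighbors of degree less than $\Delta$. Under your rule, a vertex $y'$ of degree $k'\ge k$ draws $\frac{(1-\ve)\Delta-k'}{k'(\Delta-k'+1)}\le\frac{1-\ve}{k}$ along each path, so $x$ forwards less than $1-\ve$. Summing over the $\Delta$ neighbors of $z$, VAL alone bounds the load at $z$ only by roughly $(1-\ve)\Delta$. This exceeds the surplus $\ve\Delta$ as soon as $\ve<1/2$.

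Step~1 also contains a false inference. VAL gives $d_G(x)\ge\Delta-d_G(y)+2$, which for $d_G(y)$ close to $(1-\delta_1)\Delta$ is only about $\delta_1\Delta$. So neighbors of small vertices need not be big, and the rule in your set-up (big vertices send charge to small \emph{neighbors}) need not reach every small vertex. What you actually need is the distance-two routing through degree-$\Delta$ vertices, and bounding its load is precisely the missing Step~2.
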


\begin{proof}[Proof of Theorem~\ref{thm:1} assuming Theorem~\ref{thm:robust-expander}]
	Let $\tau^*=\tau(1/2)$ be defined as in Theorem~\ref{thm:robust-expander}.
	Choose $n_0\in\mathbb{N}$ and $\eta$ such that
	\[
	0<1/n_0\ll \eta\ll \nu\ll \tau\ll \tau^*, \ve<1,
	\]
	with $n_0$ large enough that $(1+\ve)n_0/2$ exceeds the constant $\Delta_0$
	in Lemma~\ref{Thm:average} with parameter $\eta^3$.
	
	If $G$ contains a $\Delta(G)$-overfull subgraph, then $\chi'(G)=\Delta(G)+1$.
	Hence suppose $G$ contains no $\Delta(G)$-overfull subgraph, and suppose for
	a contradiction that $\chi'(G)=\Delta(G)+1$.

By deleting edges or vertices if necessary, we may replace $G$ with an
edge-chromatic critical subgraph having the same maximum degree $\Delta(G)$.
This replacement preserves $\Delta(G)$ and creates no $\Delta(G)$-overfull
subgraph. Since an edge-chromatic critical graph has at least $\Delta(G)+1$
vertices, by choosing the original $|V(G)|$ sufficiently large, we may relabel
this subgraph as $G$ and still assume that $n:=|V(G)|\ge n_0$ and
$\Delta(G)\ge (1+\ve)n/2$.

	By Lemma~\ref{lem:val}, $G$ satisfies weak VAL. Since $\Delta(G)\ge(1+\ve)n/2$
	exceeds $\Delta_0$, Lemma~\ref{Thm:average} gives average degree at least
	$(1-\eta^3)\Delta(G)$, so $\sum_{v\in V(G)}(\Delta(G)-d_G(v))\le
	\eta^3\Delta(G)\cdot n<\eta^3n^2$. Since every vertex of $U_\eta(G)$ has a degree-deficit
	at least $\eta n$, we get $|U_\eta(G)|<\eta^2 n$.
	
	Let $G^*=G-U_\eta(G)$. Then $\delta(G^*)\ge\Delta(G)-\eta n-|U_\eta(G)|
	\ge(1+\ve/2)n/2$, and since $|V(G^*)|\le n$, also
	$\delta(G^*)\ge(1+\ve/2)|V(G^*)|/2$. By the hierarchy $\ve/2\ge 4\nu/\tau$,
	$G^*$ is a robust $(\nu,\tau)$-expander by Lemma~\ref{lem:expander}.
	
	Since $|V(G^*)|\ge(1-\eta^2)n$, we have
	$|V(G)\setminus V(G^*)|<2\eta^2|V(G^*)|$. The hierarchy $\eta\ll\tau$ gives
	$\tau\ge(1+2\tau)2\eta^2$, so Lemma~\ref{lem:stability-expansion}(b) with
	$2\eta^2$ in place of $\varepsilon$ shows $G$ is a robust
	$(\nu-2\eta^2,2\tau)$-expander, and since $2\tau\le\tau^*$, in particular a
	robust $(\nu-2\eta^2,\tau^*)$-expander. Theorem~\ref{thm:robust-expander}
	then gives $\chi'(G)=\Delta(G)$, contradicting $\chi'(G)=\Delta(G)+1$.
\end{proof}

	For the general Conjecture~\ref{overfull-con}, Theorem~\ref{thm:robust-expander}
	rules out counterexamples that are robust expanders, after the standard reduction
	to a critical graph. Indeed, let $G$ be an $n$-vertex graph with
	$\Delta(G)>n/3$, where $n$ is sufficiently large, and suppose that $G$ is a
	minimal counterexample to Conjecture~\ref{overfull-con}. Thus $G$ contains no
	$\Delta(G)$-overfull subgraph, $\chi'(G)=\Delta(G)+1$, and $G$ is
	edge-chromatic critical. By Vizing's Adjacency Lemma, $G$ satisfies weak VAL.
	Moreover, Lemma~\ref{Thm:average} implies that, for every fixed $\eta>0$, we
	have $|U_\eta(G)|<\eta^2 n$. Hence, for any
	choice of parameters satisfying $0<\eta\ll \nu\le \tau<1$, if such a graph $G$
	is a robust $(\nu,\tau)$-expander, then Theorem~\ref{thm:robust-expander}
	implies $\chi'(G)=\Delta(G)$, a contradiction. Consequently, any sufficiently
	large minimal counterexample to Conjecture~\ref{overfull-con} with
	$\Delta(G)>n/3$ must fail to be a robust $(\nu,\tau)$-expander for the relevant
	choice of parameters.

	\subsection{Proof Outline of Theorem~\ref{thm:robust-expander}}

	Let $G$ be a robust $(\nu,\tau)$-expander satisfying the hypotheses of
	Theorem~\ref{thm:robust-expander}. If $G$ contains a $\Delta(G)$-overfull
	subgraph, then $\chi'(G)=\Delta(G)+1$. Thus we may assume that $G$ contains no
	$\Delta(G)$-overfull subgraph, and suppose for a contradiction that
	$\chi'(G)=\Delta(G)+1$.
	
A natural approach is to construct a $\Delta(G)$-regular super-multigraph $G'$
of $G$ such that $\chi'(G)\le \chi'(G')$, and then prove that
$\chi'(G')=\Delta(G')$, thereby obtaining a contradiction. The main difficulty
in applying this approach directly to $G$ is that the resulting multigraph $G'$
may contain a \emph{fat triangle}, that is, a triangle whose total number of
edges is roughly $\Delta(G')$ and whose edge multiplicities are roughly
$\Delta(G')/3$. For instance, this can occur when $|V(G)|$ is even, $G$ has
exactly three vertices of degree $\lceil \Delta(G)/3\rceil$, and all other
vertices have maximum degree. Such a triangle creates a serious risk of
producing overfull subgraphs during the edge-decomposition process, and is quite
challenging to handle; see, for example, the work of the third author
in~\cite{overfull-I}. Moreover, the approach of~\cite{overfull-I} does not
transfer directly to the present setting, since here the minimum degree may be
as low as a constant.
	
	In this paper, we avoid this difficulty by using the following three-stage
	strategy. Assuming that $G$ contains no $\Delta(G)$-overfull subgraph, we
	decompose $E(G)$ into $\Delta(G)$ edge-disjoint matchings.
	
	\emph{\bf Stage One: Trimming.}
	
	We apply a tool developed by Gir\~ao,
	Granet, K\"uhn, Lo, and Osthus~\cite[Lemma~7.3]{MR4550147}. We trim $G$ to a
	robust expander $G^*$ that contains no $\Delta(G^*)$-overfull subgraph. 
	 More precisely, we remove edge-disjoint matchings, each of
	which covers all vertices of maximum degree in the current graph and also
	reduces the number of edges incident with vertices of small degree. The main
	challenge in this trimming process is to ensure that no overfull subgraph with
	the same maximum degree as the current graph is created. Thus this stage ends
	either when all small-degree vertices have been eliminated, or when we find a
	dense subgraph that is ``almost''  overfull, whichever occurs first.
	
	\emph{\bf Stage Two: Regularization.}
	
 Let $G^*$ be the  graph resulting  from the trimming stage. If $G^*$ has no
	almost-overfull subgraph, then we add at most one vertex and suitable edges to
	$G^*$ to obtain a $\Delta(G^*)$-regular multigraph $G^{**}$ of even order. If
	$G^*$ contains an almost-overfull subgraph, then we carry out the same
	regularization either directly, or after identifying some small-degree vertices
	into one or two new vertices; in one case, we also remove some further matchings
	before regularizing.
	
\emph{\bf{Stage Three: Edge Decomposition.}}

In this stage we construct a decomposition of $G^{**}$ into $\Delta(G^{**})$ 1-factors. Such a decomposition, together with the matchings removed in the previous stages, gives a decomposition of $E(G)$ into $\Delta(G)$ edge-disjoint matchings, contradicting $\chi'(G)=\Delta(G)+1$.

We briefly describe the construction. Applying the regularity lemma, we obtain a balanced bipartition of $V(G^{**})$ into parts $A$ and $B$ such that $G^{**}[A,B]$ is a bipartite robust expander, and such that each vertex has approximately the same number of neighbors in its own part as in the other part. We first color $G^{**}[A]$ and $G^{**}[B]$, together with some cross edges between $A$ and $B$, using about $\Delta(G^{**})/2$ colors. We then use the robust expansion between $A$ and $B$ to correct this coloring so that each color class becomes a $1$-factor. Next, we color the few edges inside $A$ or inside $B$ that become uncolored during the correction process, and extend each resulting color class to a $1$-factor. The remaining uncolored edges now  lie only between $A$ and $B$ and form a regular bipartite multigraph. By the classical theorem of K\"onig~\cite{MR1511872}, this remaining multigraph has a $1$-factorization. Combining these three collections of $1$-factors gives a $1$-factorization of $G^{**}$, as required.

	\section{Notation and Preliminaries}\label{section:3}

	Let $G$ be a multigraph. We use $V(G)$ and $E(G)$ to denote the vertex set
	and edge set of $G$, respectively, and write $e(G)=|E(G)|$. 
	For $v\in V(G)$, let $N_G(v)$ be the set of neighbors of $v$ in $G$. The degree
	of $v$ in $G$, denoted by $d_G(v)$, is the number of edges of $G$ incident with
	$v$. We write $d_G^s(v)=|N_G(v)|$ and call this the \emph{simple degree} of
	$v$ in $G$. A vertex $v$ of $G$ is called \emph{simple} in $G$ if
	$d_G(v)=d_G^s(v)$. A neighbor $u$ of $v$ is called a \emph{simple neighbor} of
	$v$ if there is exactly one edge joining $u$ and $v$. We denote by $N_G^s(v)$ the set of simple neighbors of
	$v$. Note that it is possible to have $d_G^s(v)>|N_G^s(v)|$. Let $G^s$ denote the
	underlying simple graph of $G$. Thus $d_{G^s}(v)=d_G^s(v)$ for every
	$v\in V(G)$.

	Let $V_1,V_2\subseteq V(G)$ be disjoint vertex sets. We write $E_G(V_1,V_2)$ for the set of edges of $G$ with one endpoint in $V_1$ and the other in $V_2$, and let $e_G(V_1,V_2)=|E_G(V_1,V_2)|$. If $V_1=\{v\}$, we write $E_G(v,V_2)$ and $e_G(v,V_2)$ instead of $E_G(\{v\},V_2)$ and $e_G(\{v\},V_2)$. We also write $G[V_1,V_2]$ for the bipartite subgraph of $G$ with vertex set $V_1\cup V_2$ and edge set $E_G(V_1,V_2)$.

Let $S\subseteq V(G)$ and $v\in V(G)$. We write $N_G(v,S)=N_G(v)\cap S$, $d_G(v,S)=e_G(v,S\setminus\{v\})$, and $d_G^s(v,S)=|N_G(v,S)|$. The subgraph of $G$ induced by $S$ is denoted by $G[S]$. We write $G-S=G[V(G)\setminus S]$, and use the abbreviation $G-x$ for $G-\{x\}$. We also write $e_G(S)=e(G[S])$.

	If $F\subseteq E(G)$, then $G-F$ denotes the multigraph obtained from $G$ by deleting all edges in $F$, and $G[F]$ denotes the subgraph of $G$ induced by the edge set $F$.  We write $V(F)$ for the set of vertices covered by the edges in $F$.
	For a multiset $F$ of edges of the complete graph on $V(G)$, let $G+F$ denote the multigraph obtained from $G$ by adding all edges in $F$, with multiplicities.
	We also use the abbreviation $G+e$ for $G+\{e\}$.

	We present in the rest of this section some preliminaries that will be needed in the proof of Theorem~\ref{thm:robust-expander}.
	
	\subsection{Degree Sequences}
	
	The following result of the third author will be used to construct a regular super-multigraph from a given multigraph by adding edges. Let $m\ge 2$ be an integer. A sequence of nonnegative non-increasing integers $(d_1,\ldots,d_m)$ is \emph{admissible} if $\sum_{i=1}^m d_i$ is even and $d_1\le \sum_{i=2}^m d_i$.
	
	\begin{LEM}[{\cite[Lemma 2.3]{overfull-I}}]\label{lem:graphical-biparite}
		Let $m\ge 2$, and let $(d_1,d_2,\ldots,d_m)$ be an admissible sequence. Then there is a bipartite multigraph $L$ on $\{v_1,\ldots,v_m\}$ and an even index $p\in [2,m]$ satisfying the following properties, where we set $d_i=0$ whenever $i\notin [1,m]$.
		\begin{enumerate}[(a)]
			\item $d_L(v_i)=d_i-d_{i+1}$ and $d_L(v_{i+1})=0$ for every odd $i\in [1,p]$.
			\item If $p<m$, then $d_L(v_{p+1})\le d_{p+1}$; moreover, $d_L(v_i)=d_i$ for every $i\in [p+2,m]$.
			\item Either $\{v_1,\ldots,v_p\}$ and $\{v_{p+1},\ldots,v_m\}$, or $\{v_1,\ldots,v_p,v_{p+1}\}$ and $\{v_{p+2},\ldots,v_m\}$ form a bipartition of $L$.
		\end{enumerate}
		Furthermore, there is a polynomial-time algorithm that finds $L$ and $p$.
	\end{LEM}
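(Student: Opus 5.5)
The idea is to pair up the vertices $v_1,\dots,v_p$ in consecutive pairs, give each odd-indexed vertex of a pair the \emph{excess} $d_i-d_{i+1}$ over its partner, and give the partner nothing. Then $p$ is chosen as the switching point at which these accumulated excesses can be absorbed exactly by the tail vertices. The basic tool is the elementary fact that for any two disjoint vertex sets $X,Y$ and prescribed nonnegative integer degrees on $X\cup Y$ with equal sums over $X$ and over $Y$, a bipartite multigraph with parts $X,Y$ realizing these degrees exists. One can build it greedily by repeatedly joining a vertex of $X$ with positive remaining demand to a vertex of $Y$ with positive remaining demand. This greedy construction also gives the polynomial-time algorithm.

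For even $p\in[2,m]$ set
\[
D(p)=\sum_{\substack{i\in[1,p]\\ i \text{ odd}}}(d_i-d_{i+1}),\qquad T(j)=\sum_{i\ge j} d_i ,
\]
with $d_i=0$ for $i>m$. Since the sequence is non-increasing, $D(p)$ is non-decreasing in $p$ and $T$ is non-increasing. Admissibility gives $D(2)=d_1-d_2\le T(3)$. I let $p$ be the largest even index in $[2,m]$ with $D(p)\le T(p+1)$. Two cases follow.

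\textbf{Case 1: $D(p)\ge T(p+2)$.} Take parts $\{v_1,\dots,v_p\}$ and $\{v_{p+1},\dots,v_m\}$. The prescribed degrees are: $d_i-d_{i+1}$ at each odd $i\le p$; $0$ at each even $i\le p$; $d_i$ at each $i\ge p+2$; and $x=D(p)-T(p+2)$ at $v_{p+1}$. Then $0\le x\le T(p+1)-T(p+2)=d_{p+1}$, so (a) and (b) hold. The two side sums are equal by the choice of $x$, so the bipartite realization exists. This case also covers the boundary situations $p\in\{m-1,m\}$, where $T(p+2)=0$ (when $p=m$, both sides carry zero degree and $L$ is empty).

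\textbf{Case 2: $D(p)<T(p+2)$.} Then $p+2\le m$, and by the maximality of $p$ we have $D(p+2)>T(p+3)$. Take parts $\{v_1,\dots,v_{p+1}\}$ and $\{v_{p+2},\dots,v_m\}$. Prescribe the same degrees as in Case 1 on $v_1,\dots,v_p$, the degree $d_i$ at each $i\ge p+2$, and $x=T(p+2)-D(p)>0$ at $v_{p+1}$. The inequality $D(p)+d_{p+1}-d_{p+2}>T(p+2)-d_{p+2}$ rearranges to $x<d_{p+1}$, so (b) holds. The side sums are equal by construction, so again the bipartite realization exists, and (c) holds with the second bipartition.

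The only delicate points are choosing the right extremal $p$ and checking the boundary indices. The parity assumption in admissibility does not seem to be needed for this statement itself; I expect it matters only later, when $L$ is used to regularize a multigraph.
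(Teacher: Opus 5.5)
Your proof is correct. There is no in-paper proof to compare it with: the paper does not prove this lemma but imports it from \cite{overfull-I}, so your proposal supplies a complete, self-contained argument.

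The extremal choice of $p$ works. Take $p$ to be the largest even index with $D(p)\le T(p+1)$; it exists because admissibility is exactly the statement $D(2)\le T(3)$. Combined with the elementary fact that equal degree sums on two disjoint sides can be realized by a bipartite multigraph, this reduces everything to checking $0\le x\le d_{p+1}$.

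Both cases do this correctly:
\begin{enumerate}[(1)]
\item In Case~1, $x=D(p)-T(p+2)$ lies in $[0,d_{p+1}]$ by the case hypothesis and the choice of $p$.
\item In Case~2, maximality gives $D(p)+d_{p+1}-d_{p+2}>T(p+2)-d_{p+2}$, which is exactly $x<d_{p+1}$.
\end{enumerate}
The boundary cases $p\in\{m-1,m\}$ are also handled properly.

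Three small remarks.
\begin{enumerate}[(1)]
\item The non-increasing hypothesis is what makes the prescribed degrees $d_i-d_{i+1}$ nonnegative. You use this implicitly and could say so.
\item Adding edges one at a time is only pseudo-polynomial in the values $d_i$. For a strongly polynomial algorithm, add each edge with multiplicity equal to the minimum of the two current residual demands. Each step then exhausts at least one vertex, so there are at most $m$ steps. In the paper's application all $d_i<m$, so either version is polynomial.
\item Your parity remark is accurate. After adding $L$, the residual deficiencies at $v_i$ and $v_{i+1}$ (for odd $i\le p$) are both $d_{i+1}$, and the residuals at $v_i$ for $i\ge p+2$ vanish. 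Hence $d_{p+1}-x\equiv\sum_i d_i\equiv 0 \pmod 2$. This is precisely the evenness of $\Delta_0-d_{D^+}(v_{p+1})$ asserted in Definition~\ref{def:canonical-completion}, so parity is needed there, not in the lemma itself.
\end{enumerate}
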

	
	We will also need the following result on realizing a bipartite multigraph with a prescribed degree sequence and additional constraints.
	
	\begin{LEM}\label{lem:graphical-biparite2}
		Let $m\ge 2$, and let $((c_1,d_1),(c_2,d_2),\ldots,(c_m,d_m))$ be a sequence of pairs of nonnegative integers. Suppose that $\sum_{i=1}^m c_i=\sum_{i=1}^m d_i$ and, for each $i\in [m]$, we have $c_i+d_i\le \sum_{j\ne i}(c_j+d_j)$. Then there exists a bipartite multigraph $L$ with bipartition $X=\{x_1,\ldots,x_m\}$ and $Y=\{y_1,\ldots,y_m\}$ such that
		\begin{enumerate}[(a)]
			\item $d_L(x_i)=c_i$ and $d_L(y_i)=d_i$ for each $i\in [m]$;
			\item $e_L(x_i,y_i)=0$ for each $i\in [m]$.
		\end{enumerate}
	\end{LEM}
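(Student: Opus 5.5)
We will prove Lemma~\ref{lem:graphical-biparite2} by a max-flow/min-cut argument, using integrality of maximum flows to obtain the multigraph. Write $D=\sum_{i=1}^m c_i=\sum_{i=1}^m d_i$.

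\textbf{Step 1: the network.} Build a network with a source $s$, a sink $t$, vertices $x_1,\ldots,x_m$ and $y_1,\ldots,y_m$, and the following arcs:
\begin{enumerate}[(1)]
\item an arc $s\to x_i$ of capacity $c_i$ for each $i$;
\item an arc $y_j\to t$ of capacity $d_j$ for each $j$;
\item an arc $x_i\to y_j$ of infinite (or capacity $D$) capacity for every pair with $i\ne j$.
\end{enumerate}
An integral flow of value $D$ saturates every source and sink arc. Taking $f(x_iy_j)$ parallel edges between $x_i$ and $y_j$ then gives a bipartite multigraph $L$ with $d_L(x_i)=c_i$ and $d_L(y_j)=d_j$. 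Since the arc $x_iy_i$ is absent, $e_L(x_i,y_i)=0$. So (a) and (b) both hold, and by the integral max-flow theorem it suffices to show that every $s$--$t$ cut has capacity at least $D$.

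\textbf{Step 2: classify the finite cuts.} Let $s\in Z$ be the source side, and put $S=Z\cap X$ and $T=Z\cap Y$. The cut capacity is
\[
\sum_{x_i\notin S}c_i+\sum_{y_j\in T}d_j+(\text{capacity of middle arcs from } S \text{ to } Y\setminus T).
\]
This is finite only if no $x_i\in S$ has an out-neighbour in $Y\setminus T$. Since $x_i$ is adjacent to every $y_j$ with $j\ne i$, this forces $Y\setminus T\subseteq\{y_i\}$ for each $x_i\in S$. We now check each possibility.
\begin{enumerate}[(i)]
\item If $S=\emptyset$, the cut contains every source arc, so its capacity is at least $\sum_i c_i=D$.
\item If $T=Y$, the cut contains every sink arc, so its capacity is at least $\sum_j d_j=D$.
\item If $S\neq\emptyset$ and $T\neq Y$, then $Y\setminus T$ is nonempty and contained in $\{y_i\}$ for every $x_i\in S$. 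This forces $S=\{x_i\}$ and $Y\setminus T=\{y_i\}$ for a single index $i$. The cut capacity is then $\sum_{j\ne i}c_j+\sum_{j\ne i}d_j$.
\end{enumerate}

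\textbf{Step 3: the remaining case.} In case (iii) we need $\sum_{j\ne i}c_j+\sum_{j\ne i}d_j\ge D$. Since $\sum_{j\ne i}c_j=D-c_i$, this is equivalent to $c_i\le \sum_{j\ne i}d_j=D-d_i$, that is, $c_i+d_i\le D$.

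This follows from the hypothesis. We have
\[
c_i+d_i\le \sum_{j\ne i}(c_j+d_j)=2D-(c_i+d_i),
\]
so $c_i+d_i\le D$. Hence every cut has capacity at least $D$, a maximum integral flow has value $D$, and the lemma follows.

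The only step needing care is the case analysis of finite cuts in Step 2, in particular seeing that the forbidden pairs $x_iy_i$ can only produce the singleton cuts of case (iii). The rest is routine.
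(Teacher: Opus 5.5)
Your proof is correct, and it takes a genuinely different route from the paper.

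The paper argues extremally. It starts from an arbitrary realization of the degrees, obtained by pairing half-edges. It then chooses a realization $L$ minimizing $\sum_i e_L(x_i,y_i)$ and uses the double-counting identities
\[
c_i+d_i=2e_L(x_i,y_i)+e_L(\{x_i,y_i\},V(L)\setminus\{x_i,y_i\})
\]
and
\[
\sum_{j\ne i}(c_j+d_j)=2e_L(X\setminus\{x_i\},Y\setminus\{y_i\})+e_L(\{x_i,y_i\},V(L)\setminus\{x_i,y_i\}).
\]
From these it shows that any diagonal edge $x_iy_i$ forces an edge $x_py_q$ with $p,q\ne i$. Switching this pair to $x_iy_q,x_py_i$ strictly decreases the diagonal count.

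You replace this switching with the integral max-flow/min-cut theorem, and your cut analysis is complete. With infinite (or capacity-$D$) middle arcs, a finite cut must have $S=\emptyset$, or $T=Y$, or be the singleton cut $S=\{x_i\}$, $Y\setminus T=\{y_i\}$. The singleton cut has capacity $2D-c_i-d_i$, which is at least $D$ exactly when $c_i+d_i\le D$.

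Your route buys transparency. Since $\sum_{j\ne i}(c_j+d_j)=2D-(c_i+d_i)$, the hypothesis is \emph{equivalent} to $c_i\le D-d_i$. That condition is evidently necessary, because the $c_i$ edges at $x_i$ must land in $Y\setminus\{y_i\}$, whose total degree is $D-d_i$. So the lemma is an exact characterization, and your proof makes clear why this is the right condition; it also comes with a standard polynomial-time algorithm. The paper's route buys self-containment: it needs no flow machinery or integrality theorem, only one elementary local exchange.
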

	
	\begin{proof}
		Since $\sum_{i=1}^m c_i=\sum_{i=1}^m d_i$, there is a bipartite multigraph with the desired degrees. Indeed, give $x_i$ exactly $c_i$ half-edges and $y_i$ exactly $d_i$ half-edges, and pair the half-edges on the $X$-side arbitrarily with the half-edges on the $Y$-side.
		
		Among all such bipartite multigraphs, choose $L$ so that $\sum_{i=1}^m e_L(x_i,y_i)$ is minimum. We claim that this minimum is zero. Suppose not. Then there exists $i\in [m]$ such that $e_L(x_i,y_i)>0$. 
		Since $c_i+d_i\le \sum_{j\ne i}(c_j+d_j)$, and 
		\begin{align*}
		c_i+d_i &=2e_L(x_i,y_i)+e_L(\{x_i,y_i\},V(L)\setminus\{x_i,y_i\})\\
		\sum_{j\ne i}(c_j+d_j) &= 2e_L(X\setminus\{x_i\},Y\setminus\{y_i\})+e_L(\{x_i,y_i\},V(L)\setminus\{x_i,y_i\}), 
		\end{align*}
		 it follows that $e_L(X\setminus\{x_i\},Y\setminus\{y_i\}) \ge  e_L(x_i,y_i)>0$.
		
		Hence there is an edge $e\in E_L(x_p,y_q)$ with $p\ne i$ and $q\ne i$. Choose an edge $f\in E_L(x_i,y_i)$, and replace the two edges $f$ and $e$ by $x_i y_q$ and $x_p y_i$. This preserves the degree of every vertex. Moreover, it removes one edge between $x_i$ and $y_i$ and creates no new edge between $x_j$ and $y_j$ for any $j\ne i$, since $p\ne i$ and $q\ne i$. Therefore $\sum_{j=1}^m e_L(x_j,y_j)$ decreases, contradicting the choice of $L$.
		
		Thus $e_L(x_i,y_i)=0$ for every $i\in [m]$, and the lemma follows.
	\end{proof}

	\subsection{Edge Coloring and Related Concepts}

	Let $G$ be a multigraph, and let $W\subseteq V(G)$ be an independent set. We
	say that $G$ satisfies the \emph{Hall property} with respect to $W$ if, for
	every edge $wv\in E(G)$ with $w\in W$, we have $e_G(v,W)<d_G(w)$.
	
	Notice that if $W$ contains no vertex of maximum degree, then Vizing's Adjacency Lemma (VAL) implies that
	$G$ satisfies the Hall property with respect to $W$. In the proof of
	Theorem~\ref{thm:robust-expander}, we will take $W$ to be a set of vertices of
	degree less than $\Delta(G)/3$, and try to eliminate the vertices of $W$ by
	removing edge-disjoint matchings that saturate both the vertices of maximum
	degree in the current graph and the vertices of $W$. Lemma~\ref{lem:matching-in-critical-graph}
	below shows that, as long as $G$ satisfies the Hall property with respect to
	$W$, the bipartite graph $G[W,V(G)\setminus W]$ contains two edge-disjoint
	matchings, each of which saturates $W$.
	
	The proof uses the following result, which follows from Hall's Theorem and is
	listed as Exercise~16.2.13 in the book of Bondy and Murty~\cite{MR2368647}.
	
	\begin{LEM}\label{lem:matching-in-bipartite-graph}
		Let $G=G[X,Y]$ be a bipartite graph such that $d_G(x)\ge 1$ for every
		$x\in X$, and $d_G(x)\ge d_G(y)$ for every edge $xy\in E(G)$ with
		$x\in X$. Then $G$ has a matching saturating $X$.
	\end{LEM}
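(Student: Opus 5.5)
We will prove Lemma~\ref{lem:matching-in-bipartite-graph} by verifying Hall's condition directly with a double-counting argument. Fix a nonempty set $S\subseteq X$, and let $F=E_G(S,N_G(S))$ be the set of edges of $G$ incident with $S$. Since $G$ is bipartite, every edge incident with a vertex of $S$ has its other endpoint in $N_G(S)$. The plan is to assign to each edge $xy\in F$, with $x\in S$ and $y\in N_G(S)$, the weight $1/d_G(x)$, and then to count the total weight $w(F)$ in two ways.

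First, counting from the side of $S$: each $x\in S$ is incident with exactly $d_G(x)\ge 1$ edges of $F$, each of weight $1/d_G(x)$. Hence each $x\in S$ contributes exactly $1$, and so $w(F)=|S|$. Here the hypothesis $d_G(x)\ge1$ is what makes the weights well defined.

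Second, counting from the side of $N_G(S)$: for a vertex $y\in N_G(S)$, every edge $xy\in F$ satisfies $d_G(x)\ge d_G(y)$ by hypothesis, so each such edge has weight $1/d_G(x)\le 1/d_G(y)$. The number of edges of $F$ at $y$ is at most $d_G(y)$, so the total weight at $y$ is at most $d_G(y)\cdot 1/d_G(y)=1$. Summing over $y$ gives $w(F)\le |N_G(S)|$.

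Combining the two counts yields $|S|\le |N_G(S)|$ for every $S\subseteq X$, and Hall's Theorem then provides a matching saturating $X$. No step here is a genuine obstacle; the only care needed is in the second count, where we use that all edges at $y$ lie in $F$ or are counted at most once. Since $G$ is simple, at most $d_G(y)$ edges of $F$ meet $y$, which is all the second count requires.
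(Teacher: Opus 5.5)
Your argument is correct, and it is the standard way to verify Hall's condition here: the weights $1/d_G(x)$ give $|S|=w(F)\le |N_G(S)|$. The paper gives no separate proof; it only says the lemma follows from Hall's Theorem and cites Bondy--Murty, Exercise~16.2.13, so your route is the one intended there. One small remark: simplicity is not actually used in your second count, since the edges of $F$ at $y$ are among the $d_G(y)\ge 1$ edges at $y$ by definition, so the argument works verbatim for multigraphs.
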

	
	\begin{LEM}\label{lem:matching-in-critical-graph}
		Let $G$ be a graph, and let $W\subseteq V(G)$ be an independent set such
		that $d_G(w)\ge 1$ for every $w\in W$. If $G$ satisfies the Hall property
		with respect to $W$, then $G$ has two edge-disjoint matchings, each of
		which saturates $W$.
	\end{LEM}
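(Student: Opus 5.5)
We work in the bipartite graph $H=G[W,V(G)\setminus W]$ with sides $X=W$ and $Y=V(G)\setminus W$. Since $W$ is independent, every edge of $G$ at $w\in W$ lies in $H$, so $d_H(w)=d_G(w)\ge 1$ for every $w\in W$. For $v\in Y$ adjacent to some $w\in W$, the Hall property gives $d_H(v)=e_G(v,W)<d_G(w)=d_H(w)$. In particular $d_H(w)\ge d_H(v)$ for every edge $wv$ of $H$, so Lemma~\ref{lem:matching-in-bipartite-graph} applied to $H$ yields a matching $M_1$ saturating $W$.

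For the second matching we apply the same lemma to $H'=H-M_1$. For $w\in W$ we have $d_{H'}(w)=d_H(w)-1$. For $v\in Y$ we have $d_{H'}(v)\le d_H(v)\le d_H(w)-1$ whenever $wv\in E(H)$, where the last inequality is the Hall property together with integrality. Thus every edge $wv$ of $H'$ satisfies $d_{H'}(w)=d_H(w)-1\ge d_{H'}(v)$.

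It remains to check that $d_{H'}(w)\ge 1$, i.e.\ $d_G(w)\ge 2$ for every $w\in W$. Since $d_G(w)\ge 1$, $w$ has a neighbor $v$. The Hall property gives $1\le e_G(v,W)<d_G(w)$, so $d_G(w)\ge 2$. Lemma~\ref{lem:matching-in-bipartite-graph} then gives a matching $M_2$ of $H'$ saturating $W$, and $M_2$ is edge-disjoint from $M_1$ by construction. Both $M_1$ and $M_2$ are matchings of $G$.

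The only point needing care is the strictness of the Hall property. It is used twice: to keep the degree condition valid after removing $M_1$, and to guarantee $d_G(w)\ge 2$. Everything else is a direct application of Lemma~\ref{lem:matching-in-bipartite-graph}.
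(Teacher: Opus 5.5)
Your proof is correct and is essentially the paper's own argument: pass to $H=G[W,V(G)\setminus W]$ and apply Lemma~\ref{lem:matching-in-bipartite-graph} to get $M_1$. The strict inequality in the Hall property then gives both $d_{H-M_1}(w)\ge d_{H-M_1}(v)$ and $d_G(w)\ge 2$, so the same lemma applied to $H-M_1$ yields $M_2$; the only cosmetic difference is that you derive $d_G(w)\ge 2$ at the end rather than at the start.
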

	
	\begin{proof}
		Let $H=G[W,V(G)\setminus W]$. We first show that $H$ has a matching
		saturating $W$. Let $wv\in E(H)$ with $w\in W$ and
		$v\in V(G)\setminus W$. Since $W$ is independent, $d_H(w)=d_G(w)$. By the
		Hall property, $d_H(v)=e_G(v,W)<d_G(w)=d_H(w)$. In particular, since
		$d_H(v)\ge 1$, we have $d_H(w)\ge 2$ for every $w\in W$.
		
		By Lemma~\ref{lem:matching-in-bipartite-graph}, $H$ has a matching
		$M_1$ saturating $W$. Let $H'=H-M_1$. For every edge $wv\in E(H')$ with
		$w\in W$ and $v\in V(G)\setminus W$, we have
		$d_{H'}(v)\le d_H(v)\le d_H(w)-1=d_{H'}(w)$. Moreover,
		$d_{H'}(w)=d_H(w)-1\ge 1$ for every $w\in W$. Applying
		Lemma~\ref{lem:matching-in-bipartite-graph} to $H'$, we obtain a matching
		$M_2$ saturating $W$. Since $M_2\subseteq E(H')=E(H)\setminus M_1$, the
		matchings $M_1$ and $M_2$ are edge-disjoint.
	\end{proof}
	
	In the 1960s, Gupta~\cite{Gupta-67} and, independently, Vizing~\cite{Vizing-2-classes}
	proved the following upper bound on the chromatic index of a multigraph.
	K\"onig~\cite{MR1511872} determined the exact value of the chromatic index for
	bipartite multigraphs.
	
	\begin{THM}[\cite{Gupta-67,Vizing-2-classes}]\label{thm:chromatic-index}
		Every multigraph $G$ satisfies $\chi'(G)\le \Delta(G)+\mu(G)$.
	\end{THM}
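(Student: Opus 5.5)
We prove the Gupta–Vizing bound by the standard Vizing fan (multi-fan) recoloring argument. Set $k=\Delta(G)+\mu(G)$ and induct on $e(G)$; the case $e(G)=0$ is trivial. Pick an edge $e_0=xy_0$ and, by induction, fix an edge $k$-coloring $\varphi$ of $G-e_0$. For each vertex $v$ let $\pbar(v)$ be the set of colors in $[k]$ missing at $v$ under $\varphi$. A vertex of degree $d$ in $G-e_0$ misses at least $k-d$ colors, so $|\pbar(v)|\ge \mu(G)\ge 1$ for every $v$, and both $x$ and $y_0$ miss at least $\mu(G)+1$ colors in $G-e_0$. We must show $\varphi$ extends to $e_0$, possibly after recoloring, which yields $\chi'(G)\le k$.

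Build a maximal multi-fan at $x$. Start with the sequence $(x,e_0,y_0)$. Given $e_0,y_0,e_1,y_1,\dots,e_{t},y_t$, choose a color $\alpha$ missing at some $y_j$ with $j\le t$ that is present at $x$. Let $e_{t+1}=xy_{t+1}$ be the edge at $x$ colored $\alpha$, and require $e_{t+1}\notin\{e_1,\dots,e_t\}$; the vertex $y_{t+1}$ may repeat an earlier $y_j$, since $G$ is a multigraph. Stop when no such extension exists. The key claims are the standard multi-fan properties. First, if some color missing at a fan vertex $y_j$ is also missing at $x$, we recolor down a chain of fan edges (shift colors along the path $e_0,\dots,e_j$ in the fan's generating order) and color $e_0$. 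Second, if no color is missing at both $x$ and a fan vertex, then the sets $\pbar(y)$ over distinct fan vertices $y$ are pairwise disjoint and disjoint from $\pbar(x)$; otherwise a Kempe chain argument on an $(\alpha,\beta)$-path, with $\alpha\in\pbar(x)$ and $\beta$ missing at two fan vertices, lets us recolor and reach the first case.

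Assume we are in the second case and derive a contradiction by counting. Let $Y$ be the set of distinct fan vertices. By maximality of the fan, every color missing at a vertex of $Y$ is used on a fan edge. So the number of fan edges other than $e_0$ is at least $\sum_{y\in Y}|\pbar(y)|$. On the other hand, the fan edges join $x$ to $Y$, so their number is at most $\mu(G)|Y|-1$, the $-1$ accounting for $e_0$ itself. Using $|\pbar(y)|\ge \mu(G)$ for $y\in Y$ and $|\pbar(y_0)|\ge\mu(G)+1$, we get $\sum_{y\in Y}|\pbar(y)|\ge \mu(G)|Y|+1>\mu(G)|Y|-1$, a contradiction. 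Hence some color is free at $x$ and at an appropriate fan vertex, and after the shift $e_0$ can be colored.

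The main obstacle is the disjointness step for multi-fans: the Kempe chain swap must not destroy the fan structure. The standard way around this is to pick the minimal index $j$ at which a conflict occurs and to use the $(\alpha,\beta)$-chain that starts at $x$. Its other endpoint is either $y_j$ or the earlier fan vertex. Whichever case holds, swapping on the chain away from that endpoint leaves a fan prefix ending at a vertex that now misses $\alpha$, and the shift then finishes the proof. I would follow this argument as in Stiebitz et al.'s book, treating the tree-like generating order of the multi-fan carefully.
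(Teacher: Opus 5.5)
The paper quotes this theorem from Gupta and Vizing without proof, but your argument is correct and is the standard maximal-multifan proof. Its final count ($\sum_{y\in Y}|\pbar(y)|\ge \mu(G)|Y|+1$ against at most $\mu(G)|Y|-1$ colored fan edges) is exactly the one the paper runs in the first part of the proof of Lemma~\ref{lem:multi-version-star-multigraph}, with your shift-plus-Kempe-chain step playing the role of Lemma~\ref{lem:ele-fan}.

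The one point your sketch leaves implicit is the case where you swap on the chain starting at $y_j$: its far endpoint cannot be an earlier generator $y_l$ on the path to $e_j$. If it were, $y_l$ and $y_i$ would be distinct vertices both missing $\beta$, giving a conflict below index $j$ and contradicting the minimality of $j$. When you swap at $y_i$ instead, minimality of $j$ applies directly.
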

	
	\begin{THM}[\cite{MR1511872}]\label{konig}
		Every bipartite multigraph $G$ satisfies $\chi'(G)=\Delta(G)$.
	\end{THM}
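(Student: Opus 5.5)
This is König's edge-coloring theorem for bipartite multigraphs. The plan is to show it via regularization plus Hall's theorem; a direct Kempe-chain argument would also work. The lower bound $\chi'(G)\ge\Delta(G)$ is trivial, since the edges at a maximum-degree vertex need distinct colors. So it suffices to produce an edge $\Delta$-coloring, where $\Delta=\Delta(G)$.

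\emph{Step 1 (embedding in a regular bipartite multigraph).} Let $G$ have bipartition $(X,Y)$. First add isolated vertices to the smaller side so that $|X|=|Y|$. Then $\sum_{x\in X}(\Delta-d_G(x))=\sum_{y\in Y}(\Delta-d_G(y))$, because both sides have the same number of vertices and both degree sums equal $e(G)$. While some vertex has degree below $\Delta$, the equality of deficits forces some $x\in X$ and some $y\in Y$ to both have deficit, and we add an edge $xy$, which is allowed in a multigraph. This process ends with a $\Delta$-regular bipartite multigraph $G'\supseteq G$. Any edge $\Delta$-coloring of $G'$ restricts to one of $G$.

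\emph{Step 2 (a perfect matching in a regular bipartite multigraph).} Let $H$ be $k$-regular bipartite with $k\ge1$ and parts $X,Y$. For $S\subseteq X$, the $k|S|$ edges leaving $S$ all land in $N_H(S)$. That set receives at most $k|N_H(S)|$ edges, so $|N_H(S)|\ge|S|$. Hall's theorem, which underlies Lemma~\ref{lem:matching-in-bipartite-graph} and can be applied to the underlying simple graph, then gives a matching saturating $X$. Since $|X|=|Y|$, this matching is perfect. Alternatively, Lemma~\ref{lem:matching-in-bipartite-graph} applies directly to the underlying simple graph after one checks its degree hypothesis, but invoking Hall is cleaner.

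\emph{Step 3 (induction).} Remove the perfect matching from $G'$. The result is $(\Delta-1)$-regular bipartite, and we repeat. After $\Delta$ rounds, $E(G')$ is partitioned into $\Delta$ perfect matchings, which serve as the color classes.

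The only mildly delicate point is Step 1. There we must confirm that the deficit sums on the two sides agree, so that every added edge joins $X$ to $Y$ and no loops are needed. Once the sides are balanced, this is immediate.
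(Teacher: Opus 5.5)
Your proof is correct. The paper does not prove this statement. It is quoted from K\"onig~\cite{MR1511872} and invoked only in Step~5 of the proof of Theorem~\ref{thm:robust-expander2} (and in the outline). There it is applied to a multigraph that is already regular and bipartite, and the color classes are then read off as $1$-factors. So there is no paper proof to compare with. What you give is the classical matching-based argument: balance the sides, add cross edges between deficient vertices to reach a $\Delta$-regular bipartite multigraph, check Hall's condition by double counting, and peel off perfect matchings.

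The other standard route colors the edges one at a time. For an uncolored edge $uv$, pick a color $\alpha$ missing at $u$ and $\beta$ missing at $v$, and suppose $\alpha$ is present at $v$. The maximal $\alpha/\beta$-alternating path starting at $v$ cannot end at $u$: it would start with $\alpha$ and end with $\beta$, hence have even length, contradicting that $u$ and $v$ lie on opposite sides. Swapping $\alpha$ and $\beta$ on this path frees $\alpha$ at both ends of $uv$. That argument needs neither regularization nor Hall's theorem, and it works directly and algorithmically on $G$. Your Steps~2 and~3 alone already give the $1$-factorization of a regular bipartite multigraph that the paper needs in Step~5, so for that application your Step~1 is not needed.

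One small correction to an aside. Lemma~\ref{lem:matching-in-bipartite-graph} does not in general apply to the \emph{underlying simple graph}, because simple degrees need not satisfy $d(x)\ge d(y)$ along edges. For instance, in a $3$-regular bipartite multigraph, a vertex with one double edge and one single edge can be adjacent to a vertex with three distinct neighbours. The hypothesis does hold trivially for the regular multigraph itself, and the weighted-counting proof of that lemma goes through unchanged with multiplicities. Since your main line uses Hall's theorem directly, nothing in your proof depends on this.
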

	
	An edge $k$-coloring of a multigraph $G$ is said to be \emph{equalized} if each
	color class contains either $\lfloor e(G)/k\rfloor$ or $\lceil e(G)/k\rceil$
	edges. McDiarmid~\cite{MR300623} observed the following result.
	
	\begin{THM}\label{lem:equa-edge-coloring}
		Let $G$ be a multigraph with chromatic index $\chi'(G)$. Then, for every
		$k\ge \chi'(G)$, there is an equalized edge-coloring of $G$ with $k$
		colors.
	\end{THM}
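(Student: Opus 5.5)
Starting from an arbitrary edge $k$-coloring of $G$, I will repeatedly swap edges along two-colored paths until the color class sizes are as balanced as possible. Since $k\ge \chi'(G)$, $G$ has an edge coloring using at most $\chi'(G)$ colors; declaring the remaining $k-\chi'(G)$ colors empty gives an edge $k$-coloring $\varphi$ of $G$ with color classes $M_1,\dots,M_k$, some possibly empty. Among all edge $k$-colorings of $G$, I choose one that minimizes $\sum_{i=1}^k |M_i|^2$. The claim is that this coloring is equalized, meaning that $\bigl||M_i|-|M_j|\bigr|\le 1$ for all $i,j$. Once that holds, every class has size $\lfloor e(G)/k\rfloor$ or $\lceil e(G)/k\rceil$, because the sizes sum to $e(G)$.

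Suppose instead that $|M_i|\ge |M_j|+2$ for some $i\ne j$, and consider the spanning submultigraph $H$ with edge set $M_i\cup M_j$. Every vertex has degree at most $2$ in $H$. Every edge of $M_i\cup M_j$ that joins the same pair of vertices as another edge must lie in the other matching, so a pair of parallel edges in $H$ is a $2$-cycle with one edge from each class. Hence every component of $H$ is an even cycle (including $2$-cycles), which contains equally many edges of $M_i$ and $M_j$, or a path whose edges alternate between $M_i$ and $M_j$. Since $|M_i|>|M_j|$, some path component $P$ contains more edges of $M_i$ than of $M_j$. Alternation then forces $P$ to have odd length with both end edges in $M_i$, so $P$ has exactly one more $M_i$-edge than $M_j$-edge.

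Swap the colors $i$ and $j$ on the edges of $P$. The endpoints of $P$ have degree $1$ in $H$, so neither is incident with any other edge colored $i$ or $j$. Interior vertices see one edge of each color both before and after the swap. The result is therefore still a proper edge $k$-coloring of $G$. Under the swap, $|M_i|$ decreases by one, $|M_j|$ increases by one, and every other class is unchanged. The sum of squares changes by
\[
(|M_i|-1)^2+(|M_j|+1)^2-|M_i|^2-|M_j|^2 = 2\bigl(|M_j|-|M_i|\bigr)+2 \le -2,
\]
which contradicts the minimality of the chosen coloring. Hence the minimizing coloring is equalized.

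The only point needing care is the multigraph setting: I must check that parallel edges cannot create a component of $H$ other than an alternating path or an even cycle. This holds because each color class is a matching, so two parallel edges of $H$ necessarily carry different colors and form an alternating $2$-cycle. With that checked, the argument is the standard Kempe-chain swap and needs no further input.
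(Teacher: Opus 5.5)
Your argument is correct. Minimizing $\sum_i |M_i|^2$ and swapping colors on an odd alternating path component of $M_i\cup M_j$ is the standard Kempe-chain balancing argument behind McDiarmid's observation, and your check that parallel edges in $M_i\cup M_j$ can only form alternating $2$-cycles covers the multigraph setting. The paper gives no proof of this theorem and only cites McDiarmid, so your proposal supplies the standard proof of the cited result.
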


	Let $k\in\mathbb{N}$, and let $\CC^k(G)$ denote the set of all edge $k$-colorings of a multigraph $G$. For $\varphi\in \CC^k(G)$ and $v\in V(G)$, the set of colors \emph{present} at $v$ is
	\[
	\varphi(v)=\{\varphi(e): \text{$e\in E(G)$ is incident with $v$}\},
	\]
	and the set of colors \emph{missing} at $v$ is $\pbar(v)=[k]\setminus \varphi(v)$. For $X\subseteq V(G)$ and $i\in [k]$, define $\pbar_X^{-1}(i)=\{v\in X:i\in \pbar(v)\}$. We write simply $\pbar^{-1}(i)$ for $\pbar_{V(G)}^{-1}(i)$.
	
	The technique of multifans is widely used in edge-coloring problems. It was introduced in the book of Stiebitz, Scheide, Toft, and Favrholdt~\cite{StiebSTF-Book} as a generalization of Vizing fans. Let $G$ be a multigraph, and let $\varphi$ be a partial edge $k$-coloring of $G$. A \emph{multifan} centered at $r$ with respect to $\varphi$ is a sequence
	\[
	F_\varphi(r,s_1:s_p)=(r,e_1,s_1,e_2,s_2,\ldots,e_p,s_p),
	\]
	where $p\ge 1$, the edges $e_1,\ldots,e_p$ are distinct, $e_i=rs_i$ for each $i\in [p]$, the edge $e_1$ is uncolored under $\varphi$, and for every $i\in [2,p]$, there exists $j\in [i-1]$ such that $\varphi(e_i)\in \pbar(s_j)$.
	
	The following result on multifans can be found in~\cite[Theorem~2.1]{StiebSTF-Book}.
	
	\begin{LEM}[{\cite[Theorem~2.1]{StiebSTF-Book}}]\label{lem:ele-fan}
		Let $G$ be a multigraph with $\chi'(G)=k+1$ for some $k\ge \Delta(G)$, let $e=rs_1\in E(G)$, and let $\varphi\in \CC^k(G-e)$. If $F_\varphi(r,s_1:s_p)$ is a multifan, then $\pbar(u)\cap \pbar(v)=\emptyset$ for any distinct vertices $u,v\in \{r,s_1,\ldots,s_p\}$.
	\end{LEM}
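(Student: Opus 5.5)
We prove Lemma~\ref{lem:ele-fan} by induction on the index of the fan vertices, in the standard way for Vizing fans. Throughout we use that $\chi'(G)=k+1$, so $\varphi$ cannot be extended to $e$ and no recoloring of $G-e$ can make $e$ colorable. We write $V(F)=\{r,s_1,\ldots,s_p\}$.

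\emph{Base case.} We first show $\pbar(r)\cap\pbar(s_1)=\emptyset$. If some color $\alpha$ is missing at both $r$ and $s_1$, we color $e=rs_1$ with $\alpha$. This gives an edge $k$-coloring of $G$, which is a contradiction.

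\emph{Key claim.} For every $i\in[p]$, $\pbar(r)\cap\pbar(s_i)=\emptyset$. We argue by induction along the fan ordering. Suppose $\alpha\in\pbar(r)\cap\pbar(s_i)$ with $i\ge 2$. By the multifan definition there is a sequence $1=j_0<j_1<\cdots<j_t=i$ with $\varphi(rs_{j_{\ell}})\in\pbar(s_{j_{\ell-1}})$ for each $\ell$; we can build it by tracing back the predecessors. We then shift colors along this path: uncolor $rs_i$, recolor it $\alpha$, give $rs_{j_{t-1}}$ the color $\varphi(rs_i)$ (which was missing at $s_{j_{t-1}}$), and continue backwards until $rs_1$ receives $\varphi(rs_{j_1})$. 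Each recoloring is proper: the color placed on $rs_{j_{\ell-1}}$ was missing at $s_{j_{\ell-1}}$, and at $r$ it is freed by uncoloring $rs_{j_\ell}$. The step at $r$ with color $\alpha$ uses $\alpha\in\pbar(r)$. Distinctness of the edges $e_i$ handles multiple edges. The result is a $k$-coloring of $G$, a contradiction.

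\emph{Pairs of leaves.} Suppose $\alpha\in\pbar(s_i)\cap\pbar(s_j)$ with $i<j$. Pick $\beta\in\pbar(r)$; this exists since $d_{G-e}(r)\le\Delta(G)-1<k$. By the key claim $\beta\ne\alpha$ and $\beta\in\varphi(s_i)\cap\varphi(s_j)$. Consider the $(\alpha,\beta)$-Kempe chains, which are paths since both colors are missing somewhere along them. Since $\alpha,\beta$ each miss exactly one endpoint structure, at most one of $s_i,s_j$ lies on the chain $P_r$ starting at $r$. We swap $\alpha$ and $\beta$ on the chain starting at whichever of $s_i,s_j$ (say $s_m$) is not the other end of $P_r$. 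This chain avoids $r$, so $\beta$ stays missing at $r$ and $\beta$ becomes missing at $s_m$.

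The main obstacle is checking that the multifan structure survives this swap, so that the key claim can be applied to the new coloring $\varphi'$ and $s_m$ to give a contradiction. The swap may alter colors of fan edges $rs_l$ or the missing sets at fan vertices. Since the chain avoids $r$, no edge $rs_l$ changes color, so the edge colors of the fan are unchanged. The only risk is that a color $\varphi(e_l)$ equal to $\alpha$ or $\beta$ stops being missing at its designated predecessor. The color $\beta$ is not used on any fan edge, because $\beta\in\pbar(r)$. So only $\alpha$-colored fan edges matter.

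To handle this, we choose $i$ minimal in the fan order so that the prefix $F(r,s_1:s_i)$ is a multifan not depending on $\alpha$-predecessors beyond what we control. We then take $s_m=s_j$ with $j$ the later index where possible. If the chain from $s_j$ ends at $s_i$ instead, we swap at $s_i$ and truncate the fan at the first edge colored $\alpha$, using induction on $p$. In either case we obtain a multifan under $\varphi'$ containing $s_m$ with $\beta\in\pbar'(r)\cap\pbar'(s_m)$, contradicting the key claim.
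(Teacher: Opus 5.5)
Your base case and key claim are fine. Shifting along a predecessor chain $1=j_0<\cdots<j_t=i$ yields a $k$-coloring of $G$, and it stays proper even when some $s_{j_\ell}$ coincide: the new colors arriving at a common vertex are colors of distinct edges at $r$ (or $\alpha$), and all lie in that vertex's missing set. (The paper gives no proof of this lemma; it quotes it from Stiebitz et al.)

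The gap is in the pairs-of-leaves step, at exactly the point you call the main obstacle, and your last paragraph does not resolve it. In the principal case you swap the $(\alpha,\beta)$-chain $P$ at $s_j\notin P_r$, and $P$ does not end at $s_i$. There you must show that every $\alpha$-colored fan edge $e_l$ still has some $s_{l'}$ with $l'<l$ missing $\alpha$ under $\varphi'$. Nothing you wrote gives this, because the other end of $P$ may be the fan vertex that served as the predecessor of such an edge. Truncating at the first $\alpha$-colored edge does not help, since that truncation generally does not contain $s_j$. The ``induction on $p$'' is invoked with no hypothesis that would apply. Your case split is also garbled: if the chain from $s_j$ ends at $s_i$, swapping ``at $s_i$'' is the same swap. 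The case that really forces a swap at $s_i$ is when $s_j$ is the far end of $P_r$, which you only cover with ``where possible''.

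The missing observation is what minimality of $i$ buys, and no induction is needed. Take $i$ to be the least index with $\alpha\in\pbar(s_i)$, and let $s_j$ be any other vertex of the fan missing $\alpha$. Every $\alpha$-colored fan edge $e_l$ needs an earlier vertex missing $\alpha$, so $l>i$. Two consequences follow.
\begin{itemize}
\item[(a)] The prefix $(r,e_1,s_1,\ldots,e_i,s_i)$ has no edge colored $\alpha$ or $\beta$. Hence it remains a multifan after any $(\alpha,\beta)$-swap on a chain avoiding $r$.
\item[(b)] If such a swap leaves $\alpha$ missing at $s_i$, then the whole fan remains a multifan. Indeed, $s_i$ serves as a predecessor for every $\alpha$-colored fan edge, and every other fan edge has a color outside $\{\alpha,\beta\}$.
\end{itemize}
Now finish by cases.
\begin{itemize}
\item If $s_j\notin P_r$, swap at $s_j$. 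If the chain ends at $s_i$, then $\beta\in\pbar'(r)\cap\pbar'(s_i)$ contradicts your key claim for the prefix by (a). Otherwise $s_i$ is untouched, and $\beta\in\pbar'(r)\cap\pbar'(s_j)$ contradicts the key claim for the whole fan by (b).
\item If $s_j$ is the far end of $P_r$, then $s_i\notin P_r$; swap at $s_i$ and use (a).
\end{itemize}
With this in place your argument is complete.
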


	We will need the following two consequences of Lemma~\ref{lem:ele-fan}.
	
	\begin{LEM}\label{lemma:reduce-to-weak-VAL}
		Let $G$ be a class~2 graph. If there exists an edge $uv\in E(G)$ such that $v$ is adjacent to fewer than $\Delta(G)-d_G(u)+1$ vertices of maximum degree in $V(G)\setminus\{u\}$, then $G-uv$ is class~2 and $\Delta(G-uv)=\Delta(G)$.
	\end{LEM}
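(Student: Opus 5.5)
The plan is a short contrapositive argument that reduces everything to Vizing's Adjacency Lemma (Lemma~\ref{lem:val}), followed by an application of the Vizing bound (Theorem~\ref{thm:chromatic-index} with $\mu=1$). Write $\Delta=\Delta(G)$ and $e=uv$. Since $G$ is class~2, $\chi'(G)=\Delta+1$. The key claim is that $\chi'(G-e)\ge \Delta+1$. Both conclusions then follow from this claim.

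\emph{Step 1: $\chi'(G-e)>\Delta$.} Suppose instead that $\chi'(G-e)\le \Delta$. Together with $\chi'(G)=\Delta+1$, this means $e$ is a critical edge of $G$ in the sense defined before Lemma~\ref{lem:val}; the definition only requires $\chi'(G)=\Delta(G)+1$ and $\chi'(G-e)=\Delta(G)$. If $\chi'(G-e)<\Delta$, we first add back $e$ with a new color to get $\chi'(G)\le\Delta$, a contradiction; hence $\chi'(G-e)=\Delta$ exactly. Now apply Lemma~\ref{lem:val} to the critical edge $e=xy$ with $x=v$ and $y=u$. It says $v$ has at least $\Delta-d_G(u)+1$ neighbors of degree $\Delta$ in $V(G)\setminus\{u\}$. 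This contradicts the hypothesis, so $\chi'(G-e)\ge \Delta+1$.

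\emph{Step 2: the maximum degree is unchanged and $G-e$ is class~2.} Since $G-e$ is a simple graph, Theorem~\ref{thm:chromatic-index} gives $\chi'(G-e)\le \Delta(G-e)+1\le \Delta+1$. Combined with Step~1, we get $\Delta(G-e)+1\ge \chi'(G-e)\ge \Delta+1$. Hence $\Delta(G-e)=\Delta$ and $\chi'(G-e)=\Delta(G-e)+1$, which means $G-e$ is class~2 with the same maximum degree.

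\emph{Possible subtlety.} The only point needing care is that the ``degree $\Delta$'' in Lemma~\ref{lem:val} refers to degrees in $G$, the same graph the hypothesis refers to, so the counts match directly. If one preferred not to invoke VAL as a black box, the alternative would be a direct multifan argument. In that version, one takes $\varphi\in\CC^\Delta(G-e)$ and builds the multifan at $v$ from the uncolored edge $vu$. Lemma~\ref{lem:ele-fan} gives disjoint missing sets, and counting $|\pbar(u)|\ge \Delta-d_G(u)+1$ forces that many colors on edges from $v$ to fan vertices of degree $\Delta$. This is the standard derivation of VAL, and it is the step that would carry the real work. Given Lemma~\ref{lem:val}, however, I expect no real obstacle.
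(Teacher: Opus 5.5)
Your proof is correct and is essentially the paper's argument. The paper also reduces to the case $\chi'(G-uv)\le\Delta$, using Theorem~\ref{thm:chromatic-index} for the bookkeeping on class and maximum degree, and then derives a contradiction from a $\Delta$-coloring of $G-uv$; the only difference is that instead of citing Lemma~\ref{lem:val} it inlines the maximal-multifan count via Lemma~\ref{lem:ele-fan}, which is exactly the derivation you sketch in your last paragraph (and your explicit reduction to $\chi'(G-uv)=\Delta$ is what makes $uv$ critical in the paper's sense, so invoking Lemma~\ref{lem:val} directly is legitimate).
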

	
	\begin{proof}
		Let $\Delta=\Delta(G)$, and suppose not. Then $\chi'(G-uv)\le \Delta$: indeed, this is immediate if $G-uv$ is class~1, while if $G-uv$ is class~2 and $\Delta(G-uv)<\Delta$, then $\Delta(G-uv)=\Delta-1$ and Theorem~\ref{thm:chromatic-index} gives $\chi'(G-uv)\le \Delta$. Let $\varphi$ be an edge $\Delta$-coloring of $G-uv$, viewed as a partial edge-coloring of $G$ in which the edge $uv$ is uncolored.
		
		Let $F=F_\varphi(v,u:s_p)=(v,e_1,s_1,e_2,s_2,\ldots,e_p,s_p)$ be a maximal  multifan centered at $v$, where $e_1=vu$ and $s_1=u$. By maximality of $F$, every color missing at one of $s_1,\ldots,s_p$ appears on one of the colored fan edges $e_2,\ldots,e_p$. Hence 
		$\bigcup_{i=1}^p \pbar(s_i)\subseteq \{\varphi(e_i):2\le i\le p\}$ and so 
$\big|\bigcup_{i=1}^p \pbar(s_i)\big|\le p-1$.
		
		On the other hand, by Lemma~\ref{lem:ele-fan}, the sets $\pbar(s_1),\ldots,\pbar(s_p)$ are pairwise disjoint. Since $s_1=u$, we have $|\pbar(s_1)|=\Delta-d_G(u)+1$. Moreover, by assumption, among the vertices $s_2,\ldots,s_p$, fewer than $\Delta-d_G(u)+1$ have degree $\Delta$ in $G$. Thus at least $p-(\Delta-d_G(u)+1)$ of the vertices $s_2,\ldots,s_p$ have degree less than $\Delta$, and each of them misses at least one color.  It follows that $\big|\bigcup_{i=1}^p \pbar(s_i)\big|\ge p$, a contradiction. Therefore $G-uv$ is class~2 and $\Delta(G-uv)=\Delta(G)$.
	\end{proof}

	\begin{LEM}\label{lem:multi-version-star-multigraph}
		Let $G$ be a multigraph and let $x\in V(G)$. Then
		$\chi'(G)\le \Delta(G)+\mu(G-x)$.
	\end{LEM}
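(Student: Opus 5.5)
We argue by contradiction, using a minimal counterexample together with the multifan machinery of Lemma~\ref{lem:ele-fan}. Let $\Delta=\Delta(G)$ and $\mu'=\mu(G-x)$, and put $k=\Delta+\mu'$. Suppose $\chi'(G)>k$. Delete edges one at a time until we reach a subgraph $G'\subseteq G$ with $\chi'(G')=k+1$ such that $\chi'(G'-e)\le k$ for some edge $e$. Since $\Delta(G')\le \Delta$, $\mu(G'-x)\le \mu'$, and $k\ge \Delta(G')$, it suffices to derive a contradiction for $G'$. So we may assume that $G$ itself has $\chi'(G)=k+1$ and that $G-e$ admits a coloring $\varphi\in\CC^k(G-e)$ for some edge $e$.

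\textbf{Choosing the edge and the fan center.} We want to center a multifan at a vertex $r$ all of whose edges avoid $x$ except possibly the single edge to $x$. We take the edge $e$ to be incident with $x$ when possible. If $x$ is incident with some edge of the critical subgraph, choose the deleted edge $e=rx$ with $r\ne x$, and set $s_1=x$. Otherwise $G=G-x$ apart from isolated structure, so $\mu(G)\le\mu'$ and Theorem~\ref{thm:chromatic-index} finishes the proof directly. In the first case, let $F=F_\varphi(r,s_1:s_p)$ be a maximal multifan centered at $r$.

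\textbf{Counting.} By maximality, every color missing at some $s_i$ appears on a fan edge $e_j$ with $j\ge 2$, and all of those fan edges join $r$ to $\{s_1,\dots,s_p\}$. By Lemma~\ref{lem:ele-fan}, the sets $\pbar(r),\pbar(s_1),\dots,\pbar(s_p)$ are pairwise disjoint. Counting colors missing at the $s_i$ gives
\[
\sum_{i=1}^p |\pbar(s_i)|\le \sum_{i=1}^{p} \bigl(\text{number of colored edges between } r \text{ and } s_i\bigr).
\]
Each $s_i$ misses at least $k-d_G(s_i)$ colors, and $s_1=x$ additionally misses the color that would go on the uncolored edge $e$. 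Moreover, $r$ misses at least $k-d_G(r)+1\ge \mu'+1$ colors, which are not missing at any $s_i$.

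The key estimate is that for $s_i\ne x$, the number of edges between $r$ and $s_i$ is at most $\mu'$, since $r,s_i\in V(G-x)$. For $s_1=x$ we only know the multiplicity is at most $d_G(x)$. Each $s_i\ne x$ therefore contributes at least $k-\Delta=\mu'\ge e_G(r,s_i)$ missing colors, which exactly absorbs its edges to $r$. The vertex $x$ contributes $|\pbar(x)|\ge k-d_G(x)+1$ missing colors against $e_G(r,x)-1$ colored edges, and $k-d_G(x)+1\ge \mu'+1>e_G(r,x)-1$ when $e_G(r,x)\le \mu'+1$.

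\textbf{Main obstacle.} The difficulty is that the multiplicity $e_G(r,x)$ can be large, up to $\Delta$, so the per-vertex inequality for $x$ may fail. To handle this, sum globally over $i\in[p]$ instead of vertex by vertex, using
\[
\sum_{i}e_G(r,s_i)\le d_G(r)\le \Delta,
\]
together with the extra $\mu'+1$ colors missing at $r$. Combining this total with the per-vertex bounds for the $s_i\ne x$ forces the colors missing at $r$ to be used on fan edges, which contradicts disjointness. The contradiction therefore rests on the disjointness from Lemma~\ref{lem:ele-fan} combined with a degree count at $r$. If this global count does not close, we will instead choose the center $r=x$ and use $e=xs_1$. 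Then every fan vertex $s_i$ lies in $G-x$, but the edges $xs_i$ may be heavy. Counting over fan edges gives $\sum_i|\pbar(s_i)|\ge \sum_i (k-d(s_i))\ge p\mu'$, while the disjoint sets $\pbar(x),\pbar(s_i)$ have total size at most $k$. Some form of Gupta's argument (alternating paths) is then needed. I expect the first choice, with the center away from $x$, to be the intended route.
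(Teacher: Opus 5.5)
\textbf{There is a genuine gap.} Your fan argument with $s_1=x$ closes only when $x$ has a neighbor $r$ with $e_G(r,x)\le \mu'+1$. In the remaining case, where every bundle at $x$ has at least $\mu'+2$ edges, the proposed global count does not work.

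Disjointness of $\pbar(r)$ from the sets $\pbar(s_i)$ gives, via $|\pbar(r)|+\sum_i|\pbar(s_i)|\le k$, exactly $\sum_i|\pbar(s_i)|\le d_G(r)-1$. This is the same bound as the fan-edge count. Colors missing at $r$ can never lie on edges at $r$, so nothing ``forces'' them onto fan edges. Combining this with $\sum_i|\pbar(s_i)|\ge (k-d_G(x)+1)+(p-1)\mu'$ yields only $p\mu'+2\le d_G(x)$, which is not a contradiction. Your fallback with center $x$ fails for the same reason: the bundles $xs_i$ are heavy. Saying ``some form of Gupta's argument'' does not supply the missing step.

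\textbf{Missing idea: do not insist that $x$ enter the fan as $s_1$.} The paper splits according to whether some vertex $u\ne x$, adjacent to $x$ or not, satisfies $e_G(u,x)\le\mu'$. This is after disposing of $\mu'=0$, where every edge meets $x$ and $\chi'(G)=d_G(x)$.

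\emph{Case 1: such a $u$ exists.} Take $G$ edge-minimal with $\chi'(G)>k$, so that every edge is critical. Your reduction secures only one critical edge, yet you then choose $e$ freely. Pick any edge $uv$ and grow a maximal multifan centered at $u$. Every fan vertex satisfies $e_G(u,s_i)\le\mu'$, including $x$ if it appears later in the fan. Hence at most $p\mu'-1$ colors lie on the colored fan edges, while the disjoint sets $\pbar(s_i)$ contain at least $(\mu'+1)+(p-1)\mu'$ colors. This is a contradiction.

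\emph{Case 2: no such $u$ exists.} Then every vertex other than $x$ is joined to $x$ by more than $\mu'$ edges, so $V(G)=N_G(x)\cup\{x\}$; write $t=|N_G(x)|$. No fan or alternating-path argument is needed here; the coloring is built directly:
\begin{itemize}
\item keep only $\mu'$ edges of each $x$-bundle, giving $H$ with $\Delta(H)\le t\mu'$ and $\mu(H)\le\mu'$;
\item color $H$ with $(t+1)\mu'$ colors by Theorem~\ref{thm:chromatic-index};
\item give each of the remaining $d_G(x)-t\mu'$ edges at $x$ a fresh color.
\end{itemize}
This uses $d_G(x)+\mu'\le\Delta+\mu'$ colors in total.
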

	
	\begin{proof}
		Let $\mu=\mu(G-x)$. If $\mu=0$, then $G-x$ has no edges, and so all edges of
		$G$ are incident with $x$. Hence $\chi'(G)=d_G(x)\le \Delta(G)$, and the result
		follows. Thus we may assume that $\mu\ge 1$.
		
		Suppose, to the contrary, that $\chi'(G)>\Delta(G)+\mu$. Choose such a
		counterexample with $|E(G)|$ minimum. Then every edge of $G$ is critical; that
		is, $\chi'(G-e)<\chi'(G)$ for every edge $e\in E(G)$. Let
		$k=\chi'(G)-1$. Then $k\ge \Delta(G)+\mu$.
		
		First suppose that there exists a vertex $u\in V(G)\setminus\{x\}$ such that
		$e_G(u,x)\le \mu$. Let $v\in N_G(u)$, and let $e=uv$. Since $e$ is critical,
		$G-e$ has an edge $k$-coloring $\varphi$. Let
		$F=F_\varphi(u,v:s_p)=(u,e_1,s_1,e_2,s_2,\ldots,e_p,s_p)$
		be a maximal multifan with respect to $e$ and $\varphi$, centered at $u$, where
		$e_1=e=uv$ and $s_1=v$.
		
		By the maximality of $F$, every color missing at one of
		$s_1,\ldots,s_p$ appears on an edge from $u$ to one of
		$s_1,\ldots,s_p$. Hence $\bigcup_{i=1}^p \pbar(s_i)\subseteq
		\{\varphi(f): f\in E_{G-e}(u,\{s_1,\ldots,s_p\})\}$. 
		For every $i\in[p]$, we have $e_G(u,s_i)\le \mu$: this is true by the definition
		of $\mu=\mu(G-x)$ if $s_i\ne x$, and by the choice of $u$ if $s_i=x$. Since the
		edge $e=us_1$ is uncolored, it follows that $|\{\varphi(f): f\in E_{G-e}(u,\{s_1,\ldots,s_p\})\}|\le p\mu-1$. 
		On the other hand, by Lemma~\ref{lem:ele-fan}, the sets
		$\pbar(s_1),\ldots,\pbar(s_p)$ are pairwise disjoint. Since $e$ is uncolored and
		incident with $s_1$, we have
		$|\pbar(s_1)|\ge k-(d_G(s_1)-1)\ge \mu+1$. For each $i\in[2,p]$, we have
		$|\pbar(s_i)|\ge k-d_G(s_i)\ge \mu$. Therefore 
		$\left|\bigcup_{i=1}^p \pbar(s_i)\right|\ge (\mu+1)+(p-1)\mu=p\mu+1$, 
		a contradiction.
		
		Thus every vertex $u\in V(G)\setminus\{x\}$ satisfies $e_G(u,x)>\mu$. In
		particular, $V(G)=N_G(x)\cup\{x\}$. Write
		$N_G(x)=\{y_1,\ldots,y_t\}$, and let $d_i=e_G(x,y_i)$ for each $i\in[t]$.
		Let $H$ be the spanning subgraph of $G$ obtained by keeping all edges of $G-x$
		and exactly $\mu$ edges between $x$ and each $y_i$. Then $\mu(H)\le \mu$ and
		$\Delta(H)\le t\mu$: indeed, $d_H(x)=t\mu$, while for each $i\in[t]$,
		$d_H(y_i)\le \mu+(t-1)\mu=t\mu$. By Theorem~\ref{thm:chromatic-index},
		$\chi'(H)\le \Delta(H)+\mu(H)\le (t+1)\mu$.
		
		The edges in $E(G)\setminus E(H)$ are all incident with $x$, and 
		$|E(G)\setminus E(H)| =\sum_{i=1}^t (d_i-\mu)=d_G(x)-t\mu$. 
		Assign a distinct new color to each of these edges. Together with an edge
		coloring of $H$, this gives an edge coloring of $G$ using at most
		\[
		(t+1)\mu+d_G(x)-t\mu=d_G(x)+\mu\le \Delta(G)+\mu
		\]
		colors, contradicting the choice of $G$. Therefore
		$\chi'(G)\le \Delta(G)+\mu(G-x)$.
	\end{proof}

Let $G$ be a multigraph. For $X\subseteq V(G)$, let
\[
\df_G(X)=\sum_{x\in X}(\Delta(G)-d_{G[X]}(x)),
\]
and write $\df_G(H)$ for $\df_G(V(H))$ for an induced subgraph $H$ of $G$. We also write $\df(G)$ for
$\df_G(G)$. Thus $\df_G(X)=\Delta(G)|X|-2e_G(X)$.

Recall that $G$ is overfull if $e(G)>\Delta(G)\lfloor |V(G)|/2\rfloor$, and 
 that a subgraph $H$ of $G$ is
$\Delta(G)$-overfull if $e(H)>\Delta(G)\lfloor |V(H)|/2\rfloor$.
Since $e(H)\le \Delta(G)|V(H)|/2$ whenever $|V(H)|$ is even, a
$\Delta(G)$-overfull subgraph must have odd order. Thus, if $H$ is
$\Delta(G)$-overfull, then $2e(H)\ge \Delta(G)(|V(H)|-1)+2$. Since
$e_G(V(H))\ge e(H)$, it follows that
\begin{equation}\label{eqn:equi-def-overfull}
	\df_G(H)=\Delta(G)|V(H)|-2e_G(V(H))\le \Delta(G)-2.
\end{equation}

Consequently, if $H$ is a subgraph of $G$ with $\df_G(H)\ge \Delta(G)-1$, then
$H$ is not $\Delta(G)$-overfull. We will use this observation frequently.

Since $\df(G)=\Delta(G)|V(G)|-2e(G)$, when $|V(G)|$ is odd, the quantities
$\df(G)$ and $\Delta(G)$ have the same parity. Thus we have the following
simple consequence.

\begin{LEM}\label{lem:def-relation-with-Delta}
	Let $G$ be a multigraph of odd order. Then $\df(G)=\Delta(G)+2i$ for some
	$i\in \mathbb{Z}$.
\end{LEM}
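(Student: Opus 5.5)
The argument is a parity count based on the identity $\df(G)=\Delta(G)|V(G)|-2e(G)$, which follows from the definition of $\df_G$ applied to $X=V(G)$. For this identity, note that $G[V(G)]=G$, so each summand is $\Delta(G)-d_G(x)$. Summing over $x\in V(G)$ and using the handshake identity $\sum_{x}d_G(x)=2e(G)$, which is valid for multigraphs without loops, gives the formula.

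Next, compute $\df(G)-\Delta(G)=\Delta(G)(|V(G)|-1)-2e(G)$. Since $|V(G)|$ is odd, $|V(G)|-1$ is even. Hence both terms on the right-hand side are even, and so their difference is an even integer. Call this difference $2i$ with $i\in\mathbb{Z}$. Then $\df(G)=\Delta(G)+2i$, as required.

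No step here is a real obstacle. The only points to check are that the handshake identity holds for multigraphs, which it does because each edge contributes exactly $2$ to the degree sum even when it is parallel to other edges, and that $i$ may be negative. The latter is why the statement takes $i\in\mathbb{Z}$ rather than $\mathbb{N}$; for instance, an overfull $G$ has $\df(G)\le\Delta(G)-2$ by \eqref{eqn:equi-def-overfull}.
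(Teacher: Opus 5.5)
Your proof is correct and matches the paper's argument. The paper likewise deduces the lemma from the identity $\df(G)=\Delta(G)|V(G)|-2e(G)$, noting that for odd $|V(G)|$ the quantities $\df(G)$ and $\Delta(G)$ have the same parity; your computation $\df(G)-\Delta(G)=\Delta(G)(|V(G)|-1)-2e(G)$ simply makes that parity observation explicit.
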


\begin{LEM}\label{lem:overfull-delete-one-vertex}
	Let $G$ be a multigraph of even order, and let $\Delta=\Delta(G)$. If there
	exists $v\in V(G)$ such that $G-v$ is $\Delta$-overfull, then
	$d_G(v)=\delta(G)$.
\end{LEM}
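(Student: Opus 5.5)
The plan is a direct counting argument on deficiencies. Let $n=|V(G)|$, which is even, and set $H=G-v$, so $|V(H)|=n-1$ is odd. Note that $\Delta(H)\le\Delta$, and that $H$ being $\Delta$-overfull means precisely $e(H)>\Delta(n-2)/2$. By \eqref{eqn:equi-def-overfull}, this is equivalent to
\[
\df_G(V(H))=\Delta(n-1)-2e(H)\le \Delta-2 .
\]
The idea is to compare $d_G(v)$ with $d_G(u)$ for an arbitrary $u\ne v$ by looking at the neighbouring set $G-u$.

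First, express $e(H)$ through $d_G(v)$. We have $e(H)=e(G)-d_G(v)$, so the overfull condition reads
\[
2e(G)-2d_G(v)\ge \Delta(n-2)+2 .
\]

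Next, fix any $u\in V(G)\setminus\{v\}$ and consider $G-u$, which also has odd order $n-1$. Its deficiency is
\[
\df_G(V(G)\setminus\{u\})=\Delta(n-1)-2e(G)+2d_G(u).
\]
Since $2e(G)\le \Delta n$, this is nonnegative, although that fact is not needed. The cleaner route is to use the degree sum bound directly:
\[
2e(G)=\sum_x d_G(x)\le \Delta(n-2)+d_G(u)+d_G(v).
\]
Combining this with the overfull inequality gives
\[
\Delta(n-2)+d_G(u)+d_G(v)-2d_G(v)\ge \Delta(n-2)+2,
\]
that is, $d_G(u)\ge d_G(v)+2>d_G(v)$. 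This holds for every $u\ne v$, so $d_G(v)$ is the strict minimum. In particular $d_G(v)=\delta(G)$.

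The only point to check is the bound $d_G(x)\le\Delta$ for all $x\ne u,v$, which holds by the definition of $\Delta$. Multigraph edges cause no problem, since degrees count edge multiplicities and $e(G)-e(H)=d_G(v)$ still holds when there are no loops. I expect no real obstacle here. The only subtlety is remembering that $e(H)$ means $e_G(V(H))$; taking $H$ as the induced subgraph is the worst case, since any subgraph of $G-v$ has at most that many edges.
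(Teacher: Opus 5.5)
Your proof is correct and takes the same approach as the paper. The paper writes $\df_G(G-v)=d_G(v)+\sum_{x\ne v}(\Delta-d_G(x))\ge d_G(v)+\Delta-d_G(u)$ and compares this with $\Delta-2$; that is your edge-count inequality rearranged, and it gives $d_G(u)\ge d_G(v)+2$ for every $u\ne v$. The detour through the deficiency of $G-u$ is unused and can be dropped.
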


\begin{proof}
	Let $u\in V(G)\setminus\{v\}$. Since $G-v$ is $\Delta$-overfull, we have
	\[
	\sum_{x\in V(G)\setminus\{v\}}(\Delta-d_{G-v}(x))\le \Delta-2.
	\]
	On the other hand,
	\[
	\begin{aligned}
		\sum_{x\in V(G)\setminus\{v\}}(\Delta-d_{G-v}(x))
		&=\sum_{x\in V(G)\setminus\{v\}}(\Delta-d_G(x)+e_G(x,v))\\
		&=d_G(v)+\sum_{x\in V(G)\setminus\{v\}}(\Delta-d_G(x))\\
		&\ge d_G(v)+\Delta-d_G(u).
	\end{aligned}
	\]
	Hence $d_G(v)+\Delta-d_G(u)\le \Delta-2$, and so
	$d_G(v)\le d_G(u)-2$. Since this holds for every
	$u\in V(G)\setminus\{v\}$, we have $d_G(v)=\delta(G)$.
\end{proof}

Given an edge-coloring of $G$ and a color $i$, the edges colored $i$ form a
matching, and hence the number of vertices not incident with an edge of color
$i$ has the same parity as $|V(G)|$. We will use the following Parity Lemma of
Gr\"unewald and Steffen~\cite[Lemma~2.1]{MR2028248}.

\begin{LEM}[Parity Lemma]\label{lem:parity-lemma}
	Let $G$ be a multigraph, and let $\varphi\in \CC^k(G)$ for some integer
	$k\ge \Delta(G)$. Then $|\pbar^{-1}(i)|\equiv |V(G)| \pmod 2$ for every
	color $i\in [k]$.
\end{LEM}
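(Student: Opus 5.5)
The plan is a direct double count of the vertices that miss color $i$. Fix a color $i\in[k]$. Under $\varphi$, the edges of color $i$ form a matching $M_i$ in $G$, since $\varphi$ is a proper edge coloring. A vertex $v$ is incident with an edge of color $i$ exactly when $v\in V(M_i)$. Hence $i\in\pbar(v)$ if and only if $v\notin V(M_i)$, so
\[
\pbar^{-1}(i)=V(G)\setminus V(M_i).
\]

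Next we count $V(M_i)$. Since $M_i$ is a matching, each of its edges covers two distinct vertices (multigraphs have no loops), and distinct edges of $M_i$ share no endpoint. Thus $|V(M_i)|=2|M_i|$, which is even.

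Combining the two facts,
\[
|\pbar^{-1}(i)|=|V(G)|-2|M_i|\equiv |V(G)| \pmod 2.
\]
The hypothesis $k\ge\Delta(G)$ is needed only so that $\CC^k(G)$ is the relevant set of colorings. It plays no role in the parity argument itself.

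There is no real obstacle here. The only points to check are that loops are excluded, so every matching edge really covers two vertices, and that parallel edges never both lie in the same color class. The latter is automatic because a proper coloring gives adjacent edges distinct colors.
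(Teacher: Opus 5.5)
Your proof is correct and is essentially the paper's: the paper cites Gr\"unewald and Steffen and gives the same one-line justification, namely that each color class is a matching, so the vertices missing color $i$ form the complement of a set of even size. You are also right that $k\ge\Delta(G)$ plays no role in the parity argument.
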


\subsection{Robust Expanders}

Let $G$ be a robust $(\nu,\tau)$-expander satisfying the hypotheses of
Theorem~\ref{thm:robust-expander}. To remove edge-disjoint sparse spanning configurations from $G$ 
while ensuring that the remaining graph is still a robust expander, we will use
a result of Gir\~ao, Granet, K\"uhn, Lo, and Osthus~\cite{MR4550147}. The
relevant notation was defined for multidigraphs in~\cite{MR4550147}; here we
state the corresponding definitions for multigraphs.

We first recall the notation needed for this result.
Let $G$ be a multigraph on vertex set $V$, where $|V|=n$. Let $L$ be a multiset
consisting of paths on $V$ and isolated vertices. We write $V(L)$ for the set of
vertices that are isolated in $L$ or lie on a non-trivial path in $L$, and
$E(L)$ for the multiset of edges contained in the paths of $L$. For
$v\in V(L)$, let $d_L(v)$ be the number of edges in $E(L)$ incident with $v$.
Let $F$ be a multiset of edges.   We say that $(L,F)$ is a \emph{layout} if $F$ is a submultiset of $E(L)$ and
$E(L)\setminus F\ne \emptyset$. When $F$ is given as a
multigraph, we also say that $(L,F)$ is a layout, meaning that $(L,E(F))$ is a
layout.

Given an edge $uv$, we say that a $(u,v)$-path has \emph{shape $uv$}. Let
$(L,F)$ be a layout on $V$. A multigraph $\mathcal{H}$ on $V$ is a
\emph{spanning configuration of shape $(L,F)$} if $\mathcal{H}$ can be
decomposed into internally vertex-disjoint paths $\{P_e:e\in E(L)\}$ such that
each $P_e$ has shape $e$, $P_f=f$ for every $f\in F$, and
$\bigcup_{e\in E(L)}V^0(P_e)=V\setminus V(L)$, where $V^0(P)$ denotes the set
of internal vertices of a path $P$. In particular, the last condition implies
that the isolated vertices of $L$ remain isolated in $\mathcal{H}$.

Let $\varepsilon,\delta>0$. We say that a graph $G$ is
\emph{$(\delta,\varepsilon)$-almost regular} if
$d_G(v)=(\delta\pm \varepsilon)n$ for every $v\in V(G)$.

The following result of Gao and the third author~\cite{2405.18494} is an
adaptation to robust expanders of Lemma~7.3 for robust outexpanders due to
Gir\~ao, Granet, K\"uhn, Lo, and Osthus~\cite{MR4550147}.

\begin{LEM}[{\cite[Theorem 2.12]{2405.18494}}]\label{lem:removing-linear-forests}
	Let
	$0<1/n\ll \ve^*\le \ve\ll \nu'\ll \nu\ll \tau\ll \alpha,\eta^*\ll 1$,
	and let $\ell\le (\alpha-\eta^*)n/2$. Suppose that $G$ is an
	$(\alpha,\ve^*)$-almost regular robust $(\nu,\tau)$-expander on $n$ vertices.
	Suppose also that, for each $i\in [\ell]$, $F_i$ is a multiset of edges on
	$V(G)$. Assume that $(L_1,F_1),\ldots,(L_\ell,F_\ell)$ are layouts with
	$V(L_i)\subseteq V(G)$ for each $i\in [\ell]$, and that the following hold,
	where $L=\bigcup_{i\in [\ell]}L_i$.
	\begin{enumerate}[(a)]
		\item For each $i\in [\ell]$, we have $|V(L_i)|\le \varepsilon^2 n$ and
		$|E(L_i)|\le \varepsilon^4 n$.
		\item For each $v\in V(G)$, we have $d_L(v)\le \varepsilon^3 n$, and
		there are at most $\varepsilon^2 n$ indices $i\in [\ell]$ such that
		$v\in V(L_i)$.
	\end{enumerate}
	Then there exist edge-disjoint submultigraphs
	$\mathcal{H}_1,\ldots,\mathcal{H}_\ell\subseteq G+\bigcup_{i\in [\ell]}F_i$
	such that each $\mathcal{H}_i$ is a spanning configuration of shape
	$(L_i,F_i)$, and such that
	$G-\bigcup_{i=1}^{\ell}E(\mathcal{H}_i)$ is a robust $(\nu'/2,4\tau)$-expander.
\end{LEM}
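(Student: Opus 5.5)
This statement is cited from~\cite[Theorem 2.12]{2405.18494}, the undirected analogue of~\cite[Lemma~7.3]{MR4550147}. The plan is to follow the Gir\~ao--Granet--K\"uhn--Lo--Osthus argument, replacing orientations by undirected edges. The construction is sequential. For $i=1,\ldots,\ell$ in turn, we find $\mathcal{H}_i$ inside the current graph $G_i:=G-\bigcup_{j<i}E(\mathcal{H}_j)$ (augmented by $F_i$). We then remove it, and maintain the invariant that $G_i$ is almost regular of degree about $(\alpha-2(i-1)/n)n$ and remains a robust expander.

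\emph{Step 1: random slicing.} Before building anything, I would reserve a small random edge set $R\subseteq G$, of density about $\varepsilon$. Chernoff bounds together with Lemma~\ref{lem:stability-expansion}(a) show that both $R$ and $G-R$ are robust expanders with slightly weaker parameters, and that every vertex has about $\varepsilon n$ neighbours in every large set via $R$. Each $\mathcal{H}_i$ consists of a near-spanning path system built mostly from $G-R$, with the remaining vertices absorbed and the paths connected using $R$.

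\emph{Step 2: one configuration.} For a single index $i$, condition (a) says $L_i$ is tiny. We proceed as follows.
\begin{enumerate}[(1)]
\item Put each $f\in F_i$ in as itself.
\item Find a path cover of $V\setminus V(L_i)$ with few paths. Robust expansion together with near-regularity gives an almost perfect matching, hence $o(n)$ paths, via a Hamilton-cycle-type argument (the standard robust-expander Hamiltonicity / blow-up technique of K\"uhn--Osthus).
\item Link these paths into paths of prescribed shapes $e\in E(L_i)\setminus F_i$. This uses short connecting paths in $R$, which exist since robust expanders have diameter $O(1)$ between robust neighbourhoods, and every vertex outside $V(L_i)$ must be used exactly once.
\end{enumerate}
The isolated vertices of $L_i$ are simply avoided.

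\emph{Step 3: degree bookkeeping.} Each $\mathcal{H}_i$ uses degree $2$ at internal vertices and degree $d_{L_i}(v)$ at vertices $v\in V(L_i)$. By condition (b), the total loss at any $v$ beyond the uniform amount $2\ell$ is at most $\varepsilon^3 n+2\varepsilon^2 n$. The uniform loss keeps the graph almost regular, since $\ell\le(\alpha-\eta^*)n/2$ leaves degree at least $\eta^* n$. I would then apply Lemma~\ref{lem:stability-expansion}(a) to the non-uniform part, and use the fact that removing a near-$2$-regular spanning structure costs each robust neighbourhood count only $O(1)$ per step. To make this rigorous, choose each path cover randomly, or with spread-out neighbourhoods, so that for every set $S$ and vertex $v$ the loss in $d(v,S)$ is proportional to $|S|/n$. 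This is done by keeping $G_i$ $\varepsilon$-regular relative to a fixed regularity partition of $G$.

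\emph{Main obstacle.} The hard part is maintaining robust expansion through linearly many ($\approx\alpha n/2$) removals, all the way down to degree $\eta^* n$. After many steps the residual graph is sparse, and the naive bound from Lemma~\ref{lem:stability-expansion}(a) fails. I would handle this as in~\cite{MR4550147}. Apply the regularity lemma to obtain a reduced graph which is a robust expander. Then choose each $\mathcal{H}_i$ to follow an almost-uniform fractional decomposition of the reduced graph, so that every regular pair loses density proportionally. At the end, every pair retains density about $\eta^*$ times its original density, which preserves robust $(\nu'/2,4\tau)$-expansion. The reserved set $R$ must be replenished or partitioned into $\ell$ pieces, so that the connecting steps never exhaust it.
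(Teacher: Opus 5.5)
The paper gives no proof of this lemma. It is quoted from \cite[Theorem~2.12]{2405.18494}, the undirected form of \cite[Lemma~7.3]{MR4550147}, and used as a black box, so there is no in-paper argument to compare yours with. On its own terms, your outline correctly identifies the real difficulty: removing $\ell=\Theta(\alpha n)$ near-$2$-regular spanning structures while certifying that the leftover is still a robust expander. But it does not overcome that difficulty, and the invariant you propose in Step~3 is the wrong one.

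You ask that the loss in $d(v,S)$ be proportional to $|S|/n$. The total loss at $v$ is about $2\ell$. In the regime the lemma is designed for (e.g.\ $\eta^*=\alpha/3$, as in the trimming stage of this paper) this is of order $\alpha n$. So you are prescribing a loss of order $\alpha|S|\ge\alpha\tau n\gg\nu n$ inside $S$. A vertex of $RN_{\nu,G}(S)$ may have only $\nu n$ neighbours in $S$, so this cannot hold, and no version of it certifies that $\nu'n/2$ neighbours in $S$ survive. The invariant has to be relative: each relevant $d_G(v,S)$ should keep a constant fraction, roughly $\eta^*/\alpha$, of its value.

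A simpler alternative is to let the final expansion come from a reserved random subgraph $\Gamma\subseteq G$ whose edge probability $p$ satisfies $p\nu\gg\nu'$. Such a $\Gamma$ is a robust expander with parameter of order $p\nu$. If all configurations together use only a small fraction of the $\Gamma$-degree at each vertex, the leftover contains $\Gamma$ minus few edges per vertex. It is then a robust expander by Lemma~\ref{lem:stability-expansion}(a) and the fact that adding edges preserves robust expansion. Your $R$, of density $\varepsilon\ll\nu'$, is too sparse to play this role.

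Two further load-bearing steps are left open.

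\emph{The connecting reservoir.} $R$ has degree about $\varepsilon\alpha n$ but must serve all $\approx\alpha n/2$ configurations. Splitting it into $\ell$ pieces leaves pieces of bounded average degree, which are useless for finding connecting paths. You would have to show that the connecting paths and absorbed vertices can be chosen so that each vertex is used this way in only a small fraction of the rounds. This load balancing is not addressed, and ``replenished'' is not defined.

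\emph{Step~2 is circular.} It builds $\mathcal{H}_i$ inside the current graph $G_i$, which requires $G_i$ to have a near-spanning path cover with few paths. That is exactly the expansion property in doubt for $i$ close to $\ell$. Two smaller problems sit here too. ``An almost perfect matching, hence $o(n)$ paths'' is a non sequitur: a matching is a path cover with about $n/2$ paths. And since the path cover already covers every vertex outside $V(L_i)$, internal vertices of connecting paths would be used twice unless you add an absorption step.

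The regularity-based repair in your last paragraph is a different, simultaneous construction. It would require blow-up-lemma embeddings of many near-spanning path systems following a fractional decomposition of the reduced graph. Pairs would have to stay (super-)regular after $\Theta(n)$ removals, and the exceptional set and the sets $V(L_i)$ would have to be covered exactly. None of these steps is carried out. As it stands, the proposal lists plausible ingredients of an approximate-decomposition argument. The step that actually carries the lemma is missing: constructing all $\ell$ configurations while certifying robust expansion of what remains.
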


Following~\cite{MR3299598}, for an integer $k\ge 2$, we say that a graph $G$ is
\emph{Hamilton $k$-linked} if, whenever
$x_1,y_1,\ldots,x_k,y_k$ are distinct vertices of $G$, there exist
vertex-disjoint paths $Q_1,\ldots,Q_k$ such that $Q_i$ connects $x_i$ and $y_i$
for each $i\in [k]$, and together the paths $Q_1,\ldots,Q_k$ cover all vertices
of $G$. We will use the following result of K\"uhn, Lo, Osthus, and
Staden~\cite{MR3299598}.

\begin{LEM}[{\cite[Corollary 6.9(ii)]{MR3299598}}]\label{lem:hamiltonicity-of-expander}
	Let $n,k\in \mathbb{N}$, and suppose that
	$0<1/n\ll \nu\ll \tau\ll \alpha<1$ and $k\le \nu^4 n$. If $H$ is a robust
	$(\nu,\tau)$-expander on $n$ vertices with $\delta(H)\ge \alpha n$, then $H$
	is Hamilton $k$-linked.
\end{LEM}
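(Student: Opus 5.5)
We derive the statement from the result of K\"uhn, Lo, Osthus, and Staden cited above; there it is proved for robust outexpanders in the directed setting, and we check that the undirected version follows by passing to the symmetric digraph. Given $H$, let $D$ be the digraph on $V(H)$ obtained by replacing every edge $uv$ of $H$ with the two arcs $uv$ and $vu$. Then $\delta^0(D)=\delta(H)\ge \alpha n$.

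We next check robust outexpansion. The $\nu$-robust outneighbourhood of a set $S$ in $D$ equals $RN_{\nu,H}(S)$. Hence $D$ is a robust $(\nu,\tau)$-outexpander exactly when $H$ is a robust $(\nu,\tau)$-expander.

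Now take distinct vertices $x_1,y_1,\ldots,x_k,y_k$ with $k\le \nu^4 n$. The directed statement gives vertex-disjoint directed paths $Q_i$ from $x_i$ to $y_i$ that together cover $V(D)$. Every arc of $D$ comes from an edge of $H$, so forgetting orientations turns each $Q_i$ into an $(x_i,y_i)$-path in $H$. These undirected paths are still vertex-disjoint and still cover $V(H)$, so $H$ is Hamilton $k$-linked.

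The main point to verify is that the hypotheses match exactly. The constants satisfy $1/n\ll\nu\ll\tau\ll\alpha$ and $k\le\nu^4 n$ in both settings, so the parameters carry over unchanged. The degree condition also transfers: the undirected minimum degree becomes the minimum semidegree $\delta^0(D)$, which is what the directed statement requires. If the cited corollary instead assumes a "Hamilton $k$-linked" notion for digraphs with ordered pairs, the argument is unaffected, because $D$ is symmetric and the pairs $(x_i,y_i)$ may be oriented arbitrarily.
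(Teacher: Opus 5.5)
Your symmetrization argument is correct: in the symmetric digraph $D$ one has $\delta^0(D)=\delta(H)$, the $\nu$-robust out-neighbourhood of any $S$ is exactly $RN_{\nu,H}(S)$, and a directed Hamilton linkage forgets orientation to an undirected one, so this is essentially the paper's route, since the paper gives no proof of its own and simply imports the lemma from Corollary~6.9(ii) of K\"uhn, Lo, Osthus, and Staden. The paper quotes that corollary already in undirected form, so you need the reduction only if you start from a directed version, and then you should check that this version has the same hierarchy and the same bound $k\le \nu^4 n$ rather than assume it.
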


The notion of a bipartite robust expander was introduced by K\"uhn, Lo, Osthus,
and Staden in~\cite{MR3299598}, where it was defined in a one-sided form. Here
we use a two-sided variant, requiring the robust expansion condition to hold
from each side of the bipartition.

Let $0<\nu\le \tau<1$. Suppose that $H$ is a bipartite graph with bipartition $(A,B)$. For
$C\in\{A,B\}$, let $D$ be the other part. For $S\subseteq C$, let
\[
RN_{\nu,H}(S)=\{x\in D:d_H(x,S)\ge \nu |C|\}.
\]
We say that $H$ is a \emph{bipartite robust $(\nu,\tau)$-expander with
	bipartition $(A,B)$} if, for each choice of $C\in\{A,B\}$, every set
$S\subseteq C$ with $\tau |C|\le |S|\le (1-\tau)|C|$ satisfies
\[
|RN_{\nu,H}(S)|\ge |S|+\nu |D|.
\]
In our applications, the bipartition $(A,B)$ is chosen so that $|A|=|B|$.
The following lemma is a bipartite analogue of Lemma~\ref{lem:stability-expansion}.

\begin{LEM}[Stability of bipartite robust expansion]\label{lem:bipartite-stability}
	Let $0<\nu\le\tau<1$ and let $H$ be a bipartite robust $(\nu,\tau)$-expander
	with balanced bipartition $(A,B)$, where $|A|=|B|=m$. Let
	\[
	0<\beta\le\min\Bigl\{\tfrac{\nu}{4},\,\tfrac{\tau}{4},\,\tfrac{1-2\tau}{2}\Bigr\}.
	\]
	\begin{enumerate}[(a)]
		\item If $H'$ is obtained from $H$ by deleting at most $\beta m$ edges at
		each vertex, then $H'$ is a bipartite robust $(\nu-\beta,\tau)$-expander
		with bipartition $(A,B)$. \label{lem:bipartite-stability-a}
		
		\item Let $H'$ be a bipartite graph with bipartition $(A',B')$ obtained
		from $H$ by adding and/or deleting at most $\beta m$ vertices in each part,
		in the following sense: there are sets $W_A\subseteq A$, $W_B\subseteq B$
		and $W_A'\subseteq A'$, $W_B'\subseteq B'$ with
		$|W_A|,|W_B|,|W_A'|,|W_B'|\le\beta m$ such that
		$A\setminus W_A=A'\setminus W_A'$ and $B\setminus W_B=B'\setminus W_B'$, and
		such that $H$ and $H'$ induce the same bipartite graph on
		$(A\setminus W_A,\,B\setminus W_B)$. Then $H'$ is a bipartite robust
		$(\nu-3\beta,2\tau)$-expander with bipartition $(A',B')$.
		\label{lem:bipartite-stability-b}
		
		\item Let $H'$ be the bipartite graph with bipartition $(A',B')$ whose edge
		set consists of all edges of $H$ with one endpoint in $A'$ and the other in
		$B'$, where $(A',B')$ is obtained from $(A,B)$ by relocating at most
		$\beta m$ vertices from $A$ to $B$ and at most $\beta m$ vertices from $B$
		to $A$, so that $|A'|=|B'|=m$. Then $H'$ is a bipartite robust
		$(\nu-3\beta,\tau+2\beta)$-expander with bipartition $(A',B')$.
		\label{lem:bipartite-stability-swap}
	\end{enumerate}
\end{LEM}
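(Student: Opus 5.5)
The plan is to verify each part directly from the definition. Fix one side $C$ and its opposite side $D$; the other side follows by symmetry. In each part, a set $S'$ on the new side is transferred to a set $S$ of the original graph whose size lies in the range where the expansion of $H$ applies. We then show that most of $RN_{\nu,H}(S)$ survives in the robust neighbourhood of $S'$ in $H'$. The only real work is the bookkeeping of sizes in (b), where $|A'|$ and $|B'|$ may differ from $m$ by up to $\beta m$. I expect this to be the main obstacle, though it should reduce to elementary inequalities that follow from $\beta\le \min\{\nu/4,\tau/4,(1-2\tau)/2\}$.

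For (a), take $S\subseteq C$ with $\tau m\le |S|\le (1-\tau)m$. Every $x\in RN_{\nu,H}(S)$ loses at most $\beta m$ edges, so $d_{H'}(x,S)\ge (\nu-\beta)m$. Hence $RN_{\nu,H}(S)\subseteq RN_{\nu-\beta,H'}(S)$, and so
\[
|RN_{\nu-\beta,H'}(S)|\ge |S|+\nu m\ge |S|+(\nu-\beta)m .
\]

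For (b), note that $(1-\beta)m\le |A'|,|B'|\le (1+\beta)m$. Take $S'\subseteq A'$ with $2\tau|A'|\le |S'|\le (1-2\tau)|A'|$, and put $S=S'\setminus W_A'\subseteq A\setminus W_A$.

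We first check that $S$ is in range. From below,
\[
|S|\ge 2\tau(1-\beta)m-\beta m\ge \tau m,
\]
which is equivalent to $\beta(1+2\tau)\le\tau$ and holds since $\beta\le\tau/4$. From above, $|S|\le (1-2\tau)(1+\beta)m\le(1-\tau)m$, since $\beta(1-2\tau)\le\tau$.

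Now consider $x\in RN_{\nu,H}(S)\setminus W_B$. All its edges to $S$ lie in the common induced bipartite graph on $(A\setminus W_A,B\setminus W_B)$, so they survive in $H'$. Thus $d_{H'}(x,S')\ge\nu m\ge(\nu-3\beta)|A'|$, using $(\nu-3\beta)(1+\beta)\le\nu$. Therefore
\[
|RN_{\nu-3\beta,H'}(S')|\ge |S|+\nu m-\beta m\ge |S'|+(\nu-2\beta)m .
\]
It remains to show $(\nu-2\beta)m\ge(\nu-3\beta)|B'|$. Since $|B'|\le(1+\beta)m$, this reduces to $\nu\beta\le\beta+3\beta^2$, which is true. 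The side $C=B'$ is handled identically.

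For (c), take $S'\subseteq A'$ with $(\tau+2\beta)m\le|S'|\le(1-\tau-2\beta)m$, and let $S=S'\cap A$. Since at most $\beta m$ vertices were relocated into $A'$, we have $(\tau+\beta)m\le|S|\le(1-\tau)m$, so the expansion of $H$ applies to $S$. Consider the vertices of $RN_{\nu,H}(S)$ that remain in $B'$; at most $\beta m$ were moved away. Their edges to $S\subseteq A'$ join $A'$ to $B'$, so they are kept in $H'$. Hence these vertices have at least $\nu m\ge(\nu-3\beta)m$ neighbours in $S'$, and
\[
|RN_{\nu-3\beta,H'}(S')|\ge |S|+\nu m-\beta m\ge |S'|+(\nu-2\beta)m\ge|S'|+(\nu-3\beta)m .
\]
The side $C=B'$ is again symmetric.
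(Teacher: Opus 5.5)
Your proposal is correct and follows the paper's proof essentially line by line. In each part you transfer $S'$ to $S=S'\setminus W_A'$ (resp.\ $S'\cap A$), check that $S$ lies in the original expansion range, and lose at most $\beta m$ robust neighbours, using the same elementary inequalities derived from $\beta\le\min\{\nu/4,\tau/4,(1-2\tau)/2\}$. The paper additionally notes explicitly that these bounds make the new parameters admissible ($\nu-3\beta>0$, $2\tau<1$, $\tau+2\beta<1$), which you use only implicitly.
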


\begin{proof}
	In every part we verify the expansion condition only for subsets of the
	$A$-side; the argument for the $B$-side is symmetric. Recall that for a set
	$S$ contained in one part $C$, with $D$ the other part, the robust
	neighborhood is $RN_{\nu,H}(S)=\{x\in D:d_H(x,S)\ge\nu|C|\}$, and the
	expansion requirement reads $|RN_{\nu,H}(S)|\ge|S|+\nu|D|$. The hypothesis
	$\beta\le\min\{\nu/4,\tau/4,(1-2\tau)/2\}$ guarantees in particular that
	$\nu-3\beta>0$, $2\tau<1$, and $\tau+2\beta<1$, so all three conclusions are
	nonvacuous.
	
	\medskip
	\noindent\textbf{(a)} Here $A,B$ and $m$ are unchanged. Let $S\subseteq A$
	with $\tau m\le|S|\le(1-\tau)m$. For every $x\in RN_{\nu,H}(S)$ we have
	$d_H(x,S)\ge\nu m$; since at most $\beta m$ edges are deleted at $x$,
	\[
	d_{H'}(x,S)\ge d_H(x,S)-\beta m\ge(\nu-\beta)m=(\nu-\beta)|A|,
	\]
	so $x\in RN_{\nu-\beta,H'}(S)$. Hence
	$RN_{\nu,H}(S)\subseteq RN_{\nu-\beta,H'}(S)$, and therefore
	\[
	|RN_{\nu-\beta,H'}(S)|\ge|RN_{\nu,H}(S)|\ge|S|+\nu m\ge|S|+(\nu-\beta)m,
	\]
	as required.
	
	\medskip
	\noindent\textbf{(b)} Write $a=|A'|$ and $b=|B'|$; by hypothesis
	$ (1-\beta)m \le a,b \le (1+\beta)m$. Let $S'\subseteq A'$ with
	$2\tau a\le|S'|\le(1-2\tau)a$, and put
	\[
	S=S'\cap(A\setminus W_A)=S'\setminus W_A',
	\]
	so that $|S|\ge|S'|-|W_A'|\ge|S'|-\beta m$. Every vertex of $S$ lies in
	$A\setminus W_A$ and therefore keeps its part in both $H$ and $H'$.
	
	We first check that $S$ is admissible for the expansion of $H$ at level
	$\tau$. For the lower bound,
	\[
	|S|\ge 2\tau a-\beta m\ge 2\tau(1-\beta)m-\beta m
	=\bigl(\tau+\tau(1-2\beta)-\beta\bigr)m\ge\tau m,
	\]
	since $\beta\le\tau/4$ gives $\tau(1-2\beta)\ge\beta$. For the upper bound,
	\[
	|S|\le|S'|\le(1-2\tau)a\le(1-2\tau)(1+\beta)m\le(1-\tau)m,
	\]
	using $\beta(1-2\tau)\le\tau$. Thus $\tau m\le|S|\le(1-\tau)m$, and the robust
	expansion of $H$ gives
	\[
	|RN_{\nu,H}(S)|\ge|S|+\nu m.
	\]
	
	Now let $x\in RN_{\nu,H}(S)$ with $x\in B\setminus W_B$. Then $x$ keeps its
	part in both $H$ and $H'$, and so do all vertices of $S\subseteq A\setminus W_A$;
	since $H$ and $H'$ induce the same bipartite graph on
	$(A\setminus W_A,\,B\setminus W_B)$, the edges between $x$ and $S$ are common
	to $H$ and $H'$. Hence
	\[
	d_{H'}(x,S')\ge d_{H'}(x,S)=d_H(x,S)\ge\nu m.
	\]
	The relevant threshold in $H'$ is $(\nu-3\beta)a$, and
	\[
	(\nu-3\beta)a\le(\nu-3\beta)(1+\beta)m\le(\nu-2\beta)m\le\nu m,
	\]
	so $x\in RN_{\nu-3\beta,H'}(S')$. Thus every vertex of $RN_{\nu,H}(S)$ lying in
	$B\setminus W_B$ belongs to $RN_{\nu-3\beta,H'}(S')$. The vertices of
	$RN_{\nu,H}(S)$ outside $B\setminus W_B$ lie in $W_B$, of which there are at
	most $\beta m$. Therefore
	\[
	|RN_{\nu-3\beta,H'}(S')|
	\ge|RN_{\nu,H}(S)|-|W_B|
	\ge|S|+\nu m-\beta m.
	\]
	Finally, using $|S'|\le|S|+\beta m$ and
	$(\nu-3\beta)b\le(\nu-3\beta)(1+\beta)m\le(\nu-2\beta)m$,
	\[
	|S'|+(\nu-3\beta)b
	\le|S|+\beta m+(\nu-2\beta)m
	=|S|+\nu m-\beta m
	\le|RN_{\nu-3\beta,H'}(S')|.
	\]
	Hence $H'$ is a bipartite robust $(\nu-3\beta,2\tau)$-expander with
	bipartition $(A',B')$.

	\medskip
	\noindent\textbf{(c)} The vertex set and the part sizes are unchanged:
	$|A'|=|B'|=m$. Let $R_A\subseteq A$ be the set of vertices relocated from $A$
	to $B$, and $R_B\subseteq B$ the set relocated from $B$ to $A$, so that
	$|R_A|,|R_B|\le\beta m$,
	\[
	A'=(A\setminus R_A)\cup R_B
	\qquad\text{and}\qquad
	B'=(B\setminus R_B)\cup R_A.
	\]
	By the definition of $H'$, all edges of $H$ between $A\setminus R_A$ and
	$B\setminus R_B$ are kept in $H'$. Thus
	\begin{equation}\label{eqn:c-cross-equality}
		d_{H'}(x,T)=d_H(x,T)
		\qquad\text{for all $x\in B\setminus R_B$ and all $T\subseteq A\setminus R_A$,}
	\end{equation}
	and symmetrically $d_{H'}(x,T)=d_H(x,T)$ for all $x\in A\setminus R_A$ and
	all $T\subseteq B\setminus R_B$.
	
	We verify the expansion condition for subsets of $A'$; the argument for
	subsets of $B'$ is symmetric.
	
	Let $S'\subseteq A'$ with $(\tau+2\beta)m\le |S'|\le (1-\tau-2\beta)m$, and set
	\[
	S=S'\cap (A\setminus R_A).
	\]
	Then $S'\setminus S\subseteq R_B$, so $|S'|-\beta m\le |S|\le |S'|$, and hence
	\[
	|S|\ge(\tau+2\beta)m-\beta m\ge\tau m,
	\qquad
	|S|\le |S'|\le(1-\tau-2\beta)m\le(1-\tau)m.
	\]
	Thus $S$ is admissible for the expansion of $H$, and
	\[
	|RN_{\nu,H}(S)|\ge |S|+\nu m.
	\]
	
	Fix $x\in RN_{\nu,H}(S)\setminus R_B$. Since $S\subseteq A\setminus R_A$ and
	$x\in B\setminus R_B$, \eqref{eqn:c-cross-equality} gives, using $S\subseteq S'$,
	\[
	d_{H'}(x,S')\ge d_{H'}(x,S)=d_H(x,S)\ge\nu m
	\ge(\nu-3\beta)m=(\nu-3\beta)|A'|.
	\]
	Therefore $x\in RN_{\nu-3\beta,H'}(S')$. Hence every vertex of
	$RN_{\nu,H}(S)\setminus R_B$ belongs to $RN_{\nu-3\beta,H'}(S')$, and so
	\[
	|RN_{\nu-3\beta,H'}(S')|
	\ge |RN_{\nu,H}(S)|-|R_B|
	\ge |S|+\nu m-\beta m.
	\]
	Finally, since $|S'|\le |S|+\beta m$ and $|B'|=m$,
	\[
	|S'|+(\nu-3\beta)|B'|
	\le |S|+\beta m+(\nu-3\beta)m
	=|S|+\nu m-2\beta m
	\le |S|+\nu m-\beta m
	\le |RN_{\nu-3\beta,H'}(S')|.
	\]
	Hence $H'$ is a bipartite robust $(\nu-3\beta,\tau+2\beta)$-expander with
	bipartition $(A',B')$.
\end{proof}

	\section{A Reformulation of Theorem~\ref{thm:robust-expander}}

	In this section, we reduce Theorem~\ref{thm:robust-expander} to a statement
	about regular multigraphs. The reduction has two parts. First, starting from a
	class~2 counterexample $G$, we trim $G$ by removing carefully chosen sparse
	subgraphs. The layouts and spanning configurations from
	Lemma~\ref{lem:removing-linear-forests} are used only in this trimming step:
	they realize, inside the robust expander, the auxiliary edge patterns needed
	for the iterative deletion process, while leaving a robust expander behind.
	Second, after the trimming process stops, we regularize the remaining graph, or
	a slight modification of it, by adding a multiset of edges chosen according to
	its deficiency sequence. This is where Lemma~\ref{lem:graphical-biparite}
	enters.
	
	The regularized  object is a multigraph whose underlying simple graph remains a
	robust expander and whose multiple edges are sparse away from at most one
	exceptional vertex. The regularization is completed through the following
	construction, where the multigraph $D$ is obtained from a reduced graph of $G$
	arising from the trimming stage.
	
	\begin{DEF}[Canonical completion]\label{def:canonical-completion}
		Let $D$ be a multigraph on $\{v_1,\ldots,v_m\}$, where $m$ is even, and let
		$\Delta_0=\Delta(D)$. Suppose that $d_D(v_1)\le d_D(v_2)\le\cdots\le d_D(v_m)$, 
		set $d_i=\Delta_0-d_D(v_i)$ for each $i\in[m]$, and assume that
		$(d_1,\ldots,d_m)$ is admissible.
		Let $L$ be the bipartite
		multigraph obtained by applying Lemma~\ref{lem:graphical-biparite} to
		$(d_1,\ldots,d_m)$, and let $p$ be the even index given by that lemma. Let
		$D^+=D\cup L$.
		
		We define a multigraph $N$ on $\{v_1,\ldots,v_m\}$ as follows. If $p=m$,
		then, for every $i\in[m/2]$, add
		$\Delta_0-d_{D^+}(v_{2i})$ edges between $v_{2i-1}$ and $v_{2i}$ to $N$.
		
		Suppose now that $p<m$. Then
		$\Delta_0-d_{D^+}(v_{p+1})$ is even. Let
		\[
		q=\frac{\Delta_0-d_{D^+}(v_{p+1})}{2}.
		\]
		Add $q$ edges between $v_{p+1}$ and $v_p$, add $q$ edges between
		$v_{p+1}$ and $v_{p-1}$, and add
		$\Delta_0-d_{D^+}(v_p)-q$ edges between $v_p$ and $v_{p-1}$. Moreover, for
		each $i\in[p/2-1]$, add $\Delta_0-d_{D^+}(v_{2i})$ edges between
		$v_{2i-1}$ and $v_{2i}$ to $N$.
		
		We call $D\cup L\cup N$ and $D^+\cup N$ a  \emph{canonical completion} of
		$D$ and $D^+$, respectively. By construction, the canonical completion of $D^+$ is
		$\Delta_0$-regular.
	\end{DEF}

	The lemma below will be used in Section~\ref{Section:5} when we extend a  partial edge coloring. 
	\begin{LEM}\label{lem:v1-v2-edges}
	Let $G=D\cup L\cup N$ be a canonical completion of $D$ described above.
	Suppose that $0<1/m\ll\eta\ll1$ and that, for every $i\in[3,m]$,
	$|N_G^s(v_i)|
	\ge \Delta_0-e_G(v_1,v_i)-6\eta m$ and  $e_D(v_1,v_i)\le2\eta^2m$. 
	Then  $	e_G(v_1,v_2)\ge e_G(v_1,v_i)-7\eta m$ for every $i\in [3,m]$. 
	\end{LEM}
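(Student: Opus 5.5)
The plan is to compute $e_G(v_1,v_2)$ and $e_G(v_1,v_i)$ for $i\ge3$ directly from the three layers $D$, $L$ and $N$ of the canonical completion. Only the hypothesis $e_D(v_1,v_i)\le 2\eta^2m$ is needed for most of the argument. The simple-neighbour hypothesis is used only once, at $v_3$, to control $q$. Throughout, write $d_i=\Delta_0-d_D(v_i)$. The ordering $d_D(v_1)\le\cdots\le d_D(v_m)$ gives $d_1\ge d_2\ge d_i$ for all $i\ge 2$.

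\textbf{Step 1: the contributions of $L$.} By Lemma~\ref{lem:graphical-biparite}(a) we have $d_L(v_2)=0$, so $e_L(v_1,v_2)=0$. For $i\ge3$, parts (a) and (b) of that lemma give $d_L(v_i)\in\{0,\,d_i-d_{i+1},\,\le d_{p+1},\,d_i\}$, and hence $d_L(v_i)\le d_i$ in every case. Therefore
\[
e_L(v_1,v_i)\le d_L(v_i)\le d_i\le d_2 \qquad (i\ge 3).
\]
Since $d_L(v_2)=0$, we also have $d_{D^+}(v_2)=d_D(v_2)$, so $\Delta_0-d_{D^+}(v_2)=d_2$.

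\textbf{Step 2: the case $p\ge4$ or $p=m$.} In this case the only $N$-edges at $v_1$ are the $d_2$ edges to $v_2$, and $N$ has no edge between $v_1$ and any $v_i$ with $i\ge3$. Hence $e_G(v_1,v_2)\ge d_2$. For $i\ge3$, Step 1 gives
\[
e_G(v_1,v_i)=e_D(v_1,v_i)+e_L(v_1,v_i)\le 2\eta^2m+d_2,
\]
so $e_G(v_1,v_2)\ge e_G(v_1,v_i)-2\eta^2m$, which is stronger than the claim.

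\textbf{Step 3: the case $p=2<m$.} Here $v_{p-1}=v_1$ and $v_p=v_2$. Then $N$ contributes $d_2-q$ edges between $v_1$ and $v_2$, and $q$ edges from $v_3$ to each of $v_1$ and $v_2$, where $2q=\Delta_0-d_{D^+}(v_3)=d_3-d_L(v_3)$. So $e_G(v_1,v_2)\ge d_2-q$.

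For $i=3$ we compare directly:
\[
e_G(v_1,v_3)\le 2\eta^2m+d_L(v_3)+q=2\eta^2m+d_3-q\le 2\eta^2m+d_2-q\le e_G(v_1,v_2)+2\eta^2m.
\]

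For $i\ge4$ we have $e_G(v_1,v_i)\le 2\eta^2m+d_2\le e_G(v_1,v_2)+q+2\eta^2m$, so it remains to show that $q$ is small. This is the main obstacle, and it is the only place the simple-neighbour hypothesis enters. Suppose $q\ge2$. Then $e_G(v_2,v_3)\ge q\ge 2$, so $v_2\notin N_G^s(v_3)$. Since $G$ is $\Delta_0$-regular, we get
\[
|N_G^s(v_3)|\le \Delta_0-e_G(v_1,v_3)-e_G(v_2,v_3).
\]
Combining this with the hypothesis $|N_G^s(v_3)|\ge \Delta_0-e_G(v_1,v_3)-6\eta m$ yields $q\le e_G(v_2,v_3)\le 6\eta m$. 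Hence in all cases $q\le 6\eta m$, and therefore
\[
e_G(v_1,v_i)\le e_G(v_1,v_2)+6\eta m+2\eta^2m\le e_G(v_1,v_2)+7\eta m,
\]
using $\eta\ll1$. This completes the proof.
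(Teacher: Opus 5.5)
Your proof is correct and follows essentially the same route as the paper: you split into $p\ge 4$ (or $p=m$) versus $p=2<m$, bound the $L$-contribution by $e_L(v_1,v_i)\le d_L(v_i)\le \Delta_0-d_D(v_i)\le \Delta_0-d_D(v_2)$, and use the simple-neighbour hypothesis at $v_3$ to get $q\le 6\eta m$. Your explicit derivation of $q\le 6\eta m$ and your separate treatment of $i=3$ fill in steps the paper states tersely; in that derivation you should also note that $e_G(v_1,v_3)\ge q\ge 2$, so $v_1$ is not a simple neighbour of $v_3$ either.
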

	
 \begin{proof}
 	By Lemma~\ref{lem:graphical-biparite},  
 	we have 
 	\[
 	d_{D^+}(v_1)=d_{D^+}(v_2)=d_D(v_2).
 	\]
Suppose  first that $p\ge4$. By the construction of $N$, no edge of $N$ joins
  $v_1$ to a vertex $v_i$ with $i\ge3$. Moreover, the number of edges of $L$
  between $v_1$ and $v_i$ is at most $d_L(v_i)\le\Delta_0-d_D(v_i)$. Hence,
  for every $i\in[3,m]$,
  \[
  \begin{aligned}
  	e_G(v_1,v_i)
  	&\le \Delta_0-d_D(v_i)+e_D(v_1,v_i) \\
  	&\le \Delta_0-d_D(v_2)+2\eta^2m \\
  	&\le e_G(v_1,v_2)+2\eta^2m< e_G(v_1,v_2)+7\eta m.
  \end{aligned}
  \]	
Suppose now that $p=2$. 
Using the notation from
Definition~\ref{def:canonical-completion}, the construction of $N$ gives
\[
\begin{aligned}
	e_G(v_1,v_2)
	&=e_{D^+}(v_1,v_2)+\Delta_0-d_{D^+}(v_2)-q \\
	&= e_D(v_1,v_2)+ \Delta_0-d_D(v_2)-q.
\end{aligned}
\]
By the assumption $|N_G^s(v_3)|
 \ge  \Delta_0-e_G(v_1,v_3)- 6\eta m$, we get $q\le 6\eta m$. 
Consequently, for every $i\in[3,m]$,
\[
\begin{aligned}
	e_G(v_1,v_2)
	&\ge \Delta_0-d_D(v_2)-q \\
	&\ge \Delta_0-d_D(v_2)-6\eta m \\
	&\ge \Delta_0-d_D(v_i)-6\eta m \\
	&\ge e_G(v_1,v_i)-e_D(v_1,v_i)-6\eta m \\
	&\ge e_G(v_1,v_i)-7\eta m,
\end{aligned}
\]
where the last inequality follows from
$e_D(v_1,v_i)\le2\eta^2m\le\eta m$. Thus
the statement holds in both cases.
 \end{proof}

The following theorem is the regular-multigraph form that
will be proved in Section~\ref{Section:5}.

\begin{THM}\label{thm:robust-expander2}
	Let $n_0$ be a positive integer, and let $\eta,\nu,\tau,\alpha$ be positive
	constants such that $0<1/n_0\ll \eta\ll \nu\le \tau\ll \alpha<1$. Let $G$
	be a regular multigraph on $n\ge n_0$ vertices, where $n$ is even, and suppose
	that $n>\Delta(G)\ge \alpha n$. Suppose that $G$ satisfies the following
	conditions.
		\begin{enumerate}[(1)]
		\item $G$ is the edge-disjoint union of three multigraphs $D,L$, and $N$,
		all on the vertex set $\{v_1,\ldots,v_n\}$, such that
		$\Delta(D)=\Delta(G)$ and $G=D\cup L\cup N$ is a  canonical completion
		of $D$  defined in  Definition~\ref{def:canonical-completion}. In
		addition, the following hold. \label{item:thm4.1}
		\begin{enumerate}[(a)]

			\item $e_D(v_1,v_i) \le 2\eta^2 n$ for each $i\in [3,n]$. \label{item:thm4.1.a-new}
			\item $d_G^s(v_i)\ge \frac{i-2}{i-1}\Delta(G)-6\eta n$ for every
			$i\in [3,n]$. \label{item:thm4.1.b}
			
			\item $|N_G^s(v_2)|\ge \Delta(G)-e_G(v_1,v_2)-13\eta n$, and
			$|N_G^s(v_i)|\ge \Delta(G)-e_G(v_1,v_i)-6\eta n$ for every
			$i\in [3,n]$. \label{item:thm4.1.c}

		\end{enumerate}
		
		\item The underlying simple graph of $G$ is a robust $(\nu,\tau)$-expander.
		\label{item:thm4.2}
	\end{enumerate}
	Then $\chi'(G)=\Delta(G)$ if and only if $G$ contains no
	$\Delta(G)$-overfull subgraph.
\end{THM}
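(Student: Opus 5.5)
The "only if" direction is the counting argument from the introduction, so the work is the "if" direction. Since $G$ is $\Delta$-regular of even order with $\Delta:=\Delta(G)$, proving $\chi'(G)=\Delta$ means finding a $1$-factorization of $G$. I will follow Stage Three of the outline: split $V(G)$ into two balanced halves, factorize most of the edges inside the halves, repair parities through the robustly expanding cross graph, and finish the cross edges with Theorem~\ref{konig}.

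\textbf{Step 1 (the exceptional pair).} Conditions (1a)--(1c) say that every vertex other than $v_1$ has nearly all of its edges to simple neighbours, apart from its edges to $v_1$. By Lemma~\ref{lem:v1-v2-edges}, the multi-edge bundle $v_1v_2$ is the heaviest bundle at $v_1$, up to $7\eta n$. I will first peel off $\mu:=e_G(v_1,v_2)-O(\eta n)$ perfect matchings, each containing one $v_1v_2$ edge. I take $G'=G-\{v_1,v_2\}$ minus the bundles at $v_1,v_2$, which by (1b)--(2) is a near-regular robust expander up to $O(\eta n)$ perturbations. Then I apply Lemma~\ref{lem:hamiltonicity-of-expander} repeatedly: a Hamilton cycle in what remains of $G^s-\{v_1,v_2\}$ gives two perfect matchings, and Lemma~\ref{lem:stability-expansion} keeps the expansion intact as long as each round removes few edges per vertex. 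The absence of a $\Delta$-overfull subgraph forces $e_G(v_1,v_2)\le\Delta/2$. To see this, note that if the bundle were larger, the set $\{v_1,v_2\}$ together with the odd-parity neighbourhood structure would give $\df\le\Delta-2$ via \eqref{eqn:equi-def-overfull}. This bound guarantees the residual graph is still dense at every vertex.

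\textbf{Step 2 (balanced split).} After Step 1, the residual multigraph $G_1$ is regular of degree $\Delta_1\ge\alpha n/2$. Its remaining multiplicities are $O(\eta n)$ at every vertex, including $v_1$, because (1a) and Lemma~\ref{lem:v1-v2-edges} bound the other bundles at $v_1$. I choose a uniformly random balanced bipartition $(A,B)$. By Chernoff bounds, every vertex has $(1/2\pm\eta)\Delta_1$ neighbours on each side, and $G_1^s[A,B]$ is a bipartite robust $(\nu/4,2\tau)$-expander. I then adjust a few vertices using Lemma~\ref{lem:bipartite-stability} so that the degrees in $G_1[A]$ and $G_1[B]$ have the correct parity.

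\textbf{Step 3 (internal colouring and correction).} Using Theorem~\ref{thm:chromatic-index} and Lemma~\ref{lem:multi-version-star-multigraph} (multiplicity is $O(\eta n)$ away from one vertex), together with Theorem~\ref{lem:equa-edge-coloring}, I will colour $G_1[A]\cup G_1[B]$ with about $\Delta_1/2+O(\eta n)$ equalized colours. Each colour class is then a matching missing only $O(\eta n)$ vertices in each part. By the Parity Lemma~\ref{lem:parity-lemma}, the missing sets in $A$ and in $B$ have equal parity. For each colour I will extend the class to a perfect matching of $G_1$ using cross edges. I find a perfect matching between $\pbar_A^{-1}(i)$ and $\pbar_B^{-1}(i)$ inside the current cross graph, using Lemma~\ref{lem:hamiltonicity-of-expander}-type linkage or Hall's theorem on the bipartite robust expander. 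Where sizes differ, I uncolour a few internal edges and route the imbalance through short alternating paths.

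\textbf{Step 4 (finish).} The edges freed up during the correction are few at each vertex. I recolour them with a few extra colours, each completed to a $1$-factor via the same cross-edge repair. Every colour class so far is a $1$-factor, so the leftover graph is regular; it is also bipartite between $A$ and $B$. Theorem~\ref{konig} then factorizes it.

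The main obstacle I expect is Step 3. The cross graph must keep its bipartite robust expansion throughout about $\Delta_1/2$ rounds of repairs, which requires spreading the usage evenly, for example by a random or greedy choice of which repairs to perform. It also requires controlling the interaction with $v_1$, where Step 1 must have reduced the multiplicity enough.
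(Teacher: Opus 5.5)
There is a genuine gap, and it starts in Step~1. Your overall architecture (balanced split, equalized colouring of the inside edges, alternating-path repair through the robustly expanding cross graph, K\"onig for the rest) matches the paper's Case~2. But Step~1 rests on a false claim that everything later depends on.

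The absence of $\Delta$-overfull subgraphs does \emph{not} force $e_G(v_1,v_2)\le\Delta/2$. The pair $\{v_1,v_2\}$ has even order. The odd sets containing it only give constraints such as $e_G(v_1,v_2)+e_G(v_1,w)+e_G(v_2,w)\le\Delta$, and these allow $e_G(v_1,v_2)=\Delta-O(\eta n)$. This case really occurs: when $d_D(v_1)$ and $d_D(v_2)$ are tiny, the part $N$ of the canonical completion adds about $\Delta$ parallel $v_1v_2$ edges. That is exactly what Subcase~2.4 of the reduction produces.

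In that case your Step~1 must remove about $\Delta$ perfect matchings of $G-\{v_1,v_2\}$, so it is already a full $1$-factorization problem. The residual degree $\Delta_1$ is then $O(\eta n)$, not at least $\alpha n/2$. Even for $e_G(v_1,v_2)=\Theta(n)$, repeated use of Lemma~\ref{lem:hamiltonicity-of-expander} cannot run for $\Theta(n)$ rounds, because Lemma~\ref{lem:stability-expansion}(a) only tolerates deleting at most $\nu n$ edges per vertex.

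The paper splits on $e_G(v_1,v_2)$. If $e_G(v_1,v_2)\ge\Delta-\eta^{1/7}n$, it does three things. First, it covers the few edges from $\{v_1,v_2\}$ to $S=V(G)\setminus\{v_1,v_2\}$ by $q\le\eta^{1/7}n$ Hamilton cycles. It pairs $v_1$-edges with $v_2$-edges to distinct endpoints via Lemma~\ref{lem:graphical-biparite2}; this is where non-overfullness of the triangles $\{v_1,v_2,w\}$ is used. Second, it absorbs the multi-edges inside $S$ by Hamilton linkage. Third, it $1$-factorizes the remaining exactly regular simple robust expander on $S$ by the K\"uhn--Osthus theorem (Theorem~\ref{thm:regular-robust-decomposition}), for which your plan has no substitute. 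Otherwise the paper never peels the $v_1v_2$ bundle.

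Your Step~2 claim that all multiplicities, including those at $v_1$, are then $O(\eta n)$ is also false. Condition~(1a) bounds only $D$-edges. Lemma~\ref{lem:v1-v2-edges} only bounds every other bundle at $v_1$ by $e_G(v_1,v_2)+7\eta n$. The bipartite part $L$ of the completion can give $v_1$ linear-size bundles to several vertices (the paper's set $Z_0$), and matchings through $v_1v_2$-edges never touch them.

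Since $d^s_G(v_1)$ can be as small as $2$, a random balanced split gives no concentration at $v_1$. If a heavy bundle $v_1u$ has $u$ on $v_1$'s side, $d_{G_1[A]}(v_1)$ goes far beyond your roughly $\Delta_1/2$ internal colours. If $u$ is on the other side, $v_1$ misses almost every internal colour and all repairs must pass through one or two neighbours.

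The paper handles this deterministically. It puts $Z_0$ opposite $v_1$. It moves roughly half of each heavy $v_1$-bundle into the internally coloured graph as the middle edges $M$, so that $d_{G_{A,B}}(v_1)\approx\Delta/2$. It then runs an Initial Stage that makes every colour present at $v_1$ and $v_2$ before the alternating-path stage, and that stage avoids $v_1$, $v_2$ and $Z$.

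Finally, in Step~3 you cannot get a matching between $\pbar^{-1}_A(i)$ and $\pbar^{-1}_B(i)$ from Hall's theorem. These sets have size $O(\eta n/\alpha)$, and robust expansion says nothing about edges between two prescribed small sets. Every missing pair must be joined by a short alternating path, as the paper does.
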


\begin{proof}[Proof of Theorem~\ref{thm:robust-expander} assuming Theorem~\ref{thm:robust-expander2}]
	We choose an additional parameter $\nu'$   such that
	\[
	0<  1/n_0\ll \eta\ll \nu'\ll\nu\le \tau\ll \alpha<1.
	\]
	
	Let $G$ be a graph on $n\ge n_0$ vertices satisfying the following conditions:
	\begin{enumerate}[(i)]
		\item $\Delta(G)\ge \alpha n$ and $G$ satisfies weak VAL;
		\item $|U_\eta(G)|<\eta^2 n$;
		\item $G$ is a robust $(\nu,\tau)$-expander.
	\end{enumerate}
	We prove that $\chi'(G)=\Delta(G)$ if and only if $G$ contains no
	$\Delta(G)$-overfull subgraph. Recall that
	$U_\eta(G)=\{v\in V(G):\Delta(G)-d^s_G(v)> \eta n\}$, and define
	\[
	W(G)=\{w\in U_\eta(G):d_G(w)<\Delta(G)/3-\eta n\}.
	\]
	
	If $G$ contains a $\Delta(G)$-overfull subgraph, then
	$\chi'(G)=\Delta(G)+1$. Hence we may assume that $G$ contains no
	$\Delta(G)$-overfull subgraph, and suppose for a contradiction that
	$\chi'(G)=\Delta(G)+1$. Let $\Delta=\Delta(G)$.

We reduce and modify $G$
into a multigraph satisfying the conditions of Theorem~\ref{thm:robust-expander2} through two stages. In both stages, whenever edges are deleted, any vertices that become isolated are immediately discarded, and we keep the same notation for the resulting graph and for the intersections of the relevant vertex sets with its vertex set. Thus every vertex at the beginning of a subsequent round has positive degree.
	
	\begin{center}
		{\bf \noindent Stage I:  Trimming}
	\end{center}
	
	We will iteratively remove edge-disjoint sets of edges. Each round is designed
	to remove two edges incident with every vertex of maximum degree in the current
	graph, while also controlling the degrees of the vertices in $U_\eta(G)$. The
	goal is to reduce $G$, after identifying some vertices in certain cases, to a
	graph that serves as the underlying simple graph of the multigraph $D$ in
	Theorem~\ref{thm:robust-expander2}.
	
	Since $G$ satisfies weak VAL, every edge $uv\in E(G)$ satisfies
	$d_G(u)+d_G(v)\ge \Delta+2$. On the other hand, if $u,v\in W(G)$, then
	$d_G(u),d_G(v)<\Delta/3-\eta n$, and hence
	$d_G(u)+d_G(v)<\Delta+2$. It follows that $W(G)$ is independent in $G$.
	
	We first aim to remove prescribed edge sets that, whenever possible, cover the
	current vertices in $W(G)$ while also reducing the degree of every vertex of
	maximum degree by two. Ideally, this process gradually reduces the degrees of
	the vertices in $W(G)$ relative to the current maximum degree, eventually
	eliminating all such vertices and leaving a graph whose maximum degree is at
	least $\Delta/3+2\eta n$.
	
	There are two possible obstructions to this ideal outcome. First, $W(G)$ may
	be empty initially, or all vertices of $W(G)$ may become isolated while the
	current maximum degree is still larger than $\Delta/3+2\eta n$ and the degrees
	of many other vertices are not within $O(\eta n)$ of the maximum degree. In
	this case, we switch to a different choice of vertices to be covered. Second,
	an overfull-type obstruction may appear in the current graph. To monitor this
	second possibility, we use the following notion.
	
	A subset $X\subseteq V(G)$ with $|X|$ odd is called \emph{almost-overfull} in
	$G$ if $\df_G(X)\le \Delta+7\eta^2 n$. The process terminates as soon as an
	almost-overfull set appears. We 
	initialize
	\[
	W_0=W(G), \quad U_0=U_\eta(G), \quad  E_0=\emptyset, \quad
	G_0=G, \quad  \text{and}\quad  G'_0=G_0-U_0.
	\]
	Also set
	\[
	d_0(v)=d_G(v) \quad \text{for every $v\in V(G)$}.
	\]
	
	We will apply Lemma~\ref{lem:removing-linear-forests} to remove cycles
	containing prescribed edges. The edges removed in each round will be
	edge-$2$-colorable, or equivalently, coverable by two edge-disjoint matchings.
	This is the main reason that the edges are selected carefully in the subsequent
	Operation~\ref{ope:edge-removing}.
	Since the graph changes from round to round, the degrees of the vertices and
	the set of vertices that must be covered also change. To keep track of this,
	degree changes will be recorded relative to the fixed reference graph $G'_0$.
	When a vertex of $U_0$ must later be represented inside $G'_0$, we project it
	onto suitable vertices of $G'_0$.

To formulate a notion of almost-overfullness relative to the auxiliary degree
records, for each $i\ge1$ and each current graph $G_{i-1}$, we call a set
$X\subseteq V(G_{i-1})$ with $|X|$ odd \emph{almost-overfull} if
\[
\sum_{v\in X}\bigl(\Delta-2(i-1)-d_{i-1}(v)\bigr)
+\sum_{u\in V(G_{i-1})\setminus X}d_{i-1}(u)
\le \Delta-2(i-1)+7\eta^2n.
\]
	Here the quantities $d_i(v)$ are the auxiliary degree records defined in Step~3
of Operation~\ref{ope:edge-removing}. They record the degree changes that will
occur after deleting the prescribed edges in $E_i$ and, later, after deleting a
suitable spanning cycle of $G'_0$ or a subgraph of $G'_0$ supplied by
Lemma~\ref{lem:removing-linear-forests}. At this stage, the actual edges of
that spanning cycle inside $G'_0$ have not yet been fixed.
Thus $d_{i-1}(v)$ should be thought of as the intended degree of $v$ after the first $i-1$ rounds have happened, rather than its actual degree in the current graph $G_{i-1}$.

To explain this definition, suppose first that the auxiliary degrees
$d_{i-1}(v)$ are the actual degrees in $G_{i-1}$. Writing
$k=\Delta-2(i-1)$, the left-hand side is
\[
k|X|-2e_{G_{i-1}}(X)
+2e_{G_{i-1}}\bigl(V(G_{i-1})\setminus X\bigr).
\]
In particular, if $V(G_{i-1})\setminus X$ is independent, then this is exactly
\[
\sum_{v\in X}\bigl(k-d_{G_{i-1}[X]}(v)\bigr),
\]
the $k$-deficiency of $G_{i-1}[X]$. Thus, apart from the error term
$7\eta^2n$, the definition recovers the usual notion of a $k$-overfull set.

When $V(G_{i-1})\setminus X$ is not independent, the additional term
$2e_{G_{i-1}}(V(G_{i-1})\setminus X)$ appears. Nevertheless, the definition
still gives an upper bound on the $k$-deficiency of $G_{i-1}[X]$, when the
auxiliary degrees are treated as actual degrees. When we apply this notion
later, we will show that $|V(G_{i-1})\setminus X|$ is small and that this set
spans only $O(\eta^2n)$ edges.

	We next define the exceptional situation in which the set $W_{i-1}$ is
	modified. Suppose that $i\ge 1$ and that $G_{i-1}$ has no almost-overfull set.
	We say that $G_{i-1}$ is in the \emph{critical case} if the following three
	conditions hold:
	\begin{itemize}
		\item $|W_{i-1}|=0$ and $|V(G'_{i-1})|$ is odd;
		\item $G'_{i-1}$ has at most one vertex $u$ with $d_{i-1}(u)< \Delta-2(i-1)$ and 
		$d_{i-1}(u) =\Delta-2(i-1)-1$ if it exists; 
		\item every vertex $z\in U_{i-1}$ satisfies
		$d_{i-1}(z)<\Delta/3-\eta n+2$.
	\end{itemize}
	
	If $G_{i-1}$ is in the critical case, we define an auxiliary set
	$W'_{i-1}\subseteq U_{i-1}$ as follows. If $|U_{i-1}|$ is odd, then
	$|U_{i-1}|\ge 3$, since $G_{i-1}$ has no almost-overfull set. In this case,
	let $W'_{i-1}=\{u,v\}$, where $u,v\in U_{i-1}$ are distinct vertices of minimum
	and second-minimum auxiliary degree; that is,
	$d_{i-1}(u)\le d_{i-1}(v)\le d_{i-1}(w)$ for every
	$w\in U_{i-1}\setminus\{u,v\}$. If $|U_{i-1}|$ is even, then
	$|U_{i-1}|\ge 2$, since $G_{i-1}$ has no almost-overfull set. In this case,
	let $W'_{i-1}=\{u\}$, where $u\in U_{i-1}$ has minimum auxiliary degree.
	
	Now define
	\[
	W^*_{i-1}=
	\begin{cases}
		W_{i-1}, & \text{if $G_{i-1}$ is not in the critical case},\\
		W'_{i-1}, & \text{if $G_{i-1}$ is in the critical case}.
	\end{cases}
	\]
	
	We will also use the following two deletion procedures. The second procedure
	uses a weaker condition when $W$ contains no vertex of maximum degree. Indeed,
	if $wu\in E_{H'}(W',V(H')\setminus W')$ with $w\in W'$ and
	$e_{H'}(u,W')\ge d_{H'}(w)$, then $u$ is adjacent in $H'$ to at most
	$\Delta(H')-e_{H'}(u,W')\le \Delta(H')-d_{H'}(w)$ vertices of maximum degree.
	However, we still need the first procedure in Operation~\ref{ope:edge-removing},
	where vertices are tracked by the auxiliary degree records in $G_{i-1}$ rather
	than by their actual degrees.
	
	\begin{algorithm}
		\SetAlgoLined
		\SetKwInOut{Input}{Input}
		\SetKwInOut{Output}{Output}
		\Input{A graph $H$ and an independent set $W\subseteq V(H)$}
		\Output{An updated graph $H'$ and an updated set $W'$}
		\BlankLine
		Let $H'=H$ and $W'=W$\;
		\Repeat{there is no edge $wu\in E_{H'}(W',V(H')\setminus W')$ with
			$w\in W'$ and $e_{H'}(u,W')\ge d_{H'}(w)$}{
			Choose an edge $wu\in E_{H'}(W',V(H')\setminus W')$ with $w\in W'$
			such that $e_{H'}(u,W')\ge d_{H'}(w)$\;
			Delete $wu$ from $H'$ and discard any isolated vertices of $H'$\;
			Update $W'=\{w\in W'\cap V(H'):d_{H'}(w)\ge 1\}$\;
		}
		\KwRet{$H'$ and $W'$}\;
		\caption{Iterative edge deletion with respect to an independent set}
		\label{alg:edge-deletion-wrt-W}
	\end{algorithm}
	
	\begin{algorithm}
		\SetAlgoLined
		\SetKwInOut{Input}{Input}
		\SetKwInOut{Output}{Output}
		\Input{A graph $H$ and an independent set $W\subseteq V(H)$ containing no vertex of maximum degree}
		\Output{An updated graph $H'$ and an updated set $W'$}
		\BlankLine
		Let $H'=H$ and $W'=W$\;
		\Repeat{there is no edge $wu\in E_{H'}(W',V(H')\setminus W')$ with
			$w\in W'$ such that $u$ is adjacent in $H'$ to fewer than
			$\Delta(H')-d_{H'}(w)+1$ vertices of maximum degree}{
			Choose an edge $wu\in E_{H'}(W',V(H')\setminus W')$ with $w\in W'$
			such that $u$ is adjacent in $H'$ to fewer than
			$\Delta(H')-d_{H'}(w)+1$ vertices of maximum degree\;
			Delete $wu$ from $H'$ and discard any isolated vertices of $H'$\;
			Update $W'=\{w\in W'\cap V(H'):d_{H'}(w)\ge 1\}$\;
		}
		\KwRet{$H'$ and $W'$}\;
		\caption{Iterative maximum-degree-neighbor deletion with respect to an independent set}
		\label{alg:edge-deletion-wrt-W2}
	\end{algorithm}
	
	Operation~\ref{ope:edge-removing} is a bookkeeping procedure for the trimming stage. In round $i$, the target maximum degree is lowered from $\Delta-2(i-1)$ to $\Delta-2i$, to be realized later by deleting the prescribed edges $E_i$ together with a sparse spanning configuration in $G'_0$. The auxiliary degrees $d_i(v)$ record these intended degree changes before the corresponding configurations have been fixed. The process stops either after the allowed number of rounds or when these degree records reveal an almost-overfull obstruction, at which point the proof moves to the regularization stage.
	
	\begin{Ope}[The Edge-Removal Process]\label{ope:edge-removing}
		For $i\ge 1$, proceed as follows. At the beginning of round $i$, if
		$G_{i-1}$ contains an almost-overfull set, then terminate the process.
		Otherwise define $W^*_{i-1}$ as above. If
		$W^*_{i-1}\subseteq W^*_{i-2}\subseteq \cdots \subseteq W^*_1\subseteq W_0$,
		then the process is performed only while $i\le \Delta/3-\eta n$. If this
		containment chain fails, then the process is performed only while
		$i\le \Delta/3-2\eta n$.
		
		\begin{enumerate}[Step 1:]
			\item \textbf{Preprocessing around $W^*_{i-1}$.}
			
			Initialize $R_i=\emptyset$ and set $G^*_{i-1}=G_{i-1}$. If
			$W^*_{i-1}$ is an independent set in $G_{i-1}$, apply
			Algorithm~\ref{alg:edge-deletion-wrt-W} to $G_{i-1}$ with respect to
			$W^*_{i-1}$. Let $R_i$ be the set of edges deleted by the algorithm, let
			$G^*_{i-1}$ be obtained from $G_{i-1}-R_i$ by discarding any isolated
			vertices, and replace $W^*_{i-1}$ by the subset of vertices that remain
			non-isolated in $G^*_{i-1}$.
			
			By the stopping rule of Algorithm~\ref{alg:edge-deletion-wrt-W} and
			Lemma~\ref{lem:matching-in-critical-graph}, there exist two edge-disjoint
			matchings of
			$G^*_{i-1}[W^*_{i-1},V(G^*_{i-1})\setminus W^*_{i-1}]$
			that each saturate $W^*_{i-1}$. If $W^*_{i-1}$ is not an independent set,
			the existence of these two matchings will be verified in
			Claim~\ref{claim:Gi-property} right after Operation~\ref{ope:edge-removing}.
			
			\item \textbf{Selecting the prescribed edges.}

			We now specify the special edges that will be required in the application of
			Lemma~\ref{lem:removing-linear-forests}. At the beginning of this step, set
			$M_i=N_i=N_i^*=\emptyset$, $I_i=\emptyset$, $h_i=0$, and
			$L_i^*=J_i^*=\emptyset$. The set $L_i^*$ records one or two vertices of
			positive deficiency that will be used as endvertices of a path, while
			$J_i^*$ and $I_i$ record vertices, if any, that will be excluded in the
			current round.

			\medskip
			\noindent\textbf{Edges related to vertices of $W^*_{i-1}$.}
			
			Let $M^1_{i-1}$ and $M^2_{i-1}$ be two edge-disjoint matchings of
			$G^*_{i-1}[W^*_{i-1},V(G^*_{i-1})\setminus W^*_{i-1}]$
			that each saturate $W^*_{i-1}$. If
			$W^*_{i-1}\subseteq W^*_{i-2}\subseteq \cdots \subseteq W_0$, then their
			existence is guaranteed by Step~1 and Lemma~\ref{lem:matching-in-critical-graph}
			since $W_0$ is an independent set. Otherwise, their existence will be shown
			in Claim~\ref{claim:Gi-property}.
			
			\CaseLabel{S2.1}
			Suppose either that $|W^*_{i-1}|$ is even and at least two, or that
			$|W^*_{i-1}|$ is odd and at least three. Also include the case in which
			$|W^*_{i-1}|=1$ and
			$\big|V(G'_{i-1})\cup V(M^1_{i-1}\cup M^2_{i-1})\big|$ is even.
			
			If $|W^*_{i-1}|$ is even, set $M_i=M^1_{i-1}$. If $|W^*_{i-1}|$ is odd,
			obtain $M_i$ from $M^1_{i-1}$ by adding one edge of $M^2_{i-1}$. See
			Figure~\ref{fig:S21-Mi-Ni} for an illustration of $M_i$ when
			$|W^*_{i-1}|$ is odd.
			
			Then the number of degree-one vertices of $G^*_{i-1}[M_i]$ lying in
			$V(M_i)\setminus W^*_{i-1}$ is even. Indeed, if $|W^*_{i-1}|$ is even,
			this number is exactly $|W^*_{i-1}|$. If $|W^*_{i-1}|$ is odd, then adding
			one edge of $M^2_{i-1}$ either introduces one new degree-one vertex outside
			$W^*_{i-1}$ or changes one old such vertex from degree one to degree two.
			In either case, the resulting number is even.
			
			Denote these degree-one vertices by
			$x^*_{i,1},y^*_{i,1},\ldots,x^*_{i,h_i},y^*_{i,h_i}$, where $h_i\ge 1$ by the assumption on the size of $W^*_{i-1}$, and set
			$A_i^*=\{x^*_{i,1},y^*_{i,1},\ldots,x^*_{i,h_i},y^*_{i,h_i}\}$.
			
			Let $N_i$ be a matching of
			$G^*_{i-1}[A_i^*\cap U_0,V(G'_0)\setminus (A_i^*\cap V(G'_0))]$
			that saturates $A_i^*\cap U_0$. For each $a\in A_i^*$, define its
			representative $\rho_i(a)\in V(G'_0)$ as follows. If $a\in V(G'_0)$, set
			$\rho_i(a)=a$. If $a\in U_0$, let $\rho_i(a)$ be the vertex matched to $a$
			by $N_i$. Then, for each $1\le r\le h_i$, set
			$x_{i,r}=\rho_i(x^*_{i,r})$ and $y_{i,r}=\rho_i(y^*_{i,r})$. The existence
			of $N_i$ is guaranteed by Claim~\ref{claim:Gi-property}\eqref{claim-Gi-property-5}. See
			Figure~\ref{fig:S21-Mi-Ni} for an illustration of $N_i$.
			
			\begin{figure}[ht]
				\centering
				\begin{tikzpicture}[
					scale=0.95,
					every node/.style={font=\small},
					Wnode/.style={circle,draw,fill=gray!15,minimum size=7mm,inner sep=0pt},
					Unode/.style={circle,draw,fill=orange!25,minimum size=7mm,inner sep=0pt},
					Gnode/.style={circle,draw,fill=green!25,minimum size=7mm,inner sep=0pt},
					Mone/.style={blue!75!black,very thick},
					Mtwo/.style={red!75!black,very thick},
					Nedge/.style={orange,very thick,dashed},
					Wbox/.style={draw=blue!45,fill=blue!8,rounded corners,inner sep=8pt},
					Ubox/.style={draw=orange!60!black,fill=orange!10,rounded corners,inner sep=8pt},
					Gbox/.style={draw=green!55!black,fill=green!10,rounded corners,inner sep=8pt}
					]
					
					\node[Wnode] (w1) at (0,2) {$w_1$};
					\node[Wnode] (w2) at (0,0) {$w_2$};
					\node[Wnode] (w3) at (0,-2) {$w_3$};
					
					\node[Gnode] (a1) at (3,2) {$a_1$};
					\node[Unode] (a2) at (3,0) {$a_2$};
					\node[Gnode] (a3) at (3,-2) {$a_3$};
					\node[Unode] (b3) at (3,-3.3) {$b_3$};
					
					\node[Gnode] (r1) at (5.8,0.35) {$c_1$};
					\node[Gnode] (r2) at (5.8,-3.0) {$c_2$};
					
					\begin{scope}[on background layer]
						\node[Wbox,fit=(w1)(w2)(w3)] {};
						\node[Ubox,fit=(a1)(a2)(a3)(b3)] {};
						\node[Gbox,fit=(r1)(r2)] {};
					\end{scope}
					
					\node at (0,2.95) {$W^*_{i-1}$};
					\node at (3,2.95) {$V(M_i)\setminus W^*_{i-1}$};
					\node at (5.8,1.3) {$V(G'_0)$};
					
					\draw[Mone] (w1) -- node[above,sloped] {$M^1_{i-1}$} (a1);
					\draw[Mone] (w2) -- (a2);
					\draw[Mone] (w3) -- (a3);
					
					\draw[Mtwo] (w3) -- node[below,sloped] {$M^2_{i-1}$} (b3);
					
					\draw[Nedge] (a2) -- node[above,sloped] {$N_i$} (r1);
					\draw[Nedge] (b3) -- node[below,sloped] {$N_i$} (r2);
					
					\node[anchor=west] at (1.75,-4.35)
					{$\{x^*_{i,1},y^*_{i,1},x^*_{i,2},y^*_{i,2}\}=\{a_1,a_2,a_3,b_3\}$};
					
				\end{tikzpicture}
				
				\caption{An illustration of Case~\eqref{case:S2.1} when $|W^*_{i-1}|$ is odd. The blue solid edges are the edges of $M^1_{i-1}$ used in $M_i$, the red solid edge is the additional edge from $M^2_{i-1}$, and the orange dashed edges form the matching $N_i$. Here $a_1,a_3\in V(G'_0)$, while $a_2,b_3\in U_0$. Thus $N_i$ projects $a_2,b_3$ to $c_1,c_2\in V(G'_0)$, and $\{x_{i,1},y_{i,1},x_{i,2},y_{i,2}\}=\{a_1,c_1,a_3,c_2\}$.}
				\label{fig:S21-Mi-Ni}
			\end{figure}
			
			\CaseLabel{S2.2}
			Suppose that $|W^*_{i-1}|=1$ and
			$\big|V(G'_{i-1})\cup V(M^1_{i-1}\cup M^2_{i-1})\big|$ is odd.
			
			\CaseLabel{S2.2.1}
			If there exists
			$x^*\in V(G'_{i-1})\cup V(M^1_{i-1}\cup M^2_{i-1})$ such that
			$d_{i-1}(x^*)\le \Delta-2(i-1)-1$, then choose
			$M_i\in\{M^1_{i-1},M^2_{i-1}\}$ so that $M_i$ does not cover $x^*$. Let
			$x^*_{i,1}$ be the unique vertex in $V(M_i)\setminus W^*_{i-1}$, set
			$y^*_{i,1}=x^*$, $h_i=1$, and $L_i^*=\{y^*_{i,1}\}$.
			
			Set $A_i^*=\{x^*_{i,1},y^*_{i,1}\}$. Let $N_i$ be a matching of
			$G^*_{i-1}[A_i^*\cap U_0,V(G'_0)\setminus (A_i^*\cap V(G'_0))]$
			that saturates $A_i^*\cap U_0$.  For each $a\in A_i^*$, define its representative
			$\rho_i(a)\in V(G'_0)$ as follows. If $a\in V(G'_0)$, set
			$\rho_i(a)=a$. If $a\in U_0$, let $\rho_i(a)$ be the vertex matched to $a$
			by $N_i$. Then set $x_{i,1}=\rho_i(x^*_{i,1})$ and
			$y_{i,1}=\rho_i(y^*_{i,1})$. The existence of the required projection edges
			is guaranteed by Claim~\ref{claim:Gi-property}\eqref{claim-Gi-property-5}. See
			Figure~\ref{fig:S221-Mi-Ni} for an illustration of $M_i$ and $N_i$.
			
			\begin{figure}[ht]
				\centering
				\begin{tikzpicture}[
					scale=0.95,
					every node/.style={font=\small},
					Wnode/.style={circle,draw,fill=gray!15,minimum size=7mm,inner sep=0pt},
					Unode/.style={circle,draw,fill=orange!25,minimum size=7mm,inner sep=0pt},
					Gnode/.style={circle,draw,fill=green!25,minimum size=7mm,inner sep=0pt},
					Mchosen/.style={blue!75!black,very thick},
					Mnotchosen/.style={red!75!black,very thick,dashed},
					Nedge/.style={orange!85!black,very thick,dashed},
					Wbox/.style={draw=blue!45,fill=blue!8,rounded corners,inner sep=8pt},
					Ubox/.style={draw=orange!60!black,fill=orange!10,rounded corners,inner sep=8pt},
					Gbox/.style={draw=green!55!black,fill=green!10,rounded corners,inner sep=8pt}
					]
					
					\node[Wnode] (w) at (0,0) {$w$};
					\node[Unode] (a) at (4,0) {$a$};
					\node[Gnode] (r) at (8,0) {$b$};
					\node[Gnode] (xstar) at (8,-2) {$x^*$};
					
					\begin{scope}[on background layer]
						\node[Wbox,fit=(w)] {};
						\node[Ubox,fit=(a)] {};
						\node[Gbox,fit=(r)(xstar)] {};
					\end{scope}
					
					\node at (0,0.95) {$W^*_{i-1}$};
					\node at (4,0.95) {$V(M_i)\setminus W^*_{i-1}$};
					\node at (8,0.95) {$V(G'_0)$};
					
					\draw[Mchosen] (w) -- node[above,sloped] {$M_i=M^1_{i-1}$} (a);
					\draw[Mnotchosen] (w) -- node[below,sloped] {$M^2_{i-1}$ not chosen} (xstar);
					\draw[Nedge] (a) -- node[above,sloped] {$N_i$} (r);
					
					\node[anchor=west] at (1.4,-3.15)
					{$x^*_{i,1}=a,\qquad y^*_{i,1}=x^*,\qquad h_i=1$};
					
					\node[anchor=west] at (1.4,-3.7)
					{$x_{i,1}=\rho_i(a)=b,\qquad y_{i,1}=\rho_i(x^*)=x^*$};
					
				\end{tikzpicture}
				
				\caption{An illustration of Case~\eqref{case:S2.2.1}. Here $W^*_{i-1}=\{w\}$. We choose $M_i=M^1_{i-1}$ so that $M_i$ does not cover the vertex $x^*$; the dashed red edge indicates the unused edge of $M^2_{i-1}$ covering $x^*$. The unique vertex in $V(M_i)\setminus W^*_{i-1}$ is $x^*_{i,1}=a$, and we set $y^*_{i,1}=x^*$. In this example, $a\in U_0$, so the matching $N_i$ projects $a$ to $b=\rho_i(a)\in V(G'_0)$, while $x^*\in V(G'_0)$ already represents itself. Thus $x_{i,1}=b$ and $y_{i,1}=x^*$.}
				\label{fig:S221-Mi-Ni}
			\end{figure}
			
			\CaseLabel{S2.2.2}
			Suppose that every vertex
			$u\in \big(V(G'_{i-1})\cup V(M^1_{i-1}\cup M^2_{i-1})\big)\setminus
			W^*_{i-1}$ satisfies $d_{i-1}(u)=\Delta-2(i-1)$. Then
			$V(G'_{i-1})\cup (V(M^1_{i-1}\cup M^2_{i-1})\setminus W^*_{i-1}) =V(G'_{i-1})$. In this case,
			keep $M_i=N_i=\emptyset$ and $h_i=0$, and select no edge related to
			$W^*_{i-1}$.
			
			\CaseLabel{S2.3}
			If $|W^*_{i-1}|=0$ and $|V(G'_{i-1})|$ is even, then keep
			$M_i=N_i=\emptyset$ and $h_i=0$, and select no edge related to
			$W^*_{i-1}$.
			
			\CaseLabel{S2.4}
			Suppose that $|W^*_{i-1}|=0$ and $|V(G'_{i-1})|$ is odd.
			
			\CaseLabel{S2.4.1}
			If $G'_{i-1}$ has two vertices $u,v$ with
			$d_{i-1}(u),d_{i-1}(v)\le \Delta-2(i-1)-1$, then set
			$L_i^*=\{u,v\}$. If $u,v\in V(G'_0)$, set $x_{i,1}=u$, $y_{i,1}=v$, and
			$h_i=1$. Otherwise, choose a matching $N_i$ of
			$G^*_{i-1}[\{u,v\}\cap U_0,V(G'_0)\setminus (\{u,v\}\cap V(G'_0))]$
			that saturates $\{u,v\}\cap U_0$.  For each $a\in\{u,v\}$, define $\rho_i(a)\in V(G'_0)$ by
			setting $\rho_i(a)=a$ if $a\in V(G'_0)$, and otherwise letting
			$\rho_i(a)$ be the vertex matched to $a$ by $N_i$. Then set
			$x_{i,1}=\rho_i(u)$, $y_{i,1}=\rho_i(v)$, and $h_i=1$. The existence of the
			required projection edges is guaranteed by Claim~\ref{claim:Gi-property}\eqref{claim-Gi-property-5}.
			
			\CaseLabel{S2.4.2}
			Suppose Case~\eqref{case:S2.4.1} does not occur. If $G'_{i-1}$ has a
			vertex $s_i^*$ with $d_{i-1}(s_i^*)\le \Delta-2(i-1)-2$, then this vertex
			is unique. If $s_i^*\in V(G'_0)$, set $I_i=\{s_i^*\}$. If
			$s_i^*\notin V(G'_0)$, set $I_i=\emptyset$ and $J_i^*=\{s_i^*\}$.
			
			\CaseLabel{S2.4.3}
			Suppose Cases~\eqref{case:S2.4.1} and~\eqref{case:S2.4.2} do not occur.
			Then $G'_{i-1}$ has at most one vertex $u$ with
			$d_{i-1}(u)<\Delta-2(i-1)$, and, if such a vertex exists, then
			$d_{i-1}(u)=\Delta-2(i-1)-1$.
			
			\CaseLabel{S2.4.3.1}
			If there is a vertex $z\in U_{i-1}$ such that
			$d_{i-1}(z)\ge \Delta/3-\eta n+2$, then choose two distinct vertices
			$x_{i,1},y_{i,1}\in N_{G^*_{i-1}}(z,V(G'_0))$.  Set
			$N_i=\{zx_{i,1},zy_{i,1}\}$ and $h_i=1$.
			
			\CaseLabel{S2.4.3.2}
			Suppose instead that every vertex $z\in U_{i-1}$ satisfies
			$d_{i-1}(z)<\Delta/3-\eta n+2$. Then $G_{i-1}$ is in the critical case.
			Hence $|W^*_{i-1}|=|W'_{i-1}|\in\{1,2\}$, contradicting the assumption that
			$|W^*_{i-1}|=0$.
			
			\medskip
			\noindent\textbf{Edges incident with vertices of
				$U_0\setminus (U_{i-1}\cup V(M_i)\cup V(N_i)\cup J_i^*)$.}
			
			Set
			\[
			Z_i=\{x_{i,1},y_{i,1},\ldots,x_{i,h_i},y_{i,h_i}\}\cup I_i
			\subseteq V(G'_0)
			\]
			and
			\[
			B_i=U_0\setminus (U_{i-1}\cup V(M_i)\cup V(N_i)\cup J_i^*).
			\]
			
			\CaseLabel{S2.5}
			If $B_i\ne \emptyset$, then for each vertex $z\in B_i$, choose two
			distinct vertices
			\[
			z_{i,1},z_{i,2}\in V(G'_0)\setminus Z_i
			\]
			such that $z$ is adjacent in $G^*_{i-1}$ to both $z_{i,1}$ and $z_{i,2}$,
			and such that $z_{i,1}$ and $z_{i,2}$ have not been assigned to any other
			such vertex in the current round. Let $N_i^*$ be the set of all edges
			selected in this way. For each $z\in B_i$, let
			\[
			e_z=z_{i,1}z_{i,2}
			\]
			be the auxiliary edge on $V(G'_0)$ corresponding to the projected two-edge
			path $z_{i,1}zz_{i,2}$. The existence of $N_i^*$ is guaranteed by
			Claim~\ref{claim:Gi-property}\eqref{claim-Gi-property-5}. See
			Figure~\ref{fig:S25-Nistar} for an illustration of $N_i^*$ and the
			auxiliary edges $e_z$.
			
			\begin{figure}[ht]
				\centering
				\begin{tikzpicture}[
					scale=0.95,
					every node/.style={font=\small},
					Unode/.style={circle,draw,fill=orange!25,minimum size=7mm,inner sep=0pt},
					Gnode/.style={circle,draw,fill=green!25,minimum size=7mm,inner sep=0pt},
					Resnode/.style={circle,draw,fill=gray!20,minimum size=7mm,inner sep=0pt},
					Nstar/.style={orange!85!black,very thick,dashed},
					Pedge/.style={green!45!black,very thick,densely dotted},
					Forbidden/.style={gray!70,thick,densely dotted},
					Ubox/.style={draw=orange!60!black,fill=orange!10,rounded corners,inner sep=8pt},
					Gbox/.style={draw=green!55!black,fill=green!10,rounded corners,inner sep=8pt},
					Rbox/.style={draw=gray!60!black,fill=gray!10,rounded corners,inner sep=8pt}
					]
					
					\node[Unode] (z1) at (0,1) {$z$};
					\node[Unode] (z2) at (0,-1) {$z'$};
					
					\node[Resnode] (x) at (3.9,2.85) {$x_{i,1}$};
					\node[Resnode] (y) at (5.25,2.85) {$y_{i,1}$};
					\node[Resnode] (I) at (6.6,2.85) {$I_i$};

					\node[Gnode] (a1) at (3.9,1.25) {$z_{i,1}$};
					\node[Gnode] (a2) at (3.9,0.25) {$z_{i,2}$};
					\node[Gnode] (b1) at (3.9,-0.55) {$z'_{i,1}$};
					\node[Gnode] (b2) at (3.9,-1.55) {$z'_{i,2}$};
					
					\begin{scope}[on background layer]
						\node[Ubox,fit=(z1)(z2)] {};
						\node[Rbox,fit=(x)(y)(I)] {};
						\node[Gbox,fit=(a1)(a2)(b1)(b2)] {};
					\end{scope}
					
					\node[align=center] at (0,2.05)
					{$B_i=U_0\setminus (U_{i-1}\cup V(M_i)\cup V(N_i)\cup J_i^*)$};
					\node[align=center] at (5.0,3.95)
					{Reserved vertices};
					\node[align=center] at (4.75,-2.55)
					{Available vertices in $V(G'_0)$};
					
					\foreach \v in {x,y,I}{
						\draw[Forbidden] ($(\v)+(-0.22,-0.22)$) -- ($(\v)+(0.22,0.22)$);
						\draw[Forbidden] ($(\v)+(-0.22,0.22)$) -- ($(\v)+(0.22,-0.22)$);
					}
					
					\draw[Nstar] (z1) -- node[above,sloped,pos=0.42] {$N_i^*$} (a1);
					\draw[Nstar] (z1) -- (a2);
					
					\draw[Nstar] (z2) -- node[below,sloped,pos=0.42] {$N_i^*$} (b2);
					\draw[Nstar] (z2) -- (b1);
					
					\draw[Pedge] (a1) -- node[right,pos=0.48] {$e_z$} (a2);
					\draw[Pedge] (b1) -- node[right,pos=0.48] {$e_{z'}$} (b2);
					
					\node[anchor=west] at (-0.35,-3.15)
					{$N_i^*=\{zz_{i,1},zz_{i,2},z'z'_{i,1},z'z'_{i,2}\}$};
					
					\node[anchor=west] at (-0.35,-3.7)
					{$e_z=z_{i,1}z_{i,2},\qquad e_{z'}=z'_{i,1}z'_{i,2}$};
					
				\end{tikzpicture}
				
				\caption{An illustration of Case~\eqref{case:S2.5}. For each vertex $z\in B_i$, we choose two distinct neighbors $z_{i,1},z_{i,2}$ in $V(G'_0)\setminus (Z_i\cup V(N_i))$. The orange dashed edges are added to $N_i^*$, while the green dotted edge $e_z=z_{i,1}z_{i,2}$ is the auxiliary edge used in the projected cycle on $V(G'_0)$. When the cycle is projected back, $e_z$ is replaced by the two-edge path $z_{i,1}zz_{i,2}$. The figure shows two such vertices, $z$ and $z'$, and the four chosen representatives $z_{i,1},z_{i,2},z'_{i,1},z'_{i,2}$ are distinct in the current round.}
				\label{fig:S25-Nistar}
			\end{figure}
			
			After Case~\eqref{case:S2.5}, define
			\[
			F_i=\{x_{i,1}y_{i,1},\ldots,x_{i,h_i}y_{i,h_i}\}\cup \{e_z:z\in B_i\}.
			\]
			Thus the edges in $F_i$ are auxiliary edges on $V(G'_0)$. When the future
			cycle is projected back to the original vertex set, each auxiliary edge
			$e_z=z_{i,1}z_{i,2}$ with $z\in B_i$ is replaced by the two-edge path
			$z_{i,1}zz_{i,2}$.
			
			An illustration of the edges $M_i$, $N_i$, and $N_i^*$ all together is
			provided in Figure~\ref{fig:projection-mechanism-vertical}.
			
			\begin{figure}[ht]
				\centering
				\begin{tikzpicture}[
					x=1cm,y=1cm,
					every node/.style={font=\small},
					leftv/.style={circle,draw,fill=orange!28,minimum size=4.2mm,inner sep=0pt},
					rightv/.style={circle,draw,fill=green!28,minimum size=4.2mm,inner sep=0pt},
					exc/.style={circle,draw,fill=gray!20,minimum size=4.2mm,inner sep=0pt},
					proj/.style={orange!85!black,very thick},
					auxedge/.style={green!50!black,very thick,densely dotted},
					pathhint/.style={blue!75!black,thick,dashed},
					mainboxL/.style={draw=orange!75!black,rounded corners=3pt,fill=orange!4},
					mainboxR/.style={draw=green!65!black,rounded corners=3pt,fill=green!5},
					uibox/.style={draw=orange!75!black,rounded corners=2pt,fill=orange!10},
					wbox/.style={draw=red!70!black,rounded corners=2pt,fill=red!8}
					]
					
					\draw[mainboxL] (0,-0.5) rectangle (5.3,8.8);
					\draw[mainboxR] (8.2,-0.5) rectangle (11.8,8.5);
					
					\draw[uibox] (0.25,3.35) rectangle (5.0,8.35);
					\draw[wbox] (0.60,5.20) rectangle (4.70,6.95);
					
					\node at (2.65,-1.05) {$U_0$};
					\node at (10.0,-1.05) {$V(G'_0)$};
					
					\node at (2.65,7.60) {$U_{i-1}$};
					\node[fill=white,inner sep=1pt] at (2.65,6.65) {$W^*_{i-1}$};
					
					\node[leftv] (w1) at (1.45,6.00) {};
					\node[left=0.05cm] at (w1.west) {$w_1$};
					
					\node[leftv] (w2) at (2.65,6.0) {};
					\node[right=0.01cm] at (w2.east) {$w_2$};
					
					\node[leftv] (w3) at (3.85,6.0) {};
					\node[right=0.12cm] at (w3.south) {$w_3$};
					
					\node[leftv] (xs2) at (2.65,4.0) {};
					\node[below=0.10cm] at (xs2.east) {$x^*_{i,2}$};
					
					\node[leftv] (ys2) at (1.65,2.95) {};
					\node[left=0.10cm] at (ys2.west) {$y^*_{i,2}$};
					
					\node[leftv] (z) at (2.25,1.90) {};
					\node[left=0.12cm] at (z.west) {$z$};
					
					\node[leftv] (zp) at (1.75,0.95) {};
					\node[left=0.12cm] at (zp.west) {$z'$};
					
					\draw[blue,thick] (w1) -- (xs2);
					\draw[blue,thick] (w2) -- (ys2);
					
					\node[rightv] (x1) at (9.00,7.80) {};
					\node[right=0.18cm] at (x1.east) {$x_{i,1}=x^*_{i,1}$};
					
					\node[rightv] (y1) at (9.00,6.75) {};
					\node[right=0.18cm] at (y1.east) {$y_{i,1}=y^*_{i,1}$};
					
					\node[rightv] (x2) at (9.00,4.45) {};
					\node[right=0.18cm] at (x2.east) {$x_{i,2}$};
					
					\node[rightv] (y2) at (9.00,3.40) {};
					\node[right=0.18cm] at (y2.east) {$y_{i,2}$};
					
					\node[rightv] (z11) at (9.00,2.30) {};
					\node[right=0.18cm] at (z11.east) {$z_{i,1}$};
					
					\node[rightv] (z12) at (9.00,1.55) {};
					\node[right=0.18cm] at (z12.east) {$z_{i,2}$};
					
					\node[rightv] (zp11) at (9.00,0.95) {};
					\node[right=0.18cm] at (zp11.east) {$z'_{i,1}$};
					
					\node[rightv] (zp12) at (9.00,0.2) {};
					\node[right=0.18cm] at (zp12.east) {$z'_{i,2}$};
					
					\draw[blue,thick] (w3) -- (x1);
					\draw[blue,thick] (w3) -- (y1);
					
					\draw[proj] (xs2) -- (x2);
					\draw[proj] (ys2) -- (y2);
					
					\draw[proj] (z) -- (z11);
					\draw[proj] (z) -- (z12);
					
					\draw[proj] (zp) -- (zp11);
					\draw[proj] (zp) -- (zp12);
					
					\draw[auxedge] (x1) -- node[right=2pt,pos=0.5,fill=white,inner sep=1pt] {$x_{i,1}y_{i,1}$} (y1);
					\draw[auxedge] (x2) -- node[right=2pt,pos=0.5,fill=white,inner sep=1pt] {$x_{i,2}y_{i,2}$} (y2);
					\draw[auxedge] (z11) -- node[right=2pt,pos=0.5,fill=white,inner sep=1pt] {$e_z$} (z12);
					\draw[auxedge] (zp11) -- node[right=2pt,pos=0.5,fill=white,inner sep=1pt] {$e_{z'}$} (zp12);
					
				\end{tikzpicture}
				
				\caption{An illustration of the projection structure. The left panel represents $U_0$, with the shaded inner region indicating $U_{i-1}$ and the darker shaded subregion indicating $W^*_{i-1}\subseteq U_{i-1}$. The vertices $w_1,w_2,w_3$ illustrate vertices of $W^*_{i-1}$. The orange straight edges indicate projections to representatives in $V(G'_0)$. The blue edges represent edges of $M_i$, and the green dotted edges are auxiliary edges in $F_i$. When the future cycle on $G'_0-I_i+F_i$ is projected back, each edge $e_z$ is replaced by a two-edge path through the corresponding vertex $z$, while each edge $x_{i,k}y_{i,k}$ is replaced by the corresponding prescribed path on the left.}
				\label{fig:projection-mechanism-vertical}
			\end{figure}
			
			\item \textbf{Updating the degree records.}
			
			We update the auxiliary degree records by assuming the removal of all edges
			in $E_i:=R_i\cup M_i\cup N_i\cup N_i^*$, and by also accounting for a
			future spanning cycle of
			\[
			G'_0-I_i+F_i
			\]
			that contains every edge of $F_i$. When this cycle is projected back, each
			auxiliary edge $e_z=z_{i,1}z_{i,2}$ with $z\in B_i$ is replaced by the
			two-edge path $z_{i,1}zz_{i,2}$. If $B_i=\emptyset$, then no such auxiliary
			edge $e_z$ is present. Let $Q_i=G_{i-1}[E_i]$.
			
			The values $d_i(v)$ are defined as follows.
			\begin{enumerate}[]
				\item \CaseLabel{S3.1} For every $w\in W^*_{i-1}$, set
				$d_i(w)=d_{i-1}(w)-d_{Q_i}(w)$.
				
				\item \CaseLabel{S3.2} For every $u\in U_{i-1}\setminus W^*_{i-1}$,
				set $d_i(u)=d_{i-1}(u)-d_{Q_i}(u)$.
				
				\item \CaseLabel{S3.3} For every $u\in U_0\setminus U_{i-1}$, set
				$d_i(u)=d_{i-1}(u)-d_{Q_i}(u)$. Thus a vertex handled by $N_i^*$ is
				charged two, a vertex in $L_i^*\setminus V(G'_0)$ is charged one, and
				a vertex in $J_i^*$ is not charged in this round.
				
				\item \CaseLabel{S3.4} For every $v\in V(G'_0)$, set
				\[
				d_i(v)=
				\begin{cases}
					d_{i-1}(v), & \text{if } v\in I_i,\\
					d_{i-1}(v)-1, & \text{if } v\in L_i^*\cap V(G'_0),\\
					d_{i-1}(v)-2, & \text{otherwise.}
				\end{cases}
				\]
			\end{enumerate}
			
			\item \textbf{Updating the graph and the exceptional sets.}
			
			Let $G_i$ be obtained from $G^*_{i-1}-(E_i\setminus R_i)=G_{i-1}-E_i$ by
			discarding any isolated vertices, and define
			\[
			U_i=\{v\in U_{i-1}\cap V(G_i):\Delta-2i-d_i(v)\ge \eta n\}.
			\]
			Then let
			\[
			W_i=U_i\cap W_{i-1} \quad \text{and} \quad G'_i=G_i-U_i.
			\]
			Increase $i$ by one and repeat the process.
		\end{enumerate}
	\end{Ope}
	
	In the claim below, we verify the auxiliary properties needed in Step~2 of
	Operation~\ref{ope:edge-removing}. In particular, we prove the existence of the
	matchings $M^1_{i-1}$ and $M^2_{i-1}$ when $W^*_{i-1}$ is not an independent set in the current graph, the
	existence of the matching $N_i$, and the availability of the vertices used in
	the construction of $N_i^*$. Throughout the claim, the statements are
	understood at the beginning of Step~2 of round $i$, after Step~1 has been
	performed. Thus $G^*_{i-1}=G_{i-1}-R_i$ after discarding isolated vertices.
	
	\begin{CLA}\label{claim:Gi-property}
		For each round $i\ge 1$ for which Step~2 of
		Operation~\ref{ope:edge-removing} is reached, the following statements hold.
		\begin{enumerate}[(1)]
			\item $U_{i-1}\subseteq \ldots \subseteq U_0$.
			\label{claim-Gi-property-1}
			
			\item For any vertex $u\in U_0\cap V(G_{i-1})$, we have
			$d_{i-1}(u)=d_{G_{i-1}}(u)$.
			\label{claim-Gi-property-1.3}
			
			\item If it exists, let $j\ge 0$ be the smallest index for which
			$W_j^*=W'_j$. Then $W^*_{i-1}=W'_j$ for every $i\ge j+1$ for which
			Operation~\ref{ope:edge-removing} reaches Step~2, and
			\[
			d_{G^*_{i-1}}(u)\ge
			\Delta/3-\eta n-2\eta^2n-(i-1-j)
			\]
			for any $u\in W^*_{i-1}$.
			\label{claim-Gi-property-1.5}

	\item \label{claim-Gi-property-3}
		For every $u\in V(G^*_{i-1})\setminus W^*_{i-1}$, we have
		\[
		d_{G^*_{i-1}}(u)\ge
		\begin{cases}
			\Delta-\eta n-2(i-1)-\eta^2n
			\ge \Delta/3-\eta n-2\eta^2n-4,
			& \text{if $u\in V(G^*_{i-1})\setminus U_{i-1}$},\\
			d_0(u)-2\eta^2n-4-2\alpha_u
			\ge \Delta/3-\eta n-2\eta^2n-4,
			& \text{if $u\in U_{i-1}\setminus W^*_{i-1}$},
		\end{cases}
		\]
		where $\alpha_u$ denotes the number of rounds before round $i$ in which
		$u$ is chosen as the special vertex $z$ in
		Case~\eqref{case:S2.4.3.1}.

			\item The graph
			$G^*_{i-1}[W^*_{i-1},V(G^*_{i-1})\setminus W^*_{i-1}]$ has two
			edge-disjoint matchings $M^1_{i-1}$ and $M^2_{i-1}$, each of which
			saturates $W^*_{i-1}$.
			\label{claim-Gi-property-4}
			
			\item Let $U^*_{i-1}\subseteq U_{i-1}\setminus W^*_{i-1}$ with
			$|U^*_{i-1}|<2\eta^2 n$, and let $Z^*_{i-1}\subseteq V(G'_0)$ with
			$|Z^*_{i-1}|<2\eta^2 n$. Then
			$G^*_{i-1}[U^*_{i-1},V(G'_0)\setminus Z^*_{i-1}]$ has a matching
			saturating $U^*_{i-1}$.
			\label{claim-Gi-property-5}
			
			\item In Operation~\ref{ope:edge-removing}, for each $u\in V(G'_0)$, we
			can ensure that
			\[
			u\in \{x_{i,1},y_{i,1},\ldots,x_{i,h_i},y_{i,h_i}\}\cup I_i
			\cup (V(N_i^*)\cap V(G'_0))
			\]
			for at most $3\eta n$ distinct indices $i$.
			\label{claim-Gi-property-6}
		\end{enumerate}
	\end{CLA}
	
	\begin{proof}
		For Claim~\ref{claim:Gi-property}\eqref{claim-Gi-property-1}, the assertion
		follows immediately from the definition of $U_i$ in Step~4 of
		Operation~\ref{ope:edge-removing}.
		
		For Claim~\ref{claim:Gi-property}\eqref{claim-Gi-property-1.3}, for $i=1$ the
		assertion follows since $d_0(u)=d_{G_0}(u)$. Now suppose that $i\ge 2$ and the
		assertion holds before round $i$. Let $u\in U_0\cap V(G_{i-1})$. Since
		$G_{i-1}=G_{i-2}-E_{i-1}$ after discarding isolated vertices, we have
		$d_{G_{i-1}}(u)=d_{G_{i-2}}(u)-d_{Q_{i-1}}(u)$, 
		where $Q_{i-1}=G_{i-2}[E_{i-1}]$. On the other hand, Step~3 of
		Operation~\ref{ope:edge-removing} gives
		$d_{i-1}(u)=d_{i-2}(u)-d_{Q_{i-1}}(u)$. Since
		$d_{i-2}(u)=d_{G_{i-2}}(u)$ by the induction hypothesis, we get
		$d_{i-1}(u)=d_{G_{i-1}}(u)$.
		
		For Claim~\ref{claim:Gi-property}\eqref{claim-Gi-property-1.5}, let $j\ge 0$
		be the first index for which $W_j^*=W'_j$. Then $W_j=\emptyset$, and
		$W'_j\subseteq U_j$ with $|W'_j|\le 2$ and $|W'_j\cup V(G'_j)|$ even. It
		suffices to show that no vertex of $W'_j$ becomes isolated in any later round
		for which Step~2 of Operation~\ref{ope:edge-removing} is reached.

	If $j>\Delta/3-2\eta n$, then the process stops immediately, so it remains
	only to prove the asserted degree bound. This follows by the same argument as
	in the case $j\le\Delta/3-2\eta n$. We therefore assume below that
	$j\le\Delta/3-2\eta n$.

		Let $u\in W'_j$. Since $u\in U_j\subseteq U_0$, we have
		$d_0(u)=d_{G_0}(u)\ge \Delta/3-\eta n$. By the minimality of $j$, we have
		$W^*_{s-1}=W_{s-1}\subseteq W_0$ for every $s\in[j]$. Hence, before the
		critical case is entered, if the degree record of $u$ decreases in round $s$,
		then $u$ is incident in $G_{s-1}$ with an edge of
		$R_s\cup M_s\cup N_s$ arising from an edge between $u$ and
		$W^*_{s-1}$.
		Moreover, $R_s\cup M_s\subseteq
		E_{G_0}\bigl(W_0,V(G_0)\setminus W_0\bigr)$, 
		and each edge of $N_s$ either corresponds to the projection of an edge of
		$M_s$, or arises from a vertex $z\in U_{s-1}$ satisfying
		$d_{s-1}(z)\ge \Delta/3-\eta n+2$, as in
		Case~\eqref{case:S2.4.3.1}. In the latter case, after the two edges of $N_s$
		incident with $z$ are deleted, its degree record remains at least
		$\Delta/3-\eta n$. Thus such a use does not reduce the degree record below
		the lower bound used here. Consequently, when estimating the remaining degree
		loss of $u$, we only need to count its uses through $R_s$, $M_s$, and the
		projection edges of $N_s$ corresponding to edges of $M_s$.
		
		Each edge from $u$ to $W_0$ can be charged at most twice: once
		through an edge of $R_s\cup M_s$, and once through the corresponding
		projection edge in $N_s$. Therefore
		\[
		d_{G_j^*}(u)\ge d_{G_0}(u)-2e_{G_0}(u,W_0)
		\ge \Delta/3-\eta n-2\eta^2n.
		\]

		We claim that no later edge from an $R_s$ is deleted with respect to $W'_j$.
		Suppose not, and let $s\ge j+1$ be the first round after the critical case in
		which Algorithm~\ref{alg:edge-deletion-wrt-W} deletes an edge with respect to
		$W^*_{s-1}=W'_j$. Let $uv$ be the first such edge deleted in that round, with
		$u\in W'_j$. At the moment just before $uv$ is deleted, we have
		$e(v,W'_j)\ge d(u)$. Since $|W'_j|\le 2$, this gives $d(u)\le 2$. On the other
		hand, by the choice of $s$, no preprocessing edge has been deleted with
		respect to $W'_j$ in the rounds $j+1,\ldots,s-1$, and in each such round only
		one prescribed edge incident with $u$ is deleted. Hence
		\[
		d(u)\ge \Delta/3-\eta n-2\eta^2n-(s-1-j).
		\]
		Thus $s-1-j\ge \Delta/3-\eta n-2\eta^2n-2$, contradicting
		$s-1\le \Delta/3-2\eta n$ from the stopping condition of
		Operation~\ref{ope:edge-removing}. Therefore no
		such $R_s$-edge is deleted.
		
		Thus, for any $i\ge j+1$ for which Step~2 is reached,
		\[
		d_{G^*_{i-1}}(u)\ge d_{G^*_j}(u)-(i-1-j)
		\ge \Delta/3-\eta n-2\eta^2n-(i-1-j)>0.
		\]
		It follows that $W^*_{i-1}=W'_j$ for every such $i$.

		For Claim~\ref{claim:Gi-property}\eqref{claim-Gi-property-3}, 
		we first estimate the degree loss of a vertex $u\in V(G^*_{i-1})\setminus W^*_{i-1}$ 
		with respect to the edges deleted in the preprocessing step of Operation~\ref{ope:edge-removing}. 
		Every edge in
		$R_s$ is deleted in the preprocessing step with respect to $W^*_{s-1}$. Before
		the critical case, $W^*_{s-1}\subseteq W_0$ for every $s$, so the charging
		property of Algorithm~\ref{alg:edge-deletion-wrt-W} gives
		$d_{G_0[\bigcup_{s=1}^{i}R_s]}(u)\le e_{G_0}(u,W_0)$ for any fixed vertex
		$u$. If the critical case occurs, then after the critical case,
		Claim~\ref{claim:Gi-property}\eqref{claim-Gi-property-1.5} shows that no
		$R_s$-edge is deleted with respect to the auxiliary active set, so there is no
		further contribution. Hence
		$d_{G_0[\bigcup_{s=1}^{i}R_s]}(u)\le e_{G_0}(u,W_0)\le \eta^2n$. 
		
		For
		$u\in V(G^*_{i-1})\setminus U_{i-1}$, the definition of $U_{i-1}$ and the
		bound above give
		$d_{G^*_{i-1}}(u)\ge \Delta-\eta n-2(i-1)-\eta^2n> \Delta/3-\eta n-2\eta^2n-4$, 
		since the operation is performed only while
		$i\le \Delta/3-\eta n$ or $i\le \Delta/3-2\eta n$.

		Now let $u\in U_{i-1}\setminus W^*_{i-1}$, and let $\alpha_u$ be the
		number of rounds $s\in[i-1]$ in which $u$ is chosen as the special vertex
		$z$ in Case~\eqref{case:S2.4.3.1}. Let
	$i_1=d_{G_0[\bigcup_{s=1}^{i}R_s]}(u)$, 
		and let $i_2$ be the total number of edges incident with $u$ in
		$\bigcup_{s=1}^{i-1}(M_s\cup N_s)$, excluding, in each round in which $u$ is
		chosen as the special vertex $z$, the two prescribed edges of $N_s$ incident
		with $u$. Each time that $u$ is chosen as the special vertex $z$, the two prescribed
		edges of $N_s$ decrease its degree by two, while all other losses before
		Step~2 of round $i$ are counted by $i_1+i_2$. Hence
		\[d_{G^*_{i-1}}(u)
		\ge d_0(u)-2\alpha_u-i_1-i_2.
		\]

	It remains to bound $i_1+i_2$. Before the critical case, every active set is
	contained in $W_0\subseteq U_0$. Thus every edge of $R_s$ or $M_s$ incident
	with $u$ has its other endpoint in $W_0$. Moreover, except in
	Case~\eqref{case:S2.4.3.1}, every edge of $N_s$ incident with $u$ is the
	projection of an edge of $M_s$ incident with a vertex of $W_0$.
	Consequently, each edge from $u$ to $W_0$ is charged at most twice: once as
	an edge of $R_s\cup M_s$, and once through the corresponding projection edge
	in $N_s$.

		After the critical case, there is a unique inherited auxiliary active
		set $W'_t$. By
		Claim~\ref{claim:Gi-property}\eqref{claim-Gi-property-1.5}, no
		preprocessing edge is deleted with respect to $W'_t$. Since
		$|W'_t|\le 2$, the prescribed edges involving the vertices of $W'_t$,
		together with their projections, contribute at most four additional
		losses at $u$. Therefore, $i_1+i_2
		\le 2e_{G_0}(u,W_0)+4
		\le 2\eta^2n+4$. 
		It follows that $d_{G^*_{i-1}}(u)\ge d_0(u)-2\alpha_u-2\eta^2n-4$.

		We finally show that $d_0(u)-2\alpha_u\ge \Delta/3-\eta n$. 
		If $\alpha_u=0$, this follows from $u\in U_{i-1}\subseteq U_0$ and the
		definition of $U_0$. Suppose that $\alpha_u>0$, and let $r$ be the last
		round in which $u$ is chosen as the special vertex $z$. Before round $r$,
		the vertex $u$ has already been used as $z$ exactly $\alpha_u-1$ times.
		Since each such use deletes two edges incident with $u$, while all other
		operations can only decrease its degree record, we have $d_{r-1}(u)\le d_0(u)-2(\alpha_u-1)$. 
		On the other hand, the defining condition of
		Case~\eqref{case:S2.4.3.1} gives $d_{r-1}(u)\ge \Delta/3-\eta n+2$. 
		Thus $d_0(u)-2\alpha_u\ge \Delta/3-\eta n$.  
		Combining this with the  bound $d_{G^*_{i-1}}(u)\ge d_0(u)-2\alpha_u-i_1-i_2$ yields
		\[
		d_{G^*_{i-1}}(u)
		\ge d_0(u)-2\alpha_u-2\eta^2n-4
		\ge \Delta/3-\eta n-2\eta^2n-4,
		\]
		as required.

		For Claim~\ref{claim:Gi-property}\eqref{claim-Gi-property-4}, if
		$W^*_{i-1}$ is independent in $G_{i-1}$, then after Step~1 the existence of two
		edge-disjoint matchings saturating $W^*_{i-1}$ follows from the stopping rule
		of Algorithm~\ref{alg:edge-deletion-wrt-W} and
		Lemma~\ref{lem:matching-in-critical-graph}.
		
		Now suppose $W^*_{i-1}$ is not independent. Since $W_0$ is independent in
		$G_0$, this cannot occur if $W^*_{i-1}\subseteq W_0$, so we are after the
		unique critical case. Hence $W^*_{i-1}=W'_j$ is the auxiliary active set chosen
		in that case for some $j\le i$, and since $W^*_{i-1}$ is not independent and
		$|W^*_{i-1}|\le 2$, we get $|W^*_{i-1}|=2$. By  
		Claim~\ref{claim:Gi-property}\eqref{claim-Gi-property-1.5},
		\[
	d_{G^*_{i-1}}(u) \ge \Delta/3-\eta n-2\eta^2n-(i-1-j)>5
		\]
		for each $u\in W^*_{i-1}$, where the last inequality uses
		$i\le\Delta/3-2\eta n$. Hence each vertex of $W^*_{i-1}$ has at least four
		neighbors in $V(G^*_{i-1})\setminus W^*_{i-1}$, and so
		$G^*_{i-1}[W^*_{i-1},V(G^*_{i-1})\setminus W^*_{i-1}]$ has two edge-disjoint
		matchings saturating $W^*_{i-1}$.
		
		For Claim~\ref{claim:Gi-property}\eqref{claim-Gi-property-5}, let
		$u\in U^*_{i-1}$. By Claim~\ref{claim:Gi-property}\eqref{claim-Gi-property-3},
		we have $d_{G^*_{i-1}}(u)\ge \Delta/3-\eta n-2\eta^2n-4$. 
	Excluding neighbors in $U_{0}$ and in
		$Z^*_{i-1}$, in $G^*_{i-1}$, the number of available neighbors of $u$ in
		$V(G'_0)\setminus Z^*_{i-1}$ is at least
		\[
		\Delta/3-\eta n-2\eta^2n-4-|U_{0}|-|Z^*_{i-1}|
		\ge \Delta/3-\eta n-5\eta^2n-4
		>2\eta^2n>|U^*_{i-1}|.
		\]
	 Hence
		a matching saturating $U^*_{i-1}$ exists greedily in
		$G^*_{i-1}[U^*_{i-1},V(G'_0)\setminus Z^*_{i-1}]$.
		
		For Claim~\ref{claim:Gi-property}\eqref{claim-Gi-property-6}, fix
		$u\in V(G'_0)$ and partition its appearances into freely chosen and forced.
		
		\medskip\noindent\textit{Freely chosen appearances.}
		In each round at most $\max\{2|U_0|,4\}\le 2\eta^2 n$ vertices of $V(G'_0)$
		are needed as freely chosen projection vertices, so over fewer than
		$\Delta/3$ rounds the total number of such usages is less than
		$\frac{2}{3}\eta^2n\Delta$. Hence at most $\frac{2}{3}\eta^2n\Delta/(\eta n)\le\eta\Delta$ vertices
		are used in at least $\lfloor\eta n\rfloor$ such rounds. By
		Claim~\ref{claim:Gi-property}\eqref{claim-Gi-property-3} and
		Claim~\ref{claim:Gi-property}\eqref{claim-Gi-property-1.3}, every vertex
		requiring projection has at least $\Delta/3-2\eta n$ neighbors in $V(G'_0)$.
		After excluding those already used in at least $\lfloor\eta n\rfloor$ rounds
		and the fewer than $2\eta^2n$ vertices already fixed in the current round, at
		least $\Delta/3-2\eta n-\eta n-2\eta^2n>3\eta^2n$ 
		neighbors remain. Hence one may choose all freely selected representatives so
		that each $u\in V(G'_0)$ appears in fewer than $\eta n$ such rounds.
		
		\medskip\noindent\textit{Forced $M_i$-endpoint appearances.}
		These occur in Cases~\eqref{case:S2.1} and~\eqref{case:S2.2} when
		$a\in A_i^*$ already lies in $V(G'_0)$, forcing $\rho_i(a)=a$. Apart from the
		exceptional vertex placed in $L_i^*$ in Case~\eqref{case:S2.2.1}, each such
		appearance corresponds to a distinct edge of $M_i$ incident with $a$ and a
		vertex of $W^*_{i-1}$. Before the critical case the count is at most
		$e_{G_0}(a,W_0)$, and after it the auxiliary active set has size at most two,
		contributing at most two further appearances. Hence each $a\in V(G'_0)$
		appears in this way at most $e_{G_0}(a,W_0)+2\le\eta^2n+2$ times.
		
		\medskip\noindent\textit{Exceptional forced appearances.}
		These include $y^*_{i,1}$ in Case~\eqref{case:S2.2.1} and vertices  selected in
		Cases~\eqref{case:S2.4.1} and~\eqref{case:S2.4.2}, when they lie in
		$V(G'_0)$. For such $u\in V(G'_{i-1})$,
		Claim~\ref{claim:Gi-property}\eqref{claim-Gi-property-3} gives
		$d_{i-1}(u)\ge \Delta-\eta n-2(i-1)-\eta^2n$. 
		Since each such occurrence causes $u$ to lose one less degree than the other
		vertices of $V(G'_0)$, while a vertex in $I_i$ loses exactly two less, and the
		maximum auxiliary degree is $\Delta-2(i-1)$, it follows that $u$ can appear in
		this way in at most $\eta n+\eta^2n$ rounds.
		
		\medskip
		Summing the three bounds, each $u\in V(G'_0)$ appears in
		\[
		\{x_{i,1},y_{i,1},\ldots,x_{i,h_i},y_{i,h_i}\}\cup I_i
		\cup(V(N_i^*)\cap V(G'_0))
		\]
		for at most
		\[
		(\eta n)+(\eta^2n+2)+(\eta n+\eta^2n)<3\eta n
		\]
		indices $i$. 
	\end{proof}

	Let $\ell$ be the number of completed rounds of the edge-removal process. For
	each $i\in[\ell]$, recall that
	\[
	B_i=U_0\setminus (U_{i-1}\cup V(M_i)\cup V(N_i)\cup J_i^*)
	\]
	and
	\[
	F_i=\{x_{i,1}y_{i,1},\ldots,x_{i,h_i}y_{i,h_i}\}\cup \{e_z:z\in B_i\},
	\]
	where we let $F_i=\emptyset$ if $h_i=0$ and $B_i=\emptyset$. These auxiliary
	edges are regarded as labelled edges distinct from the edges of $G'_0$, even if
	they have the same endpoints as some edge of $G'_0$.
	
	By construction, $|F_i|\le 2\eta^2 n$ for every $i\in[\ell]$. Furthermore, by
	Claim~\ref{claim:Gi-property}\eqref{claim-Gi-property-6}, the choices can be
	made so that each vertex of $V(G'_0)$ belongs to $V(F_i)$ for at most
	$3\eta n$ distinct indices $i\in[\ell]$.
	
	For each $i\in[\ell]$, choose distinct vertices
	$v_{i,1},v_{i,2}\in V(G'_0)\setminus (V(F_i)\cup I_i)$ so that, for every
	$v\in V(G'_0)$, there are at most two pairs $(i,j)\in[\ell]\times[2]$ with
	$v=v_{i,j}$. This can be done greedily, since in each round the forbidden set
	has size $O(\eta^2n)$ and $\ell\le \Delta/3$.
	Let $P_{i,1},\ldots,P_{i,f_i}$ be the edges of $F_i$, each regarded as a
	path of length one. For each $j\in[f_i]$, let $u_{i,j}$ and $w_{i,j}$ be the
	endvertices of $P_{i,j}$.
	If $f_i\ge 1$, let $L_i$ be the multiset consisting
	of the path
	\[
	v_{i,1}u_{i,1}P_{i,1}w_{i,1}u_{i,2}P_{i,2}w_{i,2}\cdots
	u_{i,f_i}P_{i,f_i}w_{i,f_i}v_{i,2},
	\]
	the one-edge path $v_{i,2}v_{i,1}$, and all vertices of $I_i$ as isolated
	vertices. If $f_i=0$, let $L_i$ consist of two copies of the one-edge path
	$v_{i,1}v_{i,2}$, together with all vertices of $I_i$ as isolated vertices.
	
	Then $(L_i,F_i)$ is a layout for each $i\in[\ell]$: indeed,
	$F_i\subseteq E(L_i)$, while the connector edges, in particular the edge
	$v_{i,2}v_{i,1}$, ensure that $E(L_i)\setminus F_i\neq\emptyset$. Denote
	$L=\bigcup_{i\in[\ell]}L_i$. For each $i\in[\ell]$, we have
	$V(L_i)\subseteq V(G'_0)$, $|V(L_i)|\le 2e(F_i)+4<6\eta^2n$, and
	$|E(L_i)|\le 2e(F_i)+2<6\eta^2 n$. Hence, with $\varepsilon=\eta^{\frac 14}$, we
	have $|V(L_i)|\le \varepsilon^2| V(G'_0)|$ and $|E(L_i)|\le \varepsilon^4 | V(G'_0)|$ for $n$
	sufficiently large.
	
	Moreover, for each $v\in V(G'_0)$,
	Claim~\ref{claim:Gi-property}\eqref{claim-Gi-property-6} and the construction
	of the connector vertices imply that $v\in V(L_i)$ for at most
	$3\eta n+2$ distinct indices $i$. Each such occurrence contributes at most two
	to $d_L(v)$, and the isolated occurrences from $I_i$ contribute zero. Thus
	$d_L(v)\le 6\eta n+4$. Since $\eta\ll\varepsilon$, we have
	$d_L(v)\le \varepsilon^3| V(G'_0)|$ and $v\in V(L_i)$ for at most $\varepsilon^2| V(G'_0)|$
	indices $i$. Therefore the assumptions on the layouts in
	Lemma~\ref{lem:removing-linear-forests} are satisfied.
	
	Let $\alpha^*=\Delta/n$. Since $d_{G'_0}(v)\ge (\alpha^*-\eta)n-\eta n $ for every
	$v\in V(G'_0)$,   the maximum degree of $G'_0$ is at most
	$\Delta=\alpha^*n$,  and $|V(G_0')| \ge (1-\eta^2) n$, the graph $G'_0$ is $(\alpha^*,3\eta)$-almost regular.
	Furthermore, $G'_0$ is a robust $(\nu-\eta^2,2\tau)$-expander by
	Lemma~\ref{lem:stability-expansion}. Since the algorithm stops before $\Delta/3$ rounds,
	we have $\ell\le \Delta/3$.
	
	We apply Lemma~\ref{lem:removing-linear-forests} to $G'_0$, with $\alpha^*$,
	$\nu -\eta^2$, $2\tau$, and $\alpha^*/3$ playing the roles of
	$\alpha,\nu,\tau$, and $\eta^*$, respectively, and with $3\eta$ and
	$\eta^{\frac 14}$ playing the roles of $\varepsilon^*$ and $\varepsilon$. We obtain
	edge-disjoint spanning configurations
	$\mathcal{C}_1,\ldots,\mathcal{C}_\ell\subseteq G'_0+\bigcup_{i=1}^{\ell}F_i$
	such that $\mathcal{C}_i$ has shape $(L_i,F_i)$ for each $i\in[\ell]$. Since
	the vertices of $I_i$ are isolated in $L_i$, they remain isolated in
	$\mathcal{C}_i$. Thus the non-isolated part of $\mathcal{C}_i$, which we
	denote by $C_i$, is a cycle on $V(G'_0)\setminus I_i$ containing every edge of
	$F_i$. Moreover,
	\[
	G'_0-\bigcup_{i=1}^{\ell}(E(C_i)\setminus E(F_i))
	\]
	is a robust $(\nu'/2,8\tau)$-expander.

	We now project these cycles back to the original graph. Set $H_0=G_0$. For
	each $i\in[\ell]$, define
	\[
	E_i^*=(E(C_i)\setminus E(F_i))\cup R_i\cup M_i\cup N_i\cup N_i^*.
	\]
	Let $H_i$ be obtained from $H_{i-1}-E_i^*$ by discarding all isolated
	vertices, and let $H_i^*$ be obtained from $H_{i}-R_{i+1}$ by discarding all
	isolated vertices.
	
	Finally, suppose that $W_\ell^*\ne\emptyset$. Starting with $H_\ell^*$, apply
	Algorithm~\ref{alg:edge-deletion-wrt-W2} successively with respect to the
	singletons $\{u\}$, where $u\in W_\ell^*$ and
	$d_{H_\ell^*}(u)<\Delta(H_\ell^*)$. By Lemma~\ref{lemma:reduce-to-weak-VAL}, these applications do not
	change the maximum degree. Thus, throughout the entire deletion process, the
	maximum degree remains equal to $\Delta(H_\ell^*)$.
	
	Let $R_{\ell+1}$ be the union of all edges deleted in these applications.
	After all applications have been completed, discard all isolated vertices and
	retain the notation $H_\ell^*$ for the resulting graph. If
	$W_\ell^*=\emptyset$, set $R_{\ell+1}=\emptyset$ and leave $H_\ell^*$
	unchanged.
	
	We next verify that an inequality obtained when processing one vertex
	$u\in W_\ell^*$ is preserved during all later applications. At the end of the
	application with respect to $\{u\}$, every neighbor $v$ of $u$ in the current
	graph is adjacent to at least $\Delta(H_\ell^*)-d(u)+1$ 
	vertices of maximum degree, where $d(u)$ denotes the degree of $u$ at that
	time. In any later application, if an edge $wv$ is deleted, then $w$ is not a
	vertex of maximum degree. Hence none of the edges from $v$ to the
	maximum-degree vertices counted above is deleted. Consequently, as long as
	$uv$ remains an edge, we have $	d(v)\ge \Delta(H_\ell^*)-d(u)+1$. 
	Since $uv$ itself contributes one additional edge incident with $v$, this
	gives $d(u)+d(v)\ge \Delta(H_\ell^*)+2$. 
	Thus the inequalities obtained in the successive applications are preserved,
	and hence
	\begin{equation}
		d_{H_\ell^*}(u)+d_{H_\ell^*}(v)
		\ge \Delta(H_\ell^*)+2 
	\label{eqn:weak-VAL}
	\end{equation}
	for every edge $uv\in E(H_\ell^*)$ such that $u\in W_\ell^*$ and
	$d_{H_\ell^*}(u)<\Delta(H_\ell^*)$.
	
	Here the graphs $H_i$ correspond to the graphs $G_i$ in
	Operation~\ref{ope:edge-removing}, while the graphs $H_i^*$ correspond to the
	graphs $G_i^*$ in that operation. We now prove that these graphs satisfy the
	following properties.

	\begin{CLA}\label{claim:property-Gi}
		For each $0\le i\le \ell$, $H^*_i$ satisfies the following properties.
		\begin{enumerate}[(1)]
			\item\label{claim:property-Gi-expander}
			$H^*_i$ is a robust $(\nu'/2-2\eta^2,16\tau)$-expander.
			
			\item\label{claim:property-Gi-class2}
			$H^*_i$ is class~$2$ with $\Delta(H^*_i)=\Delta-2i$.
			
			\item\label{claim:property-Gi-weak-val}
			for every $uv\in E(H^*_i)$, we have
			\[
			d_{H^*_i}(u)+d_{H^*_i}(v)\ge \Delta-2i-2\eta n.
			\]
			
			\item\label{claim:property-Gi-no-overfull}
			$H^*_i$ contains no $\Delta(H^*_i)$-overfull subgraph.

		\end{enumerate}
	\end{CLA}
	
	\begin{proof}
		Write $\Delta_i=\Delta-2i$.
		
		For Claim~\ref{claim:property-Gi}\eqref{claim:property-Gi-expander}, by
		Lemma~\ref{lem:removing-linear-forests}, $H^*_\ell[V(G'_0)]$ is a robust
		$(\nu'/2,8\tau)$-expander. Since
		$H^*_\ell[V(G'_0)]\subseteq H^*_i[V(G'_0)]$ for every
		$0\le i\le\ell$, each $H^*_i[V(G'_0)]$ is also a robust
		$(\nu'/2,8\tau)$-expander. Moreover,
		$|V(H^*_i)\setminus V(G'_0)|\le |U_0|\le\eta^2n <2 \eta^2 |V(G'_0)|$, so
		Lemma~\ref{lem:stability-expansion} gives that $H^*_i$ is a robust
		$(\nu'/2-2\eta^2,16\tau)$-expander.
		
		For Claim~\ref{claim:property-Gi}\eqref{claim:property-Gi-class2}, by
		Lemma~\ref{lemma:reduce-to-weak-VAL}, $H^*_0$ is class~$2$ with
		$\Delta(H^*_0)=\Delta$. Let $i\ge1$ and suppose that $H^*_{i-1}$ is
		class~$2$ with $\Delta(H^*_{i-1})=\Delta_{i-1}$. 
		
		Let $\mathcal{Q}_i=H_{i-1}[E_i^*\setminus R_i]$. The graph $\mathcal{Q}_i$ is
		obtained from $C_i$ by replacing each auxiliary edge
		$x_{i,k}y_{i,k}\in F_i$ by the corresponding path in $M_i\cup N_i$, and
		each auxiliary edge $z_{i,1}z_{i,2}\in F_i$ by the path
		$z_{i,1}zz_{i,2}$ in $N_i^*$. In Case~\eqref{case:S2.4.2}, no projection is made: if
		the unique deficient vertex $s_i^*$ lies in $V(G'_0)$, it is placed in
		$I_i$, while if $s_i^*\notin V(G'_0)$, it is recorded in $J_i^*$.
		
		By the construction of $E_i^*$ and Step~3 of Operation~\ref{ope:edge-removing}, every vertex of degree
		$\Delta_{i-1}$ in $H^*_{i-1}$ loses exactly two incident edges when passing
		to $H_i$. Indeed, every uncharged vertex of $V(G'_0)$ has degree~$2$ in
		$\mathcal{Q}_i$, while the charged vertices in $I_i\cup L_i^*$ already
		carry enough deficit: vertices in $L_i^*$ lose only one incident edge but
		have degree at most $\Delta_{i-1}-1$ in $H^*_{i-1}$, and vertices in
		$I_i$ lose no incident edge but have degree at most $\Delta_{i-1}-2$ in
		$H^*_{i-1}$.
		
		For vertices outside $V(G'_0)$, all of which lie in $U_0$, we argue as
		follows. If such a vertex lies in $U_{i-1}$, then by the definition of
		$U_{i-1}$ it already has degree at most $\Delta_{i-1}-\eta n$ in
		$H^*_{i-1}$, and hence cannot have degree larger than $\Delta_i$ in
		$H_i$. If it lies in $U_0\setminus U_{i-1}$ and is handled by $N_i^*$,
		then it loses two incident edges in this round. If it lies in
		$L_i^*\setminus V(G'_0)$, then it loses one incident edge and already has
		degree at most $\Delta_{i-1}-1$. Finally, if it lies in $J_i^*$, then it
		is the unprojected deficient vertex from Case~\eqref{case:S2.4.2} and already has
		degree at most $\Delta_i$ in $H_i$. Thus no vertex has degree larger than
		$\Delta_i$ in $H_i$, and some uncharged vertex of $V(G'_0)$ has degree
		exactly $\Delta_i$. Therefore $\Delta(H_i)=\Delta_i$. 
		Since $H_i^*=H_i-R_{i+1}$, and the edges in $R_{i+1}$ are deleted through Algorithm~\ref{alg:edge-deletion-wrt-W} or 
		Algorithm~\ref{alg:edge-deletion-wrt-W2},  Lemma~\ref{lemma:reduce-to-weak-VAL} implies 
		$\Delta(H_i^*) =\Delta_i$ and $H_i^*$ is class 2 provided $H_i$ is class 2. 
		 Therefore, it  remains to show that $H_i$ is class~$2$. 
		
		Since $H^*_{i-1}$ is class~$2$,
		it suffices to prove that $\mathcal{Q}_i$ is edge-$2$-colorable. Indeed, if
		$H_i$ admitted an edge-coloring with $\Delta_i$ colors, then adding two new
		colors for an edge-$2$-coloring of $\mathcal{Q}_i$ would yield an
		edge-coloring of $H^*_{i-1}$ with
		$\Delta_i+2=\Delta_{i-1}$ colors, contradicting that $H^*_{i-1}$ is
		class~$2$.
		
		Suppose first that $|W^*_{i-1}|\ge2$. Then $G[M_i\cup N_i]$ is a linear
		forest in which at least two vertices of $W^*_{i-1}$ have degree one.
		Replacing the corresponding auxiliary edges of $C_i$ by paths in
		$M_i\cup N_i$ breaks the cycle, so $\mathcal{Q}_i$ is a linear forest and
		hence edge-$2$-colorable.
		
		Suppose that $|W^*_{i-1}|=1$. When
		$|V(G'_{i-1})\cup V(M^1_{i-1}\cup M^2_{i-1})|$ is even, the projection
		gives an even cycle. When it is odd, either the deficient vertex from
		Case~\eqref{case:S2.2.1} breaks the cycle into a linear forest, or Case~\eqref{case:S2.2.2} applies
		and the projection gives an even cycle. In either case, $\mathcal{Q}_i$ is
		edge-$2$-colorable.
		
		Finally, suppose that $|W^*_{i-1}|=0$. In
		Cases~\eqref{case:S2.3}, \eqref{case:S2.4.2},
		and~\eqref{case:S2.4.3.1}, the parity choice yields an even cycle after
		projection, while in Case~\eqref{case:S2.4.1}, the two charged vertices in
		$L_i^*$ break the cycle into a linear forest. Thus $\mathcal{Q}_i$ is
		edge-$2$-colorable.

		For Claim~\ref{claim:property-Gi}\eqref{claim:property-Gi-weak-val}, the case
		$i=0$ follows from the weak VAL property of $H_0^*$. Let $i\ge1$ and let
		$uv\in E(H_i^*)$. Recall that $\Delta_i=\Delta-2i$.
		
		Suppose first that one endpoint, say $u$, lies in
		$V(H_i^*)\setminus U_i$. By the definition of $U_i$ and
		Claim~\ref{claim:Gi-property}\eqref{claim-Gi-property-3},
		we have $d_{H_i^*}(u)\ge\Delta_i-\eta n-2\eta^2n-4$. 
		Thus  $d_{H_i^*}(u)+d_{H_i^*}(v)
		\ge\Delta_i-\eta n-2\eta^2n-3
		\ge\Delta_i-2\eta n$. 
		We may therefore assume that $u,v\in U_i$.
		
		Suppose next that $u,v\in U_i\setminus W_i^*$. By
		Claim~\ref{claim:Gi-property}\eqref{claim-Gi-property-3}, for each
		$x\in\{u,v\}$, we have $d_{H_i^*}(x)\ge d_0(x)-2\eta^2n-4-2\alpha_x$, 
		where $\alpha_x$ is the number of rounds before round $i+1$ in which $x$ is
		chosen as the special vertex $z$ in
		Case~\eqref{case:S2.4.3.1}. Since at most one vertex is chosen as the special
		vertex in each round, we have $\alpha_u+\alpha_v\le i$. Moreover,
		$uv\in E(H_i^*)\subseteq E(H_0^*)$, and the weak VAL property of $H_0^*$
		gives $d_0(u)+d_0(v)\ge\Delta+2$. Therefore
		\[
		\begin{aligned}
			d_{H_i^*}(u)+d_{H_i^*}(v)
			&\ge d_0(u)+d_0(v)-4\eta^2n-8
			-2(\alpha_u+\alpha_v) \\
			&\ge \Delta+2-2i-4\eta^2n-8 \\
			&\ge \Delta_i-\eta n.
		\end{aligned}
		\]
		
		We now consider the case in which
		$u\in U_i\setminus W_i^*$ and $v\in W_i^*$. Suppose first that no
		preprocessing edge is deleted with respect to $v$ in the first $i$ rounds.
		Since the active set containing $v$ is nonempty, the special case
		Case~\eqref{case:S2.4.3.1} does not occur. Thus $v$ loses at most two incident
		edges in each round. On the other hand, by the charging estimate in
		Claim~\ref{claim:Gi-property}\eqref{claim-Gi-property-3}, the total degree
		loss of $u$ is at most $2\eta^2n+4$. Hence
		\[
		\begin{aligned}
			d_{H_i^*}(u)+d_{H_i^*}(v)
			&\ge d_0(u)+d_0(v)-2i-2\eta^2n-4 \\
			&\ge \Delta+2-2i-2\eta^2n-4 \\
			&\ge \Delta_i-\eta n.
		\end{aligned}
		\]
		
		Suppose now that a preprocessing edge is deleted with respect to $v$. Let
		$s\le i$ be the first round in which this occurs, and let $vx$ be the first
		edge deleted with respect to $v$ in that round. Immediately before $vx$ is
		deleted, Algorithm~\ref{alg:edge-deletion-wrt-W} gives $d(v)\le e(x,W_{s-1}^*)\le |W_{s-1}^*|\le\eta^2n$, 
		where we use that the relevant graph is simple.
		
		Up to this moment, no preprocessing edge has been deleted with respect to
		$v$. Hence the estimate from the preceding subcase, applied up to round $s$,
		gives $d(u)+d(v)\ge\Delta_s-\eta n$. 
		It follows that, at this moment, $d(u)\ge\Delta_s-\eta n-\eta^2n$.

		From this point until $H_i^*$ is reached, $u$ loses at most two incident edges
		in each of the remaining rounds. By the charging estimate in
		Claim~\ref{claim:Gi-property}\eqref{claim-Gi-property-3}, all additional
		losses at $u$ total at most $2\eta^2n+4$. Since the edge $uv$ remains in
		$H_i^*$, we also have $d_{H_i^*}(v)\ge1$. Consequently,
		\[
		\begin{aligned}
			d_{H_i^*}(u)+d_{H_i^*}(v)
			&\ge
			\Delta_s-\eta n-\eta^2n
			-2(i-s)-2\eta^2n-4+1 \\
			&=
			\Delta_i-\eta n-3\eta^2n-3 \\
			&\ge \Delta_i-2\eta n.
		\end{aligned}
		\]
		
		It remains to consider the case $u,v\in W_i^*$. Since $W_0$ is independent
		in $H_0^*$ and $uv\in E(H_i^*)$, the set $W_i^*$ must be inherited from an
		auxiliary set $W'_j$ for some unique $j\le i$ with $|W'_j|=2$, where the
		uniqueness of $j$ follows from
		Claim~\ref{claim:Gi-property}\eqref{claim-Gi-property-1.5}. At the time when
		$W'_j=\{u,v\}$ is chosen, both $u$ and $v$ lie in $U_j\setminus W_j$.
		Therefore, by the second case above,  $d_{H_j^*}(u)+d_{H_j^*}(v)\ge\Delta_j-2\eta n$.

		By Claim~\ref{claim:Gi-property}\eqref{claim-Gi-property-1.5},
		$\{u,v\}$ remains the inherited auxiliary active set in every later round.
		Moreover, since $\{u,v\}$ is not independent,
		Algorithm~\ref{alg:edge-deletion-wrt-W} is not applied to this set in any
		later round. Thus, in each round from $j+1$ through $i$, Step~3 removes
		exactly one prescribed edge incident with each of $u$ and $v$. Therefore
		\[
		\begin{aligned}
			d_{H_i^*}(u)+d_{H_i^*}(v)
			&\ge d_{H_j^*}(u)+d_{H_j^*}(v)-2(i-j) \\
			&\ge \Delta_j-2\eta n-2(i-j) \\
			&=\Delta_i-2\eta n.
		\end{aligned}
		\]
		This proves Claim~\ref{claim:property-Gi}\eqref{claim:property-Gi-weak-val}.

		For Claim~\ref{claim:property-Gi}\eqref{claim:property-Gi-no-overfull}, 
	since  $H^*_\ell[V(G'_0)]\subseteq H_i[V(G'_0)]$
		and $|V(H_i)\setminus V(G'_0)|\le |U_0|\le\eta^2n<2\eta^2 |V(G_0')|$, the same argument as in
		Claim~\ref{claim:property-Gi}\eqref{claim:property-Gi-expander}, together with
		Lemma~\ref{lem:stability-expansion}, shows that $H_i$ is also a robust
		$(\nu'/2-2\eta^2,16\tau)$-expander.
		Since $H_i^*\subseteq H_i$ and both graphs have maximum degree
		$\Delta_i$, it suffices to show that $H_i$ contains no
		$\Delta_i$-overfull subgraph.  
		The case $i=0$ follows since
		$H_0=G_0$ contains no $\Delta$-overfull subgraph. 
			Let $i\ge1$, and suppose for a contradiction that there is an odd set
		$X\subseteq V(H_i)$ such that $H_i[X]$ is $\Delta_i$-overfull.  Let $Y=V(H_{i-1})\setminus X$. 
		 
		We first show that $|Y|$ is small. Since $H_i$ is simple and $H_i[X]$ is
		$\Delta_i$-overfull, we have $|X|>\Delta_i\ge\Delta/3$. This implies 
		$e_{H_i}\bigl(X,V(H_i)\setminus X\bigr)
		\le \df_{H_i}(X)<\Delta_i\le n$. 
		On the other hand, if both $X$ and $V(H_i)\setminus X$ had size at least
		$16\tau n$, then the robust expansion of $H_i$  would imply 
		$e_{H_i}\bigl(X,V(H_i)\setminus X\bigr)\ge
		(\nu'/2-2\eta^2)^2n^2>n$, 
		a contradiction. Since $|X|>\Delta/3>16\tau n$, it follows that
		$|V(H_i)\setminus X|<16\tau n$. The vertices of
		$V(H_{i-1}^*)\setminus V(H_i)$ lie in $U_0$, and hence 
		$|Y|<17\tau n$.

		We next show that all edges of $H_{i-1}[Y]$ are incident with a common
		vertex. Suppose first that $H_{i-1}[Y]$ contains two independent edges
		$u_1v_1$ and $u_2v_2$. By
		Claim~\ref{claim:property-Gi}\eqref{claim:property-Gi-weak-val} for
		$H_{i-1}$ ($H^*_{i-1}$ is a subgraph of $H_{i-1}$ with the same maximum degree),
		\[
		d_{H_{i-1}}(u_j)+d_{H_{i-1}}(v_j)
		\ge\Delta_{i-1}-2\eta n
		\quad\text{for }j\in[2].
		\]
		Since $H_{i-1}$ is simple, the total number of edges from these four
		vertices to $Y$ is at most $4(|Y|-1)$. Moreover, each vertex loses at most
		two incident edges when passing from $H_{i-1}$ to $H_i$. Therefore
		\[
		\begin{aligned}
			e_{H_i}\bigl(X,V(H_i)\setminus X\bigr)
			&\ge 2(\Delta_{i-1}-2\eta n)-4(|Y|-1)-8 >\Delta_i,
		\end{aligned}
		\]
		where the last inequality follows since  $|Y|<17\tau n$ and
		$\eta,\tau\ll\alpha$. This contradicts
		$e_{H_i}(X,V(H_i)\setminus X)<\Delta_i$.
		
		Similarly, $H_{i-1}[Y]$ cannot contain a triangle with vertices
		$u_1,u_2,u_3$. Indeed, applying the weak VAL inequality to the three edges of
		the triangle gives
		\[
		d_{H_{i-1}}(u_1)+d_{H_{i-1}}(u_2)
		+d_{H_{i-1}}(u_3)
		\ge \frac{3}{2}(\Delta_{i-1}-2\eta n).
		\]
		Thus
		\[
		\begin{aligned}
			e_{H_i}\bigl(X,V(H_i)\setminus X\bigr)
			&\ge \frac{3}{2}(\Delta_{i-1}-2\eta n)
			-3(|Y|-1)-6 >\Delta_i,
		\end{aligned}
		\]
		again a contradiction. A simple graph with no two independent edges is
		either a star or a triangle; since we have ruled out a triangle, it follows that all edges
		of $H_{i-1}[Y]$ are incident with a common vertex.
		
		We now show that $Y$ contains at most one vertex of $V(G_0)\setminus U_0$.
		Suppose that $y_1,y_2\in Y\setminus U_0$ are distinct. By the degree bound for
		vertices outside $U_0$ from Claim~\ref{claim:Gi-property}\eqref{claim-Gi-property-3}, 
		\[
		d_{H_{i-1}}(y_j)
		\ge \Delta_{i-1}-\eta n-\eta^2n
		\quad\text{for }j\in[2].
		\]
		Since $H_{i-1}^*$ is simple and each vertex loses at most two incident edges
		in round $i$, we obtain
		\[
		\begin{aligned}
			e_{H_i}\bigl(X,V(H_i)\setminus X\bigr)
			&\ge
			2(\Delta_{i-1}-\eta n-\eta^2n)
			-2(|Y|-1)-4 >\Delta_i,
		\end{aligned}
		\]
		contradicting the overfullness of $H_i[X]$. Hence
		$|Y|\le |U_0|+1\le\eta^2n+1$. 
		Since all edges of $H_{i-1}^*[Y]$ are incident with a common vertex, it follows
		that $e_{H_{i-1}}(Y)\le |Y|-1\le\eta^2n$.

Let
\[
B^*_i=U_0\cup I_i\cup L_i^*\cup J_i^*.
\]
Then $|B^*_i|\le\eta^2n+4$. Let $Q_i$ be the subgraph consisting of the edges
deleted when passing from $H^*_{i-1}$ to $H_i$. By construction,
$\Delta(Q_i)\le2$, and every vertex outside $B^*_i$ has degree exactly two in
$Q_i$. Since $\Delta_i=\Delta_{i-1}-2$, we have
\[
\begin{aligned}
	\df_{H_i}(X)
	&=\df_{H^*_{i-1}}(X)-2|X|+2e_{Q_i}(X) \\
	&=\df_{H^*_{i-1}}(X)
	-\sum_{x\in X}\bigl(2-d_{Q_i}(x)\bigr)
	-e_{Q_i}\bigl(X,V(H^*_{i-1})\setminus X\bigr).
\end{aligned}
\]
The first error term is at most $2|B^*_i|$, while
\[
e_{Q_i}\bigl(X,V(H^*_{i-1})\setminus X\bigr)
\le 2|V(H^*_{i-1})\setminus X|
\le 2|Y|,
\]
since $\Delta(Q_i)\le2$ and
$V(H^*_{i-1})\setminus X\subseteq Y$. Therefore
\[
\begin{aligned}
	\df_{H_i}(X)
	&\ge \df_{H^*_{i-1}}(X)-2|B^*_i|
	-2|V(H^*_{i-1})\setminus X| \\
	&\ge \df_{H^*_{i-1}}(X)-2|B^*_i|-2|Y| \\
	&\ge \df_{H^*_{i-1}}(X)-4\eta^2n-10 \\
	&> \df_{H^*_{i-1}}(X)-5\eta^2n.
\end{aligned}
\]

Since $H_i[X]$ is $\Delta_i$-overfull, we have $\df_{H_i}(X)<\Delta_i$, and hence $\df_{H^*_{i-1}}(X)<\Delta_i+5\eta^2n$. Moreover, $H^*_{i-1}=H_{i-1}-R_i$ and $\Delta(H^*_{i-1})=\Delta(H_{i-1})=\Delta_{i-1}$. Since $H^*_{i-1}\subseteq H_{i-1}$, we have $ e_{H^*_{i-1}}(X)\le e_{H_{i-1}}(X)$, and therefore 
\[ \df_{H_{i-1}}(X)\le\df_{H^*_{i-1}}(X) <\Delta_i+5\eta^2n. \]

	By the definition of the auxiliary degree records,
	$d_{i-1}(v)=d_{H_{i-1}}(v)$ for every $v\in V(H_{i-1})$. Therefore
	\[
	\begin{aligned}
		&\sum_{v\in X}\bigl(\Delta_{i-1}-d_{i-1}(v)\bigr)
		+\sum_{u\in Y}d_{i-1}(u) \\
		&\quad=
		\df_{H_{i-1}}(X)+2e_{H_{i-1}}(Y) \\
		&\quad<
		\Delta_i+5\eta^2n+2\eta^2n \\
		&\quad<
		\Delta_{i-1}+7\eta^2n.
	\end{aligned}
	\]
	Thus $X$ is almost-overfull at the beginning of round $i$, contradicting the
	condition under which Operation~\ref{ope:edge-removing} continues.
	\end{proof}

	\begin{center}
		{\bf \noindent Stage II: Regularization.}
	\end{center}
	
	In the remaining part of the proof, we modify $H^*_\ell$ into an auxiliary
	multigraph $D$, and then complete $D$ to a regular multigraph to which
	Theorem~\ref{thm:robust-expander2} can be applied. Write
	\[
	\Delta_\ell=\Delta(H^*_\ell)=\Delta-2\ell.
	\]
	
	In each case, $D$ will be obtained from $H^*_\ell$, or from a graph obtained from
	$H^*_\ell$ after a further controlled deletion, by a small modification so that
	$|V(D)|$ is even. Write
	\[
	\Delta_0:=\Delta(D).
	\]
	The value of $\Delta_0$ will be verified in each case. In several cases
	$\Delta_0=\Delta_\ell$, while after the Matching-Removal Operation (Operation~\ref{ope:stage1-matching-removal-case2}) we may have
	$\Delta_0<\Delta_\ell$.  The case analysis after Claim~\ref{claim:regular-completion} distinguishes whether or not 
	Operation~\ref{ope:edge-removing} stopped because an almost-overfull set 
	appeared.  
	The completion step is the same in all cases, but the
	verification that $D$ satisfies the required hypotheses is case-dependent.
Thus we treat the completion step in the following claim as a generic step.
	
	\begin{CLA}\label{claim:regular-completion}
		Let $0<1/m\ll \eta\ll \nu'\le \tau\ll \alpha<1$, and let $D$ be a multigraph
		on $\{u_1,\ldots,u_m\}$ with
		$d_D(u_1)\le d_D(u_2)\le\cdots\le d_D(u_m)$. Put
		$\Delta_0=\Delta(D)$. Suppose that the following conditions hold.
		\begin{enumerate}[(1)]
			\item\label{claim:regular-completion-order}
			$m$ is even and $m>\Delta_0\ge \alpha m/3$.
			
			\item\label{claim:regular-completion-simple}
			$D-\{u_1,u_2\}$ is simple and $e_D(u_1,u_i), e_D(u_2,u_i) \le 2\eta^2 m$
			for all $i\ge 3$.
			
			\item\label{claim:regular-completion-no-overfull}
			$D$ contains no $\Delta_0$-overfull subgraph.
			
			\item\label{claim:regular-completion-degree}
			Either $d_D^s(u_i)\ge \Delta_0-6\eta m$ for every $i\ge 3$, or
			$\df_D(D-u_1)\le \Delta_0+5\eta m$.
			
			\item\label{claim:regular-completion-local}
			If $|N_D(u_1)\setminus\{u_2\}|\le 1$, then either $D$ has exactly three
			vertices of degree less than $\Delta_0$, or
			\[
			e_D(\{u_1,u_2,u\},V(D)\setminus\{u_1,u_2,u\})\ge \Delta_0
			\]
			for every $u\in V(D)\setminus\{u_1,u_2\}$.
			
			\item\label{claim:regular-completion-u2}
			If $u_2$ is not simple in $D$, then $u_1u_2\notin E(D)$; for each
			$i\in[2]$ and every $w\in V(D)\setminus\{u_1,u_2\}$ with
			$u_iw\in E(D)$, we have
			\[
			e_D(u_i,w)\le e_D(u_i,V(D)\setminus\{w\})
			\quad\text{and}\quad
			d_D(u_i)+d_D(w)\ge \Delta_0+2;
			\]
			and also $d_D(u_2)\le10\eta m$, while
			$d_D^s(u_i)\ge \Delta_0-5\eta m$ for every $i\ge 3$.
			
			\item\label{claim:regular-completion-expander}
			$D^s$ is a robust $(\nu'/3, 32\tau)$-expander.
		\end{enumerate}
		Then $D$ can be completed to a $\Delta_0$-regular multigraph
		$\widehat D$ such that $\chi'(\widehat D)=\Delta_0$.
	\end{CLA}
	
	\begin{proof}
		For each $i\in[m]$, let $d_i=\Delta_0-d_D(u_i)$, so
		$(d_1,\ldots,d_m)$ is a non-increasing sequence of nonnegative integers with
		\[
		\sum_{i=1}^m d_i=m\Delta_0-2e(D)
		\]
		even since $m$ is even. For admissibility it suffices to check
		$d_1-2\le\sum_{i=2}^m d_i$. Let $S=V(D)\setminus\{u_1\}$; then $|S|=m-1$ is
		odd, and Condition~\eqref{claim:regular-completion-no-overfull} gives
		$e_D(S)\le\Delta_0(|S|-1)/2$, so
		\[
		\sum_{i=2}^m d_i-(d_1-2)=(m-2)\Delta_0-2e_D(S)+2\ge 2.
		\]
		
		Applying Lemma~\ref{lem:graphical-biparite} to $(d_1,\ldots,d_m)$ yields a
		bipartite multigraph $L$ on $\{u_1,\ldots,u_m\}$ and an even index
		$p\in[2,m]$ satisfying the conclusion of that lemma, with
		$d_{D^+}(u_1)=d_{D^+}(u_2)=d_D(u_2)$ where $D^+=D\cup L$.
		We now let $\widehat D=D\cup L\cup N$ be the $\Delta_0$-regular 
		multigraph that is the  canonical  completion of $D^+$ defined in Definition~\ref{def:canonical-completion}.

		We verify the hypotheses of Theorem~\ref{thm:robust-expander2} for
		$\widehat D$, with $v_i=u_i$ for every $i\in[m]$.
		
		Condition~\eqref{item:thm4.1}: $\widehat D=D\cup L\cup N$ is the edge-disjoint
		union of multigraphs on $\{u_1,\ldots,u_m\}$, with
		$\Delta(\widehat D)=\Delta_0=\Delta(D)$, and $L$ was constructed from the
		deficiency sequence $d_i=\Delta(D)-d_D(u_i)$ with vertices ordered
		$d_D(u_1)\le\cdots\le d_D(u_m)$.
		
		Condition~\eqref{item:thm4.1.a-new}: this is the second part of 
		Condition~\eqref{claim:regular-completion-simple}.
		
		Condition~\eqref{item:thm4.1.b}: if the first alternative of
		Condition~\eqref{claim:regular-completion-degree} holds, then
		$d_D^s(u_i)\ge\Delta_0-6\eta m$ for every $i\ge 3$, and since adding edges
		from $L\cup N$ cannot decrease simple degree,
		\[
		d_{\widehat D}^s(u_i)\ge\Delta_0-6\eta m
		\ge \frac{i-2}{i-1}\Delta_0-6\eta m
		\]
		for every $i\ge 3$.
		
		If instead the second alternative holds, so
		\[
		\sum_{j=2}^m(\Delta_0-d_D(u_j)) \le\sum_{j=2}^m(\Delta_0-d_{D-u_1}(u_j))=\df_D(D-u_1)\le \Delta_0+5\eta m,
		\]
		then for every $i\ge 3$,
		\[
		(i-1)(\Delta_0-d_D(u_i))
		\le \sum_{j=2}^i(\Delta_0-d_D(u_j))
		\le \Delta_0+5\eta m.
		\]
		Hence
		\[
		d_D(u_i)\ge \frac{i-2}{i-1}\Delta_0-\frac{6\eta m}{i-1}.
		\]
		If $u_2$ is not simple in $D$, Condition~\eqref{claim:regular-completion-u2}
		gives $d_D^s(u_i)\ge\Delta_0-6\eta m$ for every $i\ge 3$, and adding edges
		from $L\cup N$ gives the required bound. So assume $u_2$ is simple in $D$.
		By Condition~\eqref{claim:regular-completion-simple}, $D-\{u_1,u_2\}$ is
		simple and $e_D(u_i,u_1)\le 2\eta^2m$ for every $i\ge 3$, so
		\[
		d_D^s(u_i)\ge d_D(u_i)-e_D(u_i,u_1)
		\ge\frac{i-2}{i-1}\Delta_0-\frac{6\eta m}{i-1}-2\eta^2m
		\ge\frac{i-2}{i-1}\Delta_0-6\eta m.
		\]
		Since adding edges from $L\cup N$ cannot decrease simple degree,
		$d_{\widehat D}^s(u_i)\ge\frac{i-2}{i-1}\Delta_0-6\eta m$ for every $i\ge 3$.
		
		Condition~\eqref{item:thm4.1.c}: by
		Condition~\eqref{claim:regular-completion-degree}, either
		$d_D^s(u_i)\ge\Delta_0-6\eta m$ for every $i\ge 3$, or
		$\df_D(D-u_1)\le\Delta_0+5\eta m$. In the first case, since adding edges from
		$L\cup N$ cannot decrease simple degree,
		\[
		d_{\widehat D}^s(u_i)\ge\Delta_0-6\eta m
		\ge \Delta_0-e_{\widehat D}(u_1,u_i)-6\eta m
		\]
		for every $i\ge 3$.
		
		So assume $\df_D(D-u_1)\le\Delta_0+5\eta m$. Since
		$d_D(u_1)+d_L(u_1)=d_{D^+}(u_1)=d_{D^+}(u_2)$ by Lemma~\ref{lem:graphical-biparite}(a),
		\[
		\begin{aligned}
			\df_D(D-u_1)
			&\ge d_D(u_1)+d_L(u_1)+\Delta_0-d_{D^+}(u_2)
		+\sum_{i=3}^m(\Delta_0-d_{D^+}(u_i))\\
			&=\Delta_0+\sum_{i=3}^m(\Delta_0-d_{D^+}(u_i)),
		\end{aligned}
		\]
		so $\sum_{i=3}^m(\Delta_0-d_{D^+}(u_i))\le 6\eta m$. When $p<m$, the residual
		deficiency of $u_{p+1}$ in $D^+$ is $2q$, hence $q\le 3\eta m$, and the
		edges of $N$ create at most $6\eta m$ parallel edges at any vertex $u_i$ with
		$i\ge 3$, apart from the bundle to $u_1$. Hence
		\[
		d_{\widehat D}^s(u_i)\ge
		\Delta_0-e_{\widehat D}(u_1,u_i)-6\eta m
		\]
		for every $i\ge 3$.
		
		For $i=2$: if $u_2$ is simple in $D$,
		\[
		d_{\widehat D}^s(u_2)\ge\Delta_0-e_{\widehat D}(u_1,u_2)-e_N(u_2,u_{p+1})
		\ge\Delta_0-e_{\widehat D}(u_1,u_2)-3\eta m,
		\]
		using $e_N(u_2,u_{p+1})\le q\le 3\eta m$, or $0$ if the edge is not created
		in the construction of $N$. If $u_2$ is not simple in $D$,
		Condition~\eqref{claim:regular-completion-u2} gives $d_D(u_2)\le 10\eta m$,
		so
		\[
		d_{\widehat D}^s(u_2)\ge
		\Delta_0-e_{\widehat D}(u_1,u_2)-d_D(u_2)-e_N(u_2,u_{p+1})
		\ge \Delta_0-e_{\widehat D}(u_1,u_2)-13\eta m.
		\]

		Condition~\eqref{item:thm4.2}: $D^s$ is a robust $(\nu'/3,32\tau)$-expander
		by Condition~\eqref{claim:regular-completion-expander}, and adding the edges of
		$L\cup N$ cannot destroy robust expansion, so the underlying simple graph of
		$\widehat D$ is also a robust $(\nu'/3, 32\tau)$-expander. Thus $\widehat D$
		satisfies all hypotheses of Theorem~\ref{thm:robust-expander2}, with
		$\nu'/3$, $32\tau$, and $\alpha/3$ playing the roles of $\nu$, $\tau$, and
		$\alpha$.
		
		It remains to show $\widehat D$ contains no $\Delta_0$-overfull subgraph.
		Suppose $X\subseteq V(\widehat D)$ is odd with $\widehat D[X]$
		$\Delta_0$-overfull. Since $\widehat D$ is $\Delta_0$-regular,
		$e_{\widehat D}(X,V(\widehat D)\setminus X)<\Delta_0$. Replacing $X$ by its
		complement if necessary, assume $|X|\le m/2$.
		
		If $32\tau m\le|X|\le m/2$, robust expansion of $D^s$ gives more than
		$\nu'm/3$ vertices in $V(D)\setminus X$ each with at least $\nu'm/3$ neighbors
		in $X$, so $e_D(X,V(D)\setminus X)\ge(\nu'm/3)^2>\Delta_0$, a contradiction.
		Hence $|X|<32\tau m$.
		
		By the bound established for Condition~\eqref{item:thm4.1.b},
		$d_D^s(u_i)\ge\frac{i-2}{i-1}\Delta_0-6\eta m$ for every $i\ge 3$. At most
		one vertex of $X$ lies in $\{u_3,\ldots,u_m\}$: otherwise choose
		$u_a,u_b\in X$ with $3\le a<b$, arranged so $b\ge 4$, and then
		\[
		d_D^s(u_a)+d_D^s(u_b)
		\ge\left(\frac{1}{2}+\frac{2}{3}\right)\Delta_0-12\eta m.
		\]
		Since each of $u_a,u_b$ has at most $|X|-1$ simple neighbors in $X$ and
		$|X|<32\tau m$,
		\[
		e_D(X,V(D)\setminus X)\ge d_D^s(u_a)+d_D^s(u_b)-2(|X|-1)>\Delta_0,
		\]
		by the hierarchy $\eta\ll\tau\ll\alpha$ and $\Delta_0\ge\alpha m/3$,
		contradicting $e_{\widehat D}(X,V(\widehat D)\setminus X)<\Delta_0$.
		
		So $X=\{u_1,u_2,u\}$ for some $u\in V(D)\setminus\{u_1,u_2\}$.
		
		If $u_2$ is not simple in $D$, Condition~\eqref{claim:regular-completion-u2}
		gives $u_1u_2\notin E(D)$ and, for every edge from $u_i$ to $u$ with
		$i\in[2]$,
		\[
		e_D(u_i,u)\le e_D(u_i,V(D)\setminus\{u\})
		\quad\text{and}\quad
		d_D(u_i)+d_D(u)\ge\Delta_0+2,
		\]
		which gives at least $\Delta_0$ edges leaving $\{u_1,u_2,u\}$, contradicting
		that $\widehat D[X]$ is $\Delta_0$-overfull.
		
		So assume $u_2$ is simple in $D$. If $d^s_{\widehat D}(u_1)\ge 3$, then
		$u_1$ has a neighbor outside $\{u_2,u\}$, and simplicity of $u_2$ gives at
		least $\Delta_0$ edges leaving $X$, a contradiction. Hence
		$|N_{\widehat D}(u_1)\setminus\{u_2\}|\le 1$. By
		Condition~\eqref{claim:regular-completion-local}, either $D$ has exactly three
		vertices of degree less than $\Delta_0$, or
		\[
		e_D(\{u_1,u_2,u\},V(D)\setminus\{u_1,u_2,u\})\ge\Delta_0.
		\]
		The second alternative gives at least $\Delta_0$ edges leaving $X$, a
		contradiction. In the first alternative, the construction of $L\cup N$ forces
		the exceptional three-set to be precisely the deficient vertices; its complement
		is $\Delta_0$-overfull in $D$, contradicting
		Condition~\eqref{claim:regular-completion-no-overfull}.
		
		Thus $\widehat D$ contains no $\Delta_0$-overfull subgraph, and
		Theorem~\ref{thm:robust-expander2} gives
		$\chi'(\widehat D)=\Delta(\widehat D)=\Delta_0$.
	\end{proof}
	
	Since Operation~\ref{ope:edge-removing} stopped after the $\ell$ completed
	rounds, either
	\[
	\begin{cases}
		\ell>\Delta/3-\eta n,
		& \text{if } W^*_\ell\subseteq W^*_{\ell-1}\subseteq\cdots
		\subseteq W^*_1\subseteq W_0,\\
		\ell>\Delta/3-2\eta n,
		& \text{otherwise,}
	\end{cases}
	\]
	or $G_\ell$ has an almost-overfull set. By
	Claim~\ref{claim:regular-completion}, in each case it suffices to construct a
	multigraph $D$ satisfying
	Conditions~\eqref{claim:regular-completion-order}--\eqref{claim:regular-completion-expander}
	with $\chi'(D)>\Delta(D)$.
	
	\smallskip
	\noindent\textbf{Case 1.} $G_\ell$ has no almost-overfull set.
	
	\smallskip
	\noindent\textbf{Case 1.1.}
	$W^*_\ell\subseteq W^*_{\ell-1}\subseteq\cdots\subseteq W^*_1\subseteq W_0$.
	
	Then $\ell>\Delta/3-\eta n$. Since the operation was performed for $\ell$
	rounds and did not stop earlier, $\ell-1\le\Delta/3-\eta n$, and hence
	\[
	\Delta/3+2\eta n-2\le\Delta_\ell<\Delta/3+2\eta n.
	\]
	Since $W^*_\ell\subseteq\cdots\subseteq W_0$, the process never entered the
	critical case before round $\ell$, so $W_i^*=W_i$ for every $0\le i\le\ell$.
	If $|W_\ell|\ge 2$, then by Step~1 of Operation~\ref{ope:edge-removing}, every
	$v\in W_\ell$ satisfies
	\[
	d_{H^*_\ell}(v)\le d_{G_0}(v)-\ell
	<\Delta/3-\eta n-(\Delta/3-\eta n)=0,
	\]
	a contradiction. Hence $|W_\ell|\le 1$, and every vertex in
	$V(H^*_\ell)\setminus W_\ell$ has degree at least
	$\Delta/3-\eta n-2\eta^2n-4\ge\Delta_\ell-3\eta n$ in $H^*_\ell$ by Claim~\ref{claim:Gi-property}\eqref{claim-Gi-property-3}.
	
	Suppose first that $|V(H^*_\ell)|$ is even. Let $D=H^*_\ell$, ordered as
	$u_1,\ldots,u_m$ with $d_D(u_1)\le\cdots\le d_D(u_m)$, and
	$\Delta_0=\Delta_\ell$. Condition~\eqref{claim:regular-completion-order}
	follows from the parity assumption and the lower bound on $\Delta_\ell$.
	Condition~\eqref{claim:regular-completion-simple} holds since $D=H^*_\ell$ is
	simple: $D-\{u_1,u_2\}$ is simple and
	$e_D(u_1,u_i),e_D(u_2,u_i)\le 1\le 2\eta^2 m$ for every $i\ge 3$.
	Conditions~\eqref{claim:regular-completion-no-overfull} and
	\eqref{claim:regular-completion-expander} follow from
	Claim~\ref{claim:property-Gi}\eqref{claim:property-Gi-no-overfull},
	Claim~\ref{claim:property-Gi}\eqref{claim:property-Gi-expander}, and
	Lemma~\ref{lem:stability-expansion}. Since at most one vertex lies in
	$W_\ell$, every $u_i$ with $i\ge 3$ lies outside $W_\ell$, so
	$d_D^s(u_i)=d_D(u_i)\ge\Delta_0-3\eta n$ for every $i\ge 3$, giving
	Condition~\eqref{claim:regular-completion-degree}.
	
	For Condition~\eqref{claim:regular-completion-local}, suppose
	$|N_D(u_1)\setminus\{u_2\}|\le 1$. Since $H^*_\ell$ has no isolated vertices,
	$d_D(u_1)\ge 1$. If $d_D(u_1)=1$, then $u_1\in W_\ell^*$, and the cleanup
	condition~\eqref{eqn:weak-VAL} gives
	$d_D(u_1)+d_D(u_2)\ge\Delta_0+2$, a contradiction. Hence $d_D(u_1)=2$ and
	$N_D(u_1)=\{u_2,u\}$ for some $u\ne u_1,u_2$. Applying~\eqref{eqn:weak-VAL} to both edges incident with $u_1$ gives
	$d_D(u_2)=d_D(u)=\Delta_0$, so
	\[
	e_D(\{u_1,u_2,u\},V(D)\setminus\{u_1,u_2,u\})
	=d_D(u_1)+d_D(u_2)+d_D(u)-2e_D(\{u_1,u_2,u\})\ge 2\Delta_0-4>\Delta_0.
	\]
	Condition~\eqref{claim:regular-completion-u2} is vacuous since $D$ is simple.
	Since $\chi'(H^*_\ell)>\Delta_\ell$ by
	Claim~\ref{claim:property-Gi}\eqref{claim:property-Gi-class2}, applying
	Claim~\ref{claim:regular-completion} gives a $\Delta_0$-regular multigraph
	$\widehat D$ with $\chi'(\widehat D)=\Delta_0$, so restricting to $E(D)$
	yields a $\Delta_0$-coloring of $D$, contradicting $\chi'(D)>\Delta(D)$.
	
	Now suppose $|V(H^*_\ell)|$ is odd. If $H^*_\ell$ has at least three vertices of
	degree less than $\Delta_\ell$, let $D$ be obtained from $H^*_\ell$ by adding a
	new vertex $x$ joined to two vertices of largest degree among those with degree
	less than $\Delta_\ell$. Otherwise $H^*_\ell$ has exactly two such vertices
	(since $H^*_\ell$ has odd order and contains no $\Delta_\ell$-overfull subgraph),
	and let $D$ be the disjoint union of $H^*_\ell$ and an isolated vertex $x$. In
	both cases $\Delta_0=\Delta(D)=\Delta_\ell$ and $m=|V(D)|$ is even.
	
	Order the vertices of $D$ as $u_1,\ldots,u_m$ by nondecreasing degree.
	Condition~\eqref{claim:regular-completion-order} is immediate.
	Condition~\eqref{claim:regular-completion-simple} holds since adding a new
	vertex creates no parallel edges in $H^*_\ell$: $D-\{u_1,u_2\}$ is simple and
	$e_D(u_1,u_i),e_D(u_2,u_i)\le 1\le 2\eta^2 m$ for every $i\ge 3$.
	Condition~\eqref{claim:regular-completion-expander} follows from
	Lemma~\ref{lem:stability-expansion}. For
	Condition~\eqref{claim:regular-completion-no-overfull}: if $x$ is isolated, an
	odd set containing $x$ corresponds to an even set in $H^*_\ell$, while an odd set
	avoiding $x$ is handled by
	Claim~\ref{claim:property-Gi}\eqref{claim:property-Gi-no-overfull}; if $x$ has
	degree two, every vertex of $W_\ell^*$ has degree at least $2$ in $H^*_\ell$, so
	$x$ has minimum degree, and Lemma~\ref{lem:overfull-delete-one-vertex} rules
	out a $\Delta_0$-overfull set containing $x$, while one avoiding $x$ is already overfull
	in $H^*_\ell$. The same degree estimates give
	Condition~\eqref{claim:regular-completion-degree}; the verification of
	Condition~\eqref{claim:regular-completion-local} is identical to the
	even-order case; and Condition~\eqref{claim:regular-completion-u2} is vacuous.
	Applying Claim~\ref{claim:regular-completion} gives a contradiction as before.
	
	\smallskip
	\noindent\textbf{Case 1.2.}
	It is not the case that
	$W^*_\ell\subseteq W^*_{\ell-1}\subseteq\cdots\subseteq W^*_1\subseteq W_0$.
	
	Let $t\in[\ell]$ be the unique index for which $W_t^*=W'_t$.

	Suppose first that $t>\Delta/3-2\eta n$. Then the operation stops immediately
	after round $t$, and hence $t=\ell$. Since
	$\Delta/3-2\eta n<t\le\Delta/3-\eta n$, we have
	\[
	\Delta/3+2\eta n\le\Delta_\ell<\Delta/3+4\eta n.
	\]
	Since $W_t=\emptyset$, Claim~\ref{claim:Gi-property}\eqref{claim-Gi-property-3} gives
	$d_{H^*_t}(v)\ge\Delta/3-\eta n-2\eta^2n-4$ for every
	$v\in W'_t$.  Consequently,
	\[
	\Delta_\ell-d_{H_\ell^*}(v)
	<5\eta n+2\eta^2n+4<6\eta n
	\]
	for every $v\in W'_t$.
	
	By the parity choice in the critical case, $|V(H_\ell^*)|$ is odd. Construct
	$D$ as in the odd-order part of Case~1.1. The order, simplicity,
	no-overfullness, and robust-expansion conditions follow exactly as there.
	Moreover, Claim~\ref{claim:Gi-property}\eqref{claim-Gi-property-3} gives
	$d_{H_\ell^*}(v)>\Delta_\ell-6\eta n$ for every vertex
	$v\in V(H_\ell^*)$, so
	Condition~\eqref{claim:regular-completion-degree} also holds. The local
	condition follows from~\eqref{eqn:weak-VAL}  and the two-neighbor argument
	from Case~1.1, while Condition~\eqref{claim:regular-completion-u2} is
	vacuous. Applying Claim~\ref{claim:regular-completion} gives a contradiction.
	
	We may therefore assume that $t\le\Delta/3-2\eta n$. Then
	$\ell>\Delta/3-2\eta n$. Since the operation was performed for $\ell$ rounds
	and did not stop earlier, $\ell-1\le\Delta/3-2\eta n$, and hence
	\[
	\Delta/3+4\eta n-2\le\Delta_\ell<\Delta/3+4\eta n.
	\]
	Then
	by Claim~\ref{claim:Gi-property}\eqref{claim-Gi-property-3}, 
	$d_{H^*_t}(v)\ge\Delta/3-\eta n-2\eta^2 n-4$ for every $v\in W'_t$. 
	Since
	$\ell-1\le\Delta/3-2\eta n$, Step~1 of Operation~\ref{ope:edge-removing}
	gives $d_{H^*_\ell}(v)\ge\eta n$ for every $v\in W'_t$, so $W^*_\ell=W'_t$.
	
	If $|W^*_\ell|=1$, then $|V(H^*_\ell)|$ is odd and we construct $D$ as in the
	odd-order part of Case~1.1. If $|W^*_\ell|=2$, then $|V(H^*_\ell)|$ is even and
	$D=H^*_\ell$. In either case $\Delta_0=\Delta(D)=\Delta_\ell$.
	Conditions~\eqref{claim:regular-completion-order},
	\eqref{claim:regular-completion-simple},
	\eqref{claim:regular-completion-no-overfull}, and
	\eqref{claim:regular-completion-expander} follow as in Case~1.1. The only
	possible vertices with degree below $\Delta/3-\eta n$ lie in $W^*_\ell$, hence
	among $u_1,u_2$, so
	\[
	d_D^s(u_i)\ge\Delta/3-\eta n-2\eta^2 n-4\ge\Delta_\ell-2\eta n>\Delta_0-6\eta n
	\]
	for every $i\ge 3$, giving Condition~\eqref{claim:regular-completion-degree}.
	If Condition~\eqref{claim:regular-completion-local} needs to be checked, then,
	by the lower degree bound, we must be in the odd-order case. In this case,
	$u_1$ is the newly added vertex, and the verification is identical to that in
	the odd-order part of Case~1.1.
	Condition~\eqref{claim:regular-completion-u2} is vacuous since $D$ is simple.
	Applying Claim~\ref{claim:regular-completion} gives the desired contradiction.

	\smallskip
	\noindent\textbf{Case 2.} $G_\ell$ has an almost-overfull set $X$.
	
	Put $\Delta_\ell=\Delta(H_\ell)=\Delta-2\ell$ and
	$Y=V(H_\ell)\setminus X$. Since $X$ is almost-overfull in $G_\ell$, we have
	\[
	\sum_{v\in X}\bigl(\Delta_\ell-d_\ell(v)\bigr)
	+\sum_{u\in V(G_\ell)\setminus X}d_\ell(u)
	\le \Delta_\ell+7\eta^2n.
	\]
	By the construction of the projected graphs,
	$d_\ell(v)=d_{H_\ell}(v)$ for every $v\in V(H_\ell)$. Therefore
	\[
	\begin{aligned}
		&\sum_{v\in X}\bigl(\Delta_\ell-d_\ell(v)\bigr)
		+\sum_{u\in V(G_\ell)\setminus X}d_\ell(u)\\
		&=
		\sum_{v\in X}\bigl(\Delta_\ell-d_{H_\ell}(v)\bigr)
		+\sum_{u\in Y}d_{H_\ell}(u)\\
		&=
		\df_{H_\ell}(X)+2e_{H_\ell}(Y).
	\end{aligned}
	\]
	Consequently, $\df_{H_\ell}(X)+2e_{H_\ell}(Y)
	\le \Delta_\ell+7\eta^2n$, 
	and, in particular, $\df_{H_\ell}(X)\le\Delta_\ell+7\eta^2n$. 
	Since $H_\ell^*$ is obtained from $H_\ell$  by deleting only edges from $E_{H_\ell}(X,Y)$, every vertex of $X$ has the same degree in $H_\ell^*[X]$ as in $H_\ell[X]$. Also $\Delta(H_\ell^*)=\Delta(H_\ell)=\Delta_\ell$. Hence
	$$
	\df_{H^*_\ell}(X)=\df_{H_\ell}(X)\le\Delta_\ell+7\eta^2n. 
	$$

	\begin{CLA}\label{claim:Y-size}
		$|Y|\le |U_\ell|+1\le\eta^2 n+1$.
	\end{CLA}
	
	\begin{proof}
		Since $e_{H^*_\ell}(X,Y)\le\df_{H^*_\ell}(X)\le\Delta_\ell+7\eta^2 n$ and
		$H^*_\ell$ is a robust $(\nu'/2-2\eta^2,16\tau)$-expander,
		$\min\{|X|,|Y|\}<16\tau n$. Since $H^*_\ell$ is simple, the bound
		$\df_{H^*_\ell}(X)\le\Delta_\ell+7\eta^2 n$ implies $H^*_\ell[X]$ contains a
		vertex of degree at least $\Delta_\ell-7\eta^2 n$, so
		$|X|>\Delta_\ell-7\eta^2 n>16\tau n$ and hence $|Y|<16\tau n$.
		
		If $|Y\setminus U_\ell|\ge 2$, then by
		Claim~\ref{claim:Gi-property}\eqref{claim-Gi-property-3},
		\[
		e_{H^*_\ell}(X,Y)\ge 2(\Delta_\ell-(\eta n+\eta^2 n+2)-(|Y|-1))
		>\Delta_\ell+7\eta^2 n,
		\]
		contradicting $e_{H^*_\ell}(X,Y)\le\Delta_\ell+7\eta^2 n$. Hence
		$|Y\setminus U_\ell|\le 1$ and $|Y|\le |U_\ell|+1\le\eta^2 n+1$.
	\end{proof}
	
	Modify $X$ and $Y$ so that the new $Y$-set is independent. If every two
	distinct vertices $y_1,y_2\in Y$ satisfy
	\[
	d_{H^*_\ell}(y_1)+d_{H^*_\ell}(y_2)<\Delta_\ell-2\eta n,
	\]
	set $X_1=X$ and $Y_1=Y$. Otherwise, choose distinct vertices $y_1,y_2\in Y$
	with
	\[
	d_{H^*_\ell}(y_1)+d_{H^*_\ell}(y_2)\ge\Delta_\ell-2\eta n,
	\]
	and set $X_1=X\cup\{y_1,y_2\}$ and $Y_1=Y\setminus\{y_1,y_2\}$.
	
	If $X_1\ne X$, then
	\[
	e_{H^*_\ell}(X,Y)\ge d_{H^*_\ell}(y_1)+d_{H^*_\ell}(y_2)-2(|Y|-1)
	>\Delta_\ell-2.1\eta n,
	\]
	and since $e_{H^*_\ell}(X,Y)\le\Delta_\ell+7\eta^2 n$, any two distinct vertices
	of $Y\setminus\{y_1,y_2\}$ have degree sum at most $6\eta n$. If $X_1=X$, the
	same conclusion follows from the choice. Hence
	\begin{equation}\label{eqn:Y-vertex-degree-sum}
		d_{H^*_\ell}(y_1)+d_{H^*_\ell}(y_2)<\Delta_\ell-2\eta n
		\quad\text{for all distinct $y_1,y_2\in Y_1$.}
	\end{equation}
	By Claim~\ref{claim:property-Gi}\eqref{claim:property-Gi-weak-val}, $Y_1$ is
	independent in $H^*_\ell$.
	
	We show $\df_{H^*_\ell}(X_1)<\Delta_\ell+5\eta n$. If $X_1=X$ this follows from
	$\df_{H^*_\ell}(X_1)\le\Delta_\ell+7\eta^2 n$. Otherwise put $A=\{y_1,y_2\}$;
	since $H^*_\ell[Y]$ is simple and $|Y|\le\eta^2 n+1$,
	\[
	e_{H^*_\ell}(A,X)+e_{H^*_\ell}(A)\ge
	d_{H^*_\ell}(y_1)+d_{H^*_\ell}(y_2)-2(|Y|-1),
	\]
	so
	\[
	\begin{aligned}
		\df_{H^*_\ell}(X_1)
		&=\df_{H^*_\ell}(X)+2\Delta_\ell-2e_{H^*_\ell}(A,X)-2e_{H^*_\ell}(A)\\
		&\le\Delta_\ell+7\eta^2 n+2\Delta_\ell
		-2\bigl(d_{H^*_\ell}(y_1)+d_{H^*_\ell}(y_2)-2(|Y|-1)\bigr)\\
		&<\Delta_\ell+5\eta n,
	\end{aligned}
	\]
	using $d_{H^*_\ell}(y_1)+d_{H^*_\ell}(y_2)\ge\Delta_\ell-2\eta n$ and
	$|Y|\le\eta^2 n+1$.
	
	By repeatedly exchanging a lower-degree vertex $x$ of $X_1$ with a higher-degree
	vertex of $Y_1$, assume further that 
	\begin{equation}\label{eqn:Y-vertex-smaller-degree}
		d_{H^*_\ell}(y)\le d_{H^*_\ell}(x)
		\quad\text{for all $x\in X_1$ and $y\in Y_1$}.
	\end{equation}
	Since each exchanged vertex $x$ has no neighbor in $Y_1$ within $H_\ell^*$ by~\eqref{eqn:Y-vertex-degree-sum} and Claim~\ref{claim:property-Gi}\eqref{claim:property-Gi-weak-val},
	swapping it for a vertex $y\in Y_1$ with $d_{H^*_\ell}(y)>d_{H^*_\ell}(x)$
	decreases  $\df_{H^*_\ell}(X_1)$. Hence the bound 
	continues to hold:
	\begin{equation}\label{eqn:X1-almost-overfull}
		\df_{H^*_\ell}(X_1)<\Delta_\ell+5\eta n.
	\end{equation}

	Let $x_0\in X_1$ with
	$d_{H^*_\ell}(x_0)=\min\{d_{H^*_\ell}(x):x\in X_1\}$. If
	$d_{H^*_\ell}(x_0)\le e_{H^*_\ell}(X_1,Y_1)$, then by
	\eqref{eqn:X1-almost-overfull},
	\begin{equation}\label{eqn:degree-non-x0-vertex-of-X}
		d_{H^*_\ell}(x)\ge\Delta_\ell-5\eta n
		\quad\text{for every $x\in X_1\setminus\{x_0\}$}.
	\end{equation}
	Applying Algorithm~\ref{alg:edge-deletion-wrt-W2} to $H^*_\ell$ with respect to
	$Y_1$, and discarding isolated vertices, we keep the same notation for the
	resulting graph and for the intersections of $X_1$ and $Y_1$ with its vertex
	set. Thus, we have 
	\begin{equation}\label{eqn:Y1-weak-VAL}
		d_{H^*_\ell}(y)+d_{H^*_\ell}(v)\ge\Delta_\ell+2
		\quad\text{for every $yv\in E(H^*_\ell)$ with $y\in Y_1$}.
	\end{equation}
	By Lemma~\ref{lemma:reduce-to-weak-VAL} and Lemma~\ref{lem:stability-expansion},
	this operation preserves the class-$2$  property and maximum degree and $H^*_\ell$ remains a robust
	$(\nu'/2-4\eta^2, 32\tau)$-expander. Since Algorithm~\ref{alg:edge-deletion-wrt-W2}  only deletes 
	edges between $X_1$ and $Y_1$,  \eqref{eqn:X1-almost-overfull} still holds. 
	
	\smallskip
	\noindent\textbf{Subcase 2.1.} $|Y_1|=0$.
	
	If $H^*_\ell$ has at least three vertices of degree less than $\Delta_\ell$, let
	$D$ be obtained from $H^*_\ell$ by adding a new vertex $x$ joined to two vertices
	of largest degree among those with degree less than $\Delta_\ell$. Otherwise
	$H^*_\ell$ has exactly two such vertices (since $H^*_\ell$ has odd order and no
	$\Delta_\ell$-overfull subgraph), and let $D$ be the disjoint union of
	$H^*_\ell$ and an isolated vertex $x$. In both cases $\Delta_0=\Delta_\ell$ and
	$m=|V(D)|$ is even.
	
	Order the vertices of $D$ as $u_1,\ldots,u_m$ by nondecreasing degree. Since
	$Y_1=\emptyset$, \eqref{eqn:X1-almost-overfull} gives
	$\df_D(D-u_1)\le\Delta_\ell+5\eta n$, so
	Condition~\eqref{claim:regular-completion-degree} holds.
	Conditions~\eqref{claim:regular-completion-order},
	\eqref{claim:regular-completion-no-overfull}, and
	\eqref{claim:regular-completion-expander} are immediate. Since adding a new
	vertex creates no parallel edges in $H^*_\ell$,
	Condition~\eqref{claim:regular-completion-simple} holds:
	$D-\{u_1,u_2\}$ is simple and
	$e_D(u_1,u_i),e_D(u_2,u_i)\le 1\le 2\eta^2 m$ for every $i\ge 3$. For
	Condition~\eqref{claim:regular-completion-local}, by the construction of $D$,
	$|N_D(u_1)\setminus\{u_2\}|=0$ implies $D$ has exactly three vertices of degree
	less than $\Delta_0$. Condition~\eqref{claim:regular-completion-u2} is vacuous
	since $D$ is simple. Since $H^*_\ell\subseteq D$ and
	$\chi'(H^*_\ell)>\Delta_\ell$, applying Claim~\ref{claim:regular-completion}
	gives a $\Delta_0$-regular multigraph $\widehat D$ with
	$\chi'(\widehat D)=\Delta_0$, and restricting to $E(D)$ yields a
	$\Delta_0$-coloring of $D$, contradicting $\chi'(D)>\Delta(D)$.
	
	\smallskip
	\noindent\textbf{Subcase 2.2.} $|Y_1|=1$.
	
	Let $Y_1=\{u_1\}$ and $D=H^*_\ell$, with vertices ordered
	$u_1,u_2,\ldots,u_m$ by nondecreasing degree. Then
	$\Delta_0=\Delta_\ell$ and $\chi'(D)>\Delta(D)$ by
	Claim~\ref{claim:property-Gi}\eqref{claim:property-Gi-class2}.
	Conditions~\eqref{claim:regular-completion-order},
	\eqref{claim:regular-completion-no-overfull}, and
	\eqref{claim:regular-completion-expander} follow from
	Claim~\ref{claim:property-Gi}. Condition~\eqref{claim:regular-completion-simple}
	holds since $D=H^*_\ell$ is simple. By~\eqref{eqn:X1-almost-overfull} and
	\eqref{eqn:degree-non-x0-vertex-of-X}, either
	$\df_D(D-u_1)\le\Delta_0+5\eta n$ or all $u_i$ with $i\ge 3$ satisfy
	$d_D^s(u_i)\ge\Delta_0-5\eta n$, giving
	Condition~\eqref{claim:regular-completion-degree}. If
	Condition~\eqref{claim:regular-completion-local} or
	\eqref{claim:regular-completion-u2} needs to be checked,
	\eqref{eqn:Y1-weak-VAL} provides the required degree-sum bound on all edges
	incident with $u_1$, and the two-neighbor cut argument from Case~1.1 applies.
	Applying Claim~\ref{claim:regular-completion} gives a contradiction.
	
	\smallskip
	\noindent\textbf{Subcase 2.3.} $|Y_1|\ge 2$ and
	$d_{H^*_\ell}(x_0)\ge e_{H^*_\ell}(X_1,Y_1)$.
	
	Identify all vertices of $Y_1$ into a single vertex $u_1$, and let $D$ be the
	resulting multigraph. Since $Y_1$ is independent, every edge-coloring of $D$
	gives one of $H^*_\ell$ with the same colors, so $\chi'(D)>\Delta(D)$. Also
	$d_D(u_1)=e_{H^*_\ell}(X_1,Y_1)\le d_{H^*_\ell}(x_0)$ and
	$\Delta_0=\Delta_\ell$.
	
	We show $D$ contains no $\Delta_0$-overfull subgraph. Suppose
	$S\subseteq V(D)$ is odd with $D[S]$ $\Delta_0$-overfull. If $u_1\notin S$,
	then $D[S]$ is a subgraph of $H^*_\ell$, contradicting
	Claim~\ref{claim:property-Gi}\eqref{claim:property-Gi-no-overfull}. So
	$u_1\in S$; let $T=V(D)\setminus S$. Since $D[S]$ is overfull,
	$e_D(S,T)<\Delta_0$.  Since the original $H^*_\ell$ is a robust $(\nu'/2-2\eta^2,16\tau)$-expander by
	Claim~\ref{claim:property-Gi}\eqref{claim:property-Gi-expander}, and
	$|Y_1|\le\eta^2 n+1$, the cleanup operation with respect to $Y_1$ together with
	the identification performed here implies, by Lemma~\ref{lem:stability-expansion},
	that $D^s$ is a robust $(\nu'/2-4\eta^2,32\tau)$-expander. Robust expansion then
	implies $|S|<32\tau n$ or $|T|<32\tau n$. Since
	$D-u_1$ is simple and $e_D(u_1,z)\le |Y_1|\le \eta^2 n+1$ for every
	$z\ne u_1$, the case $|S|<32\tau n$ would give
	\[
	e_D(S)\le \binom{|S|}{2}+|S|(\eta^2 n+1)
	<\Delta_0(|S|-1)/2,
	\]
	by $\eta\ll\tau\ll\alpha$ and $\Delta_0\ge\alpha n/3$, contradicting
	overfullness. Hence $|T|<32\tau n$.
	
	If $|S|\le|V(D)|-3$, then $T$ contains two vertices
	$a,b\in X_1\setminus\{x_0\}$, and by
	\eqref{eqn:degree-non-x0-vertex-of-X},
	\[
	e_D(S,T)\ge d_{H^*_\ell}(a)+d_{H^*_\ell}(b)-2|T|>\Delta_0,
	\]
	contradicting that $D[S]$ is $\Delta_0$-overfull. Hence
	$|S|=|V(D)|-1$. But $\delta(D)=d_D(u_1)\le d_{H^*_\ell}(x_0)$, so
	Lemma~\ref{lem:overfull-delete-one-vertex} gives a contradiction.
	
	Conditions~\eqref{claim:regular-completion-order} and
	\eqref{claim:regular-completion-expander} follow from the construction and
	Lemma~\ref{lem:stability-expansion}. For
	Condition~\eqref{claim:regular-completion-simple}: all multiple edges are
	incident with $u_1$, so $D-\{u_1,u_2\}$ is simple; also
	$e_D(u_1,u_i)=e_{H^*_\ell}(Y_1,u_i)\le |Y_1|\le\eta^2 n+1\le 2\eta^2 m$ and
	$e_D(u_2,u_i)\le 1\le 2\eta^2 m$ for every $i\ge 3$.
	Condition~\eqref{claim:regular-completion-degree} follows from
	\eqref{eqn:X1-almost-overfull}. 
	Conditions~\eqref{claim:regular-completion-local} and
	\eqref{claim:regular-completion-u2} follow from \eqref{eqn:Y1-weak-VAL} and
	the two-neighbor argument above. Applying Claim~\ref{claim:regular-completion}
	gives a contradiction.
	
	\smallskip
	\noindent\textbf{Subcase 2.4.} $|Y_1|\ge 2$,
	$d_{H^*_\ell}(x_0)<e_{H^*_\ell}(X_1,Y_1)$, and there exists
	$x\in X_1\setminus\{x_0\}$ with
	$d_{H^*_\ell}(x)\le\Delta_\ell-d_{H^*_\ell}(x_0)$.
	
	Choose $x_1\in X_1\setminus\{x_0\}$ with minimum degree. By
	\eqref{eqn:degree-non-x0-vertex-of-X},
	$d_{H^*_\ell}(x_1)\ge\Delta_\ell-5\eta n$, so $d_{H^*_\ell}(x_0)\le 5\eta n$.
	Together with \eqref{eqn:Y-vertex-smaller-degree}, every vertex of
	$Y_1\cup\{x_0\}$ has degree less than $\Delta_\ell/3$, so
	$Y_1\cup\{x_0\}$ is independent by
	Claim~\ref{claim:property-Gi}\eqref{claim:property-Gi-weak-val}. Apply
	Algorithm~\ref{alg:edge-deletion-wrt-W2} to $H^*_\ell$ with respect to
	$Y_1\cup\{x_0\}$, discard isolated vertices, and keep the same notation.
	
	Partition $Y_1\cup\{x_0\}$ into two nonempty parts $A$ and $B$ with $x_0\in B$
	so that the degree sums differ by at most $d_{H^*_\ell}(x_0)$. Such a partition
	exists by minimizing the difference: if it exceeded $d_{H^*_\ell}(x_0)$, moving
	one vertex from the larger side would reduce it. Identify $A$ and $B$ into
	vertices $u_1$ and $u_2$ with $d_D(u_1)\le d_D(u_2)$, and let $D$ be the
	resulting multigraph. Since $Y_1\cup\{x_0\}$ is independent, every edge-coloring
	of $D$ gives one of $H^*_\ell$, so $\chi'(D)>\Delta(D)$ and
	$\Delta_0=\Delta_\ell$.
	
	Condition~\eqref{claim:regular-completion-simple}: all multiple edges are
	incident with $u_1$ or $u_2$, so $D-\{u_1,u_2\}$ is simple; and since
	$|A|,|B|\le |Y_1|+1\le\eta^2 n+2\le 2\eta^2 m$,
	$e_D(u_1,u_i),e_D(u_2,u_i)\le 2\eta^2 m$ for every $i\ge 3$.
	
	The weak VAL cleanup gives Condition~\eqref{claim:regular-completion-u2}:
	$u_1u_2\notin E(D)$, and for each $i\in[2]$ and every
	$w\in V(D)\setminus\{u_1,u_2\}$ with $u_iw\in E(D)$,
	\[
	e_D(u_i,w)\le e_D(u_i,V(D)\setminus\{w\})
	\quad\text{and}\quad
	d_D(u_i)+d_D(w)\ge\Delta_0+2.
	\]
	Since $e_{H^*_\ell}(X_1,Y_1)>d_{H^*_\ell}(x_0)$ and
	\eqref{eqn:X1-almost-overfull} gives
	$e_{H^*_\ell}(X_1,Y_1)\le d_{H^*_\ell}(x_0)+5\eta n$, the total degree of the two
	identified vertices is at most $15\eta n$, and the partition balancing gives
	$d_D(u_2)\le 10\eta n$. Vertices outside $\{u_1,u_2\}$ have simple degree at
	least $\Delta_0-5\eta n$ by \eqref{eqn:degree-non-x0-vertex-of-X}, so
	Condition~\eqref{claim:regular-completion-degree} holds.
	
	We show $D$ contains no $\Delta_0$-overfull subgraph. Suppose
	$S\subseteq V(D)$ is odd with $D[S]$ $\Delta_0$-overfull. The robust expansion
	and cut argument from Subcase~2.3 rules out $|S|\le |V(D)|-3$ unless
	$V(D)\setminus S=\{u_1,u_2,x_1\}$, but then $D[S]$ is a subgraph of $H^*_\ell$,
	contradicting that $H^*_\ell$ contains no $\Delta_0$-overfull subgraph. Hence
	$|S|=|V(D)|-1$. By Lemma~\ref{lem:overfull-delete-one-vertex}, the missing
	vertex is $u_1$ or $u_2$. If $S=V(D)\setminus\{u_1\}$, using
	$d_D(u_2)-d_D(u_1)\le d_{H^*_\ell}(x_0)$ and
	$d_D(x_1)=d_{H^*_\ell}(x_1)\le\Delta_\ell-d_{H^*_\ell}(x_0)$,
	\[
	\df_D(D-u_1)\ge d_D(u_1)+2\Delta_0-d_D(u_2)-d_D(x_1)\ge\Delta_0,
	\]
	contradicting that $D[S]$ is $\Delta_0$-overfull; the case
	$S=V(D)\setminus\{u_2\}$ is identical. Hence
	Condition~\eqref{claim:regular-completion-no-overfull} holds, and
	Claim~\ref{claim:regular-completion} gives a contradiction.
	
	\smallskip
	\noindent\textbf{Subcase 2.5.} $|Y_1|\ge 2$,
	$d_{H^*_\ell}(x_0)<e_{H^*_\ell}(X_1,Y_1)$, and
	$d_{H^*_\ell}(x)>\Delta_\ell-d_{H^*_\ell}(x_0)$ for every
	$x\in X_1\setminus\{x_0\}$.
	
	Put $s=e_{H^*_\ell}(X_1,Y_1)-d_{H^*_\ell}(x_0)>0$. Since
	$\df_{H^*_\ell}(X_1)<\Delta_\ell+5\eta n$, for every
	$x\in X_1\setminus\{x_0\}$,
	\begin{equation}\label{eqn:x-degree-subcase2.5}
		d_{H^*_\ell}(x)>\Delta_\ell-5\eta n+s.
	\end{equation}
	
	\begin{Ope}[Matching-Removal Operation]\label{ope:stage1-matching-removal-case2}
		Put $Q_0^*=H^*_\ell$. For $i\ge 1$, do the following.
		\begin{enumerate}[Step 1:]
			\item Apply Algorithm~\ref{alg:edge-deletion-wrt-W2} to $Q_{i-1}^*$ with
			$W=Y_1$. Let $R_i$ be the set of deleted edges, and let $Q_{i-1}$ be
			obtained from $Q_{i-1}^*-R_i$ by discarding isolated vertices. Keep the
			same notation for $X_1$ and $Y_1$ after intersecting with $V(Q_{i-1})$.
			
			\item Choose $x_{i-1}^0\in X_1$ with
			$d_{Q_{i-1}}(x_{i-1}^0)=\min\{d_{Q_{i-1}}(x):x\in X_1\}$. Stop and let
			$r=i-1$ if $|Y_1|\le 1$, or
			$e_{Q_{i-1}}(X_1,Y_1)\le d_{Q_{i-1}}(x_{i-1}^0)$, or there exists
			$x\in X_1\setminus\{x_{i-1}^0\}$ with
			$d_{Q_{i-1}}(x)\le\Delta(Q_{i-1})-d_{Q_{i-1}}(x_{i-1}^0)$.
			
			\item Otherwise, choose 
			$M_i=\{y_1x_1,y_2x_2\}\subseteq E_{Q_{i-1}}(Y_1,X_1)$ consisting of two independent edges, where
			$y_1,y_2\in Y_1$ have largest and second-largest degree in
			$Q_{i-1}[Y_1,X_1]$, and $x_1,x_2\in X_1\setminus\{x_{i-1}^0\}$. Such a
			matching exists by Lemma~\ref{lem:matching-in-critical-graph}, since
			$Q_{i-1}$ satisfies weak VAL with respect to $Y_1$.
			
			\item Let $N_i$ be a perfect matching of
			$Q_{i-1}[X_1]-\{x_{i-1}^0,x_1,x_2\}$. Such a matching exists because
			$Q_{i-1}[X_1]-\{x_{i-1}^0,x_1,x_2\}$ is a robust
			$(\nu'/2-6\eta,32\tau)$-expander by Lemma~\ref{lem:stability-expansion} ($s<5\eta n$ and $H_\ell^*$ is a robust $(\nu'/2-2\eta^2 n, 16\tau)$-expander),
			hence has a Hamilton cycle by Lemma~\ref{lem:hamiltonicity-of-expander};
			its order is even.
			
			\item Let $Q_i^*$ be obtained from $Q_{i-1}-(M_i\cup N_i)$ by discarding
			isolated vertices, and continue.
		\end{enumerate}
	\end{Ope}
	
	\begin{CLA}\label{claim:Qi-property}
		The operation stops with $r\le s$, and the following hold.
		\begin{enumerate}[(1)]
			\item\label{claim:Qi-stop}
			At least one of: $|Y_1|\le 1$,
			$e_{Q_r}(X_1,Y_1)\le d_{Q_r}(x_r^0)$, or there exists
			$x\in X_1\setminus\{x_r^0\}$ with
			$d_{Q_r}(x)\le\Delta(Q_r)-d_{Q_r}(x_r^0)$ holds.
			
			\item\label{claim:Qi-class2}
			For every $i\in[0,r]$, $\Delta(Q_i)=\Delta(Q_i^*)=\Delta_\ell-i$ and
			$Q_i$ is class~$2$.
			
			\item\label{claim:Qi-deficiency}
			If $\df_{H^*_\ell}(X_1)=\Delta_\ell+t$, then
			$\df_{Q_r}(X_1)=\Delta(Q_r)+t-2r<\Delta(Q_r)+5\eta n$.
			
			\item\label{claim:Qi-no-overfull}
			For every $i\in[0,r]$, $Q_i$ contains no $\Delta(Q_i)$-overfull subgraph.
			
			\item\label{claim:Qi-expander}
			$Q_r$ is a robust $(\nu'/2-6\eta,32\tau)$-expander.
		\end{enumerate}
	\end{CLA}
	
	\begin{proof}
		Let $s_i=e_{Q_i}(X_1,Y_1)-d_{Q_i}(x_i^0)$ whenever $Q_i$ is not a stopping
		graph. In a completed round $i+1$, $e_{Q_i}(X_1,Y_1)$ decreases by $2$; before
		the cleanup producing $Q_{i+1}$, every vertex of
		$X_1\setminus\{x_i^0\}$ loses one incident edge while $x_i^0$ loses none; and
		during cleanup every deleted edge from $X_1$ lies in $E(X_1,Y_1)$. Hence
		$s_{i+1}\le s_i-1$, and since $s_0\le s$ we have $r\le s$.
		
		Conclusion~\eqref{claim:Qi-stop} follows directly from Step~2.
		
		For~\eqref{claim:Qi-class2}: by Lemma~\ref{lemma:reduce-to-weak-VAL},
		Algorithm~\ref{alg:edge-deletion-wrt-W2} preserves the class-$2$ property and
		the current maximum degree, so $Q_0$ is class~$2$ and
		$\Delta(Q_0)=\Delta_\ell$. Suppose round $i$ is completed. Then
		$M_i\cup N_i$ is a matching of $Q_{i-1}$ saturating every maximum-degree
		vertex, so $\Delta(Q_i^*)=\Delta_\ell-i$. If
		$\chi'(Q_i^*)\le\Delta(Q_i^*)$, coloring $M_i\cup N_i$ with one new color gives
		a $\Delta(Q_{i-1})$-coloring of $Q_{i-1}$, contradicting class~$2$. Hence
		$Q_i^*$ is class~$2$, and the cleanup preserves this and the maximum degree.
		
		For~\eqref{claim:Qi-deficiency}: write
		$\df_{H^*_\ell}(X_1)=\Delta_\ell+t$ with $t<5\eta n$. 
		In each completed round
		the maximum degree decreases by $1$ and $N_i$ removes one edge from each vertex
		of $X_1\setminus\{x_{i-1}^0,x_1,x_2\}$, giving
		\[
		\df_{Q_r}(X_1)=\Delta(Q_0)+t-3r=\Delta(Q_r)+t-2r<\Delta(Q_r)+5\eta n.
		\]
		
		For~\eqref{claim:Qi-no-overfull}: the case $i=0$ holds since $Q_0$ is obtained
		from $H^*_\ell$ by edge deletion with the same maximum degree $\Delta_\ell$, and
		$H^*_\ell$ has no $\Delta_\ell$-overfull subgraph. Let $i\ge 1$ be the smallest
		index for which $Q_i$ contains a $\Delta(Q_i)$-overfull subgraph. Let
		$S\subseteq V(Q_i)$ be odd with
		$e_{Q_i}(S)>\Delta(Q_i)(|S|-1)/2$, and set $T=V(Q_i)\setminus S$. If
		$T\ne\emptyset$, overfullness gives $e_{Q_i}(S,T)<\Delta(Q_i)$ and robust
		expansion implies $|T|<32\tau n$; if $T=\emptyset$ this is trivial. Thus 
		$|T|<32\tau n$.

		Put $F_i=M_i\cup N_i$. Since $Q_i$ is obtained from
		$Q_i^*=Q_{i-1}-F_i$ by cleanup while preserving the maximum degree, we have
		$\df_{Q_i}(S)\ge\df_{Q_i^*}(S)$. Let $\theta_i(S)$ be the number of vertices
		of $S$ not saturated by $F_i\cap E(Q_i[S])$. Then
		\begin{equation}\label{eqn:deficiency-change-Fi}
			\df_{Q_i^*}(S)=\df_{Q_{i-1}}(S)-\theta_i(S).
		\end{equation}
		Indeed, passing from $Q_{i-1}$ to $Q_i^*$ decreases the maximum degree by one
		and removes exactly $|F_i\cap E(Q_i[S])|$ 
		edges with both endpoints in $S$. Since $F_i$ is a matching,
		\[
		2|F_i\cap E(Q_i[S])|=|S|-\theta_i(S),
		\]
		and hence
		\[
		\begin{aligned}
			\df_{Q_i^*}(S)
			&=\df_{Q_{i-1}}(S)-|S|
			+2|F_i\cap E(Q_i[S])|\\
			&=\df_{Q_{i-1}}(S)-\theta_i(S).
		\end{aligned}
		\]
		In particular, since $|S|$ is odd, $\theta_i(S)$ is odd.

		If $|T\cap X_1|\ge 3$, then
		$T\cap(X_1\setminus\{x_{i-1}^0\})$ contains two vertices $x_1,x_2$, and by
		\eqref{eqn:x-degree-subcase2.5},
		\[
		d_{Q_i}(x_1),d_{Q_i}(x_2)\ge\Delta_\ell-5\eta n+s-i =\Delta(Q_i)-5\eta n+s.
		\]
		With $|T| <32\tau n$ and $i\le r\le s$,
		\[
		e_{Q_i}(S,T)\ge d_{Q_i}(x_1)+d_{Q_i}(x_2)-2|T|>\Delta(Q_i),
		\]
		a contradiction. Hence  $|T\cap X_1|\le 2$.

		Suppose $|T\cap X_1|=2$. The case
		$x_{i-1}^0\notin T\cap X_1$ is handled by the above estimate, so assume
		$T\cap X_1=\{x_{i-1}^0,x\}$ for some
		$x\in X_1\setminus\{x_{i-1}^0\}$.
		
		If $e_{Q_i}(T\cap X_1,Y_1\setminus S)=0$, then since round $i$ was completed,
		$d_{Q_{i-1}}(x)>\Delta(Q_{i-1})-d_{Q_{i-1}}(x_{i-1}^0)$. Since $x$ loses one
		edge in $F_i$ while $x_{i-1}^0$ loses none and
		$\Delta(Q_i)=\Delta(Q_{i-1})-1$, we get
		$d_{Q_i}(x)>\Delta(Q_i)-d_{Q_i}(x_{i-1}^0)$, so
		\[
		\df_{Q_i}(S)\ge d_{Q_i}(x_{i-1}^0)
		+(\Delta(Q_i)-d_{Q_i}(x_{i-1}^0)+1)-2
		=\Delta(Q_i)-1.
		\]
		Since $|S|$ is odd, Lemma~\ref{lem:def-relation-with-Delta} gives
		$\df_{Q_i}(S)\ge\Delta(Q_i)$, a contradiction.

	Thus $e_{Q_i}(T\cap X_1,Y_1\setminus S)>0$. We first claim that 
	$e_{Q_i}(x^0_{i-1},Y_1\setminus S)=0$. 
	Suppose otherwise, and let $y\in Y_1\setminus S$ be such that
	$yx^0_{i-1}\in E(Q_i)$. Since $y\in Y_1$ and $Q_i$ satisfies weak VAL
	with respect to $Y_1$, we have $d_{Q_i}(y)+d_{Q_i}(x^0_{i-1})\ge \Delta(Q_i)+2$. 
	Moreover, $d_{Q_0}(y)\le d_{Q_0}(x^0_{i-1})$ for every $y\in Y_1$, and
	the difference between these two degrees can increase by at most $i$
	during the first $i$ rounds. Hence $d_{Q_i}(y)\le d_{Q_i}(x^0_{i-1})+i$. 
	It follows that $2d_{Q_i}(x^0_{i-1})+i\ge \Delta(Q_i)+2$, 
	and therefore
	\[
	d_{Q_i}(x^0_{i-1})
	\ge \frac{\Delta(Q_i)+2-i}{2}
	\ge \frac{\Delta(Q_i)}{2}-3\eta n.
	\]
	Since $d_{Q_i}(x)\ge d_{Q_i}(x^0_{i-1})$, we obtain
	\[
	\begin{aligned}
		e_{Q_i}(S,T)
		&\ge d_{Q_i}(x^0_{i-1})+d_{Q_i}(y)+d_{Q_i}(x)-3|T|\\
		&\ge d_{Q_i}(x^0_{i-1})+d_{Q_i}(y)
		+d_{Q_i}(x^0_{i-1})-3|T|\\
		&\ge \Delta(Q_i)+2+\frac{\Delta(Q_i)}2-3\eta n-3|T|\\
		&>\Delta(Q_i),
	\end{aligned}
	\]
	a contradiction. Thus $e_{Q_i}(x^0_{i-1},Y_1\setminus S)=0$. 

	Let $a=e_{Q_i}(x,Y_1\setminus S)$. 
	Since $Q_{i-1}$ is not a stopping graph, we have $d_{Q_i}(x)>\Delta(Q_i)-d_{Q_i}(x^0_{i-1})$. 
	For every edge $xy$ with $y\in Y_1\setminus S$, weak VAL with respect to
	$Y_1$ gives $d_{Q_i}(x)+d_{Q_i}(y)\ge \Delta(Q_i)+2$. 
	Summing this inequality over the $a$ edges from $x$ to
	$Y_1\setminus S$,   yields
	\[
	\sum_{y\in N_{Q_i}(x)\cap(Y_1\setminus S)}
	 d_{Q_i}(y)
	\ge a\bigl(\Delta(Q_i)+2-d_{Q_i}(x)\bigr).
	\]
	Since $Y_1$ is independent and
	$e_{Q_i}(x^0_{i-1},Y_1\setminus S)=0$, after subtracting the $a$ edges
	joining these vertices to $x$, the vertices in
	$N_{Q_i}(x)\cap(Y_1\setminus S)$ send at least 
	$a\bigl(\Delta(Q_i)+2-d_{Q_i}(x)\bigr)-a$ 
	edges to $S$. Also, $x$ sends at least $\Delta(Q_i)-d_{Q_i}(x^0_{i-1})-a$ 
	edges to $S$, while the edges incident with $x^0_{i-1}$ contribute at
	least $d_{Q_i}(x^0_{i-1})-1$ further edges to the cut. Therefore,
	\[
	\begin{aligned}
		e_{Q_i}(S,T)
		&\ge
		a\bigl(\Delta(Q_i)+2-d_{Q_i}(x)\bigr)-a 
	+\Delta(Q_i)-d_{Q_i}(x^0_{i-1})-a
		+d_{Q_i}(x^0_{i-1})-1\\
		&=\Delta(Q_i)
		+a\bigl(\Delta(Q_i)-d_{Q_i}(x)\bigr)-1\\
		&\ge \Delta(Q_i)-1.
	\end{aligned}
	\]
	Since $|S|$ is odd, Lemma~\ref{lem:def-relation-with-Delta} gives
	$\df_{Q_i}(S)\ge\Delta(Q_i)$, a contradiction.

		Since $|T\cap X_1| \le 2$, it remains to handle
		$|T\cap X_1|\le 1$. If $|S\cap Y_1|\ge 3$, then
		\[
		\sum_{y\in S\cap Y_1}d_{Q_i}(y)\le e_{Q_i}(X_1,Y_1) <\Delta(Q_i)+5\eta n,
		\]
		so $\df_{Q_i}(S) >3\Delta(Q_i)-(\Delta(Q_i)+5\eta n)>\Delta(Q_i)$, a contradiction. Hence
		$|S\cap Y_1|\le 2$, and since $|S|$ and $|X_1|$ are odd, the only
		possibilities are
		\[
		(|T\cap X_1|,|S\cap Y_1|)=(0,2)\quad\text{or}\quad(1,1).
		\]

		Suppose $(|T\cap X_1|,|S\cap Y_1|)=(0,2)$; write
		$S\cap Y_1=\{y_1,y_2\}$ and let $h=|\{y_1,y_2\}\cap V(M_i)|$.
		Since $T\cap X_1=\emptyset$, we have $X_1\subseteq S$. By the construction
		of $F_i$, the vertices of $S$ not saturated by $F_i\cap E(Q_i[S])$ are
		precisely $x_{i-1}^0$,  the vertices of $\{y_1,y_2\}\setminus V(M_i)$,
		and $2-h$ vertices of $X_1\setminus \{x_{i-1}^0\}$ covered by $M_i$
		through crossing edges. 
	Therefore $\theta_i(S)=1+|\{y_1,y_2\}\setminus V(M_i)|+2-h=5-2h$.

		Suppose first that $h=0$. Then $\theta_i(S)=5$. Let $y_1'$ and $y_2'$ be
		the two vertices of $Y_1$ saturated by $M_i$. By the choice of $M_i$,
		after relabeling $y_1'$ and $y_2'$ if necessary, we have
		$d_{Q_{i-1}}(y_1')\ge d_{Q_{i-1}}(y_1)$ and
		$d_{Q_{i-1}}(y_2')\ge d_{Q_{i-1}}(y_2)$. It follows that
		\[
		\begin{aligned}
			\df_{Q_i}(S)
			&\ge
			2\Delta(Q_{i-1})
			-d_{Q_{i-1}}(y_1)-d_{Q_{i-1}}(y_2)
			+d_{Q_{i-1}}(y_1)+d_{Q_{i-1}}(y_2)-2\\
			&=2\Delta(Q_{i-1})-2>\Delta(Q_i),
		\end{aligned}
		\]
		a contradiction.
		
		Suppose then  that $h=1$. Then $\theta_i(S)=3$. Let $y_1'$ and $y_2'$ be
		the two vertices of $Y_1$ saturated by $M_i$. Suppose, 
		without loss of generality, that $y_2'=y_2$. 
		By the choice of $M_i$,
		we have
		$d_{Q_{i-1}}(y_1')\ge d_{Q_{i-1}}(y_1)$.  It then follows that
		\[
		\begin{aligned}
			\df_{Q_i}(S)
			&\ge
			\Delta(Q_{i-1})
			-d_{Q_{i-1}}(y_1)
			+d_{Q_{i-1}}(y'_1)-1\\
			& \ge \Delta(Q_{i-1})-1.
		\end{aligned}
		\]
	Since $|S|$ is odd, Lemma~\ref{lem:def-relation-with-Delta} gives
	$\df_{Q_i}(S)\ge\Delta(Q_i)$, a contradiction.

		Suppose lastly that $h=2$. Then $\theta_i(S)=1$, and
		\eqref{eqn:deficiency-change-Fi} gives
		\[
		\df_{Q_i}(S)
		\ge \df_{Q_i^*}(S)
		=\df_{Q_{i-1}}(S)-1
		\ge \Delta(Q_{i-1})-1
		=\Delta(Q_i),
		\]
		where the second inequality follows from the fact that $Q_{i-1}$ has no
		$\Delta(Q_{i-1})$-overfull subgraph.  This is again a contradiction.

	Suppose next that $(|T\cap X_1|,|S\cap Y_1|)=(1,1)$; write
	$S\cap Y_1=\{y\}$ and $T\cap X_1=\{x\}$.
	
	We first consider the case where $e_{Q_i}(x,Y_1\setminus S)>0$.
	Let
	$a=e_{Q_i}(x,Y_1\setminus S)$. For every edge $xy'$ with
	$y'\in Y_1\setminus S$, weak VAL with respect to $Y_1$ gives
	$d_{Q_i}(y')+d_{Q_i}(x)\ge \Delta(Q_i)+2$. Summing over the $a$ edges
	from $x$ to $Y_1\setminus S$  gives
	\[
	\sum_{y'\in N_{Q_i}(x)\cap(Y_1\setminus S)}
d_{Q_i}(y')
	\ge a\bigl(\Delta(Q_i)+2-d_{Q_i}(x)\bigr).
	\]
	Since $Y_1$ is independent and $T\cap X_1=\{x\}$, after subtracting the
	$a$ edges joining these vertices to $x$, the vertices in
	$N_{Q_i}(x)\cap(Y_1\setminus S)$ send at least
	$a(\Delta(Q_i)+2-d_{Q_i}(x))-a$ edges to $S$. Also, the vertex $x$ sends
	$d_{Q_i}(x)-a$ edges to $S$. Therefore
	\[
	\begin{aligned}
		e_{Q_i}(S,T)
		&\ge a\bigl(\Delta(Q_i)+2-d_{Q_i}(x)\bigr)-a
		+d_{Q_i}(x)-a\\
		&=\Delta(Q_i)+(a-1)\bigl(\Delta(Q_i)-d_{Q_i}(x)\bigr) 
\ge \Delta(Q_i),
	\end{aligned}
	\]
	a contradiction.
	
	We may therefore assume that $e_{Q_i}(x,Y_1\setminus S)=0$. We
	consider two subcases according to whether $x^0_{i-1}$ belongs to $S$.
	
	First suppose that $x^0_{i-1}\notin S$. Then $x=x^0_{i-1}$. Since
	$x^0_{i-1}$ is not saturated by $F_i$, no edge of $M_i$ is incident with
	$x$. Hence both edges of $M_i$ have their $X_1$-endpoints in $S\cap X_1$.
	Let $r=e_{Q_i[M_i]}(y,S\cap X_1)$. Then the vertices of $S$ not saturated by
	$F_i\cap E(Q_i[S])$ are exactly the vertex $y$, if $r=0$, together with
	the $2-r$ vertices of $S\cap X_1$ covered by $M_i$ through crossing edges.
	Thus $\theta_i(S)=\mathbf 1_{\{r=0\}}+2-r$.
	
	Since $|S|$ is odd, $\theta_i(S)$ is odd. If $r=1$, then
	$\theta_i(S)=1$, and hence
	\[
	\df_{Q_i}(S)
	\ge \df_{Q_i^*}(S)
	=\df_{Q_{i-1}}(S)-1
	\ge \Delta(Q_{i-1})-1
	=\Delta(Q_i),
	\]
	a contradiction. Thus $r=0$, and so $\theta_i(S)=3$. In this case
	$y\notin V(M_i)$. Let $y_1'$ and $y_2'$ be the two vertices of $Y_1$
	saturated by $M_i$. By the choice of $M_i$, we have
	$d_{Q_{i-1}}(y_1')\ge d_{Q_{i-1}}(y)$ and
	$d_{Q_{i-1}}(y_2')\ge d_{Q_{i-1}}(y)$. Then
	\[
	\begin{aligned}
		\df_{Q_i}(S)
		&\ge
		\Delta(Q_{i-1})-d_{Q_{i-1}}(y)
		+d_{Q_{i-1}}(y_1')+d_{Q_{i-1}}(y_2')-2\\
		&\ge \Delta(Q_{i-1})+d_{Q_{i-1}}(y_2')-2 
\ge \Delta(Q_i),
	\end{aligned}
	\]
	again a contradiction, since $y_2'$ is saturated by $M_i$ and hence
	$d_{Q_{i-1}}(y_2')\ge 1$.
	
	It remains to consider the case $x^0_{i-1}\in S$. Then
	$x\ne x^0_{i-1}$. Let $S'=(S\setminus\{x^0_{i-1}\})\cup\{x\}$. Then
	$|S'|=|S|$, so $S'$ is odd. Since
	$e_{Q_i}(x,Y_1\setminus S)=0$, all edges of
	$Q_i$ incident with $x$ lie inside $S'\cup\{x^0_{i-1}\}$. Hence
	$d_{Q_i}(x,S')=d_{Q_i}(x)-e_{Q_i}(x,x^0_{i-1})$. On the other hand,
	$d_{Q_i}(x^0_{i-1},S\setminus\{x^0_{i-1}\})\le d_{Q_i}(x^0_{i-1})-e_{Q_i}(x,x^0_{i-1})$.
	Using $d_{Q_i}(x)\ge d_{Q_i}(x^0_{i-1})$, we get
	\[
	\begin{aligned}
		e_{Q_i}(S')
		&=e_{Q_i}(S)-d_{Q_i}(x^0_{i-1},S\setminus\{x^0_{i-1}\})
		+d_{Q_i}(x,S\setminus\{x^0_{i-1}\})\\
		&\ge e_{Q_i}(S)-d_{Q_i}(x^0_{i-1})+e_{Q_i}(x,x^0_{i-1})+d_{Q_i}(x)-e_{Q_i}(x,x^0_{i-1}) \ge e_{Q_i}(S).
	\end{aligned}
	\]
	Thus $Q_i[S']$ is also $\Delta(Q_i)$-overfull. But for $S'$, we have
	$S'\cap Y_1=\{y\}$, $x^0_{i-1}\notin S'$, and
	$|T'\cap X_1|=1$, where $T'=V(Q_i)\setminus S'$. Hence the already treated
	cases apply to $S'$, giving the final contradiction.

		Conclusion~\eqref{claim:Qi-expander} follows from
		Claim~\ref{claim:property-Gi}\eqref{claim:property-Gi-expander},
		Lemma~\ref{lem:stability-expansion}, and $r\le s<5\eta n$.
	\end{proof}
	
	By Conclusion~\eqref{claim:Qi-stop}, one of the three stopping alternatives
	holds. By Conclusions~\eqref{claim:Qi-class2}--\eqref{claim:Qi-expander},
	$Q_r$ satisfies the same structural hypotheses as $H^*_\ell$ with weaker
	expansion parameters (but $Q_\ell$ is still a robust $(\nu'/3, 32\tau)$-expander). With $Q_r$, $\Delta(Q_r)$, and $x_r^0$ in place of
	$H^*_\ell$, $\Delta_\ell$, and $x_0$, construct $D$ as in
	Subcases~2.1--2.4. The verification that $D$ satisfies
	Conditions~\eqref{claim:regular-completion-order}--\eqref{claim:regular-completion-expander}
	is identical to the above with $Q_r$ replacing $H^*_\ell$. Since an edge
	$\Delta(D)$-coloring of $D$ induces a $\Delta(Q_r)$-coloring of $Q_r$,
	Conclusion~\eqref{claim:Qi-class2} gives $\chi'(D)>\Delta(D)$. Applying
	Claim~\ref{claim:regular-completion} gives a $\Delta(D)$-regular multigraph
	$\widehat D$ with $\chi'(\widehat D)=\Delta(D)$, and restricting to $E(D)$
	yields a $\Delta(D)$-coloring of $D$, contradicting $\chi'(D)>\Delta(D)$.
	This contradiction completes Stage~II.
\end{proof}

	\section{Proof of Theorem~\ref{thm:robust-expander2}}\label{Section:5}

\subsection{The Regularity Lemma and A Balanced Bipartition}

The second case for the proof of Theorem~\ref{thm:robust-expander2}
 proceeds via an
alternating-path argument that decomposes the edges of $G$ into 1-factors by
exploiting a balanced bipartition $(A,B)$ of $V(G)$. For this to work, the
bipartite graph $G[A,B]$ must inherit a bipartite form of robust expansion from
$G$: this is what guarantees short alternating paths between any prescribed pair
of vertices. The key intermediary is Szemer\'edi's regularity lemma, which
decomposes $G$ into a bounded collection of cluster pairs each exhibiting
uniform edge-distribution. The reduced graph on these clusters inherits the
robust expansion of $G$ (Lemma~\ref{lem:reduced-robust}), and a random balanced
partition of the clusters, certified by a Chernoff bound, transfers this
expansion to $G[A,B]$ (Lemma~\ref{lem:bipartite-robust-slicing}).
Thus we introduce the regularity lemma in this subsection
and apply it  to obtain a desired bipartition $(A,B)$
of $G$.

Let $G[X,Y]$ be a bipartite graph. The \emph{density} of $G$ is the ratio
$d(X,Y):=\frac{e_G(X,Y)}{|X||Y|}$. Given $\ve>0$, we say $G$ is
\emph{$\ve$-regular} if for every $X'\subseteq X$ and $Y'\subseteq Y$ with
$|X'| \ge \ve|X|$ and $|Y'| \ge \ve|Y|$, $|d(X',Y')-d(X,Y)|<\ve$ holds.
We also say that the pair $(X,Y)$ is
$\ve$-regular. We call $G$ \emph{$(\ve, d)$-regular} if $G$ is $\ve$-regular
with density $d\pm \ve$.

\begin{LEM}[Degree form of the Regularity Lemma~{\cite{Szemeredi-regular-partitions}}]\label{lem:regularity-lemma}
	For every $\varepsilon>0$ and every positive integer $M'$ there exist integers
	$M$ and $n_0$ such that the following holds. Suppose that $G$ is a graph on
	$n\ge n_0$ vertices and that $d\in[0,1]$. Then there exists a partition of
	$V(G)$ into
	\[
	V_0,V_1,\dots,V_k
	\]
	and a spanning subgraph $G'\subseteq G$ such that the following conditions
	hold:
	\begin{enumerate}[(1)]
		\item $M'\le k\le M$;
		\item $|V_0|\le \varepsilon n$;
		\item $|V_1|=\cdots=|V_k|=:m$;
		\item $d_{G'}(x)\ge d_G(x)-(d+\varepsilon)n$ for all $x\in V(G)$;
		\item for all $i\in [k]$, the graph $G'[V_i]$ is empty;
		\item for all distinct $i,j\in [k]$, the graph $G'[V_i,V_j]$ is
		$\varepsilon$-regular and either has density $0$ or density at least $d$.
	\end{enumerate}
\end{LEM}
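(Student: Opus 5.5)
This is the degree form of Szemerédi's Regularity Lemma, which the paper cites rather than proves. The plan is therefore to derive it from the standard partition version of the Regularity Lemma by the usual cleaning argument, as in Komlós--Simonovits.

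\textbf{Setting up the input partition.} Start from the classical statement. For every $\varepsilon'>0$ and $M'$ there exist $M,n_0$ such that every graph on $n\ge n_0$ vertices has an equipartition $V_0,V_1,\dots,V_k$ with the following properties:
\begin{itemize}
\item $M'\le k\le M$ and $|V_0|\le\varepsilon' n$;
\item all but at most $\varepsilon' k^2$ of the pairs $(V_i,V_j)$ are $\varepsilon'$-regular.
\end{itemize}
We apply this with $\varepsilon'=\varepsilon^2/4$, say, and with $M'$ enlarged so that $1/M'\le\varepsilon/4$.

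\textbf{Cleaning.} Build $G'$ from $G$ by deleting four kinds of edges:
\begin{itemize}
\item all edges incident with $V_0$;
\item all edges inside each cluster $V_i$, which gives property (5);
\item all edges in irregular pairs;
\item all edges in regular pairs of density less than $d+\varepsilon'$.
\end{itemize}
For each surviving pair, its density is still at least $d$, and it is $\varepsilon$-regular because $\varepsilon'\le\varepsilon$. Pairs that were emptied have density $0$, so property (6) holds.

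\textbf{Controlling the degree loss (the main obstacle).} The crux is property (4), and it is where a naive argument breaks down. Irregular pairs are few only on average, so a single vertex can lie in clusters belonging to many irregular pairs. A uniform per-vertex bound therefore needs an extra step: any cluster $V_i$ that lies in more than $\sqrt{\varepsilon'}\,k$ irregular pairs is moved into $V_0$. By counting, at most $2\sqrt{\varepsilon'}\,k$ clusters are moved. The new $V_0$ then has size at most $(\varepsilon'+2\sqrt{\varepsilon'})n\le\varepsilon n$, which preserves property (2), and property (1) survives once $M'$ is adjusted.

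We also have to handle vertices of $V_0$ themselves. Their edges to the rest are deleted, and a vertex of $V_0$ could lose almost all of its degree this way. The standard resolution is the usual trick of leaving the edges incident with $V_0$ in $G'$. Property (6) constrains only pairs $V_i,V_j$ with $i,j\in[k]$, and (5) constrains only $G'[V_i]$ for $i\ge 1$, so keeping these edges does not violate either.

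With this adjustment, a vertex $x\in V_i$ with $i\ge1$ loses edges only from four sources:
\begin{itemize}
\item edges inside $V_i$: at most $m\le n/k\le(\varepsilon/4)n$;
\item irregular pairs: at most $\sqrt{\varepsilon'}\,k\cdot m\le\sqrt{\varepsilon'}\,n$;
\item low-density pairs: at most $(d+\varepsilon')n$ in total;
\item edges to $V_0$, if those are also removed from the cluster side: at most $\varepsilon n/2$.
\end{itemize}

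\textbf{Choosing constants.} The remaining work is to fix the constants so that the total loss is at most $(d+\varepsilon)n$. Choosing $\varepsilon'$ with $\sqrt{\varepsilon'}+\varepsilon'$ small relative to $\varepsilon$ makes the terms fit. The $V_0$ term is the tight one, and it is absorbed either by keeping those edges or by rescaling $\varepsilon$ at the start.
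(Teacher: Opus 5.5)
\textbf{The gap is in your bound for the low-density pairs.} The paper does not prove this lemma; it quotes it from the literature. So your derivation has to stand on its own.

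You bound the loss at $x\in V_i$, from emptying all regular pairs $(V_i,V_j)$ of density below $d+\varepsilon'$, by $(d+\varepsilon')n$. This silently assumes $d_G(x,V_j)\le (d+\varepsilon')|V_j|$ for every such $j$. Regularity gives no pointwise bound of this kind. It only shows that fewer than $\varepsilon'|V_i|$ vertices of $V_i$ have more than $(d+2\varepsilon')|V_j|$ neighbors in $V_j$, since a larger such set would have density deviating from $d(V_i,V_j)$ by more than $\varepsilon'$. Nothing prevents the same vertex from being exceptional for every $j$.

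The star $K_{1,n-1}$ is a counterexample. Every equipartition of it is $\varepsilon'$-regular. If the centre $x$ lies in a cluster, every pair at that cluster has density about $1/m<d$, so your cleaning empties all of them. Then $x$ keeps only its edges to $V_0$, and (4) fails. Moving whole clusters with many irregular pairs into $V_0$ does nothing about this. The exceptional vertices, not the irregular pairs, are the real crux of the degree form.

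\textbf{The standard repair works at the level of vertices.}
\begin{itemize}
\item Fix a cluster $V_i$. Fewer than $\varepsilon' mk$ pairs $(x,j)$ have $x\in V_i$ exceptional for a low-density pair $(V_i,V_j)$. Hence fewer than $\sqrt{\varepsilon'}\,m$ vertices of $V_i$ are exceptional for more than $\sqrt{\varepsilon'}\,k$ indices $j$.
\item Move these vertices into $V_0$, keeping all edges at $V_0$ as you propose.
\item Move further vertices into $V_0$ so that all surviving clusters again have a common size, at least $(1-\sqrt{\varepsilon'})m$.
\item Because the clusters have shrunk, invoke the slicing lemma: each shrunken pair is $2\varepsilon'$-regular, with density within $\varepsilon'$ of the original.
\item Decide which pairs to empty by the \emph{original} density, emptying those below $d+\varepsilon'$. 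Every surviving pair then has density above $d$.
\end{itemize}

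With this, a cluster vertex loses at most:
\begin{itemize}
\item $m$ inside its own cluster;
\item $\sqrt{\varepsilon'}\,n$ to irregular pairs;
\item $(d+2\varepsilon')n+\sqrt{\varepsilon'}\,n$ to low-density pairs.
\end{itemize}
Also $|V_0|=O(\sqrt{\varepsilon'})\,n$. Both bounds fit once $\varepsilon'\ll\varepsilon$ and $1/M'\ll\varepsilon$.

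Two smaller points. Your choice $\varepsilon'=\varepsilon^2/4$ is too large, since already $\varepsilon'+2\sqrt{\varepsilon'}>\varepsilon$. And once the edges at $V_0$ are kept, the last loss term in your list does not arise.
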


We refer to $V_0$ as the \emph{exceptional set} and to $V_1,\dots,V_k$ as
\emph{clusters}. The partition $V_0,V_1,\dots,V_k$ is also called a
\emph{regularity partition}, and $G'$ is called the \emph{pure graph}.

Given a graph $G$ on $n$ vertices, we form the \emph{reduced graph} $R$ of $G$
with parameters $\varepsilon,d$, and $M'$ by applying the regularity lemma with
these parameters to obtain $V_0,\dots,V_k$. Then $R$ is the graph whose
vertices are the clusters $V_1,\dots,V_k$, and whose edges are those pairs
$V_i, V_j$ of clusters for which $G'[V_i,V_j]$ is nonempty.

The version of Lemma~\ref{lem:reduced-robust} for robust outexpanders (the directed analogue of robust expanders)
 was proved in Lemma~14 of~\cite{MR2644240}. A stronger form of the
statement is Lemma~3.5 in~\cite{MR3090152}, and Lemma~6.2 from~\cite{KO2014}
stated that the stronger version holds also for robust expanders. Here, the
weak version suffices for our application, and we modify the short proof in
Lemma~14 of~\cite{MR2644240} for graphs for completeness.

\begin{LEM}\label{lem:reduced-robust}
	Let $M',n_0$ be positive integers and let $\varepsilon,d, \nu,\tau, \alpha$ be
	positive constants such that
	$\frac1{n_0}\ll \varepsilon \ll d \ll \nu,\tau,\alpha < 1$ and
	$M' \ll n_0$. Let $G$ be a graph on $n\ge n_0$ vertices. 
	If $G$ is robust $(\nu,\tau)$-expander, then $R$ is robust $(\nu/2,2\tau)$-expander.
	If additionally $\delta(G)\ge \alpha n$, then $\delta(R)\ge \alpha |R|/2$.
\end{LEM}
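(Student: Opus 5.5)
We verify the two claims separately. Let $k=|R|$ and let $m$ be the common cluster size. Since $|V_0|\le\ve n$, we have $km\ge(1-\ve)n$.

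\textbf{Robust expansion.} Fix a set $S_R$ of clusters with $2\tau k\le|S_R|\le(1-2\tau)k$, and let $S=\bigcup_{V_i\in S_R}V_i$. Then $|S|=|S_R|m$, so $\tau n\le|S|\le(1-\tau)n$ up to the $\ve n$ loss from $V_0$; the slack between $2\tau$ and $\tau$ absorbs this. By robust expansion of $G$, the set $RN_{\nu,G}(S)$ has size at least $|S|+\nu n$.

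Next we pass to the pure graph $G'$. Every vertex has lost at most $(d+\ve)n$ neighbours, so each $x\in RN_{\nu,G}(S)$ satisfies $d_{G'}(x,S)\ge(\nu-d-\ve)n\ge 3\nu n/4$. Let $R'$ be the set of clusters $V_j$ containing at least $\nu m/4$ vertices of $RN_{\nu,G}(S)$. Vertices in $V_0$ or in clusters outside $R'$ account for at most $\ve n+k\nu m/4\le\ve n+\nu n/4$ vertices of $RN_{\nu,G}(S)$. Hence
\[
|R'|m\ge|S|+\nu n-\ve n-\nu n/4,
\]
which gives $|R'|\ge|S_R|+\nu k/2$ after dividing by $m$ and using $n\ge km$.

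It remains to show $R'\subseteq RN_{\nu/2,R}(S_R)$. Fix $V_j\in R'$ and let $T\subseteq V_j$ be its vertices in $RN_{\nu,G}(S)$, so $|T|\ge\nu m/4\ge\ve m$. Then
\[
e_{G'}(T,S)\ge|T|\cdot 3\nu n/4.
\]
Each $R$-neighbour $V_i\in S_R$ of $V_j$ contributes at most $|T|m$ edges, so $V_j$ has at least $3\nu n/(4m)\ge 3\nu k/4\ge\nu k/2$ neighbours in $S_R$ within $R$. Only nonempty pairs carry edges of $G'$, so these are genuine edges of $R$. Therefore $V_j\in RN_{\nu/2,R}(S_R)$, and $R$ is a robust $(\nu/2,2\tau)$-expander. (If one wants the neighbours in the direction $T\to S$ to respect regularity, this follows because the pairs have density $0$ or at least $d$, but simple counting already suffices.)

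\textbf{Minimum degree.} Take any cluster $V_i$ and any $x\in V_i$. Then $d_{G'}(x)\ge(\alpha-d-\ve)n$. At most $\ve n$ of these neighbours lie in $V_0$, and none lie in $V_i$ itself since $G'[V_i]$ is empty. The remaining neighbours lie in clusters adjacent to $V_i$ in $R$, each containing at most $m$ of them. Hence
\[
d_R(V_i)\ge(\alpha-d-2\ve)n/m\ge(\alpha-d-2\ve)k\ge\alpha k/2.
\]

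\textbf{Main obstacle.} The only delicate point is the bookkeeping of error terms: we need $d+\ve\ll\nu$, and the loss from clusters with few robust neighbours (at most $\nu n/4$) must be absorbed into the expansion slack $\nu n$. Both follow from the hierarchy $\ve\ll d\ll\nu,\tau,\alpha$.
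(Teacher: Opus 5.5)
Your proof is correct and is essentially the paper's argument: pass to the pure graph $G'$, observe that each vertex of $RN_{\nu,G}(S)$ spreads its $G'$-neighbours over at least about $\nu k$ clusters of $S_R$, each of which is an $R$-edge, and handle the minimum degree by the same $(\delta(G)-(d+2\varepsilon)n)/m$ count. The one difference is that you keep only clusters containing at least $\nu m/4$ robust neighbours. This threshold is unnecessary, since your counting already works with $|T|=1$, and the paper uses every cluster that contains even one such vertex. Your version is still valid because the resulting extra loss of $\nu n/4$ is absorbed by the $\nu n$ expansion slack.
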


\begin{proof}
	Let $G'$ denote the pure graph, $k=|V(R)|$, let $V_1,\dots,V_k$ be the
	clusters of $G$, and let $V_0$ be the exceptional set. Let
	$m= |V_1| =\ldots =|V_k|$.

	We first prove the minimum-degree assertion, assuming in addition that
	$\delta(G)\ge \alpha n$.
	By Lemma~\ref{lem:regularity-lemma}(4), we have
	$\delta(R)\ge \frac{\delta(G')-|V_0|}{m}$. Moreover, every vertex loses at
	most $(d+\varepsilon)n$ neighbors when passing from $G$ to $G'$, and
	$|V_0|\le \varepsilon n$. Therefore
	$\delta(R)\ge \frac{\delta(G)-(d+2\varepsilon)n}{m}$. Since
	$\delta(G)\ge \alpha n$, $d,\varepsilon\ll \alpha$, and $n=km+|V_0|$,
	\[
	\delta(R)\ge \frac{\alpha}{2}k=\frac{\alpha}{2}|R|.
	\]
	
		It remains to prove that $R$ is a robust $(\nu/2,2\tau)$-expander; this part
	does not use the minimum-degree assumption.
	Let $S\subseteq V(R)$ with $2\tau k\le |S|\le (1-2\tau)k$, and let $S_G$
	be the union of all clusters corresponding to the vertices of $S$. Then
	$|S_G|=|S| m$ and hence, since $|V_0|\le \varepsilon n$ and
	$\varepsilon\ll \tau$, we have $\tau n\le |S_G|\le (1-\tau)n$. Since $G$ is a
	robust $(\nu,\tau)$-expander, $|RN_{\nu, G}(S_G)| \ge |S_G|+\nu n$.
	
	For any vertex $x\in RN_{\nu,G}(S_G)$, we have
	$|N_{G'}(x)\cap S_G| \ge|N_G(x)\cap S_G|-(d+\varepsilon)n \ge \nu n-(d+\varepsilon)n
	\ge \frac{\nu n}{2}$
	since $d,\varepsilon\ll \nu$. Thus $x\in RN_{\nu/2,G'}(S_G)$, so
	$RN_{\nu,G}(S_G)\subseteq RN_{\nu/2,G'}(S_G)$ and
	$|RN_{\nu/2,G'}(S_G)| \ge |S_G|+\nu n \ge |S|m+\nu mk$.
	
	Let $x\in RN_{\nu/2,G'}(S_G)\setminus V_0$ and suppose that $x\in V_j$.
	Since $|N_{G'}(x)\cap S_G|\ge \frac{\nu n}{2}$, the vertex $x$ has neighbors
	in at least $\frac{(\nu n/2)}{m}\ge \frac{\nu k}{2}$ clusters $V_i\in S$. By
	the definition of $R$, this implies that $V_j$ has at least $\nu k/2$
	neighbors in $S$, and so $V_j\in RN_{\nu/2,R}(S)$.
	
	It follows that every cluster containing a vertex of
	$RN_{\nu/2,G'}(S_G)\setminus V_0$ lies in $RN_{\nu/2,R}(S)$. Hence
	\begin{align*}
		|RN_{\nu/2,R}(S)|	 & \ge
		\frac{|RN_{\nu/2,G'}(S_G)|-|V_0|}{m} \\
		& \ge \frac{|S|m+\nu mk-|V_0|}{m} \\
		& \ge |S|+\frac{\nu k}{2},
	\end{align*}
	since $|V_0|\le \varepsilon n\ll \nu mk$. Therefore $R$ is a robust
	$(\nu/2,2\tau)$-expander.
\end{proof}

 We will also 
need the following version of the Chernoff bound (see
e.g.~\cite[Theorem A.1.16]{MR2437651}).

\begin{LEM}\label{lem:Chernoffbound}
	Let $X_1,\ldots, X_n$ be mutually independent random variables that satisfy
	$E(X_i)=0$ and $|X_i| \le 1$ for each $i\in [n]$. Set
	$S=X_1+\ldots+X_n$. Then for any $a>0$,
	$$
	\pr(|S|>a)<2e^{-a^2/2n}.
	$$
\end{LEM}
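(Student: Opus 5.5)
This is Lemma~\ref{lem:Chernoffbound}, the symmetric Chernoff--Hoeffding bound for bounded, mean-zero, independent summands. I would prove it by the exponential moment method: bound each tail separately and add the two bounds.

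\emph{Upper tail.} Fix $\lambda>0$. Markov's inequality applied to $e^{\lambda S}$ gives
\[
\pr(S>a)\le e^{-\lambda a}\,E(e^{\lambda S})=e^{-\lambda a}\prod_{i=1}^n E(e^{\lambda X_i}),
\]
where the product form uses mutual independence.

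\emph{Moment generating function.} The key step is the estimate
\[
E(e^{\lambda X_i})\le \cosh\lambda\le e^{\lambda^2/2}.
\]
For the first inequality, note that $x\mapsto e^{\lambda x}$ is convex on $[-1,1]$. Hence for $x\in[-1,1]$,
\[
e^{\lambda x}\le \frac{1+x}{2}e^{\lambda}+\frac{1-x}{2}e^{-\lambda}.
\]
Taking expectations and using $E(X_i)=0$ gives $E(e^{\lambda X_i})\le\cosh\lambda$. For the second inequality, compare Taylor series term by term: $\lambda^{2k}/(2k)!\le \lambda^{2k}/(2^k k!)$, because $(2k)!\ge 2^k k!$.

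\emph{Optimization.} Combining the two steps,
\[
\pr(S>a)\le \exp\!\left(-\lambda a+\frac{n\lambda^2}{2}\right).
\]
Choosing $\lambda=a/n$ yields $\pr(S>a)\le e^{-a^2/2n}$. The variables $-X_i$ satisfy the same hypotheses, so the same argument gives $\pr(S<-a)\le e^{-a^2/2n}$. A union bound then gives $\pr(|S|>a)\le 2e^{-a^2/2n}$.

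\emph{Strict inequality.} The statement asks for strict inequality. Equality in the convexity step would force each $X_i$ to be supported on $\{-1,1\}$. But then $\cosh\lambda<e^{\lambda^2/2}$ for $\lambda\neq0$ (the Taylor comparison is strict from $k=2$ on), so at least one of the two inequalities is strict. Hence each one-sided bound is strict, and so is the two-sided one.

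There is no real obstacle here. The only point needing care is the bound $E(e^{\lambda X_i})\le e^{\lambda^2/2}$, and the convexity argument handles it.
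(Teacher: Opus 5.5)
Your proof is correct and is essentially the argument behind the result the paper cites. The paper gives no proof of its own and refers to Alon--Spencer, Theorem A.1.16, whose proof is exactly the chord bound $E(e^{\lambda X_i})\le\cosh\lambda<e^{\lambda^2/2}$ with $\lambda=a/n$, with symmetry and a union bound giving the two-sided version. Your strictness discussion can be shortened: $\cosh\lambda<e^{\lambda^2/2}$ holds for every $\lambda\neq 0$, so each one-sided bound is strict directly, with no case analysis on the support of the $X_i$.
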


\begin{LEM}\label{lem:bipartite-robust-slicing}
	Let $M',n_0$ be positive integers and let $\varepsilon,d,\nu,\tau$ be positive
	constants such that
	\[
	\frac1{n_0}\ll \varepsilon \ll d \ll \nu \le \tau < 1
	\qquad\text{and}\qquad
	M'\ll n_0.
	\]
	Let $G$ be a robust $(\nu,\tau)$-expander on $n\ge n_0$ vertices, where $n$
	is even, and let $v_1,v_2\in V(G)$ be two specified vertices. Then there
	exists a partition $\{A,B\}$ of $V(G)$ such that
	\begin{enumerate}[(i)]
		\item  $|A| =|B|$ and $|A\cap \{v_1,v_2\}|=1$;
		\item $|d_G(v,A)-d_G(v,B)|\le n^{2/3}$ for every $v\in V(G)$;
		\item $H=G[A,B]$ is a bipartite robust $(d\nu^2/40,3\tau)$-expander.
	\end{enumerate}
\end{LEM}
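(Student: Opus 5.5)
We will prove Lemma~\ref{lem:bipartite-robust-slicing} by a random partition at the level of clusters of a regularity partition, followed by a vertex-level correction. First apply Lemma~\ref{lem:regularity-lemma} to $G$ with parameters $\varepsilon,d,M'$. This gives clusters $V_1,\dots,V_k$ of common size $m$, an exceptional set $V_0$, a pure graph $G'$ and a reduced graph $R$. By Lemma~\ref{lem:reduced-robust}, $R$ is a robust $(\nu/2,2\tau)$-expander.

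\textbf{Random partition.} We use a two-level random partition.
\begin{itemize}
\item Each cluster is split into two halves of size $m/2$, one chosen uniformly at random inside the cluster.
\item An independent fair coin for each cluster decides which half goes to $A$.
\item The vertices of $V_0$ are placed independently and uniformly at random.
\end{itemize}
Splitting every cluster in half means that every cluster meets both sides. We will use the following property of $\varepsilon$-regular pairs: if $V_iV_j\in E(R)$, then for any halves $V_i'\subseteq V_i$ and $V_j'\subseteq V_j$ the pair $G'[V_i',V_j']$ is still $2\varepsilon$-regular with density at least $d-\varepsilon$.

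\textbf{Degree balance (ii).} For each vertex $v$, the quantity $d_G(v,A)-d_G(v,B)$ is a sum of bounded contributions.
\begin{itemize}
\item Inside each cluster, sampling a uniform half gives a hypergeometric deviation. This satisfies Chernoff-type concentration analogous to Lemma~\ref{lem:Chernoffbound}.
\item The vertices of $V_0$ contribute independent $\pm1$ terms.
\end{itemize}
With deviation threshold $a=n^{2/3}$, each failure probability is $e^{-\Omega(n^{1/3})}$. A union bound over all $n$ vertices then shows that (ii) holds with high probability.

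\textbf{Fixing sizes and $v_1,v_2$ (i).} Afterwards, we enforce $|A|=|B|$ by moving $O(\sqrt{n\log n})$ vertices of $V_0$; the imbalance is concentrated at this scale. If needed, we also swap $v_1$ or $v_2$ with a vertex on the other side. These changes move $o(n^{2/3})$ vertices, so (ii) survives up to constants. We absorb this by running the concentration argument at threshold $n^{2/3}/2$.

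\textbf{Bipartite expansion (iii).} This is the main step. By symmetry, consider $S\subseteq A$ with $3\tau|A|\le |S|\le (1-3\tau)|A|$. Let $\mathcal S$ be the set of clusters $V_i$ with $|S\cap V_i|\ge \varepsilon' m$, where we take $\varepsilon'=\nu\tau/10$, say. Since each cluster contributes at most $m/2$ vertices to $A$, $\mathcal S$ has size roughly at least $2|S|/m\ge 2\tau k$. We then argue according to the size of $\mathcal S$.
\begin{itemize}
\item If $|\mathcal S|\le (1-2\tau)k$, apply robust expansion of $R$ to obtain $RN_{\nu/2,R}(\mathcal S)$ of size at least $|\mathcal S|+\nu k/2$. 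For a cluster $V_j$ in this robust neighbourhood, regularity shows that all but $\varepsilon m$ of the vertices of $V_j\cap B$ have at least roughly $(d-2\varepsilon)\varepsilon' m\cdot\nu k/2$ neighbours in $S$. This is at least $d\nu^2|A|/40$ once $\varepsilon'$ is chosen as a suitable multiple of $\nu$. Summing over these clusters gives about $(|\mathcal S|+\nu k/2)(m/2)$ vertices of $B$. We then compare this with $|S|\le |\mathcal S|m/2+\varepsilon' mk$, which yields $|RN(S)|\ge |S|+\nu|B|\cdot d\nu/40$ after absorbing the losses from $V_0$ and the size fix.
\item If $|\mathcal S|>(1-2\tau)k$, the condition $|S|\le (1-3\tau)|A|$ leaves slack. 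We shrink $\mathcal S$ to a subfamily of size $(1-2\tau)k$, still with enough mass of $S$, and repeat the argument.
\end{itemize}

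The hardest part is this bookkeeping: matching the thresholds $\varepsilon'$, $\tau$ versus $3\tau$, and the constant $d\nu^2/40$. A secondary point is that the argument is deterministic given that every cluster is split into halves, so no union bound over exponentially many sets $S$ is required. The randomness is needed only for (ii).
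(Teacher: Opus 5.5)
Your proposal is correct and follows essentially the same route as the paper: regularity (the paper applies it to $G-\{v_1,v_2\}$ rather than $G$), robust expansion of the reduced graph via Lemma~\ref{lem:reduced-robust}, and an exact random halving of every cluster with a Chernoff bound for (ii). For (iii) both use the same deterministic argument, where the low-degree vertices of a cluster $V_j$ are controlled by applying $\varepsilon$-regularity to the whole low-degree set against each $S\cap V_h$ and averaging; this is what justifies your ``all but $\varepsilon m$'' claim, and a union of pairwise exceptional sets would not suffice since $\nu k/2$ may exceed $1/\varepsilon$.

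The paper is tidier in one respect: it makes all clusters even, then pairs up vertices inside each cluster, inside $V_0$, and $v_1$ with $v_2$, and randomizes each pair. This gives (i) and exact halving for free and lets the $\pm1$ Chernoff bound apply directly, with no post-hoc correction. Also, your threshold must be $\varepsilon'=\Theta(\nu)$, e.g.\ $\nu/20$, comparable to the paper's $\gamma\ell/5$ with $\gamma=0.45\nu$. With $\varepsilon'=\nu\tau/10$ you only get degree about $d\nu^2\tau|A|/10$ into $S$, not the required $d\nu^2|A|/40$.
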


\begin{proof}
	Apply the degree form of Szemer\'edi's regularity lemma to $G-\{v_1,v_2\}$.
	Since deleting two vertices changes robust expansion only negligibly,
	$G-\{v_1,v_2\}$ is still a robust $(0.9\nu,1.1\tau)$-expander for $n$
	sufficiently large.
	
	Let $R$ be the reduced graph of $G-\{v_1,v_2\}$ with parameters
	$\varepsilon,d,M'$, where $V_1,\dots,V_k$ are the clusters and $V_0$ is the
	exceptional set. By Lemma~\ref{lem:reduced-robust}, the graph $R$ is a robust
	$(0.45\nu,2.2\tau)$-expander.
	
	By moving one vertex from each cluster of odd size to $V_0$ if necessary, we
	may assume that every cluster has even size. Thus we may write
	\[
	|V_i|=2\ell \qquad\text{for all }i\in[k],
	\]
	and still
	\[
	|V_0|\le \varepsilon n + k \le 2\varepsilon n
	\]
	since $k$ is bounded in terms of $\varepsilon$.
	
	\medskip
	\noindent
	\textbf{Construction of the partition.}
	Pair $v_1$ with $v_2$, and for each $i\in [0,k]$, partition the vertices of
	$V_i$ into arbitrary pairs. For each pair, assign one vertex to $A$ and the
	other to $B$ uniformly at random. Every partition obtained in this way
	automatically satisfies~(i).
	
	We show some such partition also satisfies~(ii). Fix any vertex $v\in V(G)$.
	Let $\{x_1,y_1\}, \ldots, \{x_{n/2}, y_{n/2}\}$ be the pairs used in the
	construction. For each $i$, define
	\[
	X_i=
	\begin{cases}
		e_G(v,x_i)-e_G(v,y_i), &\text{if }x_i\in A,\ y_i\in B,\\[1mm]
		e_G(v,y_i)-e_G(v,x_i), &\text{if }x_i\in B,\ y_i\in A.
	\end{cases}
	\]
	Then $X_i\in\{-1,0,1\}$, so $|X_i|\le 1$. Moreover, the expectation of $X_i$
	is $0$ for every $i$, and the random variables $X_1,\dots,X_{n/2}$ are
	mutually independent, since the random choices on distinct pairs are
	independent.
	
	Let $S_v=X_1+\cdots+X_{n/2}$. By construction, $S_v=d_G(v,A)-d_G(v,B)$.
	Applying Lemma~\ref{lem:Chernoffbound},
	\[
	\Pr\bigl(|d_G(v,A)-d_G(v,B)|>n^{\frac 23}\bigr)
	=
	\Pr(|S_v|>n^{\frac 23})
	\le 2e^{-n^{\frac 13}}
	\]
	for all sufficiently large $n$. Taking a union bound over all $v\in V(G)$,
	the probability that condition~(ii) fails for some vertex is at most
	$2n e^{-n^{\frac 13 }}<1$ for all sufficiently large $n$. Hence there exists a
	partition $\{A,B\}$ produced by the above construction satisfying both~(i)
	and~(ii). Fix such a partition.
	
	For each $i\in[k]$, let $A_i= A\cap V_i$ and $B_i= B\cap V_i$. Then
	$|A_i|=|B_i|=\ell$. We show that $H=G[A,B]$ is a bipartite robust
	$(d\nu^2/40,3\tau)$-expander.  The argument is adapted from the proof of Lemma~6.3 in~\cite{KO2014}.
	
	We prove the expansion property only for subsets of $A$, since the argument
	for subsets of $B$ is symmetric.  
	 Let $S\subseteq A$ satisfy
	$3\tau |A|\le |S|\le (1-3\tau)|A|$ and let $\gamma =0.45\nu$. 
	 For each $i\in [k]$, let
	\begin{align*}
		S_R&=\{V_i: i\in [k], 	|S\cap A_i|\ge \frac{\gamma\ell}{5}\}.
	\end{align*}
	Then
	\begin{align}
		|S_R| & \ge \frac{|S|-\gamma \ell k/5-|V_0|-2}{\ell} \nonumber \\
		& \ge \frac{|S|}{\ell}-\frac{\gamma k}{4} \ge 2.2\tau k.  \label{eqn:S_R-size}
	\end{align}
	The last inequality holds since $|S|\ge 3\tau |A|$, $|A| \ge k\ell$, and
	$\gamma \ll \tau$.
	
	Since $R$ is a robust $(\gamma,2.2\tau)$-expander, if
	$|S_R|\le (1-2.2\tau)k$, then  $|RN_{\gamma,R}(S_R)|\ge |S_R|+\gamma k$. 
	If $|S_R|>(1-2.2\tau)k$, then applying robust expansion to an arbitrary
	subset of $S_R$ of size $(1-2.2\tau)k$ gives $|RN_{\gamma,R}(S_R)|\ge (1-2.2\tau+\gamma)k$.

	Let
	\[
	N_H=\bigcup_{V_j\in RN_{\gamma, R}(S_R)} B_j.
	\]
	Since $|B_j| =\ell$ for $j\in [k]$, if $|S_R| \le (1-2.2\tau)k$, then we
	have
	\[
	|N_H|
	\ge |S_R|\ell+\gamma k\ell
	\ge |S| +\frac{3\gamma k\ell }{4}
	\ge |S|+\frac{\gamma |A|}{2}.
	\]
	If $|S_R| > (1-2.2\tau)k$, then since $|S|\le (1-3\tau)|A|$,
	\[
	|N_H|
	\ge  (1-2.2\tau+\gamma)k\ell
	\ge |S|+\frac{\gamma |A|}{2}.
	\]
	Thus in either case,
	\begin{equation} \label{eqn:N_H-size}
		|N_H|\ge |S|+\frac{\gamma |A|}{2}.
	\end{equation}
	
	Now let $N^*=N_H\setminus RN_{d\gamma^2/8,H}(S)$. We claim that
	$|N^*|\le \frac{\gamma |A|}{4}$. Together with~\eqref{eqn:N_H-size}, this
	implies
	\[
	|RN_{d\gamma^2/8,H}(S)|\ge |S|+\frac{\gamma |A|}{4}\ge |S|+\frac{d\gamma^2}{4}|A| >|S|+\frac{d\nu^2}{40}|A|=|S|+\frac{d\nu^2}{40}|B|,
	\]
	since $d\ll \gamma$ and  $\gamma =0.45\nu$. 
	Since  $d\gamma^2/8>d\nu^2/40$, we then get 
	$$|RN_{d\nu^2/40,H}(S)|>|S|+\frac{d\nu^2}{40}|B|.$$

	Suppose for a contradiction that $|N^*|\ge \frac{\gamma |A|}{4}$. Then there
	exists a cluster $V_j$ such that $|B_j\cap N^*|\ge \frac{\gamma \ell}{5}$. In
	particular, $V_j\in RN_{\gamma, R}(S_R)$. Let $B_j^*=B_j\cap N^*$. Since no
	vertex in $B_j^*$ lies in $RN_{d\gamma^2/8,H}(S)$, we have
	\begin{align}
		e_H(S,B_j^*) &
	<\frac{d\gamma^2}{8} |A| |B_j^*|. \label{eqn:e_H(S,B_j^*)-upper}
	\end{align}
	
	On the other hand, since $V_j\in RN_{\gamma, R}(S_R)$, the cluster $V_j$ has
	at least $\gamma k$ neighbors in $S_R$ in the reduced graph. Let $S_R^*$ be
	the set of these neighboring clusters. Then $|S_R^*|\ge \gamma k$. For every
	$V_h\in S_R^*$ we have $|A_h\cap S|\ge \frac{\gamma\ell}{5}$.
	
	Moreover, since $V_hV_j\in E(R)$, the pair $(V_h,V_j)$ is $\varepsilon$-regular
	in the pure graph of $G-\{v_1,v_2\}$ with density at least $d$. Now
	$|B_j^*|\ge \frac{\gamma\ell}{5}\ge \varepsilon(2\ell+1)$, and
	$|A_h\cap S|\ge \frac{\gamma\ell}{5}\ge \varepsilon(2\ell+1)$, because
	$\varepsilon\ll \gamma$. Therefore, by $\varepsilon$-regularity, the number of
	edges in $G$ from $A_h\cap S$ to $B_j^*$ is at least
	\[
	(d-\varepsilon)|A_h\cap S||B_j^*|
	\ge \frac{5d}{6}\cdot \frac{\gamma\ell}{5}|B_j^*|
	= \frac{d\gamma\ell}{6}|B_j^*|.
	\]
	Summing over all $V_h\in S_R^*$ shows that
	\[
	e_H(S,B_j^*)
	\ge \gamma k\cdot \frac{d\gamma\ell}{6}|B_j^*|
	\ge \frac{d\gamma^2}{6}k\ell|B_j^*|
	\ge \frac{d\gamma^2}{7}|A||B_j^*|,
	\]
	a contradiction to~\eqref{eqn:e_H(S,B_j^*)-upper}. This completes the proof.
\end{proof}

\subsection{Proof of Theorem~\ref{thm:robust-expander2}}

We will also need the following result, 
due to K\"uhn and Osthus~\cite{KO2014}. 
It has 
served as a powerful tool in establishing other results.

\begin{THM}[\cite{KO2014}]\label{thm:regular-robust-decomposition} For every
	$\alpha>0$ there exists $\tau=\tau(\alpha)> 0$ such that for every $\nu> 0$
	there exists an integer $N_0=N_0(\alpha, \nu, \tau)$ for which the following
	holds. Suppose that
	\begin{enumerate}[(i)]
		\item $G$ is an $r$-regular graph on $n\geq N_0$ vertices, where
		$r\geq \alpha n$ is even;
		
		\item $G$ is a robust $(\nu, \tau)$-expander.
	\end{enumerate}
	Then $G$ has a Hamilton decomposition. Moreover, this decomposition can be
	found in polynomial time in $n$.
\end{THM}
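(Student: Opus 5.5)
Theorem~\ref{thm:regular-robust-decomposition} is the main result of K\"uhn and Osthus~\cite{KO2014}, and in this paper I would simply import it: $\tau(\alpha)$ and $N_0(\alpha,\nu,\tau)$ are the constants provided there, and hypotheses (i) and (ii) are exactly theirs. If a self-contained argument were wanted, I would follow their three-part strategy: absorber first, then approximate decomposition, then absorption.

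\emph{Step 1: robust decomposition structure.} First fix a hierarchy $1/N_0\ll\varepsilon\ll\nu\ll\tau\ll\alpha$. Apply Lemma~\ref{lem:regularity-lemma} and Lemma~\ref{lem:reduced-robust} to obtain a reduced graph $R$ that is again a robust expander of linear minimum degree. Inside $G$, set aside a sparse regular spanning subgraph $G^{\rm rob}$. It is built from ``chord absorbers'' and ``parity extended cycle switchers'' arranged along a Hamilton cycle of $R$; such a cycle exists by Lemma~\ref{lem:hamiltonicity-of-expander} applied to $R$. Its defining property is the robust decomposition lemma: for every very sparse regular graph $H$ on $V(G)$ that is edge-disjoint from $G^{\rm rob}$, the union $G^{\rm rob}\cup H$ has a Hamilton decomposition. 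The regularity of the cluster pairs is used here to build the required Hamilton paths between prescribed endpoints. This is done with blow-up type embeddings inside $\varepsilon$-regular pairs, steered by the robust expansion.

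\emph{Step 2: approximate decomposition.} Remove $G^{\rm rob}$. By Lemma~\ref{lem:stability-expansion}(a) the leftover graph $G'$ is still an almost-regular robust expander. Cover almost all of $E(G')$ by edge-disjoint Hamilton cycles. For this, split $G'$ randomly into many spanning subgraphs; by Lemma~\ref{lem:Chernoffbound} each part is almost-regular and a robust expander. In each part, decompose almost all edges into $2$-factors with few cycles, and then merge the cycles of each $2$-factor into a Hamilton cycle using a few reserved edges; Lemma~\ref{lem:removing-linear-forests} is the natural tool for connecting them. The uncovered edges form a graph $H$ of tiny maximum degree. Since $r$ is even and we removed Hamilton cycles from an $r$-regular graph, $H\cup G^{\rm rob}$ is regular of even degree. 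Moreover, $H$ can be made regular by absorbing the irregularities into reserved edges.

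\emph{Step 3: absorption.} Apply the robust decomposition property of $G^{\rm rob}$ to $H$. This gives a Hamilton decomposition of $G^{\rm rob}\cup H$, which together with the cycles from Step~2 gives a Hamilton decomposition of $G$. Every step is constructive, which gives the polynomial-time claim.

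The main obstacle is Step~1, the construction of $G^{\rm rob}$ and the proof that it absorbs an \emph{arbitrary} sparse regular leftover. This is where the exact degree bookkeeping and the parity switching are needed, and it is the bulk of~\cite{KO2014}. Consequently, for the purposes of this paper I would cite the theorem rather than reprove it.
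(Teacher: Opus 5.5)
Your proposal is correct and matches the paper: the paper gives no proof of Theorem~\ref{thm:regular-robust-decomposition} and imports it verbatim from K\"uhn and Osthus~\cite{KO2014}, with the same constants $\tau(\alpha)$ and $N_0(\alpha,\nu,\tau)$ and the same hypotheses (i)--(ii). Your absorber and approximate-decomposition sketch is optional and only a rough outline; in~\cite{KO2014} the undirected statement is actually deduced from the authors' directed Hamilton decomposition theorem for regular robust outexpanders, so simply citing the result, as you conclude, is the right choice.
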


\begin{proof}[Proof of Theorem~\ref{thm:robust-expander2}]
	Let $\tau^*=\tau(\alpha/3)$ be defined as in
	Theorem~\ref{thm:regular-robust-decomposition}. We   choose an additional constant $d$ such that
	\begin{equation}\label{eqn:parameters}
		0<1/n_0\ll \eta\ll d\ll \nu\le \tau\ll \tau^*, \alpha<1. 
	\end{equation}
	Let $G$ be a regular multigraph on $n\ge n_0$ vertices with $n>\Delta(G) \ge \alpha n$,
	where $n$ is even.  
	Suppose that $G$ satisfies the following
	conditions.
	\begin{enumerate}[(1)]
		\item $G$ is the edge-disjoint union of three multigraphs $D,L$, and $N$,
		all on the vertex set $\{v_1,\ldots,v_n\}$, such that
		$\Delta(D)=\Delta(G)$ and $G=D\cup L\cup N$ is a  canonical completion
		of $D$  defined in  Definition~\ref{def:canonical-completion}. In
		addition, the following hold. 
	\begin{enumerate}[(a)]
	\item $e_D(v_1,v_i) \le 2\eta^2 n$ for each $i\in [3,n]$.  
	\item $d_G^s(v_i)\ge \frac{i-2}{i-1}\Delta(G)-6\eta n$ for every
	$i\in [3,n]$. 
	
	\item $|N_G^s(v_2)|\ge \Delta(G)-e_G(v_1,v_2)-13\eta n$, and
	$|N_G^s(v_i)|\ge \Delta(G)-e_G(v_1,v_i)-6\eta n$ for every
	$i\in [3,n]$. 
\end{enumerate}		
		\item The underlying simple graph of $G$ is a robust $(\nu,\tau)$-expander.
	\end{enumerate}
	
	Throughout the proof, let
	$\Delta=\Delta(G)$. 
	We prove that $\chi'(G) =\Delta$ if and only if $G$ contains no $\Delta$-overfull subgraph.

	If $\chi'(G)=\Delta$, then $G$ contains no $\Delta$-overfull subgraph.
	Thus assume that $G$ contains no $\Delta$-overfull subgraph; we show that
	$\chi'(G)=\Delta$.
	
	We split into two cases according to $e_G(v_1,v_2)$. Case~1 treats the situation
	where the edges incident with $v_1$ are highly concentrated in
	$E_G(v_1,v_2)$, which is an obstacle for the approach used in Case 2.

	{\bf \noindent Case 1: $e_G(v_1,v_2) \ge \Delta -\eta^{\frac{1}{7}} n$. }
	
	\medskip
	
	We first remove a small number of Hamilton cycles
	to cover  all edges in  $E_G(\{v_1,v_2\},V(G)\setminus\{v_1,v_2\})$.

Let
	$N_G(\{v_1,v_2\})\setminus\{v_1,v_2\}=\{w_1,\ldots,w_m\}$. For each
	$i\in[m]$, let $c_i=e_G(v_1,w_i)$ and $d_i=e_G(v_2,w_i)$. Then
	\[
	\sum_{i=1}^m c_i=d_G(v_1)-e_G(v_1,v_2)=d_G(v_2)-e_G(v_1,v_2)=\sum_{i=1}^m d_i.
	\]
	We claim that $c_i+d_i\le \sum_{j\ne i}(c_j+d_j)$ for every $i\in[m]$.
	Suppose not. Then for some $i\in[m]$,
	$c_i+d_i>\sum_{j\ne i}(c_j+d_j)$, and hence
	$c_i+d_i>\Delta-e_G(v_1,v_2)$. Thus
	$e_G(v_1,v_2)+c_i+d_i>\Delta$, so $G[\{v_1,v_2,w_i\}]$ is
	$\Delta$-overfull, a contradiction.
	
	By Lemma~\ref{lem:graphical-biparite2}, applied after splitting each $w_i$
	into two vertices $x_i$ and $y_i$, there is a bipartite multigraph $Q$ with
	parts $\{x_1,\ldots,x_m\}$ and $\{y_1,\ldots,y_m\}$ such that
	$d_Q(x_i)=c_i$, $d_Q(y_i)=d_i$, and $e_Q(x_i,y_i)=0$ for every $i\in[m]$.
	After identifying $x_i$ and $y_i$ back to $w_i$, each edge of $Q$ gives a pair
	of edges of $G$, one incident with $v_1$ and one incident with $v_2$, with
	distinct other endpoints. Let $q=e(Q)=\sum_{i=1}^m c_i$. By the assumption of
	Case~1, $q\le \eta^{\frac{1}{7}}n$.
	
	We process the edges of $Q$ one by one. Suppose that the current edge of $Q$
	corresponds to the unused copies of the edges $v_1w_i$ and $v_2w_j$, where
	$i\ne j$. Let $S=V(G)\setminus\{v_1,v_2\}$. After the Hamilton cycles chosen in
	earlier rounds have been deleted from $G[S]$, the remaining underlying simple
	graph on $S$ is still a robust $(\nu/2,2\tau)$-expander. Indeed, deleting the
	two vertices $v_1,v_2$ and then deleting the edges used in the previous rounds
	removes maximum degree at most $2q\le 2\eta^{\frac{1}{7}}n\ll \nu n$, so this follows
	from Condition~\eqref{item:thm4.2} and Lemma~\ref{lem:stability-expansion}.
	Moreover, by Condition~\eqref{item:thm4.1.b}, we have
	$\delta(G^s[S])\ge \alpha n/3$. Hence Lemma~\ref{lem:hamiltonicity-of-expander},
	with $k=1$, gives a Hamilton path $P$ in the current underlying simple graph on
	$S$ with ends $w_i$ and $w_j$. Choose an unused edge $f\in E_G(v_1,v_2)$; this
	is possible since $q=\Delta-e_G(v_1,v_2)\le \eta^{ \frac{1}{7}}n\ll \Delta$. Then
	$v_1w_iPw_jv_2fv_1$ is a Hamilton cycle of $G$. Repeating this for all edges of
	$Q$, we obtain $q$ edge-disjoint Hamilton cycles covering all edges in
	$E_G(\{v_1,v_2\},S)$.

	Let $G^{**}$ be the multigraph obtained from $G$ after deleting all edges
	covered by these $q$ Hamilton cycles. Then
	$e_{G^{**}}(\{v_1,v_2\},V(G^{**})\setminus\{v_1,v_2\})=0$, and $G^{**}$ is
	$r$-regular, where $r=\Delta-2q$. In particular,
	$e_{G^{**}}(v_1,v_2)=r$.
	
	Let  $R=G^{**}[S]$. Then $R$ is an 
	$r$-regular multigraph with even order $n-2$. Moreover, using Condition~\eqref{item:thm4.1.c}
	together with the fact that all edges from $\{v_1,v_2\}$ to $S$ have been
	deleted, we obtain 
	$\Delta(R-E(R^s))\le 6\eta n$.
	
	By Theorem~\ref{thm:chromatic-index}, we have 
	$\chi'(R-E(R^s))\le \Delta(R-E(R^s))+\mu(R-E(R^s))\le 12\eta n$. After edge-coloring $R-E(R^s)$ with at
	most $12\eta n$ colors, partition each
	color class greedily into submatchings of size at most $\nu^4n/81$.  Thus $E(R-E(R^s))$ can be partitioned into matchings
	$M_1,\ldots,M_\ell$ such that $|M_i|\le \nu^4 n/81$ for every $i\in[\ell]$ and
	$\ell<\nu^2 n$. 
	
	Remove spanning linear forests from $R^s$ as follows. Let $R_0=R^s$. Suppose
	that $R_0,\ldots,R_{i-1}$ and $F_1,\ldots,F_{i-1}$ have already been
	constructed. Since
	$\Delta(F_1\cup\cdots\cup F_{i-1})\le 2(i-1)\le 2\nu^2 n$, the graph
	$R_{i-1}$ is still a robust $(\nu/3,3\tau)$-expander by
	Lemma~\ref{lem:stability-expansion}. Applying
	Lemma~\ref{lem:hamiltonicity-of-expander} to $R_{i-1}$ and the pairs given by
	the matching $M_i$, we obtain a spanning linear forest $F_i\subseteq R_{i-1}$
	such that $F_i\cup M_i$ is a Hamilton cycle  of $R$. Set
	$R_i=R_{i-1}-E(F_i)$.
	
	We show that $R_\ell$ is regular. For every $u\in S$ and every $i\in[\ell]$,
	we have $d_{F_i}(u)=1$ if $u\in V(M_i)$ and $d_{F_i}(u)=2$ otherwise. Since
	$M_1,\ldots,M_\ell$ partition $E(R-E(R^s))$, it follows that
	$\sum_{i=1}^\ell d_{F_i}(u)=2\ell-d_{R-E(R^s)}(u)$. Therefore
	\[
	d_{R_\ell}(u)=d_{R^s}(u)-\sum_{i=1}^\ell d_{F_i}(u)
	=(r-d_{R-E(R^s)}(u))-(2\ell-d_{R-E(R^s)}(u))=r-2\ell.
	\]
	Thus $R_\ell$ is a simple $(r-2\ell)$-regular graph. Moreover,
	$r-2\ell\ge \Delta-2\eta^{\frac{1}{7}}n-2\nu^2n\ge \alpha n/2$, and $R_\ell$ is
	still a robust $(\nu/3,3\tau)$-expander.
	
	If $r-2\ell$ is odd, then Lemma~\ref{lem:hamiltonicity-of-expander} gives a
	Hamilton cycle in $R_\ell$, and hence a perfect matching $M_0$ in $R_\ell$.
	Replace $R_\ell$ by $R_\ell-M_0$. The resulting graph is still a robust
	$(\nu/4,4\tau)$-expander and is even-regular with degree at least
	$\alpha n/3$. If $r-2\ell$ is even, set $M_0=\emptyset$ and keep $R_\ell$ as
	it is. Now Theorem~\ref{thm:regular-robust-decomposition} applies with
	parameters $\nu/4$, $\tau^*$, and $\alpha/3$. Since each Hamilton cycle on the
	even vertex set $S$ decomposes into two perfect matchings, the graph $R_\ell$
	has a decomposition into $r-2\ell$ perfect matchings, including $M_0$ if it was
	chosen.
	
	For each $i\in[\ell]$, the graph $F_i\cup M_i$ is a Hamilton cycle on the even
	vertex set $S$, and hence decomposes into two perfect matchings. Therefore $R$
	has a decomposition into $(r-2\ell)+2\ell=r$ perfect matchings. Since
	$e_{G^{**}}(v_1,v_2)=r$ and there are no other edges from $\{v_1,v_2\}$ to $S$
	in $G^{**}$, adding one distinct edge from $E_{G^{**}}(v_1,v_2)$ to each of
	these perfect matchings gives an edge $r$-coloring of $G^{**}$. Each of the $q$ Hamilton cycles removed earlier 
	is edge-$2$-colorable.  Thus we conclude 
	that $\chi'(G)=\Delta$.

	{\bf \noindent Case 2: $e_G(v_1,v_2)< \Delta-\eta^{\frac{1}{7}}n$. }

	In this case, we follow the five-step framework 
	below   and 
decompose the edges of $G$ into $\Delta$ edge-disjoint 1-factors.
The framework  records the purpose of each step before 
the detailed construction and estimates are given.

	\begin{enumerate}[Step 1.]
	\item By Lemma~\ref{lem:bipartite-robust-slicing}, partition $V(G)$ into
	two subsets $A$ and $B$ satisfying the following three properties:
	\begin{enumerate}[(i)]
		\item $|A|=|B|$ and $|A\cap\{v_1,v_2\}|=1$;
		\item $|d_{G^s}(v,A)-d_{G^s}(v,B)|\le n^{2/3}$ for every $v\in V(G)$;
		\item $H^s=G^s[A,B]$ is a bipartite robust $(d\nu^2/40,3\tau)$-expander.
	\end{enumerate}
	We then adjust the partition so that $v_1\in A$, $v_2\in B$, and every vertex
	$u\in V(G)\setminus\{v_1,v_2\}$ with $e_G(v_1,u)>6\eta n$ lies in $B$. This adjustment is needed to ensure that the high-multiplicity pairs involving $v_1$ are separated by the bipartition. We also arrange the two parts slightly so that
	$$
d_G(v_1,A)\le \Delta/2.
	$$

	\item Let $G_{A,B}$ be the subgraph of $G$ consisting of all edges inside
	$A$, all edges inside $B$, together with some 
	edges incident with $v_1$ that go between $A$ and $B$. These cross-edges
	are chosen so that, for every $u\in B$, at most $\lceil e_G(v_1,u)/2\rceil + 7\eta n$
	of the edges between $v_1$ and $u$ are put into $G_{A,B}$. In particular,
	each high-multiplicity bundle between $v_1$ and a vertex of $B$ is split as
	evenly as needed between $G_{A,B}$ and $H-E(G_{A,B})$. It can be shown that
	$\Delta(G_{A,B})\le \Delta/2+ 14 \eta n$. We equitably edge-color $G_{A,B}$
	with $k$ colors, where $k  =\Delta/2+O(\eta n)$.
	
	\item In this step, we use the expansion property of the partition $\{A,B\}$
	to extend each color class obtained from Step~2 into a 1-factor of $G$.
	Specifically, since $|V(G)|$ is even, for each color $i\in [k]$, the number
	of vertices of $G_{A,B}$ missing color $i$ is even. We pair up these
	vertices. Let $(u,v)$ be such a pair. We find a path $P$ connecting $u$ and
	$v$ which starts and ends with an uncolored edge between $A$ and $B$, and
	whose edges alternate between uncolored edges between $A$ and $B$ and edges
	colored $i$ inside $A$ or inside $B$. We then swap color $i$ and the
	null-color along the edges of $P$. This increases the size of the color
	class $i$ by  one  and makes color $i$ present at both $u$ and $v$. We
	continue this process for all other vertices missing color $i$, and then for
	all other color classes. After this, each of the $k$ color classes becomes a
	1-factor. The vertices $v_1$ and $v_2$, which may have small simple degree,
	are treated with priority when we deal with colors missing at them. The
	existence of such paths is guaranteed by the robust expansion property of
	the partition. During this process, we make sure that the subgraphs inside
	$A$ and inside $B$, denoted by $R_A$ and $R_B$, respectively, induced by
	the set of edges that become uncolored in the process, have maximum degree
	$O(\eta^{\frac{1}{3}}n)$.
	
	\item We equitably edge-color $R_A\cup R_B$ with another $\ell$ colors. The
	equitable coloring ensures that, for each color $i\in [k+1,k+\ell]$, the
	number of edges colored $i$ inside $A$ is the same as the number of edges
	colored $i$ inside $B$. We then extend each color class obtained in this
	step into a 1-factor by finding a matching between $A$ and $B$ that
	saturates all vertices still missing color $i$. Again, the existence of such
	a matching is guaranteed by the robust expansion property of the partition.
	Repeating this for each new color class turns all of them into 1-factors.
	
	\item After Steps~3 and~4, all edges of $G$ inside $A$ and inside $B$ have
	been colored. The uncolored edges are only between $A$ and $B$, and they
	form a $(\Delta-k-\ell)$-regular bipartite multigraph. Hence this remaining
	bipartite multigraph has a 1-factorization by Theorem~\ref{konig}.
\end{enumerate}

We now provide the details for each step. 
	
	\begin{center}
		{\bf Step 1: Partition $V(G)$ into two desired subsets $A$ and $B$}
	\end{center}
	
	Let
	$Z_0=\{u\in V(G)\setminus\{v_1,v_2\}: e_G(v_1,u)>6\eta n\}$.
	Since $d_G(v_1)=\Delta < n$, we have
	$|Z_0|  \le \Delta/(6\eta n)<1/(6\eta)\ll \eta n$.
	
	Apply Lemma~\ref{lem:bipartite-robust-slicing} to $G^s$, the underlying
	simple graph of $G$, to obtain a bipartition $(A,B)$ of $V(G)$ satisfying
	the following properties:
	\begin{enumerate}[(i)]
		\item $|A|=|B|$ and $|A\cap\{v_1,v_2\}|=1$;
		\item $|d_{G^s}(v,A)-d_{G^s}(v,B)|\le n^{2/3}$ for every $v\in V(G)$;
		\item $H^s=G^s[A,B]$ is a bipartite robust $(d\nu^2/40,3\tau)$-expander.
	\end{enumerate}
	After interchanging the names of $A$ and $B$ if necessary, assume that
	$v_1\in A$ and $v_2\in B$.
	
	Since $|Z_0|\ll \eta n$, we can choose, for every $z\in Z_0$, a distinct vertex
	$z'\in V(G)\setminus (Z_0\cup\{v_1,v_2\})$ lying in the opposite part from
	$z$. We call $z'$ the partner of $z$, and let $Z_0'$ be the set of all these
	partners. Since $Z_0'\cap Z_0=\emptyset$, we have
	$d_G(v_1,Z_0')\le 6\eta n|Z_0'|=6\eta n|Z_0|<d_G(v_1,Z_0)$.
	
	For every vertex $z\in Z_0\cap A$, swap the positions of $z$ and its partner
	$z'$. We still denote the resulting parts by $A$ and $B$. After these swaps,
	we have $|A|=|B|$, $v_1\in A$, $v_2\in B$, $Z_0'\subseteq A$, and
	$Z_0\subseteq B$. We will swap vertices of $Z_0$ and $Z_0'$
	so that the following holds:
	\begin{eqnarray}
		 d_G(v_1, A\setminus (Z_0' \cup \{v_1\})) & \le & d_G(v_1, B\setminus (Z_0 \cup \{v_2\})), \quad \text{and} \label{eqn:v1-degree1}\\
		  d_G(v_1, A )  &\le & d_G(v_1, B).   \label{eqn:v1-degree2}
	\end{eqnarray}
	Note that~\eqref{eqn:v1-degree2} is a consequence of~\eqref{eqn:v1-degree1} 
	since $d_G(v_1,Z_0' \cup \{v_1\})<d_G(v_1,Z_0 \cup \{v_2\})$ by the definition of $Z_0$. 
	
	If $d_G(v_1, A\setminus (Z_0' \cup \{v_1\})) \le d_G(v_1, B\setminus (Z_0 \cup \{v_2\}))$, then we keep this partition. Otherwise, suppose
	the contrary. Let
	$A_0=A\setminus(\{v_1\}\cup Z_0')$ and
	$B_0=B\setminus(\{v_2\}\cup Z_0)$. 
	In this case, replace the partition by
	$A=\{v_1\}\cup Z_0'\cup B_0$ and
	$B=\{v_2\}\cup Z_0\cup A_0$. Then $|A|=|B|$, $v_1\in A$, $v_2\in B$, and
	$Z_0\subseteq B$. 
	Thus, in all cases, the final partition satisfies $|A|=|B|$, $v_1\in A$,
	$v_2\in B$, $Z_0\subseteq B$, and the partition satisfies both~\eqref{eqn:v1-degree1} and~\eqref{eqn:v1-degree2}.
	
	We now compare the final partition with the partition originally obtained from
	Lemma~\ref{lem:bipartite-robust-slicing}. In the first round of swaps, we move
	at most $2|Z_0|$ vertices, namely the vertices of $Z_0\cap A$ and their
	partners. In the second case above, we interchange only the smaller exceptional
	sets $\{v_1\}\cup Z_0'$ and $\{v_2\}\cup Z_0$, relative to the opposite
	orientation of the original partition. Thus, although the procedure may involve
	up to $4|Z_0|+2$ vertex moves in total, a vertex that is swapped twice is simply
	returned to the side it occupies in one of the two orientations of the original
	partition. Consequently, the final partition differs from one of the two
	orientations of the partition given by Lemma~\ref{lem:bipartite-robust-slicing}
	on at most $2|Z_0|+2$ vertices.
	Hence, for every $v\in V(G)$,
	\begin{equation}\label{eqn:partition-deg}
		|d_{G^s}(v,A)-d_{G^s}(v,B)|
		\le n^{2/3}+2|Z_0|+2
		\le n^{2/3}+\eta n.
	\end{equation}
	Moreover, this modification changes $H^s=G^s[A,B]$ only through edges incident
	with at most $2|Z_0|+2$ vertices. Therefore, by Lemma~\ref{lem:bipartite-stability}\eqref{lem:bipartite-stability-swap}, the final bipartite graph
	$H^s=G^s[A,B]$ is a bipartite robust $(d\nu^2/50,4\tau)$-expander.

\begin{center}
	{\bf Step 2: Form $G_{A,B}$ and edge-color it}
\end{center}

Let
\[
G_A=G[A], \qquad G_B=G[B], \qquad \text{and} \qquad H=G[A,B].
\]
We will move a carefully chosen set of edges of $H$ incident with $v_1$ into
$G_A\cup G_B$. The purpose of doing this is twofold. First, it allows us to
make the degrees of all vertices in the resulting graph within $O(\eta n)$ of
$\Delta/2$. Second, when the simple degree of $v_1$ is small, we need to keep
enough available edges from $v_1$ to one or two suitable neighbors for the
coloring extension in Step~4.

We first choose a possible exceptional vertex. Suppose that there exists a unique 
$w^*\in V(G)\setminus\{v_1,v_2\}$ such that
$e_G(v_1,w^*)>\eta^{\frac 16}n$ and $e_G(v_2,w^*)\ge 1$. Then, by the choice of
the partition, $w^*\in B$. In this case, set $q=e_G(v_2,w^*)$ and $W^*=\{w^*\}$. Since $G$ has no
$\Delta$-overfull subgraph, we have
$e_G(v_2,w^*)\le \Delta-e_G(v_1,v_2)-e_G(v_1,w^*)$. Moreover, by
Condition~\eqref{item:thm4.1.c}, we have $q\le 13\eta n$. If no such vertex
uniquely exists, then we let $q=0$ and $W^*=\emptyset$.

Let 
\[
Z_1=Z_0\cup\{v_2\}
\quad\text{and}\quad
Z_1'=Z_0'\cup\{v_1\}.
\]
For each $u\in Z_1$, let $u'\in Z_1'$ be its partner, where the partner of
$v_2$ is $v_1$. Thus $e_G(v_1,u')=0$ when $u=v_2$.

Let $a=d_G(v_1,A)$. We choose a set $M_0$ of
$\max\{q-a,0\}$ edges from
$E_G(v_1,B\setminus(\{v_2\}\cup W^*))$ as follows. First give priority to the
edges in $E_G(v_1,Z_0\setminus W^*)$; if these edges are not enough, choose
the remaining edges from $E_G(v_1,B\setminus Z_1)$. Thus no edge of $M_0$ is
incident with $v_2$ or with a vertex of $W^*$.

This is possible. Indeed, if $q\le a$, then $M_0=\emptyset$. If $q>a$, then
$q>0$ and $W^*=\{w^*\}$. Since $G$ has no $\Delta$-overfull subgraph, we have
\[
q=e_G(v_2,w^*)
\le \Delta-e_G(v_1,v_2)-e_G(v_1,w^*).
\]
Consequently,
\[
\begin{aligned}
	q-a
	&\le
	\Delta-e_G(v_1,v_2)-e_G(v_1,w^*)-d_G(v_1,A) \\
	&=
	e_G(v_1,B\setminus\{v_2,w^*\}),
\end{aligned}
\]
so there are enough available edges to choose $M_0$.

Let
\[
t=\left\lfloor\frac{d_G(v_1,B)-d_G(v_1,A)-|M_0|}{2}\right\rfloor, 
\]
and let
\[
M_0^1=M_0\cap E_G(v_1,Z_1)
\quad\text{and}\quad
M_0^2=M_0\cap E_G(v_1,B\setminus Z_1).
\]
For each $u\in B$, let $m_u=e_{G[M_0]}(v_1,u)$.
Let $Z_2=\{z\in Z_1:e_G(v_1,z)-e_G(v_1,z')-m_z\ge0\}$, 
let $Z_2'$ be the set of partners of the vertices in $Z_2$, and let $M_0^*$
be the set of edges of $M_0$ between $v_1$ and $Z_2$. Note that
$\{v_2\}\cup W^*\subseteq Z_2$.
Set $s=|M_0| -|M_0^*|$.  

We choose $M^1_1\subseteq E_G(v_1,B)\setminus M_0$ in two stages. In the first
stage, we choose edges from the remaining $v_1u$-bundles with $u\in Z_2$.
The choices are made so that, for
every $u\in Z_2$ with partner $u'$,
\[
\left\lfloor
\frac{e_G(v_1,u)-e_G(v_1,u')-m_u}{2}
\right\rfloor
-s_u
\le
e_{G[M_1^1]}(v_1,u)
\le
\left\lceil
\frac{e_G(v_1,u)-m_u}{2}
\right\rceil,
\]
where the nonnegative integers $s_u$ satisfy $\sum_{u\in Z_2}s_u\le s+\eta n$.

In particular, if 
$\sum_{u\in  Z_2}
\left\lceil\frac{e_G(v_1,u)-e_G(v_1, u')-m_u}{2}\right\rceil \ge t$, then we choose $M_1^1$ so that $|M_1^1|=t$. 
This is possible by the following estimate, since 
\[
\begin{aligned}
	d_G(v_1,B)-d_G(v_1,A)-|M_0|
	&=
	d_G(v_1,Z_2)-d_G(v_1,Z_2')-|M_0^*| \\
	&\quad+
	d_G(v_1,Z_1\setminus Z_2) -d_G(v_1,Z'_1\setminus Z'_2)
	-\bigl(|M_0^1|-|M_0^*|\bigr) \\
	&\quad+
	d_G(v_1,B\setminus Z_1)
	-d_G(v_1,A\setminus Z_1')-|M_0^2| \\
	&\ge
	d_G(v_1,Z_2)-d_G(v_1,Z_2')-|M_0^*|-s, 
\end{aligned}
\]
where the last inequality follows by $d_G(v_1,Z_1\setminus Z_2)  \ge d_G(v_1,Z'_1\setminus Z'_2)$
from  the definition of $Z_0$
and  by $d_G(v_1,B\setminus Z_1) \ge d_G(v_1,A\setminus Z_1')$ from~\eqref{eqn:v1-degree1}. 

If $\sum_{u\in  Z_2}
\left\lceil\frac{e_G(v_1,u)-e_G(v_1, u')-m_u}{2}\right\rceil <t$, 
then we choose $M_1^1$ so that  $|M_1^1| =\sum_{u\in  Z_2}
\left\lceil\frac{e_G(v_1,u)-e_G(v_1, u')-m_u}{2}\right\rceil $.

In the second stage,  if $|M_1^1|=t$, then we let $M_1^2 =\emptyset$. 
Otherwise, we choose $M_1^2 \subseteq E_G(v_1,B\setminus Z_1)\setminus(M_0\cup M_1^1)$ consisting of 
  $t-|M_1^1|$ further edges  
 so that, for every $u\in B\setminus Z_1$,
\[
e_{G[M^2_1]}(v_1,u)
\le
\left\lceil\frac{e_G(v_1,u)-m_u}{2}\right\rceil.
\]
The second-stage choice is possible. Since
\[
\sum_{u\in  Z_1}
\left\lceil\frac{e_G(v_1,u)-e_G(v_1, u')-m_u}{2}\right\rceil+\sum_{u\in B\setminus Z_1}
\left\lceil\frac{e_G(v_1,u)-m_u}{2}\right\rceil
\ge
\left\lfloor\frac{d_G(v_1,B)-d_G(v_1,A)-|M_0|}{2}\right\rfloor
\ge t,
\]
we have 
\[
\sum_{u\in  Z_2}
\left\lceil\frac{e_G(v_1,u)-e_G(v_1, u')-m_u}{2}\right\rceil+\sum_{u\in B\setminus Z_1}
\left\lceil\frac{e_G(v_1,u)-m_u}{2}\right\rceil
\ge t. 
\]
As 
$|M^1_1| = \sum_{u\in Z_2}
\left\lceil\frac{e_G(v_1,u)-e_G(v_1,u')-m_u}{2}\right\rceil$, 
we   get 
  $\sum_{u\in B\setminus Z_1}\left\lceil\frac{e_G(v_1,u)-m_u}{2}\right\rceil \ge t-|M_1^1|$.

Let $M_1=M_1^1\cup M_1^2$. Then $|M_1|=t$.
Let $M=M_0\cup M_1$, and define
\[
G_{A,B}=G_A\cup G_B+M.
\]
We call the edges of $M$ the middle edges.

For every $u\in B$, by construction,
\[
e_{G[M]}(v_1,u)
\le
m_u+\left\lceil\frac{e_G(v_1,u)-m_u}{2}\right\rceil
\le
\left\lceil\frac{e_G(v_1,u)}{2}\right\rceil+ \left \lceil \frac{|M_0|}{2} \right\rceil.
\]
Moreover, for every $u\in Z_2$ with partner $u'$,
\[
e_{G[M]}(v_1,u)
\ge
m_u+
\left\lfloor
\frac{e_G(v_1,u)-e_G(v_1,u')-m_u}{2}
\right\rfloor
-s-\eta n.
\]
In particular, when $u=v_2$, the partner is $v_1$ and $m_{v_2}=0$, so
\[
e_{G[M]}(v_1,v_2)
\ge
\left\lfloor\frac{e_G(v_1,v_2)}{2}\right\rfloor
-s-\eta n.
\]

In the exceptional case $q>0$, the triangle involving $\{v_1,v_2,w^*\}$ is
still controlled. Indeed, the edges of $M_0$ avoid both $v_2$ and $w^*$, and
$d_G(v_1,A)+|M_0|\ge q=e_G(v_2,w^*)$. Hence
\begin{equation}\label{eqn:triangle-v1-controlled}
	e(G_{A,B}[\{v_1,v_2,w^*\}])\le d_{G_{A,B}}(v_1).
\end{equation}
If $q=0$, then no exceptional vertex is designated, and no exceptional triangle
is treated separately.

\begin{CLA}\label{claim:degrees-in-G-A-B}
	For every $v\in V(G)$,
	\[
	\frac{1}{2}\Delta-21.6\eta n
	\le d_{G_{A,B}}(v)
	\le \frac{1}{2}\Delta+14\eta n.
	\]
\end{CLA}

\begin{proof}
	First consider $v=v_1$. Let $a=d_G(v_1,A)$ and $b=d_G(v_1,B)$. Then
	$a+b=\Delta$ and, by the choice of the partition, $a\le b$. Since
	$|M_0|=\max\{q-a,0\}\le q\le 13\eta n$, by the definition of $t$ and the
	choice of $M_1$ we have
	$d_{G_{A,B}}(v_1)=a+|M_0|+t$. Hence
	\[
	\frac{\Delta}{2}-1
	\le d_{G_{A,B}}(v_1)
	\le \frac{\Delta}{2}+\frac{|M_0|}{2}
	\le \frac{\Delta}{2}+6.6\eta n,
	\]
	and the desired bound holds.
	
	Now let $v\ne v_1$, and let $C\in\{A,B\}$ be the part containing $v$. Put
	$q_v=e_G(v_1,v)$ and
	$r_v=d_{G-v_1}(v)-d^s_{G-v_1}(v)$. Thus $r_v$ counts the excess
	multiplicity on the edges from $v$ to vertices other than $v_1$. By
	Condition~\eqref{item:thm4.1.c}, we have $r_v\le 13\eta n$ if $v=v_2$,
	and $r_v\le 6\eta n$ otherwise. Also,
	$d_{G^s}(v)=\Delta-(q_v-1)^+-r_v$, where
	$(q_v-1)^+=\max\{q_v-1,0\}$.
	
	Suppose first that $C=A$. Since $Z_0\subseteq B$ and $v_2\in B$, we have
	$q_v\le 6\eta n$ and $r_v\le 6\eta n$. Using
	\eqref{eqn:partition-deg}, we obtain
	\[
	\begin{aligned}
		d_{G_{A,B}}(v)
		&\le \frac{d_{G^s}(v)+n^{2/3}+\eta n}{2}+(q_v-1)^++r_v \\
		&= \frac{\Delta+(q_v-1)^++r_v+n^{2/3}+\eta n}{2}
		\le \frac{\Delta}{2}+7\eta n.
	\end{aligned}
	\]
	For the lower bound, the edges from $v$ to its own part contain the whole
	$v_1v$-bundle. Thus
	\[
	\begin{aligned}
		d_{G_{A,B}}(v)
		&\ge \frac{d_{G^s}(v)-n^{2/3}-\eta n}{2}+(q_v-1)^+ \\
		&= \frac{\Delta+(q_v-1)^+-r_v-n^{2/3}-\eta n}{2}
		\ge \frac{\Delta}{2}-4\eta n.
	\end{aligned}
	\]
	
	Now suppose that $C=B$. Let $m_v=e_{G[M_0]}(v_1,v)$. By the choice of the
	middle edges,
	\[
	e_{G[M]}(v_1,v)
	\le \left\lceil\frac{q_v-m_v}{2}\right\rceil+m_v
	\le \left\lceil\frac{q_v}{2}\right\rceil+ \left\lceil \frac{|M_0|}{2}\right \rceil.
	\]
	Since $|M_0|\le 13\eta n$, we have
	$e_{G[M]}(v_1,v)\le \lceil q_v/2\rceil+6.6\eta n$. Therefore
	\[
	\begin{aligned}
		d_{G_{A,B}}(v)
		&\le
		\frac{1}{2}\bigl(\Delta-(q_v-1)^+-r_v+n^{2/3}+\eta n\bigr)
		+r_v+\left\lceil\frac{q_v}{2}\right\rceil+6.6\eta n \\
		&\le \frac{\Delta}{2}+14\eta n,
	\end{aligned}
	\]
	where we use $r_v\le 13\eta n$ and $n^{2/3}\ll\eta n$.
	
	For the lower bound, first suppose that $v\notin Z_1$. Then
	$q_v\le 6\eta n$ and $r_v\le 6\eta n$. Since
	$e_{G[M]}(v_1,v)\ge 0$, the lower estimate from
	\eqref{eqn:partition-deg} gives
	\[
	\begin{aligned}
		d_{G_{A,B}}(v)
		&\ge
		\frac{1}{2}\bigl(\Delta-(q_v-1)^+-r_v-n^{2/3}-\eta n\bigr) \\
		&\ge
		\frac{\Delta}{2}
		-\frac{1}{2}\bigl(6\eta n+6\eta n+n^{2/3}+\eta n\bigr)
		\ge \frac{\Delta}{2}-7\eta n.
	\end{aligned}
	\]
	
	Then suppose that $v\in Z_1\setminus Z_2$. 
	By the construction of $M$, we have $e_{G[M]}(v_1,v)=e_{G[M_0]}(v_1,v)$,
	and $q_v < q_{v'}+m_v \le 6\eta n+m_v$. 
	Thus, 
	\[
	\begin{aligned}
		d_{G_{A,B}}(v)
		&\ge
		\frac{1}{2}\bigl(\Delta-(q_v-1)^+-r_v-n^{2/3}-\eta n\bigr)
		+m_v  \\
		&\ge
		\frac{\Delta}{2}
		-\frac{r_v}{2}
		+\frac{m_v}{2}
		-4\eta n \ge \frac{\Delta}{2} -7\eta n. 
	\end{aligned}
	\]
	
	It remains to consider $v\in Z_2$. Let $v'$ be the partner of $v$, where
	the partner of $v_2$ is $v_2$.
	From the first stage choice, 
 there are nonnegative integers
	$s_u$, $u\in Z_2$, such that $s_u \le 14 \eta n-|M_0^*|$. 
	Consequently,
	\[
	\begin{aligned}
		d_{G_{A,B}}(v)
		&\ge
		\frac{1}{2}\bigl(\Delta-(q_v-1)^+-r_v-n^{2/3}-\eta n\bigr)
		+\frac{q_v-q_{v'}-m_v}{2}-1-s_v \\
		&\ge
		\frac{\Delta}{2}
		-\frac{r_v}{2}
		-\frac{q_{v'}}{2}
		-\eta n-s_v,
	\end{aligned}
	\]
	where we used the fact that $m_v \le |M_0^*|$. 
	
	If $v\in Z_0$, then $v'\in Z_0'$, and hence
	$q_{v'}\le 6\eta n$. Also, $r_v\le 6\eta n$. Therefore
	\[
	d_{G_{A,B}}(v)
	\ge
	\frac{\Delta}{2}
	-\frac{6\eta n}{2}
	-\frac{6\eta n}{2}
	-\eta n-14\eta n
	\ge \frac{\Delta}{2}-21\eta n.
	\]
	If $v=v_2$, then $v'=v_1$, so $q_{v'}=0$, while $r_v\le 13\eta n$. Hence
	\[
	d_{G_{A,B}}(v_2)
	\ge
	\frac{\Delta}{2}
	-\frac{13\eta n}{2}
	-\eta n-14\eta n
	\ge \frac{\Delta}{2}-21.6\eta n.
	\]
	This proves the claim.
\end{proof}

	Let $k=\left\lceil \frac{\Delta}{2}+28\eta n\right\rceil$. By
	Claim~\ref{claim:degrees-in-G-A-B} and $\mu(G_{A,B}-v_1)\le 13\eta n$ from
Condition~\eqref{item:thm4.1.c}, we have
	$\Delta(G_{A,B})+\mu(G_{A,B}-v_1)\le k$. Thus $G_{A,B}$ has an edge
	$k$-coloring by Lemma~\ref{lem:multi-version-star-multigraph}. Applying
	Lemma~\ref{lem:equa-edge-coloring}, we obtain an equalized edge $k$-coloring
	$\varphi$ of $G_{A,B}$.
	
	In particular, for every $v\in V(G_{A,B})$, by
	Claim~\ref{claim:degrees-in-G-A-B}, 
	\begin{equation}\label{eqn:number-of-missing-colors}
		|\overline{\varphi}(v)|
		=
		k-d_{G_{A,B}}(v)
	< 49.7\eta n. 
	\end{equation}
	Hence
	\begin{equation}\label{eqn:total-number-of-missing-colors}
		\sum_{v\in V(G_{A,B})}|\overline{\varphi}(v)|
	< 49.7\eta n^2.
	\end{equation}
	Moreover, since $\varphi$ is equalized, the numbers of vertices missing any
	two colors differ by at most $2$. Therefore, using the total bound on missing
	colors and the fact that $k\ge \Delta/2\ge \alpha n/2$,
	\begin{equation}\label{eqn:upper-bound-vertices-missing-i}
		|\overline{\varphi}^{-1}(i)|
		\le \frac{49.7\eta n^2}{k}+2
		\le \frac{100\eta n}{\alpha}
		\qquad\text{for every } i\in [k],
	\end{equation}
	for $n$ sufficiently large. This partial edge coloring $\varphi$ of $G$ will
	be extended in the next step.
	
	\begin{center}
		{\bf Step 3: Extend the $k$ color classes from Step 2 into 1-factors}
	\end{center}
	
	Let $\gamma=d\nu^2/50$. We modify the partial edge coloring of $G$ obtained
	in Step~2 by exchanging alternating paths, that is, by swapping ``null-color''
	and a given color $i$ on the edges of such a path. The coloring will be
	updated frequently during this step. For clarity, let
	$\pbar_{\rm in}(v)$ denote the set of colors missing at $v$ in the coloring
	obtained at the end of Step~2. Missing colors used to define MCC-pairs below
	are always taken with respect to the current coloring.
	
	Upon the completion of Step~3, each of the $k$ color classes will be a
	1-factor of $G$. In the process of Step~3, a few edges of $H-E(G_{A,B})$
	will be colored and a few edges of $G_{A,B}$ will be uncolored. We denote
	by $R_A$ and $R_B$ respectively the subgraphs induced by the edges of $G_A$
	and $G_B$ that become uncolored in this step. They are empty initially. Each
	time we exchange colors on an alternating path, at most $2/\gamma$ edges are
	added to $R_A\cup R_B$, and hence at most $2/\gamma$ edges are added to each
	of $R_A$ and $R_B$. The following conditions will be satisfied at the
	completion of this step.
	\begin{enumerate}[S3.1]
		\item The total number of uncolored edges in each of $R_A$ and $R_B$ is
		less than $ \eta^{\frac{1}{2}}n^2$.
		Furthermore, $R_A$ and $R_B$ have the same number of uncolored edges.
		\label{s31}
		
		\item $\Delta(R_A)$ and $\Delta(R_B)$ are less than $5\eta^{\frac{1}{3}}n$.
		\label{s32}
		
		\item 
		Let
		\[
		Z=
		\begin{cases}
			N_{G^s}(v_1), & \text{if } d_{G^s}(v_1)< \eta^{\frac{1}{7}}n,\\
			\emptyset, & \text{otherwise.}
		\end{cases}
		\]
		
		Every vertex $v\in V(G)\setminus ( \{v_1,v_2\} \cup Z)$ is incident in $G$
		with fewer than
		$|\pbar_{\rm in}(v_1)|+|\pbar_{\rm in}(v_2)|+|\pbar_{\rm in}(v)|+5\eta^{\frac{1}{3}}n$
		colored edges of $H-E(G_{A,B})$; every vertex $v\in \{v_1,v_2\} \cup Z$ is
		incident in $G$ with at most
		$|\pbar_{\rm in}(v_1)|+|\pbar_{\rm in}(v_2)|+|\pbar_{\rm in}(v)|$ colored edges of
		$H-E(G_{A,B})$. \label{s33}
	\end{enumerate}
	
	To ensure that Condition~(S3.\ref{s32}) is satisfied, we say that an edge
	$e=uv\in E(G_{A,B})$ is \emph{good} if $e\notin E(R_A\cup R_B)$ and the
	degrees of both $u$ and $v$ in $R_A\cup R_B$ are less than
	$5\eta^{\frac{1}{3}}n-1$. Equivalently, when $uv\in E(G_A)$, the relevant
	degrees are those in $R_A$, while the degrees of $u$ and $v$ in $R_B$ are
	zero; the analogous statement holds when $uv\in E(G_B)$. Thus a good edge
	can be added to $R_A$ or $R_B$ without violating (S3.\ref{s32}).
	
	We call a pair of distinct vertices $(a,b)$ a \emph{missing-common-color
		pair}, or an \emph{MCC-pair} for short, with respect to a color $i$ if $i$
	is missing at both $a$ and $b$ with respect to the current coloring. Since
	$|\pbar^{-1}(i)\cap A|-|\pbar^{-1}(i)\cap B|$ is even by the Parity Lemma
	(Lemma~\ref{lem:parity-lemma}), for each color $i\in [k]$, we can first pair
	vertices from $\pbar^{-1}(i)\cap A$ with vertices from $\pbar^{-1}(i)\cap B$,
	and then pair the remaining unpaired vertices inside $\pbar^{-1}(i)\cap A$ or
	inside $\pbar^{-1}(i)\cap B$. Thus we can form in total $|\pbar^{-1}(i)|/2$
	MCC-pairs with respect to $i$.
	
	For every MCC-pair $(a,b)$ with respect to a color $i\in [k]$, we will
	exchange colors on an alternating path $P$ from $a$ to $b$ with at most
	$2/\gamma$ edges, where $P$ starts and ends with uncolored edges of
	$H-E(G_{A,B})$ and alternates between uncolored edges and good edges colored
	by $i$. After the exchange on $P$, the vertices $a$ and $b$ will each be
	incident with an edge colored by $i$, while the good edges of $P$ that were
	colored by $i$ become uncolored and are added to $R_A\cup R_B$. In
	particular, at most $2/\gamma$ edges are added to each of $R_A$ and $R_B$ in
	this exchange.  Thus each exchange eliminates one MCC-pair for color $i$, preserves the properness of the coloring, and uncolors only good edges of $G_A\cup G_B$, which are then recorded in $R_A\cup R_B$.
	Before proving the existence of such paths, we verify that
	Conditions~(S3.\ref{s31}), (S3.\ref{s32}), and~(S3.\ref{s33}) hold at the end of
	Step~3.

For (S3.\ref{s31}): The Initial Stage uncolors at most two edges in $G_B$
for each color initially missing at $v_1$, and at most one edge in $G_A$ for
each color which is still missing at $v_2$. Hence it contributes fewer than
$150\eta n$ edges to $R_A\cup R_B$, and fewer than $100\eta n$ edges to each
of $R_A$ and $R_B$. The only time two edges of $G_B$ may be uncolored for a
color missing at $v_1$ is in the special recoloring used to avoid uncoloring
an edge between $v_2$ and the largest $v_1$-bundle outside $v_2$.

Moreover, each ordinary step of the Initial Stage only transfers a missing
color at $v_1$ or $v_2$ to one endpoint of an uncolored edge, and so does not
increase the total number of missing-color incidences. In the special
recoloring step, if the auxiliary color $\alpha$ is missing at both $v_2$ and
$w^*$, or is present at exactly one of them, then the total number of
missing-color incidences again does not increase. The only possible increase
occurs when $\alpha$ is present at both $v_2$ and $w^*$; in this case, as
explained in the Initial Stage, which is provided shortly after, the total number of missing-color incidences
increases by at most two. Since this special recoloring is used at most
$|\overline{\varphi}_{\rm in}(v_1)|$ times, the Initial Stage increases the
total number of missing-color incidences by at most
$2|\overline{\varphi}_{\rm in}(v_1)|<100\eta n$.

Therefore, after the Initial Stage, the total number of missing colors over
all vertices in $A\cup B$ is still less than $50\eta n^2$. Hence there are
fewer than $25\eta n^2$ MCC-pairs. For each MCC-pair, at most $2/\gamma$
edges are added to each of $R_A$ and $R_B$ when we exchange colors on the
corresponding alternating path. Hence the number of newly uncolored edges in
each of $R_A$ and $R_B$ is always less than
\[
100\eta n+50\frac{\eta}{\gamma}n^2<\eta^{\frac 12}n^2.
\]

It remains to justify $e(R_A)=e(R_B)$. Upon completion of Step~3, each color
class is a $1$-factor of $G$. Since $|A|=|B|$, every $1$-factor contains the
same number of edges inside $A$ as inside $B$. Therefore the total number of
colored edges in $G_A$ equals the total number of colored edges in $G_B$ at
the end of Step~3. Since $G$ is regular and the balanced bipartition $(A,B)$
gives $e(G_A)=e(G_B)$, the number of uncolored edges left in $G_A$ equals the
number left in $G_B$. These are precisely $e(R_A)$ and $e(R_B)$, so
(S3.\ref{s31}) follows.

	For (S3.\ref{s32}): This holds throughout, since we add an edge to $R_A$ or
	$R_B$ only when that edge is good, and by definition adding a good edge cannot
	create a vertex of degree at least $5\eta^{\frac{1}{3}}n$ in either $R_A$ or
	$R_B$.

For (S3.\ref{s33}): During Step~3, the number of newly colored edges of
$H-E(G_{A,B})$ incident with a vertex
$u\in V(G)\setminus(\{v_1,v_2\}\cup Z)$ is bounded by the number of
exchanged alternating paths containing $u$. The number of such paths in which
$u$ is used as a starting vertex is at most
$|\pbar_{\rm in}(v_1)|+|\pbar_{\rm in}(v_2)|+|\pbar_{\rm in}(u)|$. If $u$
is contained in an alternating path but is not used as a starting vertex, then
the path uses a good edge incident with $u$, and this edge is subsequently
added to $R_A\cup R_B$. Hence the number of such paths is at most
$d_{R_A\cup R_B}(u)$, which is less than $5\eta^{\frac 13}n$ by (S3.\ref{s32}).
Therefore the number of edges of $H-E(G_{A,B})$ colored in Step~3 and incident
with $u$ is less than
\[
|\pbar_{\rm in}(v_1)|+|\pbar_{\rm in}(v_2)|+|\pbar_{\rm in}(u)|
+5\eta^{\frac 13 }n.
\]

Now let $u\in \{v_1,v_2\}\cup Z$. Such a vertex is used only as a starting
vertex of alternating paths. The number of such paths is at most
$|\pbar_{\rm in}(v_1)|+|\pbar_{\rm in}(v_2)|+|\pbar_{\rm in}(u)|$.
Thus Condition~(S3.\ref{s33}) is satisfied at the end of Step~3.

	\medskip

	\noindent\textbf{Initial Stage}. \quad
	
	Since  $d^s_G(v_1)=2$ is possible, we mainly deal with missing colors at $v_1$
	in this stage together with the missing colors at $v_2$.  If such a vertex exists uniquely, let  $w^*\in V(G)\setminus\{v_1,v_2\}$ such that
	$e_G(v_1,w^*)>\eta^{\frac 16}n$ and $e_G(v_2,w^*)\ge 1$, and let $W^*=\{w^*\}$.
	If no such vertex exists, or if there is more than one such vertex,
	then we let  $W^*=\emptyset$. 
	Let $\varphi_0$ denote the coloring at the beginning of the Initial
	Stage. We first deal with the colors missing at $v_1$. For $v_1$ and each
	$i\in \pbar_{\rm in}(v_1)$, choose an uncolored edge
	$e\in E_G(v_1,u)\cap E(H)$ for some $u\in B$. Such an edge exists, since
	$d_{G_{A,B}}(v_1)\le \Delta/2+14\eta n$ by~Claim~\ref{claim:degrees-in-G-A-B}, and so  the number of edges of $H-E(G_{A,B})$ incident with $v_1$ is larger than $|\pbar_{\rm in}(v_1)|$.
	Before treating the color $i$, we have used fewer than $|\pbar_{\rm in}(v_1)|$ such edges. If $i\in \pbar(u)$, then
	we simply color $e$ with $i$. Otherwise, $i$ is present at $u$. Since $i$ is
	missing at $v_1$, the edge colored $i$ and incident with $u$ cannot join $u$
	to $v_1$. Moreover, by the construction of the coloring before this Initial
	Stage, and by the fact that all edges of $H$ newly colored so far are incident
	with $v_1$, this edge must lie inside $B$. Hence there exists $y\in B$ such
	that an edge $f\in E_G(u,y)$ is colored with $i$.

If $y\notin W^*$ or $u\ne v_2$, then we uncolor $f$ and color $e$ with $i$.
In this case, the missing color $i$ is transferred from $v_1$ to $y$, so the
total number of missing-color incidences does not increase.

Thus we may assume that $y\in W^*$ and $u=v_2$. Hence $W^*=\{w^*\}$ and
$f\in E_G(v_2,w^*)$. In this case we avoid uncoloring the edge $f$. We claim
that, throughout the Initial Stage, we can choose distinct colors
\[
\alpha\in [k]\setminus \{\varphi_0(g):g\in E(G_{A,B}[\{v_1,v_2,w^*\}])\}.
\]
Indeed, by~\eqref{eqn:triangle-v1-controlled}, we have
\begin{equation}\label{eqn:alpha-available-initial}
	k-e(G_{A,B}[\{v_1,v_2,w^*\}])
	\ge k-d_{G_{A,B}}(v_1)=|\pbar_{\rm in}(v_1)|.
\end{equation}
The number of times this special recoloring is used is at most
$|\pbar_{\rm in}(v_1)|$. Hence we can choose a color $\alpha$ from the above
set which has not been used before in this special recoloring. Moreover,
$\alpha\ne i$: if $f$ had color $i$ under $\varphi_0$, then $i$ is excluded
from the above set; while if the color $i$ was put on $f$ by an earlier special
recoloring, then $i$ is excluded by the distinctness of the colors chosen for
the special recolorings.

By the choice of $\alpha$, if $\alpha$ is present at $v_2$, then the edge
colored $\alpha$ incident with $v_2$ is not an edge to $v_1$ or to $w^*$; and
if $\alpha$ is present at $w^*$, then the edge colored $\alpha$ incident with
$w^*$ is not an edge to $v_1$ or to $v_2$.

If $\alpha$ is missing at both $v_2$ and $w^*$, then we recolor $f$ by
$\alpha$ and color $e$ by $i$. In this case, the missing color $i$ is
transferred from $v_1$ to $w^*$, and the total number of missing-color
incidences does not increase.

If $\alpha$ is present at $v_2$ but missing at $w^*$, then we uncolor the
edge at $v_2$ colored $\alpha$, recolor $f$ by $\alpha$, and then color $e$
by $i$. The case where $\alpha$ is present at $w^*$ but missing at $v_2$ is
symmetric. In these cases, the color $i$ is transferred from $v_1$ to $w^*$,
and the missing incidence of $\alpha$ is transferred from one endpoint of $f$
to the other endpoint of the uncolored $\alpha$-edge. Thus the total number of
missing-color incidences does not increase.

Lastly, suppose that $\alpha$ is present at both $v_2$ and $w^*$. We uncolor
the two $\alpha$-colored edges incident with $v_2$ and $w^*$, recolor $f$ by
$\alpha$, and color $e$ by $i$. Then the color $i$ is no longer missing at
$v_1$, but becomes missing at $w^*$. The color $\alpha$ remains present at
both $v_2$ and $w^*$ because of the recolored edge $f$, but it becomes missing
at the other endpoints of the two edges originally colored $\alpha$. Thus this
step increases the total number of missing-color incidences by two.

After all colors missing at $v_1$ have been treated, we deal with the colors still missing at $v_2$ with respect to the current coloring. Let $i\in \pbar(v_2)$. We choose an uncolored edge $e\in E_H(v_2,A)$, say $e\in E_H(v_2,u)$, where $u\ne v_1$, as follows. Since we are in Case~2, $e_G(v_1,v_2)<\Delta-\eta^{\frac{1}{7}}n$. Hence $d_G(v_2,V(G)\setminus\{v_1\})>\eta^{\frac 17}n$. By Condition~\eqref{item:thm4.1.c}, $v_2$ has at least $\eta^{\frac 17}n-13\eta n$ simple neighbors outside $v_1$. Using \eqref{eqn:partition-deg}, at least $\eta^{\frac 17 }n/3$ of these neighbors lie in $A$. After excluding the vertices incident with edges of $H$ already colored in the Initial Stage, there remains an available uncolored edge $v_2u\in E_H(v_2,A)$ with $u\ne v_1$ such that $u$
is a simple neighbor of $v_2$. 

If $i\in \pbar(u)$, then color $v_2u$ with $i$. Thus we may assume that $i$ is present at $u$. Let $f$ be the edge colored $i$ incident with $u$. If $f$ is not incident with $v_1$, then uncolor $f$ and color $v_2u$ with $i$. It remains to consider the case where $f \in E_G(v_1,u)$. In this case the color $i$ is already present at $v_1$. By the preceding availability estimate, choose another uncolored edge $v_2u'\in E_H(v_2,A)$, with $u'\ne v_1,u$, avoiding all previously used choices. Since $i$ is already present at $v_1$ on the edge $f$, the edge colored $i$ incident with $u'$ cannot be from $E_G(u',v_1)$. Therefore, if $i\notin \pbar(u')$, the edge colored $i$ incident with $u'$ is not incident with $v_1$; we uncolor that edge and color $v_2u'$ with $i$. If $i\in \pbar(u')$, we simply color $v_2u'$ with $i$. Thus, in all cases, no edge incident with $v_1$ is uncolored while treating a color missing at $v_2$.
	
	After this Initial Stage, every color in $[k]$ is present at both $v_1$ and
	$v_2$, and $R_A$ contains no edge that is incident with $v_1$. 
	\medskip
	
	\noindent\textbf{Second Stage}. 
	
	We now show the existence of alternating paths for the current MCC-pairs.
	
	Fix a color $i\in [k]$, and let  $W_i$ be the set of vertices $x\in V(G)$ such that at least one of the
	following holds:
	\begin{itemize}
		\item $x$ is incident with a non-good edge colored $i$;
		\item $x$ is missing the color $i$;
		\item $x\in Z \cup \{v_1,v_2\}$;
		\item $x$ is incident with a good edge colored $i$ whose other endpoint lies
		in $Z \cup\{v_1,v_2\}$.
	\end{itemize}
The last item
	ensures that $W_i$ is closed under the good color-$i$ matching on the
	vertices in $Z \cup \{v_1, v_2\}$. Consequently, if a vertex outside $W_i$ is incident with a
	good edge colored $i$, then the other endpoint of this good edge is also
	outside $W_i$.
	
	By (S3.\ref{s31}), there are fewer than $\eta^{\frac{1}{2}}n^2$ edges in
	each of $R_A$ and $R_B$, so in each part there are fewer than
	$\frac{2\eta^{\frac{1}{2}}n^2}{5\eta^{\frac{1}{3}}n-1}<\eta^{\frac{1}{6}}n$
	vertices of degree at least $5\eta^{\frac{1}{3}}n-1$ in the corresponding
	auxiliary graph. Since each non-good edge colored $i$ is accounted for
	through one or two such vertices, there are fewer than $2\eta^{\frac{1}{6}}n$
	vertices incident with a non-good edge colored $i$. Moreover,
	by~\eqref{eqn:upper-bound-vertices-missing-i}, the number of vertices missing
	the color $i$ is at most $\frac{100\eta n}{\alpha}$. Since
	$|Z|<\eta^{\frac{1}{7}}n$, we have
	\begin{equation}\label{eqn:Wi-small}
		|W_i|
		\le 2\eta^{\frac{1}{6}}n+\frac{100\eta n}{\alpha}+4+2\eta^{\frac{1}{7}}n
		<3\eta^{\frac{1}{7}}n.
	\end{equation}
	
	For $v\in C$, where $C\in\{A,B\}$, let $D$ denote the other part. Define
	\[
	S_0^i(v)=\{u\in D:  \text{$vu$ is an uncolored edge}\}\setminus W_i.
	\]
	For $j\ge 0$, after $S_j^i(v)$ has been defined, let $T_j^i(v)$ be the set
	of vertices joined to vertices of $S_j^i(v)$ by a good edge colored $i$.
	Since $W_i$ is closed under the good color-$i$ matching, every vertex of
	$T_j^i(v)$ also avoids $W_i$. In particular, each vertex of $S_j^i(v)$ is
	matched by a unique good edge colored $i$ to a vertex outside $W_i$, and
	hence $|T_j^i(v)|=|S_j^i(v)|$. For $j\ge 1$, define
	\[
	S_j^i(v)=RN_{\gamma,H}(T_{j-1}^i(v))\setminus
	\big(\{u\in V(H):  \text{$vu$ was already colored}\} \cup W_i\big).
	\]
	Figure~\ref{fig:def-S_j-T_j} demonstrates the definition of these sets
	$S_j^i(v)$ and $T_j^i(v)$.
	
	\begin{figure}[ht]
		\centering
		\begin{tikzpicture}[
			x=1cm,y=1cm,
			>=stealth,
			font=\small,
			vtx/.style={circle, draw, fill=white, inner sep=1.5pt, minimum size=18pt},
			sset/.style={draw, rounded corners, fill=green!18, minimum width=2.0cm, minimum height=0.75cm},
			tset/.style={draw, rounded corners, fill=orange!18, minimum width=2.0cm, minimum height=0.75cm},
			uncol/.style={dashed, thick},
			coli/.style={very thick},
			rn/.style={->, very thick},
			lab/.style={font=\scriptsize, fill=white, inner sep=1pt}
			]
			
			\fill[blue!7, rounded corners] (-2.2,-4.4) rectangle (2.2,3.2);
			\fill[red!7, rounded corners]  (3.0,-4.4) rectangle (7.4,3.2);
			
			\node at (0,2.85) {$A$};
			\node at (5.2,2.85) {$B$};
			
			\node[vtx] (v) at (-0.5,2.0) {$v$};
			
			\node[sset] (S0) at (5.2,2.0) {$S_0^i(v)$};
			\node[tset] (T0) at (5.2,0.95) {$T_0^i(v)$};
			
			\node[sset] (S1) at (0,0.0) {$S_1^i(v)$};
			\node[tset] (T1) at (0,-1.05) {$T_1^i(v)$};
			
			\node[sset] (S2) at (5.2,-2.0) {$S_2^i(v)$};
			\node[tset] (T2) at (5.2,-3.05) {$T_2^i(v)$};
			
			\node at (0,-3.95) {$\vdots$};
			\node at (5.2,-3.95) {$\vdots$};
			
			\draw[uncol] (v.east) -- node[lab,above] {uncolored $H$-edges} (S0.west);
			
			\draw[coli] ([xshift=-6pt]S0.south) -- ([xshift=-6pt]T0.north);
			\draw[coli] (S0.south) -- (T0.north);
			\draw[coli] ([xshift=6pt]S0.south) -- ([xshift=6pt]T0.north);
			\node[lab] at (6.55,1.47) {color $i$};
			
			\draw[rn, shorten <=2pt, shorten >=2pt]
			(T0.west) .. controls (3.7,0.75) and (1.8,0.45) ..
			node[lab,above,sloped,pos=0.52] {$RN_{\gamma,H}\setminus W_i$}
			(S1.east);
			
			\draw[coli] ([xshift=-6pt]S1.south) -- ([xshift=-6pt]T1.north);
			\draw[coli] (S1.south) -- (T1.north);
			\draw[coli] ([xshift=6pt]S1.south) -- ([xshift=6pt]T1.north);
			\node[lab] at (1.35,-0.53) {color $i$};
			
			\draw[rn, shorten <=2pt, shorten >=2pt]
			(T1.east) .. controls (1.9,-1.25) and (3.7,-1.65) ..
			node[lab,above,sloped,pos=0.55] {$RN_{\gamma,H}\setminus W_i$}
			(S2.west);
			
			\draw[coli] ([xshift=-6pt]S2.south) -- ([xshift=-6pt]T2.north);
			\draw[coli] (S2.south) -- (T2.north);
			\draw[coli] ([xshift=6pt]S2.south) -- ([xshift=6pt]T2.north);
			\node[lab] at (6.55,-2.53) {color $i$};
			
			\node[align=center, font=\footnotesize, text width=5.0cm] at (2.6,-5.02)
			{Each $S_j^i(v)$ avoids $W_i$, and $|T_j^i(v)|=|S_j^i(v)|$.};
			
		\end{tikzpicture}
		\caption{Illustration of the construction of $S_j^i(v)$ and $T_j^i(v)$,
			assuming $v\in A$. Dashed cross-part edges are uncolored edges of $H$, the
			bundles of solid edges represent good edges of color $i$, and the curved
			arrows indicate the robust-neighborhood step with the forbidden set $W_i$
			removed.}
		\label{fig:def-S_j-T_j}
	\end{figure}
	
	We record a lower bound on $|S_0^i(v)|$. Let $v$ be an endpoint of a current
	MCC-pair. By the Initial Stage, $v\notin\{v_1,v_2\}$. Hence, by
	\eqref{item:thm4.1.b},
	$d^s_G(v)\ge \frac{\Delta}{2}-6\eta n$.
	Using~\eqref{eqn:partition-deg}, the number of simple neighbors of $v$ in the
	opposite part is at least
	$\frac{1}{2}\left(\frac{\Delta}{2}-6\eta n-n^{\frac{2}{3}}-\eta n\right)>\frac{1}{5}\alpha n$.
	Moreover, at most
	$|\pbar_{\rm in}(v_1)|+|\pbar_{\rm in}(v_2)|+|\pbar_{\rm in}(v)|+5\eta^{\frac{1}{3}}n$
	edges of $H-E(G_{A,B})$ incident with $v$ have already been colored in
	Step~3, and by~\eqref{eqn:Wi-small} we exclude at most $3\eta^{\frac{1}{7}}n$
	vertices from $W_i$. Therefore
	\[
	|S_0^i(v)|
	>
	\frac{1}{5}\alpha n
	-\left(150\eta n+5\eta^{\frac{1}{3}}n\right)
	-3\eta^{\frac{1}{7}}n
	> \frac{1}{6}\alpha n.
	\]
	In particular, $|S_0^i(v)|>\frac{1}{6}\alpha n>4\tau |A|$.
	
	Since the partition is balanced, write $m=|A|=|B|$. Grow the sets $S_j^i(v)$
	using robust expansion. Suppose that $|S_j^i(v)|\le (1-4\tau)m$. Since
	$T_j^i(v)$ lies in the part opposite to $S_j^i(v)$ and
	$|T_j^i(v)|=|S_j^i(v)|$, the bipartite robust $(\gamma,4\tau)$-expansion of
	$H$ gives $|RN_{\gamma,H}(T_j^i(v))|>|S_j^i(v)|+\gamma m$. When forming
	$S_{j+1}^i(v)$, we delete the vertices in $W_i$ and the vertices excluded
	because of previously colored edges of $H$. By~\eqref{eqn:Wi-small} and the
	bounds from Step~3, these deletions have size less than $\frac{1}{2}\gamma m$.
	Hence $|S_{j+1}^i(v)|>|S_j^i(v)|+\frac{1}{2}\gamma m$. Let $\ell$ be the
	smallest integer such that $|S_\ell^i(v)|>(1-4\tau)m$. Since the size
	increases by at least $\frac{1}{2}\gamma m$ at each step, $\ell<2/\gamma$.
	
	Let $(v,u)$ be a current MCC-pair. Let $C$ be the part containing $v$, and
	let $D$ be the other part. Suppose first that $S_\ell^i(v)$ and $u$ lie in
	opposite parts. Since $|S_\ell^i(v)|>(1-4\tau)m$ and
	$|T_0^i(u)|=|S_0^i(u)|>\frac{1}{6}\alpha n$, we have
	$|T_0^i(u)\cap S_\ell^i(v)|>\frac{1}{7}\alpha n$. Choose
	$v_\ell^1\in T_0^i(u)\cap S_\ell^i(v)$ and choose
	$u_0^1\in S_0^i(u)$ such that $v_\ell^1u_0^1$ is colored $i$.
	
	Since $v_\ell^1\in S_\ell^i(v)\subseteq RN_{\gamma,H}(T_{\ell-1}^i(v))$,
	the vertex $v_\ell^1$ has at least $\gamma m-|W_i|>2/\gamma$ neighbors in
	$T_{\ell-1}^i(v)$ available for the construction. Choose
	$v_{\ell-1}^2\in T_{\ell-1}^i(v)\cap N_G(v_\ell^1)$ not selected before,
	and then choose $v_{\ell-1}^1\in S_{\ell-1}^i(v)$ such that
	$v_{\ell-1}^1v_{\ell-1}^2$ is colored $i$. Repeating backwards, for each
	$j \in \{\ell-2,\ldots,0\}$, choose
	$v_j^2\in T_j^i(v)\cap N_G(v_{j+1}^1)$ not selected before, and then choose
	$v_j^1\in S_j^i(v)$ such that $v_j^1v_j^2$ is colored $i$. It follows that
	\[
	vv_0^1v_0^2v_1^1v_1^2\cdots v_{\ell-1}^1v_{\ell-1}^2v_\ell^1u_0^1u
	\]
	is a $(v,u)$-alternating path. See Figure~\ref{fig:alternating-path} Case~(a)
	for the case $\ell=4$, where $S_4^i(v)\subseteq D$ and $u\in C$.
	
	Now suppose that $S_\ell^i(v)$ and $u$ lie in the same part. Use one more
	step from $u$. By the same argument,
	$|T_1^i(u)\cap S_\ell^i(v)|>\frac{1}{7}\alpha n$. Choose
	$v_\ell^1\in T_1^i(u)\cap S_\ell^i(v)$ and choose
	$u_1^1\in S_1^i(u)$ such that $u_1^1v_\ell^1$ is colored $i$. Then choose
	$u_0^2\in T_0^i(u)\cap N_G(u_1^1)$ not selected before, and choose
	$u_0^1\in S_0^i(u)$ such that $u_0^1u_0^2$ is colored $i$. Combining this
	with the backwards construction from $v_\ell^1$ to $v$, we obtain the
	$(v,u)$-alternating path
	\[
	vv_0^1v_0^2v_1^1v_1^2\cdots v_{\ell-1}^1v_{\ell-1}^2v_\ell^1u_1^1u_0^2u_0^1u.
	\]
	See Figure~\ref{fig:alternating-path} Case~(b) for the case $\ell=4$, where
	$S_4^i(v)\subseteq D$ and $u\in D$.
	
	\begin{figure}[ht]
		\centering
		\captionsetup[subfigure]{skip=8pt}
		
		\begin{subfigure}[t]{0.95\textwidth}
			\centering
			\begin{tikzpicture}[
				x=1cm,y=1cm,
				font=\small,
				>=stealth,
				partA/.style={draw, rounded corners, fill=blue!7, minimum width=12.2cm, minimum height=1.25cm},
				partB/.style={draw, rounded corners, fill=red!7, minimum width=12.2cm, minimum height=1.25cm},
				vtx/.style={circle, draw, fill=white, inner sep=1.2pt, minimum size=18pt},
				meet/.style={circle, draw, fill=yellow!22, inner sep=1.2pt, minimum size=18pt},
				uncol/.style={dashed, thick},
				coli/.style={very thick}
				]
				
				\node[partA] at (6.1,2.1) {};
				\node[partB] at (6.1,0.1) {};
				
				\node at (0.6,2.1) {$C$};
				\node at (0.6,0.1) {$D$};
				
				\node[vtx] (v)   at (1.3,2.1) {$v$};
				\node[vtx] (v11) at (4.3,2.1) {$v_1^1$};
				\node[vtx] (v12) at (5.4,2.1) {$v_1^2$};
				\node[vtx] (v31) at (7.5,2.1) {$v_3^1$};
				\node[vtx] (v32) at (8.6,2.1) {$v_3^2$};
				\node[vtx] (u)   at (11.0,2.1) {$u$};
				
				\node[vtx]  (v01) at (2.4,0.1) {$v_0^1$};
				\node[vtx]  (v02) at (3.5,0.1) {$v_0^2$};
				\node[vtx]  (v21) at (5.9,0.1) {$v_2^1$};
				\node[vtx]  (v22) at (7.0,0.1) {$v_2^2$};
				\node[meet] (v41) at (9.1,0.1) {$v_4^1$};
				\node[vtx]  (u01) at (10.2,0.1) {$u_0^1$};
				
				\draw[uncol] (v) -- (v01);
				\draw[coli]  (v01) -- (v02);
				\draw[uncol] (v02) -- (v11);
				\draw[coli]  (v11) -- (v12);
				\draw[uncol] (v12) -- (v21);
				\draw[coli]  (v21) -- (v22);
				\draw[uncol] (v22) -- (v31);
				\draw[coli]  (v31) -- (v32);
				\draw[uncol] (v32) -- (v41);
				\draw[coli]  (v41) -- (u01);
				\draw[uncol] (u01) -- (u);
				
				\node[font=\scriptsize] at (2.95,-0.22) {$i$};
				\node[font=\scriptsize] at (4.85,2.42) {$i$};
				\node[font=\scriptsize] at (6.45,-0.22) {$i$};
				\node[font=\scriptsize] at (8.05,2.42) {$i$};
				\node[font=\scriptsize] at (9.65,-0.22) {$i$};
				
				\node[align=center] at (6.1,-1.25)
				{Case (a): $u\in C$. Then $S_4^i(v)\subseteq D$, and the path ends through $T_0^i(u)$,\\
					so $v_4^1\in T_0^i(u)\cap S_4^i(v)$.};
				
			\end{tikzpicture}
		\end{subfigure}
		
		\vspace{1.2em}
		
		\begin{subfigure}[t]{0.95\textwidth}
			\centering
			\begin{tikzpicture}[
				x=1cm,y=1cm,
				font=\small,
				>=stealth,
				partA/.style={draw, rounded corners, fill=blue!7, minimum width=12.2cm, minimum height=1.25cm},
				partB/.style={draw, rounded corners, fill=red!7, minimum width=12.2cm, minimum height=1.25cm},
				vtx/.style={circle, draw, fill=white, inner sep=1.2pt, minimum size=18pt},
				meet/.style={circle, draw, fill=yellow!22, inner sep=1.2pt, minimum size=18pt},
				uncol/.style={dashed, thick},
				coli/.style={very thick}
				]
				
				\node[partA] at (6.1,2.1) {};
				\node[partB] at (6.1,0.1) {};
				
				\node at (0.6,2.1) {$C$};
				\node at (0.6,0.1) {$D$};
				
				\node[vtx] (v)   at (1.3,2.1) {$v$};
				\node[vtx] (v11) at (4.3,2.1) {$v_1^1$};
				\node[vtx] (v12) at (5.4,2.1) {$v_1^2$};
				\node[vtx] (v31) at (7.1,2.1) {$v_3^1$};
				\node[vtx] (v32) at (8.2,2.1) {$v_3^2$};
				\node[vtx] (u02) at (9.8,2.1) {$u_0^2$};
				\node[vtx] (u01) at (10.9,2.1) {$u_0^1$};
				
				\node[vtx]  (v01) at (2.4,0.1) {$v_0^1$};
				\node[vtx]  (v02) at (3.5,0.1) {$v_0^2$};
				\node[vtx]  (v21) at (5.9,0.1) {$v_2^1$};
				\node[vtx]  (v22) at (7.0,0.1) {$v_2^2$};
				\node[meet] (v41) at (8.8,0.1) {$v_4^1$};
				\node[vtx]  (u11) at (9.9,0.1) {$u_1^1$};
				\node[vtx]  (u)   at (11.1,0.1) {$u$};
				
				\draw[uncol] (v) -- (v01);
				\draw[coli]  (v01) -- (v02);
				\draw[uncol] (v02) -- (v11);
				\draw[coli]  (v11) -- (v12);
				\draw[uncol] (v12) -- (v21);
				\draw[coli]  (v21) -- (v22);
				\draw[uncol] (v22) -- (v31);
				\draw[coli]  (v31) -- (v32);
				\draw[uncol] (v32) -- (v41);
				\draw[coli]  (v41) -- (u11);
				\draw[uncol] (u11) -- (u02);
				\draw[coli]  (u02) -- (u01);
				\draw[uncol] (u01) -- (u);
				
				\node[font=\scriptsize] at (2.95,-0.22) {$i$};
				\node[font=\scriptsize] at (4.85,2.42) {$i$};
				\node[font=\scriptsize] at (6.45,-0.22) {$i$};
				\node[font=\scriptsize] at (7.75,2.42) {$i$};
				\node[font=\scriptsize] at (9.35,-0.22) {$i$};
				\node[font=\scriptsize] at (10.35,2.42) {$i$};
				
				\node[align=center] at (6.1,-1.25)
				{Case (b): $u\in D$. Then $S_4^i(v)\subseteq D$, and we use one extra step from $u$,\\
					so $v_4^1\in T_1^i(u)\cap S_4^i(v)$.};
				
			\end{tikzpicture}
		\end{subfigure}
		
		\caption{Illustration of the two alternating-path constructions for $\ell=4$.
			Here $\{C,D\}=\{A,B\}$ denotes the bipartition. Dashed edges are uncolored
			edges of $H$, and solid edges are edges colored $i$.}
		\label{fig:alternating-path}
	\end{figure}
	
	In both constructions, no internal vertex of the path is in $\{v_1,v_2\}$.
	Moreover, in the case $d_{G^s}(v_1)<\eta^{\frac{1}{7}}n$, 
	no internal vertex
	lies in $Z$.
	
	We exchange along this alternating path $P$ by coloring the edges of
	$E(P)\cap E(H)$ with color $i$ and uncoloring the edges of
	$E(P)\cap E(G_A\cup G_B)$. After the exchange, the color $i$ appears on
	edges incident with both $u$ and $v$. The newly uncolored edges of $G_A$ are
	added to $R_A$, and the newly uncolored edges of $G_B$ are added to $R_B$.
	
	By finding such paths for all MCC-pairs with respect to color $i$, we extend
	color class $i$ to a 1-factor of $G$. Repeating for all $i\in[k]$ completes
	Step~3.
	
	\begin{center}
		{\bf Step 4: Edge-color $R_A$ and $R_B$ and extend the new color classes
			into 1-factors}
	\end{center}
	
Each of the color classes for the colors in $[k]$ is now a 1-factor of $G$.
By Conditions~(S3.\ref{s31}) and~(S3.\ref{s32}), each of $R_A$ and $R_B$ has
fewer than $\eta^{\frac12}n^2$ edges and
$\Delta(R_A),\Delta(R_B)<5\eta^{\frac13}n$.

We claim that $v_1\notin V(R_A)$. In the Initial Stage of Step~3, no edge
incident with $v_1$ was added to $R_A$; and throughout the Second Stage of
Step~3 the vertex $v_1$ was avoided, so no edge of $G_A$ incident with $v_1$
was placed in $R_A$ either. Hence $R_A$ contains no edge incident with $v_1$.
Since $v_1\notin V(R_A)$ and $v_1\in A$, no edge of $R_A\cup R_B$ is incident
with $v_1$, so $R_A\cup R_B$ is a subgraph of $G-v_1$. As $v_1$ is the only
vertex at which $G$ may have high multiplicity,
Condition~\eqref{item:thm4.1.c} gives
$\mu(R_A\cup R_B)\le\mu(G-v_1)\le 13\eta n$.

We color the edges of $R_A\cup R_B$ in two stages, and then extend each new
color class to a 1-factor by adding a matching in $H$.
	
	\medskip
	\noindent{\bf Stage I: Coloring the selected Initial Stage edges.}
	
	Let $R_B^0$ be the subgraph of $R_B$ consisting of the edges that were added
	to $R_B$ in the Initial Stage of Step~3 and are incident with a vertex of
	$B\cap Z$ or with $v_2$.  Let $\ell_1=e(R_B^0)$. Every color initially
	missing at $v_1$ contributes at most two edges to $R_B^0$, while the colors
	treated at $v_2$ contribute none to $R_B^0$. Hence $\ell_1 <100\eta n$. If
	$\ell_1\ge 1$,
	then we color all edges of $R_B^0$ with distinct colors from
	$[k+1,k+\ell_1]$. For these same colors, we choose $\ell_1$ distinct edges
	from $R_A$ and color them distinctly, one edge for each color in
	$[k+1,k+\ell_1]$. 
	
	The alternating-path part of Step~3 avoids all vertices of
	$Z \cup \{v_1, v_2\}$ through the forbidden set $W_i$. Moreover, every
	Initial Stage edge of $R_B$ incident with $B\cap Z$ or with $v_2$ has just
	been placed into $R_B^0$ and colored in Stage~I. Thus, after Stage~I, no
	remaining edge of $R_B$ which was created in the Initial Stage is incident with
	$B\cap Z$ or with $v_2$. 
	
	After this, delete the colored edges from $R_A$ and $R_B$, and still denote
	the remaining uncolored subgraphs by $R_A$ and $R_B$. Since the same number
	of edges was colored in $R_A$ as in $R_B$, we still have $e(R_A)=e(R_B)$.
	Moreover, the maximum-degree bound and the multiplicity bound
	$\mu(R_A\cup R_B)\le 13\eta n$ remain valid.
	
	\medskip
	\noindent{\bf Stage II: Coloring the remaining edges of $R_A\cup R_B$.}
	
	Set $\ell_2=\lceil 5 \eta^{\frac13}n+13\eta n \rceil$. By
	Theorem~\ref{thm:chromatic-index} and Lemma~\ref{lem:equa-edge-coloring},
	we edge-color $R_A$ and $R_B$ separately with the common color set
	$[k+\ell_1+1,k+\ell_1+\ell_2]$, and choose both colorings equalized. Since
	$e(R_A)=e(R_B)$, the two equalized colorings have the same multiset of color
	class sizes. Thus, after renaming the colors in one of the two colorings if
	necessary, each color in $[k+\ell_1+1,k+\ell_1+\ell_2]$ appears on the same
	number of edges in $R_A$ as in $R_B$. Each such color appears on at most
	\[
	\left\lceil\frac{e(R_A)}{\ell_2}\right\rceil
	\le \frac{\eta^{\frac12}n^2}{5\eta^{\frac13}n+13\eta n}+1
	<\eta^{\frac16}n
	\]
	edges in each of $R_A$ and $R_B$.
	
	\medskip
	\noindent{\bf Extending the new color classes.}
	
The extensions of the Stage I and Stage II colorings are discussed together below, since they share some common arguments.

We process the colors $i\in [k+1,k+\ell_1+\ell_2]$ in increasing order. Suppose that, for every $j\in [k+1,i-1]$, the color class $j$ has already been extended to a 1-factor. Let $A_i$ and $B_i$ be the sets
	of vertices in $A$ and $B$, respectively, that are incident with an edge
	colored $i$ in $R_A\cup R_B$. If $i\in [k+1,k+\ell_1]$, then
	$|A_i|=|B_i|=2$. If $i\in [k+\ell_1+1,k+\ell_1+\ell_2]$, then
	$|A_i|=|B_i|\le 2\eta^{\frac16}n$. Let $H_i$ be the subgraph of $H$
	obtained by deleting the vertices in $A_i\cup B_i$ and removing all edges of
	$H$ that have already been colored.
	
	Initialize $Q_i=\emptyset$. We will build a small prescribed matching $Q_i$ to
	handle $v_1$ and $v_2$ before finding a matching saturating  the remaining vertices of $H_i$.
	
	We first deal with $v_1$.   As argued at the beginning
	of Step~4, $v_1\notin V(R_A)$, so $v_1$ is incident with no colored edge of
	$R_A\cup R_B$; hence $v_1\notin A_i$, and $v_1$ has not yet been covered by
	$Q_i$.

	First suppose that $d_{G^s}(v_1)\ge \eta^{\frac17}n$. By
	\eqref{eqn:partition-deg},
	$d_{G^s}(v_1,B)\ge \frac{1}{2}\left(d_{G^s}(v_1)-n^{\frac23}-\eta n\right)>\frac{1}{3}\eta^{\frac17}n$.
	Thus, if $i\in [k+\ell_1+1,k+\ell_1+\ell_2]$, then
	$|B_i|\le 2\eta^{\frac16}n$, and if $i\in [k+1,k+\ell_1]$, then
	$|B_i|=2$. In either case, after also excluding $v_2$ and the vertices
	corresponding to already colored edges of $H$ incident with $v_1$, there
	remains a  neighbor
	$u\in (N_{G^s}(v_1)\cap B)\setminus (B_i\cup\{v_2\})$ such that some edge in
	$E_G(v_1,u)$ is still uncolored. Indeed, at this point at most
	$150\eta n+\ell_1+\ell_2$  neighbors of $v_1$ in $B$ can be blocked by
	already colored edges of $H$, and in either case $|B_i|\le 2\eta^{\frac16}n$.
	Hence the number of available  neighbors is positive.  Choose one such edge and add it to $Q_i$.

Then suppose that $d_{G^s}(v_1)< \eta^{\frac17}n$.
	If $i\in [k+1,k+\ell_1]$, let $e \in E_G(u,w)$  for some $u,w\in B$ be the edge of $R_B^0$ colored $i$ in
	Stage~I.   If there is a unique vertex 
	$w'\in V(G)\setminus\{v_1,v_2\}$ such that
	$e_G(v_1,w')>\eta^{\frac 16}n$, 
	then either  $e_G(v_2, w')=0$  or the construction in the Initial Stage of Step 3 indicates
	that $R_B^0$ contains no edge with   endvertices  $v_2$ and 
	$w'$ (so $w'=w^*$ when $e_G(v_2, w') \ge 1$). 
	 Hence the edge $e$ cannot have both
	$v_2$ and $w'$ as its endvertices. Choose
	$x\in \{v_2,w'\}\setminus \{u,w\}$.

	We claim that there is an available edge from $v_1$ to $x$ in $H_i$. If
	$x=w'$, then $e_G(v_1,x)>\eta^{\frac16}n$. If $x=v_2$, then by
	Conditions~\eqref{item:thm4.1.a-new} and~\eqref{item:thm4.1.c}, together with
	Lemma~\ref{lem:v1-v2-edges}, we have
	$e_G(v_1,x)=e_G(v_1,v_2)\ge \eta^{\frac16}n-7\eta n$.
	
	By the construction of the middle edges, for every $x\in B$,
	\[
	e_{H-E(G_{A,B})}(v_1,x)
	=
	e_G(v_1,x)-e_{G[M]}(v_1,x)
	\ge
	\left\lfloor \frac{e_G(v_1,x)}{2}\right\rfloor-7\eta n.
	\]
	Therefore, in either case,
	\[
	e_{H-E(G_{A,B})}(v_1,x)
	\ge \frac12\eta^{\frac16}n-12\eta n.
	\]
	At this point, fewer than $150\eta n+\ell_1+\ell_2$ edges of $H$ incident with
	$v_1$ have already been colored. Since
	\[
	150\eta n+\ell_1+\ell_2<\frac12\eta^{\frac16}n-12\eta n,
	\]
	we have $e_{H_i}(v_1,x)>0$. Thus we can choose an available edge from $v_1$ to
	$x$ and add it to $Q_i$.
	
	This handles the case where such a unique vertex $w'$ exists. It remains to
	consider the case where either $e_G(v_1,w')\le \eta^{\frac16}n$ for every
	$w'\in V(G)\setminus\{v_1,v_2\}$, or there are at least two distinct vertices
	$w_1,w_2\in V(G)\setminus\{v_1,v_2\}$ such that
	$e_G(v_1,w_j)>\eta^{\frac16}n$ for $j\in[2]$.
In the first case, since $e_G(v_1,v_2) < \Delta -\eta^{\frac 1 7} n$, 
we know that $e_G(v_1, B\setminus \{u,w\}) \ge \eta^{\frac 1 7} n -2\eta^{\frac 16}n$. 
Thus $e_{H_i}(v_1, B\setminus \{u,w\}) \ge \frac{1}{2} (\eta^{\frac 1 7} n -2\eta^{\frac 16}n)-13\eta n -(150\eta n+\ell_1+\ell_2) >0$. 
Thus we can choose an available edge between  $v_1$ 
and a vertex of $B\setminus \{u,w\}$ and add this edge to $Q_i$. 

If there are at least two distinct vertices $w_1, w_2 \in V(G)\setminus\{v_1,v_2\}$
such that $e_G(v_1,w_j) > \eta^{\frac 16}n$ for $j\in [2]$,  then $w_1, w_2\in Z_0\subseteq B$ and 
 we again get $e_G(v_1,v_2) \ge \eta^{\frac 16}n-7\eta n$. 
Now there is an available edge between $v_1$ and 
a vertex of $\{v_2, w_1, w_2\}\setminus \{u,w\}$.  Choose such an edge and add it to $Q_i$.

If instead $i\in [k+\ell_1+1,k+\ell_1+\ell_2]$, then $i$ is a Stage~II
color. By Stage~I, no remaining edge of $R_B$ is incident with a vertex of
$(Z\cap B)\cup\{v_2\}$. Since we are now in the case
$d_{G^s}(v_1)<\eta^{\frac17}n$, every neighbor of $v_1$ in $B$ lies in
$(Z\cap B)\cup\{v_2\}$. Hence
\[
e_G(v_1,B)=e_G(v_1,(Z\cap B)\cup\{v_2\}).
\]
Moreover, the number of edges from $v_1$ to $B$ that were put into
$G_{A,B}$ is exactly $|M|=d_{G_{A,B}}(v_1)-d_G(v_1,A)$, and the number of
edges of $H$ incident with $v_1$ that have already been colored is at most
$150\eta n+\ell_1+\ell_2$. Therefore
\[
\begin{aligned}
	e_{H_i}(v_1,(Z\cap B)\cup\{v_2\})
	&\ge e_G(v_1,B)-|M|-(150\eta n+\ell_1+\ell_2)\\
	&=\Delta-d_{G_{A,B}}(v_1)-(150\eta n+\ell_1+\ell_2)\\
	&\ge \Delta/2-14\eta n-(150\eta n+\ell_1+\ell_2)>0.
\end{aligned}
\]
Thus we can choose an available edge between $v_1$ and a vertex of
$(Z\cap B)\cup\{v_2\}$ and add it to $Q_i$.

	Next we consider $v_2$. 
	If $v_2\in B_i$, or if $v_2$ has already been covered by the edge between
	$v_1$ and $v_2$ in $Q_i$, then no further edge incident with $v_2$ is needed.
	Suppose $v_2\notin B_i$ and $v_2$ has not yet been covered by $Q_i$. Since
	we are in Case~2, $e_G(v_1,v_2)<\Delta-\eta^{\frac17}n$, and hence
	$d_G(v_2,V(G)\setminus\{v_1\})>\eta^{\frac17}n$. By Condition~\eqref{item:thm4.1.c},
	$d_{G^s}(v_2,V(G)\setminus\{v_1\})>\eta^{\frac17}n-13\eta n$. Since
	$v_2\in B$, \eqref{eqn:partition-deg} gives
	\[
	d_{G^s}(v_2,A\setminus\{v_1\})
	\ge \frac{1}{2}\left(\eta^{\frac17}n-13\eta n-n^{\frac23}-\eta n\right)-1
	> \frac{1}{3}\eta^{\frac17}n.
	\]
	After excluding the vertices in $A_i$ and those corresponding to already
	colored edges of $H$ incident with $v_2$, at least one neighbor
	$y\in A\setminus A_i$ remains such that some edge in $E_G(v_2,y)$ is still
	uncolored. Indeed, the number of excluded neighbors  of $v_2$ is at most
	$|A_i|+(150\eta n+\ell_1+\ell_2)$, and
	$|A_i|\le 2\eta^{\frac16}n$.  Choose one such edge and add it to $Q_i$.
	
	Let $H_i^*$ be obtained from $H_i$ by deleting the endpoints of the edges in
	$Q_i$. We show $H_i^*$ has a perfect matching. Suppose not. By Hall's theorem,
	there exists $X\subseteq A\cap V(H_i^*)$ with $|N_{H_i^*}(X)|<|X|$. Since
	$H_i^*$ is obtained from $H$ by deleting at most $2\eta^{\frac16}n+2$ vertices
	from each side and removing only already colored edges of $H$, we estimate
	its minimum degree. By the choice of $Q_i$, the vertices $v_1$ and $v_2$ are deleted from
	$H_i^*$ whenever they were not already deleted in the definition of $H_i$. Thus Condition~\eqref{item:thm4.1.b} gives
	$d^s_G(v)\ge \frac{\Delta}{2}-6\eta n$. Thus,
	by~\eqref{eqn:partition-deg}, every remaining vertex of $H_i^*$ has at least
	$\frac{1}{2}\left(\frac{\Delta}{2}-6\eta n-n^{\frac23}-\eta n\right)>\frac{1}{5}\alpha n$
	neighbors in the opposite part in $H$ before the deletions. Hence every
	remaining vertex of $H_i^*$ has degree at least
	\[
	\delta(H_i^*)>
	\frac{1}{5}\alpha n
	-2
	-2\eta^{\frac16}n
	-\left(150\eta n+5\eta^{\frac13}n+\ell_1+\ell_2\right)
	>8\tau |A|.
	\]
	Thus $|N_{H_i^*}(X)|>\frac{1}{5}\alpha n$, in particular $|X|>8\tau |A|$.
	On the other hand, since the vertices in $(B\cap V(H_i^*))\setminus N_{H_i^*}(X)$
	have no neighbors in $X$, the same minimum-degree bound implies
	$|(A\cap V(H_i^*))\setminus X|>8\tau |A|$. Hence
	$8\tau |A|<|X|<(1-8\tau)|A|$.
	
	Since the underlying simple graph of $H_i^*$ is obtained from $H^s$ by deleting at most $3\eta^{\frac16}n$
	vertices from each side and at most
	$150\eta n+5\eta^{\frac13}n+\ell_1+\ell_2$ edges incident with each vertex,
	and $H^s$ is a bipartite robust $(\gamma,4\tau)$-expander with
	$\eta\ll\gamma\ll\tau$, by Lemma~\ref{lem:bipartite-stability}\eqref{lem:bipartite-stability-a} and~\eqref{lem:bipartite-stability-b},  $H_i^*$ is still a bipartite robust
	$(\gamma/2,8\tau)$-expander. Applying robust expansion to $X$ gives
	$|N_{H_i^*}(X)|\ge |X|$, contradicting the choice of $X$. Therefore $H_i^*$
	has a perfect matching, say $M_i$.
	
	Then $M_i\cup Q_i$ is a perfect matching of $H_i$. Color all edges of
	$M_i\cup Q_i$ with color $i$, extending it to a 1-factor.
	
	Repeating this procedure for every color in $[k+1,k+\ell_1+\ell_2]$, we
	obtain  an additional  $\ell_1+\ell_2$ 1-factors. Together with the $k$ 1-factors
	already obtained in Step~3, this gives $k+\ell_1+\ell_2$ pairwise
	edge-disjoint 1-factors of $G$.
	
	\begin{center}
		{\bf Step 5: Edge-color the remaining bipartite multigraph}
	\end{center}
	
	Let $R$ be the subgraph of $G$ consisting of the remaining uncolored edges.
	By the construction in the previous steps, all edges of $G[A]$ and $G[B]$
	have already been colored, so every edge of $R$ belongs to $H=G[A,B]$. Thus
	$R$ is bipartite with parts $A$ and $B$.
	
	Since we have $k+\ell_1+\ell_2$ pairwise edge-disjoint 1-factors, each
	vertex of $G$ is incident with exactly one edge of each of these color
	classes. Therefore the remaining multigraph $R$ is regular of degree
	$\Delta(R)=\Delta(G)-k-\ell_1-\ell_2$.
	
	By Theorem~\ref{konig}, $R$ admits a proper edge-coloring with $\Delta(R)$
	colors, using the colors in $[k+\ell_1+\ell_2+1,\Delta]$. Since $R$ is
	$\Delta(R)$-regular, every vertex of $R$ is incident with exactly one edge of
	each of these new colors, so each new color class is a 1-factor of $G$.
	
	Together with the $k+\ell_1+\ell_2$ 1-factors constructed in the previous
	steps, this gives an edge-coloring of $G$ with exactly $\Delta$ colors, and
	every color class is a 1-factor. Hence $G$ is 1-factorizable.
	
\end{proof}

\section*{Declaration of Use of AI Tools}

During the preparation of this manuscript, the authors used ChatGPT 5.5 Plus 
 to assist with language
editing, grammar, clarity, and formatting. The authors reviewed and verified
all AI-assisted edits and take full responsibility for the content of the
manuscript.
	
	\section*{Acknowledgments}
	The third author would like to dedicate this paper to the memory of Professor Bing Yao of Northwest Normal University, China, who passed away on May 24, 2026. His passion for research and teaching, together with his selfless support of students, will always be remembered.
	
	\bibliographystyle{abbrv}
	\bibliography{SSL-BIB_08-19}
\end{document}